\numberwithin{equation}{section}
\newtheorem{theorem}{Theorem}
\newtheorem{remark}{Remark}
\newtheorem{lemma}[theorem]{Lemma}
\newtheorem{corollary}[theorem]{Corollary}
\newtheorem{proposition}[theorem]{Proposition}
\newtheorem{definition}[theorem]{Definition}
\numberwithin{theorem}{section}
\begin{document}

\title{Computing Spectral Measures and Spectral Types}
\author{Matthew J. Colbrook\thanks{Department of Applied Mathematics and Theoretical Physics, University of Cambridge, UK.\\Email: m.colbrook@damtp.cam.ac.uk}}

\date{}

\maketitle

\begin{abstract}
Spectral measures arise in numerous applications such as quantum mechanics, signal processing, resonance phenomena, and fluid stability analysis. Similarly, spectral decompositions (into pure point, absolutely continuous and singular continuous parts) often characterise relevant physical properties such as the long-time dynamics of quantum systems. Despite new results on computing spectra, there remains no general method able to compute spectral measures or spectral decompositions of infinite-dimensional normal operators. Previous efforts have focused on specific examples where analytical formulae are available (or perturbations thereof) or on classes of operators that carry a lot of structure. Hence the general computational problem is predominantly open. We solve this problem by providing the first set of general algorithms that compute spectral measures and decompositions of a wide class of operators. Given a matrix representation of a self-adjoint or unitary operator, such that each column decays at infinity at a known asymptotic rate, we show how to compute spectral measures and decompositions. We discuss how these methods allow the computation of objects such as the functional calculus, and how they generalise to a large class of partial differential operators, allowing, for example, solutions to evolution PDEs such as the linear Schr\"odinger equation on $L^2(\mathbb{R}^d)$. Computational spectral problems in infinite dimensions have led to the Solvability Complexity Index (SCI) hierarchy, which classifies the difficulty of computational problems. We classify the computation of measures, measure decompositions, types of spectra, functional calculus, and Radon--Nikodym derivatives in the SCI hierarchy. The new algorithms are demonstrated to be efficient on examples taken from orthogonal polynomials on the real line  and the unit circle (giving, for example, computational realisations of Favard's theorem and Verblunsky's theorem, respectively), and are applied to evolution equations on a two-dimensional quasicrystal.
\end{abstract}

\linespread{1}\selectfont{}
\tableofcontents
\normalsize
\linespread{1}\selectfont{}

\section{Introduction}
The analysis and computation of spectral properties of operators form core parts of many branches of science and mathematics, arising in diverse fields such as differential and integral equations, orthogonal polynomials, quantum mechanics, statistical mechanics, integrable systems and optics \cite{ponomarenko2013cloning,dean2013hofstadter,szabo2012modern,bender2007making,davies2007linear,simon1998classical,gray2006toeplitz}. Correspondingly, the problem of numerically computing the spectrum, $\sigma(T)$, of an operator $T$ acting on the canonical separable Hilbert space $l^2(\mathbb{N})$ has attracted a large amount of interest over the last 60 years or so \cite{aronszajn1951approximation, Arveson_cnum_lin94, Arveson_role_of94,Arveson_Improper93, Bottcher_appr, brown2007quasi, Lyonell_2, Sharg, deift1985toda, digernes1994finite, hansen2008, hansen2011, Levitin, Marletta_pollution, Marletta_Spec_gaps, Arieh1, Arieh2, Seidel_JFA,Hansen_PRS, SeSi_Pseudospectra, Eugene3, Silbermann22, riddell1967spectral,colbrookinfinite,colb1,colbrookPSEUDO}. However, the richness, beauty and difficulties that are encountered in infinite dimensions lie not just in the set $\sigma(T)\subset\mathbb{C}$, but also in the generalisation of projections onto eigenspaces and the possibility of different spectral types. Specifically, given a normal operator $T$, there is an associated projection-valued measure (resolution of the identity), which we denote by $E^T$, whose existence is guaranteed by the spectral theorem and whose support is $\sigma(T)$ \cite{kadison1997fundamentals_1,kadison1997fundamentals_2,reed1972methods}. This allows the representation of the operator $T$ as an integral over $\sigma(T)$, analogous to the finite-dimensional case of diagonalisation:
\begin{equation}
\label{spectral_theorem_classic}
Tx=\left[\int_{\sigma(T)} \lambda dE^T(\lambda)\right] x,\quad \forall x\in\mathcal{D}(T),
\end{equation}
where $\mathcal{D}(T)$ denotes the domain of $T$. For example, if $T$ is compact, then $E^T$ corresponds to projections onto eigenspaces, familiar from the finite-dimensional setting. However, in general, the situation is much richer and more complicated, with different types of spectra (pure point, absolutely continuous and singular continuous). An excellent and readable introduction can be found in Halmos' article \cite{halmos1963does}.

The computation of $E^T$, along with its various decompositions and their supports, is of great interest, both theoretically and for practical applications. For example, spectral measures are intimately related to correlation functions in signal processing, resonance phenomena in scattering theory, and stability analysis for fluids. Moreover, the computation of $E^T$ allows one to compute many additional objects, which we provide the first general algorithms for in this paper, such as the functional calculus (Theorem \ref{F_calc}), the Radon--Nikodym derivative of the absolutely continuous component of the measure (Theorem \ref{meas_comp3}), and the spectral measures and spectral set decompositions (Theorem \ref{spec_decomp_comp} and Theorem \ref{spec_decomp_as_sets}). For instance, in \S \ref{PDE_SEC_WOW} we show how our results allow the computation of spectral measures and the functional calculus of almost arbitrary self-adjoint partial differential operators on $L^2(\mathbb{R}^d)$. An important class of examples is given by solutions of evolution equations such as the Schr\"odinger equation on $L^2(\mathbb{R}^d)$ with a potential of locally bounded total variation. We prove that this is computationally possible even when an algorithm is only allowed to point sample the potential. A numerical example of fractional diffusion for a discrete quasicrystal is also given in \S \ref{penrose_numerics}.

Despite its importance, there has been no general method able to compute spectral measures of normal operators. Although there is a rich literature on the theory of spectral measures, most of the efforts to develop computational tools have focused on specific examples where analytical formulae are available (or perturbations thereof) or on classes of operators that carry a lot of
structure. Indeed, apart from the work of Webb and Olver \cite{webb2017spectra} (which deals with compact perturbations of tridiagonal Toeplitz operators) and methods for computing spectral density functions of Sturm--Liouville problems and other highly structured operators, there has been limited success in computing the measure $E^T$.\footnote{See \S \ref{prev_work} for connections with previous work.} In some sense, this is not surprising given the difficulty in rigorously computing spectra. One can consider computing spectral measures/projections as the infinite-dimensional analogue of computing projections onto eigenspaces.\footnote{Of course eigenvectors exist in the infinite-dimensional case, but not all of the spectrum consists of eigenvalues. The projection-valued measure generalises the notion of projections onto eigenspaces.} Thus, from a numerical/computational point of view, the current state of affairs in infinite-dimensional spectral computations is highly deficient, analogous in finite dimensions to being able to compute the location of eigenvalues but not eigenvectors! It has been unknown if the general computation of spectral measures is possible, even for simple subclasses such as discrete Schr\"odinger operators. In other words, the computational problem of ``diagonalisation'' through computing spectral measures remains an important and predominantly open problem.

In this paper, we solve this problem by providing the first set of algorithms for the computation of spectral measures for a large class of self-adjoint and unitary operators, namely, those whose matrix columns decay at a known asymptotic rate. This is a very weak assumption and covers the majority of operators, even unbounded, found in applications. In particular, those whose representation is sparse (such as many of the graph or lattice operators typically dealt with in physics) and also partial differential operators, once a suitable basis has been selected (see Theorem \ref{WowPDE} and Appendix \ref{append2}). We also show how to compute spectral measure decompositions, the functional calculus, the density of the absolutely continuous part of the measure (Radon--Nikodym derivative) and different types of spectra (pure point, absolutely continuous and singular continuous - these sets often characterise different physical properties in quantum mechanics \cite{ruelle1969remark,amrein1973characterization,enss1978asymptotic,simon1990absence,geisel1991new,combes1993connections,last1996quantum,cycon2009schrodinger}). A central ingredient of these new algorithms is the computation of the resolvent operator with error control through appropriate rectangular truncations (Theorem \ref{res_est1}). Furthermore, we demonstrate the applicability of our algorithms. The algorithms are efficient and parallelisable, allowing large scale computations.

\subsection{The Solvability Complexity Index and classification of problems}

A surprise thrown up by the infinite-dimensional spectral problem, which turns out to be quite generic, is the Solvability Complexity Index (SCI) \cite{hansen2011}. The SCI provides a hierarchy for classifying the difficulty of computational problems. In classical numerical analysis, when computing spectra, one hopes to construct an algorithm, $\Gamma_n$, with one limit such that for an operator $T$,
\begin{equation}\label{eq:one_limit}
\Gamma_n(T) \rightarrow \sigma(T), \quad\text{as } n \rightarrow \infty,
\end{equation}
preferably with some form of error control or rate of convergence. However, this is not always possible. For example, when considering the class of bounded operators, the best possible alternative is an algorithm depending on three indices $n_1, n_2, n_3$ such that 
  \begin{equation*}
\lim_{n_3 \rightarrow \infty} \lim_{n_2 \rightarrow \infty} \lim_{n_1 \rightarrow \infty} \Gamma_{n_3,n_2,n_1}(T) = \sigma(T).
\end{equation*}
Any algorithm with fewer than three limits will fail on some bounded operator, and neither error control nor convergence rates on any of the limits are possible since these would reduce the required number of limits. However, for self-adjoint operators, it is possible to reduce the number of limits to two, but not one \cite{hansen2011,ben2015can}. With more structure (such as sparsity or column decay) it is possible to compute the spectrum in one limit with a certain type of error control \cite{colb1}. Hence, the only way to characterise the computational spectral problem is through a hierarchy, classifying the difficulty of computing spectral properties of different subclasses of operators. The SCI classifies difficulty by considering the minimum number of limits that one must take to calculate the quantity of interest (see Appendix \ref{append1} for a full definition). The SCI has roots in the work of Smale \cite{Smale2, Smale_Acta_Numerica}, and his program on the foundations of computational mathematics and scientific computing, though it is quite distinct. The notions of Turing computability \cite{turing1937computable} and computability in the Blum--Shub--Smale (BSS) \cite{BCSS} sense become special cases, and impossibility results that are proven in the SCI hierarchy hold in all models of computation. The phenomenon of needing several limits also covers general numerical analysis problems, such as Smale's question on the existence of purely iterative algorithms for polynomial root finding \cite{smale_question}. As demonstrated by McMullen \cite{McMullen1, McMullen2} and Doyle and McMullen \cite{Doyle_McMullen}, this is a case where several limits are needed in the computation, and their results become special cases of classification in the SCI hierarchy \cite{ben2015can}. Extensions of the hierarchy to error control \cite{colbrook3,colbrook4,colbrook2020foundations} also have potential applications in the growing field of computer-assisted proofs, where one must perform a computation with absolute certainty. See, for example, the work of Fefferman and Seco on the Dirac--Schwinger conjecture \cite{fefferman1990,fefferman1996interval} and Hales on Kepler's conjecture (Hilbert's 18th problem) \cite{Hales_Annals}, both of which implicitly provide classifications in the SCI hierarchy. As well as spectral problems \cite{colb1,colbrookinfinite,colbrook4,colbrook3,colbrook2020computingSM,colbrook2020foundations,hansen2011,ben2015can}, the SCI hierarchy has recently been used to solve problems related to computing resonances \cite{Jonathan_res,benartzi2020computing}, computing solutions of semigroups and evolution PDEs \cite{colb_semigp,becker2020computing}, and computational barriers for stable and accurate neural networks \cite{antun2021can}.

An informal definition of the SCI hierarchy is as follows, with a detailed summary contained in Appendix \ref{append1}. The SCI hierarchy is based on the concept of a computational problem. This is described by a function 
\[
\Xi:\Omega \rightarrow \mathcal{M}
\] that we want to compute, where $\Omega$ is some domain, and $(\mathcal{M},d)$ is a metric space. For example, we could take $\Xi(T) = \sigma(T)$ (the spectrum) for some class of operators $\Omega$ and $\mathcal{M}$ the collection of non-empty closed subsets of $\mathbb{C}$ equipped with the Attouch--Wets metric. The SCI of a computational problem is the smallest number of limits needed in order to compute the solution. For a given set of evaluation functions (the information our algorithm is allowed to read - in our case, $\Lambda_1$ or $\Lambda_2$ defined in (\ref{evals_def_nene})), class of objects (in our case, subclasses of operators acting on $l^2(\mathbb{N})$) and model of computation $\alpha$ (in this paper general, $G$, or arithmetic, $A$) we define:
\begin{itemize}
\item[(i)] $\Delta^{\alpha}_0$ is the set of problems that can be computed in finite time, the SCI $=0$.
\item[(ii)] $\Delta^{\alpha}_1$ is the set of problems that can be computed using one limit (the SCI $=1$) with control of the error, i.e. $\exists$ a sequence of algorithms $\{\Gamma_n\}$ such that $d(\Gamma_n(A), \Xi(A)) \leq 2^{-n}, \, \forall A \in \Omega$.
\item[(iii)] $\Delta^{\alpha}_2$ is the set of problems that can be computed using one limit (the SCI $=1$) without error control, i.e. $\exists$ a sequence of algorithms $\{\Gamma_n\}$ such that $\lim_{n\rightarrow \infty}\Gamma_n(A) = \Xi(A), \, \forall A \in \Omega$.
\item[(iv)] $\Delta^{\alpha}_{m+1}$, for $m \in \mathbb{N}$, is the set of problems that can be computed by using $m$ limits, (the SCI $\leq m$), i.e. $\exists$ a family of algorithms $\{\Gamma_{n_m, \hdots, n_1}\}$ such that 
$$
\lim_{n_m \rightarrow\infty}\hdots \lim_{n_1\rightarrow\infty}\Gamma_{n_m,\hdots, n_1}(A) = \Xi(A), \, \forall A \in \Omega.
$$
 \end{itemize}
The class $\Delta_1$ is, of course, a highly desired class; however, non-trivial spectral problems are higher up in the hierarchy. For example, the following classifications are known \cite{hansen2011,ben2015can}:
\begin{itemize}
\itemsep0em
\item[(i)] The general spectral problem is in $\Delta_4\setminus \Delta_3$.
\item[(ii)] The self-adjoint spectral problem is in $\Delta_3\setminus \Delta_2$.
\item[(iii)] The compact spectral problem is in $\Delta_2\setminus \Delta_1$.
\end{itemize}
Here, the notation $\setminus$ indicates the standard ``setminus''. Hence, the computational spectral problem becomes an infinite classification theory to characterise the above hierarchy. In order to do so, there will, necessarily, have to be many different types of algorithms. Characterising the hierarchy will yield a myriad of different approaches, as different structures on the various classes of operators will require specific algorithms.

This paper provides classifications of spectral problems associated with $E^T$ (such as decompositions of the measure and spectrum) in the SCI hierarchy, some of which can be computed in one limit. We provide algorithms for these problems, and one of the main tools used is the computation of the resolvent operator $R(z,T):=(T-zI)^{-1}$ with error control (Theorem \ref{res_est1}). This is possible through appropriate \textit{rectangular} truncations of the infinite-dimensional operator. This approach differs from finite-dimensional techniques, which typically consider square truncations.

\begin{remark}[Recursivity and independence of the model of computation]\label{Qremark}
The constructive inclusion results we provide hold for arithmetic algorithms and the impossibility results hold for general algorithms. We refer the reader to Appendix \ref{append1} for a detailed explanation. Put simply, this means that the algorithms constructed can be recursively implemented with inexact input and restrictions to arithmetic operations over the rationals (it is also straightforward to implement them using interval arithmetic), whereas the impossibility results hold in any model of computation (such as the Turing or BSS models).
\end{remark}

\subsection{Summary of the main results}
\label{main_res}

\subsubsection{Partial differential operators}
\label{PDE_SEC_WOW}

For $N \in \mathbb{N}$, consider the operator formally defined on $L^2(\mathbb{R}^d)$ by
\begin{equation}\label{eq:diff_op}
Lu(x)=\sum_{k\in\mathbb{Z}_{\geq0}^d,\left|k\right|\leq N}a_k(x)\partial^ku(x),
\end{equation}
where throughout we use multi-index notation with $\left|k\right|=\max\{\left|k_1\right|,...,\left|k_d\right|\}$ and $\partial^k=\partial^{k_1}_{x_1}\partial^{k_2}_{x_2}...\partial^{k_d}_{x_d}$. We will assume that the coefficients $a_k(x)$ are complex-valued measurable functions on $\mathbb{R}^d$ and that $L$ is self-adjoint. For dimension $d$ and $r>0$ consider the space
\begin{equation}
\mathcal{A}_r=\{f\in M([-r,r]^d):\left\|f\right\|_{\infty}+\mathrm{TV}_{[-r,r]^d}(f)<\infty\},
\end{equation}
where $M([-r,r]^d)$ denotes the set of measurable functions on the hypercube $[-r,r]^d$ and $\mathrm{TV}_{[-r,r]^d}$ denotes the total variation norm in the sense of Hardy and Krause \cite{niederreiter1992random}. This space becomes a Banach algebra when equipped with the norm \cite{blumlinger1989topological}
$$
\left\|f\right\|_{\mathcal{A}_r}:=\left\|f|_{[-r,r]^d}\right\|_{\infty}+(3^d+1)\mathrm{TV}_{[-r,r]^d}(f).
$$
Let $\Omega_{\mathrm{PDE}}$ consist of all such $L$ such that the following assumptions hold:
\begin{enumerate}
	\item[(1)] The set $C_{0}^\infty(\mathbb{R}^d)$ of smooth, compactly supported functions forms a core of $L$.
	\item[(2)] For each of the functions $a_k(x)$, there exists a positive constant $A_k$ and an integer $B_k$ (both possibly unknown) such that 
	$$
	\left|a_k(x)\right|\leq A_k\left(1+\left|x\right|^{2B_k}\right),
	$$
	almost everywhere on $\mathbb{R}^d$, that is, we have at most polynomial growth of the coefficients.
	\item[(3)] The restrictions $a_k|_{[-r,r]^d}\in\mathcal{A}_r$ for all $r>0$.
\end{enumerate}
We consider the case where our algorithms can do the following:
\begin{enumerate}
	\item Evaluate any coefficient $a_k(x)$ to a given precision at $x\in\mathbb{Q}^d$, where $\mathbb{Q}$ denotes the field of rationals, and output an approximation in $\mathbb{Q}+i\mathbb{Q}$.
	\item For each $n$, evaluate a positive number $b_n(L)$ such that the sequence $\{b_n(L)\}_{n\in\mathbb{N}}$ satisfies
\begin{equation}
\label{tot_var_bound}
\sup_{n\in\mathbb{N}}\max_{\left|k\right|\leq N}\frac{\left\|a_k\right\|_{\mathcal{A}_n}}{b_n(L)}<\infty.
\end{equation}
\end{enumerate}

In Appendix \ref{append2}, we prove (and state a more precise version of) the following theorem.

\begin{theorem}[Spectral properties of self-adjoint partial differential operators can be computed]
\label{WowPDE}
Given the above set-up, there exist sequences of arithmetic algorithms that compute spectral measures, the functional calculus, and Radon--Nikodym derivatives of the absolutely continuous part of the measure over the class $\Omega_\mathrm{PDE}$. In other words, these objects can be computed in one limit with $\mathrm{SCI}=1$.
\end{theorem}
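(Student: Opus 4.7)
The plan is to reduce Theorem \ref{WowPDE} to the discrete matrix framework developed earlier in the paper, by choosing an orthonormal basis of $L^2(\mathbb{R}^d)$ in which $L$ becomes an infinite matrix with (i) columns decaying at a computable asymptotic rate and (ii) entries approximable to arbitrary precision from the data the algorithm is allowed to read. The natural candidate is the tensor-product Hermite basis $\{h_\alpha\}_{\alpha\in\mathbb{Z}_{\geq 0}^d}$: each $h_\alpha$ is a Schwartz function, each derivative $\partial^k h_\alpha$ is an explicit finite linear combination of other Hermite functions with coefficients polynomial in $\alpha$, and combining this with the polynomial growth (2) and the core assumption (1) places every $h_\alpha$ inside $\mathcal{D}(L)$ with a computable $L^2$-bound on $Lh_\alpha$. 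Fixing an enumeration $\alpha(\cdot):\mathbb{N}\to\mathbb{Z}_{\geq 0}^d$, this gives a concrete matrix $L_{ij}=\sum_{|k|\leq N}\int_{\mathbb{R}^d}a_k(x)\,\partial^k h_{\alpha(j)}(x)\,\overline{h_{\alpha(i)}(x)}\,dx$.

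For each such integral, I would split $\mathbb{R}^d$ into a hypercube $[-r,r]^d$ and its complement. The tail piece is controlled by assumption (2) combined with the Gaussian decay of the product $\partial^k h_{\alpha(j)}\cdot\overline{h_{\alpha(i)}}$, yielding a contribution that is super-polynomially small in $r$ and uniformly bounded over any fixed range of indices. On the cube, the integrand lies in $\mathcal{A}_r$ because $\mathcal{A}_r$ is a Banach algebra containing the smooth Hermite factors with $\mathcal{A}_r$-norms one can compute in closed form, while the $a_k$-factor is controlled by (3) together with the surrogate $b_n(L)$ from (\ref{tot_var_bound}). A low-discrepancy quadrature rule combined with the Koksma--Hlawka inequality then converts point samples of $a_k$ into approximations of each $L_{ij}$, with an explicit error bound written purely in terms of $b_n(L)$, $r$, and the quadrature size. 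Pairing these per-entry estimates with standard bounds on the spread of $\partial^k h_\alpha$ in the Hermite index $\alpha$ yields a computable asymptotic column-decay rate for the matrix, which is precisely the hypothesis the rest of the paper requires.

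With the matrix representation and column-decay rate in hand, I would feed them into Theorem \ref{res_est1} to compute the resolvent with error control and then into the algorithms behind Theorems \ref{F_calc}, \ref{meas_comp3}, \ref{spec_decomp_comp} to obtain arithmetic algorithms for spectral measures, functional calculus, and Radon--Nikodym derivatives that achieve $\mathrm{SCI}=1$ over $\Omega_\mathrm{PDE}$. The main technical obstacle is making the quadrature step genuinely quantitative in a recursive sense: the error bounds must be simultaneously (i) uniform in the matrix indices so that a \emph{known} column-decay rate actually emerges, (ii) expressible purely in terms of the accessible surrogate $b_n(L)$ rather than the inaccessible exact $\mathcal{A}_r$-norms (and the unknown constants $A_k$, $B_k$), and (iii) compatible with the fact that only approximate rational evaluations of $a_k$ are available. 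Marrying the Hardy--Krause / Koksma--Hlawka machinery to the Hermite decay estimates in this quantitative, data-driven way is the heart of the argument; once done, the remainder is composition with results already established in the body of the paper.
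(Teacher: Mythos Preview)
Your overall plan---represent $L$ in the tensor-product Hermite basis, approximate matrix entries by quasi-Monte Carlo quadrature via Koksma--Hlawka using the surrogate $b_n(L)$, and then invoke the discrete-operator machinery---is exactly what the paper does. The divergence, and the gap, is in how you pass from computable matrix entries to a resolvent approximation.

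The problematic step is the sentence ``Pairing these per-entry estimates with standard bounds on the spread of $\partial^k h_\alpha$ in the Hermite index $\alpha$ yields a computable asymptotic column-decay rate for the matrix.'' The differential operators $\partial^k$ are indeed banded in the Hermite basis, but multiplication by $a_k$ is not: for a generic $a_k$ satisfying only local bounded variation and polynomial growth, the matrix of $f\mapsto a_k f$ in the Hermite basis is dense, and its column decay is governed by the Hermite-coefficient decay of functions of the form $a_k\cdot(\text{Schwartz})$, which is not extractable from point samples and the surrogate $b_n(L)$ (the constants $A_k,B_k$ are explicitly unknown). Your quadrature estimates bound the \emph{error} in each computed entry $L_{ij}$, not the size of $L_{ij}$ itself, so combining them with the bandedness of $\partial^k$ does not produce a bound on $\|(I-P_{f(n)})LP_n\|$. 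The hypothesis of Theorem~\ref{res_est1} and Corollary~\ref{res_est2} is therefore not verified, and obstacle (i) that you flag is not resolved by the argument you sketch.

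The paper sidesteps this entirely by proving a separate resolvent-approximation result, Theorem~\ref{res_est3}, valid over all of $\Omega_{\mathrm{SA}}$ with \emph{no} column-decay assumption, at the price of also reading the inner products $\langle Te_j,Te_i\rangle=\langle Lh_{\alpha(j)},Lh_{\alpha(i)}\rangle$. These are integrals of exactly the same structure as $\langle Te_j,e_i\rangle$ and are approximated by the same Koksma--Hlawka machinery. With $\langle Te_j,Te_j\rangle=\|Te_j\|^2$ available, the algorithm can \emph{adaptively} find, for each $n$, an $l(n)$ with $\|(I-P_{l(n)})(T-zI)P_n\|$ small, rather than needing an a priori $f,\alpha$. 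Once Theorem~\ref{res_est3} replaces Corollary~\ref{res_est2}, the proofs of Theorems~\ref{meas_comp1}, \ref{F_calc} and \ref{meas_comp3} carry over verbatim. (A minor point: the relevant discrete theorem for spectral measures in the statement of Theorem~\ref{WowPDE} is Theorem~\ref{meas_comp1}, not Theorem~\ref{spec_decomp_comp}, which concerns the $\Delta_3$ decomposition problems.)
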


\begin{remark}
As noted in Appendix \ref{append2}, we can extend this result to computing the decompositions into pure point, absolutely continuous and singular continuous parts of measures and spectra (with $\mathrm{SCI}>1$).
\end{remark}

The above properties characterising $\Omega_{\mathrm{PDE}}$ are deliberately very weak, and hence the class $\Omega_{\mathrm{PDE}}$ is very large. For example, Schr\"odinger operators $L=-\Delta+V$ with polynomially bounded potentials of locally bounded total variation are a subclass of $\Omega_{\mathrm{PDE}}$. Hence, in this case, the theorem says that we can compute the spectral properties (measures, functional calculus etc.) of $L$ by point sampling the potential $V$, if we have an asymptotic bound on the total variation of $V$ over finite rectangles. In particular, we can solve the Schr\"odinger equation
\begin{equation}
\label{evolution_we_can_handleII}
\frac{du}{dt}=-iLu,\quad u_{t=0}=u_0
\end{equation}
by computing $\exp(-itL)u_0$ with guaranteed convergence. Applications of some of our results to semigroups (where error control can also be provided) are given in \cite{colb_semigp}. The proof of Theorem \ref{WowPDE} can also be extended to other domains such as the half-line, and can be adapted to cope with other types of coefficients that are not of locally bounded total variation (for instance, Coulombic potentials for Dirac or Schr\"odinger operators). In order to prove Theorem \ref{WowPDE}, we prove theorems for operators on $l^2(\mathbb{N})$.

\subsubsection{Operators on $l^2(\mathbb{N})$}
\label{DISCRETE_SEC_WOW}

We consider self-adjoint operators given as an infinite matrix $T$ whose columns decay at a known asymptotic rate:
\begin{equation}
\label{need_def}
\|(P_{f(n)}-I)TP_n\|=O(\alpha_n)
\end{equation}
for a sequence $\alpha_n\downarrow 0$ and function $f:\mathbb{N}\rightarrow\mathbb{N}$, where $P_n$ denotes the orthogonal projection onto the linear span of the first $n$ canonical basis vectors. This set-up is adapted in Appendix \ref{append2} to prove Theorem \ref{WowPDE}, where we make use of the following theorems proven for operators on $l^2(\mathbb{N})$. 

We consider the problem of computing spectral measures. Specifically, for operators of the form (\ref{need_def}) we develop the first algorithms and SCI classifications for:
\begin{itemize}
	\item \textbf{Theorem \ref{meas_comp1}:} $\Delta_2^A$ classification for the projection-valued spectral measure and, through taking inner products, the computation of the scalar spectral measures defined via
	$$
	\mu_{x,y}^T(U)=\langle E^T({U}) x,y\rangle.
	$$
This is done for open sets $U$ and can be extended to other types of sets such as closed intervals or singletons (Theorem \ref{meas_err_impossible} shows that the problem $\notin\Delta_1^G$ in certain cases). These scalar-valued spectral measures play an important role in, for example, quantum mechanics, where they correspond to the distribution of the physical observable modelled by a Hamiltonian $T$.
	\item \textbf{Theorem \ref{spec_decomp_comp}:} $\Delta_3^A\setminus \Delta_2^G$ classification for the decompositions of the projection-valued and scalar-valued spectral measures into absolutely continuous, singular continuous and pure point parts. These decompositions often characterise different physical properties in quantum mechanics \cite{ruelle1969remark,amrein1973characterization,enss1978asymptotic,simon1990absence,geisel1991new,combes1993connections,last1996quantum,cycon2009schrodinger}
	\item \textbf{Theorem \ref{F_calc}:} $\Delta_2^A$ classification for the functional calculus of operators, i.e. the computation of $F(T)$ for suitable functions $F$. For brevity, we consider functions that are bounded and continuous on the spectrum of $T$. However, the proof makes clear that wider classes can be dealt with. In some cases, $\Delta_1^A$ error control is possible, for instance, when considering the holomorphic functional calculus (see \S \ref{penrose_numerics}). A key application of this result is the computation of solutions of many evolution PDEs, such as the Schr\"odinger equation through the function $F(z)=\exp(-izt)$.
	\item \textbf{Theorem \ref{meas_comp3}:} $\Delta_2^A$ classification for the Radon--Nikodym derivatives (densities) of the absolutely continuous parts of the scalar spectral measures with convergence in the $L^1$ sense on an open set. This requires a certain separation condition, without which our algorithm converges (Lebesgue) almost everywhere.
\end{itemize}

We also consider the computation of spectra as sets in the complex plane. Convergence is measured using the Hausdorff metric in the bounded case and using the Attouch--Wets metric in the unbounded case (i.e. uniform convergence on compact subsets of $\mathbb{C}$). Specifically, we provide the first algorithms computing these quantities and prove in \textbf{Theorem \ref{spec_decomp_as_sets}} that:
\begin{itemize}
	\item The absolutely continuous spectrum $\sigma_{\mathrm{ac}}(T)$ can be computed in two limits but not one limit ($\Delta_3^A\setminus \Delta_2^G$ classification).
	\item The pure point spectrum $\sigma_{\mathrm{pp}}(T)$ can be computed in two limits but not one limit ($\Delta_3^A\setminus \Delta_2^G$ classification).
	\item The singular continuous spectrum $\sigma_{\mathrm{sc}}(T)$ can be computed in three limits ($\Delta_4^A$ classification). If $f(n)-n\geq \sqrt{2n}+1/2$, then the computation cannot be done in two limits ($\notin\Delta_3^G$ classification). That is, if the local asymptotic bandwidth is allowed to grow sufficiently rapidly, three limits are needed, and this computational problem is exceedingly difficult. We do not know whether this growth condition on $f$ can be dropped. However, without it, the problem still requires at least two limits ($\notin\Delta_2^G$ classification).
\end{itemize}

The main tool used to prove the above results is
\begin{itemize}
	\item \textbf{Theorem \ref{res_est1} and Corollary \ref{res_est2}:} The action of the resolvent $x\rightarrow R(z,T)x$ can be computed with \textit{error control}. This also opens up potential applications in computer-assisted proofs.
\end{itemize}

We demonstrate that the ``one-limit'' algorithms constructed in this paper are implementable and efficient. These provide the first set of algorithms addressing these problems, and we have provided extensive numerical experiments in \S \ref{num_sec}. This includes orthogonal polynomials on the real line and unit circle (where we also discuss acceleration through extrapolation), as well as fractional diffusion for a two-dimensional quasicrystal. Since writing the initial version of this paper, our algorithms have also been implemented in the software package \texttt{SpecSolve} of \cite{colbrook2020computingSM}, which uses the machinery of high-order rational kernels to accelerate computation of the Radon--Nikodym derivative of regular enough measures. In the current paper, we also show that in the case where the measure is regular enough, a global collocation method and local Richardson extrapolation for the computation of the Radon--Nikodym derivative can both accelerate convergence.

\vspace{3mm}

Finally, some brief remarks are in order.
\begin{itemize}
	\item[(i)] The impossibility results hold in general, even when restricted to tridiagonal operators. Furthermore, many of the impossibility results hold for structured operators such as bounded discrete Schr\"odinger operators. Our results (constructive algorithms and impossibility results) also carry over to a large class of normal operators, including unitary operators or skew-adjoint operators, both of which are important in applications. For the sake of clarity, we have stuck to the self-adjoint case in the statement of theorems and proofs. Numerical examples for unitary operators are given in \S \ref{CMV_numerics}.
	\item[(ii)] The difficulty encountered when computing the singular continuous spectrum is partly due to the negative definition of the singular continuous part of a measure. It is the ``leftover'' part of the measure, the part that is not continuous with respect to Lebesgue measure and does not contain atoms.
The challenge of studying $\sigma_{\mathrm{sc}}$ analytically also reflects this difficulty - singular continuous spectra were once thought to be rather rare or exotic. However, they are quite generic; see, for example, \cite{simon1995operators} for a precise topological statement to this effect.
\item[(iii)] One might at first expect computational results to be independent of the function $f$ due to tridiagonalisation. However, the infinite-dimensional case is much more subtle than the finite-dimensional case. Using Householder transformations on a bounded sparse self-adjoint operator $T$ leads to a tridiagonal operator, but, in general, this operator is $T$ restricted to a \textit{strict} subspace of $l^2(\mathbb{N})$. Part of the operator may be lost in the strong operator limit. Instead, one must consider a sum of possibly infinitely many tridiagonal operators (see \cite[Ch. 2 \& 8]{anders_thesis}). Hence some spectral problems may have different classifications for different $f$.
\end{itemize}

\subsection{Open problems}

This results of this paper form part of a program \cite{colb1,colbrookinfinite,colbrookPSEUDO,colbrook4,colbrook3,colbrook2020computingSM,colbrook2020foundations} on determining the foundations of computation (boundaries of what is possible) for infinite-dimensional spectral problems. As such, there remain many interesting open problems related to this paper, such as:

\begin{itemize}
	\item \textbf{Computation of spectral measures for more general normal operators:} Proposition \ref{stone2} demonstrates how the techniques of this paper can be generalised to certain classes of normal operators whose spectrum does not necessarily lie along a curve. However, it is not obvious how to extend the techniques to completely general normal operators. For example, the method of integrating the resolvent along a contour cannot be easily extended to interior points of the spectrum.
	\item \textbf{Brown measure for non-normal operators:} There is an extension of spectral measures to certain non-normal operators known as the Brown measure \cite{brown1983lidskii,haagerup2007brown,haagerup2009invariant}. Computing spectra of non-normal operators is generally harder (in a sense made precise by the SCI hierarchy) than for normal operators (issues such as numerical stability are also present even in the finite-dimensional case). It would be interesting to see if this phenomenon is also present for computing the Brown measure. Some works on approximating the Brown measure from matrix elements include \cite{hansen2011,Arveson_cnum_lin94,bedos1996folner}.
\end{itemize}

\subsection{A motivating example}
\label{motiv}

As a motivating example, consider the case of a Jacobi operator with matrix
$$
J=\begin{pmatrix}
b_1 & a_1 &  & \\
a_1 & b_2 & a_2 & \\
& a_2 & b_3 & \ddots\\
 &  & \ddots & \ddots
\end{pmatrix},
$$
where $a_j,b_j\in\mathbb{R}$ and $a_j>0$. An enormous amount of work exists on the study of these operators, and the correspondence between bounded Jacobi matrices and probability measures with compact support \cite{teschl2000jacobi,deift1999orthogonal}. The entries in the matrix provide the coefficients in the recurrence relation for the associated orthonormal polynomials. To study the canonical measure $\mu_J$, one usually considers the principal resolvent function, which is defined on $\mathbb{C}\backslash\sigma(J)$ via
\begin{equation}
G(z):=\langle R(z,J)e_1,e_1\rangle=\int_{\mathbb{R}}\frac{d\mu_J(\lambda )}{\lambda -z},
\end{equation}
and then takes $z$ close to the real axis. The function $G$ is also known in the differential equations and Schr\"odinger communities as the Weyl $m$-function \cite{teschl2000jacobi,gesztesy1997m} and one can develop the discrete analogue of what is known as Weyl--Titchmarsh--Kodaira theory for Sturm--Liouville operators. Going back to the work of Stieltjes \cite{stieltjes1894recherches} (see also \cite{akhiezer1965classical,wall2018analytic}), there is a representation of $G$ as a continued fraction:
\begin{equation}
\label{ctd_fraction}
G(z):=\frac{1}{-z+b_1-\frac{a_1^2}{-z+b_2-...}}.
\end{equation}
One can also approximate $G$ via finite truncated matrices \cite{teschl2000jacobi}.

\begin{figure}
\centering
\includegraphics[width=0.48\textwidth,trim={32mm 92mm 32mm 92mm},clip]{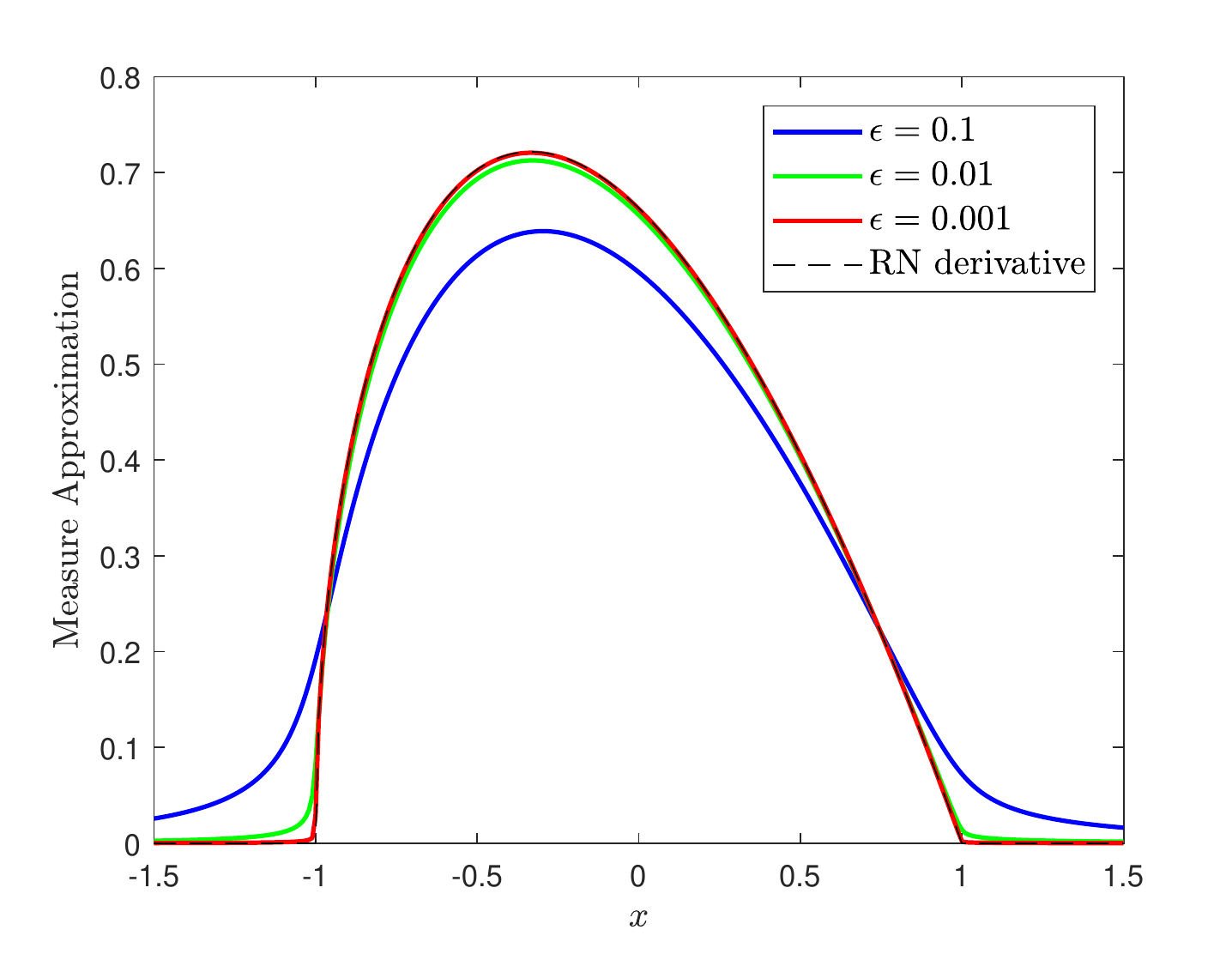}
\includegraphics[width=0.48\textwidth,trim={32mm 92mm 32mm 92mm},clip]{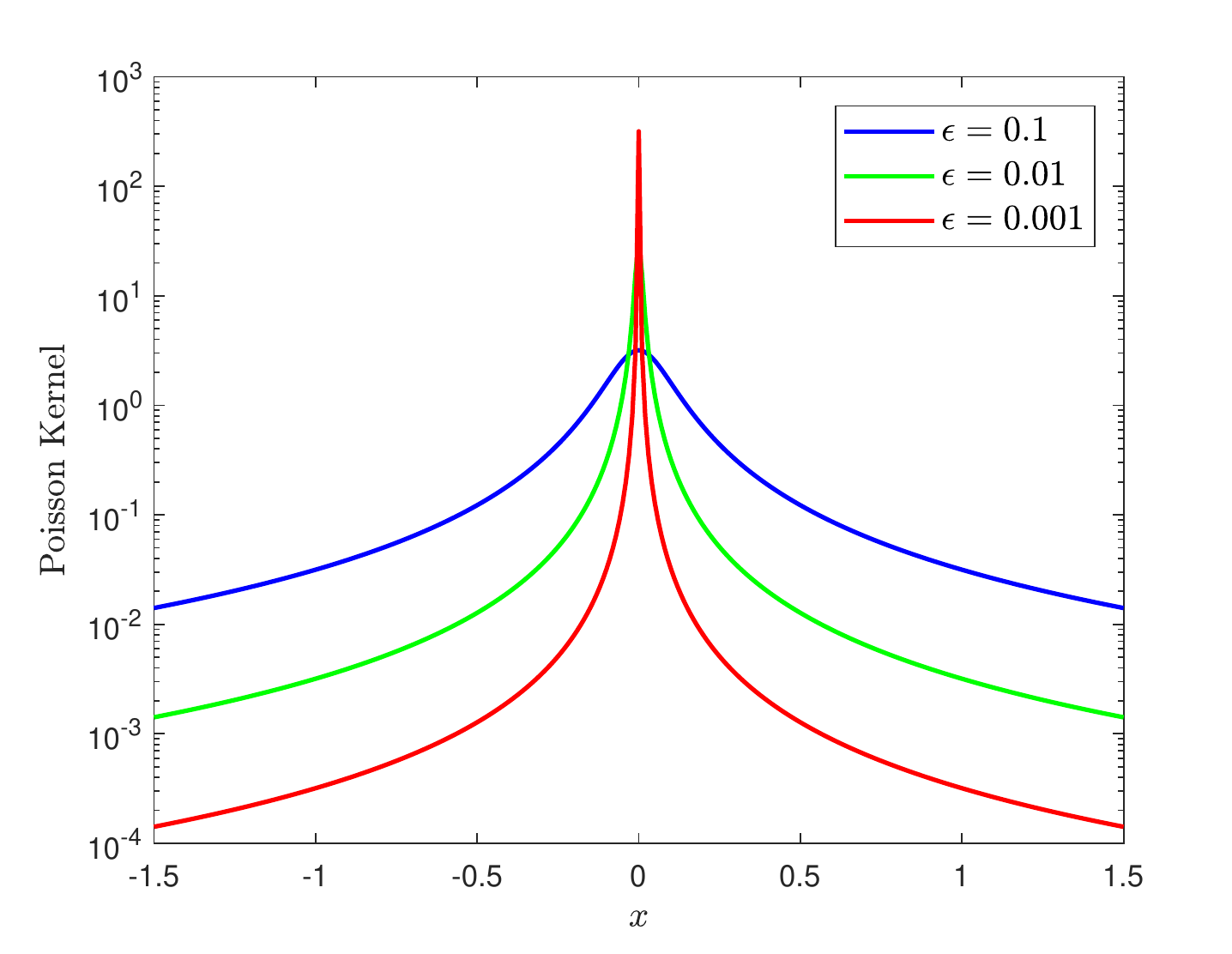} 
\caption{Smoothed approximations of the Radon--Nikodym derivative for the Jacobi operator associated to Jacobi polynomials with $\alpha=1$, $\beta=1/2$. Here the measure is absolutely continuous and supported on $[-1,1]$. Left: Computation of convolutions for different $\epsilon$ using the methods of this paper. Right: The associated Poisson kernel $\pi^{-1}\epsilon/(\epsilon^2+x^2)$ which approaches a Dirac delta distribution as $\epsilon\downarrow 0$.}
\label{idea}
\end{figure}

However, there are two major obstacles to overcome when using (\ref{ctd_fraction}) and its variants as a means to compute measures. First of all, this representation of the principal resolvent function is structurally dependent. For example, (\ref{ctd_fraction}) is valid for the restricted case of Jacobi operators and hence one is led to seeking different methods for different operators (such as tight-binding Hamiltonians on two-dimensional lattices which have a growing bandwidth when represented as operators on $l^2(\mathbb{N})$). Second, this would seem to give the wrong classification of the difficulty of the problem in the SCI hierarchy, giving rise to a tower of algorithms with two limits. One first takes a truncation parameter $n$ to infinity to compute $G(z)$ for $\mathrm{Im}(z)>0$, and then a second limit as $z$ approaches the real axis. One of the main messages of this paper is that both of these issues can be overcome. Measures can be computed in one limit via an algorithm $\Gamma_n$ and for a large class of operators. The only restriction is a known asymptotic decay rate of the off-diagonal entries. As a by-product, we compute the $m$-function of such operators with error control. Specific cases where this can be written explicitly do exist, such as periodic Jacobi matrices or perturbations of Toeplitz operators \cite{edleman} (see also \S \ref{prev_work}). However, there has been no general method proposed to compute the resolvent with error control. This consideration is crucial to allow the computation of measures in one limit.

To see how we might compute the measure using the resolvent, consider the Poisson kernels for the half-plane and the unit disk, defined respectively by
\begin{equation}
P_{H}(x,y)=\frac{1}{\pi}\frac{y}{x^2+y^2}\text{ and } P_{D}(x,y)=\frac{1}{2\pi}\frac{1-(x^2+y^2)}{(x-1)^2+y^2}=P_{D}(r,\theta)=\frac{1}{2\pi}\frac{1-r^2}{1-2r\cos(\theta)+r^2},
\end{equation}
where $(r,\theta)$ denote the usual polar coordinates. Let $T$ be a normal operator, then for $z\notin\sigma(T)$, we have from the functional calculus that
$$
R(z,T)=\int_{\sigma(T)}\frac{1}{\lambda -z}dE^T(\lambda ).
$$
For self-adjoint $T$, $z=u+iv\in\mathbb{C}\backslash{\mathbb{R}}$ ($u,v\in\mathbb{R}$) and $x\in l^2(\mathbb{N})$, we define
\begin{equation}
\label{Poiss_int1}
\begin{split}
K_H(z;T,x):&=\frac{1}{2\pi i}[R(z,T)-R(\overline{z},T)]x\\
&=\frac{1}{2\pi i}\int_{-\infty}^\infty\left(\frac{1}{\lambda -z}-\frac{1}{\lambda -\overline{z}}\right)dE^T(\lambda )x=\int_{-\infty}^\infty P_H(u-\lambda ,v)dE^T(\lambda )x.
\end{split}
\end{equation}
Similarly, if $T$ is unitary, $z=r\exp(i\psi)\in\mathbb{C}\backslash{\mathbb{T}}$ (with $z\neq 0$) and $x\in l^2(\mathbb{N})$, we define
\begin{equation}
\label{Poiss_int2}
K_D(z;T,x):=\frac{1}{2\pi i}[R(z,T)-R(1/\overline{z},T)]x=\frac{1}{2\pi i}\int_{\mathbb{T}}\left[\frac{1}{\lambda -z}-\frac{1}{\lambda -1/\overline{z}}\right]dE^T(\lambda )x.
\end{equation}
We change variables $\lambda =\exp(i\theta)$ and, with an abuse of notation, write $dE^T(\lambda )=i\exp(i\theta)dE^T(\theta)$. A simple calculation then gives
\begin{equation}
\label{Poiss_int3}
K_D(z;T,x)=\int_{0}^{2\pi} P_D(r,\psi-\theta)dE^T(\theta)x.
\end{equation}

Returning to our example, we see that the computation of the resolvent with error control allows the computation of $G(z)$ with error control through taking inner products. By considering $G(z)-G(\overline{z})$, this allows the computation of the convolution of the measure $\mu_J$ with the Poisson kernel $P_{H}$. In other words, we can compute a \textit{smoothed version} of the measure $\mu_J$ with error control. We can then take the smoothing parameter to zero to recover the measure (one can show that these smoothed approximations converge weakly in the sense of measures). Figure \ref{idea} demonstrates this for a typical example.

\subsection{Functional analytic setup}
\label{sec_not}

We consider the canonical\footnote{By a choice of basis our results extend to any separable Hilbert space.} separable Hilbert space $\mathcal{H} = l^2(\mathbb{N})$, the set of square summable sequences with canonical basis $\{e_n\}_{n\in\mathbb{N}}$. Let $\mathcal{C}(l^2(\mathbb{N}))$ be the set of closed densely defined linear operators $T$ such that $\mathrm{span}\{e_n:n\in\mathbb{N}\}$ forms a core of $T$ and $T^*$. The spectrum of $T\in\mathcal{C}(l^2(\mathbb{N}))$ will be denoted by $\sigma(T)$ and the point spectrum (the set of eigenvalues) by $\sigma_{\mathrm{p}}(T)$. The latter set is not always closed and in general the closure of a set $S$ will be denoted by $\overline{S}$. The resolvent operator $(T-zI)^{-1}$ defined on $\mathbb{C}\backslash\sigma(T)$ will be denoted by $R(z,T)$.

This paper focusses on the subclass $\Omega_{\mathrm{N}}\subset \mathcal{C}(l^2(\mathbb{N}))$ of normal operators, that is, operators for which $\mathcal{D}(T)=\mathcal{D}(T^*)$ and $\left\|Tx\right\|=\left\|T^*x\right\|$ for all $x\in\mathcal{D}(T)$. The subclasses $\subset\Omega_{\mathrm{N}}$ of self-adjoint and unitary operators will be denoted by $\Omega_{\mathrm{SA}}$ and $\Omega_{\mathrm{U}}$ respectively. For $T\in\Omega_{\mathrm{SA}}$ and $T\in\Omega_\mathrm{U}$, $\sigma(T)\subset\mathbb{R}$ and $\sigma(T)\subset\mathbb{T}$ respectively, where $\mathbb{T}$ denotes the unit circle. Given $T\in\Omega_{\mathrm{N}}$ and a Borel set $B$, $E^T_B$ will denote the projection $E^T(B)$. Given $x,y\in l^2(\mathbb{N})$, we can define a bounded (complex-valued) measure $\mu_{x,y}^T$ via the formula
\begin{equation}
\mu_{x,y}^T(B)=\langle E_{B}^T x,y\rangle.
\end{equation}
Via the Lebesgue decomposition theorem \cite{halmos2013measure}, the spectral measure $\mu^T_{x,y}$ can be decomposed into three parts
\begin{equation}
\label{spec_decomp_hard}
\mu_{x,y}^T=\mu_{x,y,\mathrm{ac}}^T+\mu_{x,y,\mathrm{sc}}^T+\mu_{x,y,\mathrm{pp}}^T,
\end{equation}
the absolutely continuous part of the measure (with respect to the Lebesgue measure), the singular continuous part (singular with respect to the Lebesgue measure and atomless) and the pure point part. When considering $\Omega_{\mathrm{SA}}$, we will consider Lebesgue measure on $\mathbb{R}$ and let
\begin{equation}
\label{RN_def}
\rho_{x,y}^T(\lambda)=\frac{d\mu_{x,y,\mathrm{ac}}^T}{dm}(\lambda),
\end{equation}
the Radon--Nikodym derivative of $\mu_{x,y,\mathrm{ac}}^T$ with respect to Lebesgue measure. Of course this can be extended to the unitary (and, more generally, normal) case. This naturally gives a decomposition of the Hilbert space $\mathcal{H}=l^2(\mathbb{N})$. For $\mathcal{I}=\mathrm{ac},\mathrm{sc}$ and $\mathrm{pp}$, we let $\mathcal{H}_{\mathcal{I}}$ consist of vectors $x$ whose measure $\mu_{x,x}^T$ is absolutely continuous, singular continuous and pure point respectively. This gives rise to the orthogonal (invariant subspace) decomposition
\begin{equation}
\label{hilbert_decomp}
\mathcal{H}=\mathcal{H}_{\mathrm{ac}}\oplus\mathcal{H}_{\mathrm{sc}}\oplus\mathcal{H}_{\mathrm{pp}},
\end{equation}
whose associated projections will be denoted by $P_{\mathrm{ac}}^{T}$, $P_{\mathrm{sc}}^{T}$ and $P_{\mathrm{pp}}^{T}$ respectively. These projections commute with $T$ and the projections obtained through the projection-valued measure. Of particular interest is the spectrum of $T$ restricted to each $\mathcal{H}_{\mathcal{I}}$, which will be denoted by $\sigma_{\mathcal{I}}(T)$. These different sets and subspaces often, but not always, characterise different physical properties in quantum mechanics (such as the famous RAGE theorem \cite{ruelle1969remark,amrein1973characterization,enss1978asymptotic}), where a system is modelled by some Hamiltonian $T\in \Omega_{\mathrm{SA}}$ \cite{cycon2009schrodinger,combes1993connections,geisel1991new,last1996quantum}. For example, pure point spectrum implies the absence of ballistic motion for many Schr\"odinger operators \cite{simon1990absence}.

\subsection{Algorithmic setup}
\label{sec_not2}

Given an operator $T\in\mathcal{C}(l^2(\mathbb{N}))$, we can view it as an infinite matrix
$$
T=\left( \begin{array}{cccc}
t_{11} & t_{12} & t_{13} &\hdots \\
t_{21} & t_{22} & t_{23} &\hdots\\
t_{31} & t_{32} & t_{33} &\hdots\\
\vdots & \vdots & \vdots& \ddots \end{array} \right)
$$
through the inner products\footnote{Our convention throughout will be that the inner product $\langle \cdot,\cdot\rangle$ is linear in the first component and conjugate-linear in the second.} $t_{ij}=\langle Te_j,e_i\rangle$. All of the algorithms constructed can also be adapted to operators on $l^2(\mathbb{Z})$, either through the use of a suitable re-ordering of the basis, or though considering truncations of matrices in two directions, which is useful numerically since it preserves bandwidth.
To be precise about the information needed to compute spectral properties, we define two classes of evaluation functions as
\begin{equation}\label{evals_def_nene}
\Lambda_1=\{\langle Te_j,e_i\rangle:i,j\in\mathbb{N}\},\quad\Lambda_2=\{\langle Te_j,e_i\rangle,\langle T^*e_j,T^*e_i\rangle:i,j\in\mathbb{N}\}.
\end{equation}
These can be understood as different sets of information our algorithms are allowed to access (see Appendix \ref{append1} for a precise meaning). All the results proven in this paper can be easily extended to the case of inexact input. This means replacing the evaluation functions by
$$
f_{i,j,m}^{(1)},f_{i,j,m}^{(2)}:\mathcal{C}(l^2(\mathbb{N}))\rightarrow\mathbb{Q}+i\mathbb{Q}
$$
such that $|f_{i,j,m}^{(1)}(T)-\langle Te_j,e_i\rangle|\leq 2^{-m}$ and $|f_{i,j,m}^{(2)}(T)-\langle T^*e_j,T^*e_i\rangle|\leq 2^{-m}$. Hence, the existence results carry over to algorithms that are only allowed to perform arithmetic operations over $\mathbb{Q}$. This could be useful for rigorous bounds using interval arithmetic and computer-assisted proofs (for those familiar with the term, our algorithms are Turing recursive), though for brevity, we stick to $\Lambda_1$ and $\Lambda_2$ throughout. For discrete operators, the above information is often given to us, for example, in tight-binding models in physics or as a discretisation of a PDE, and hence it is natural to seek to compute spectral properties from matrix values. The set $\Lambda_2$ is motivated via variational problems. For partial differential operators, such information is often given through inner products with a suitable basis, and in this case, the inexact input model is needed due to approximating the integrals (see Appendix \ref{append2}). For the classes considered in this paper, the evaluation sets $\Lambda_1$ and $\Lambda_2$ are in general different, yet the classifications in the SCI remain the same.

We will be concerned operators whose matrix representation has a known \textit{asymptotic rate of column/off-diagonal decay}. Namely, let $f:\mathbb{N}\rightarrow\mathbb{N}$ with $f(n)>n$ and let $\alpha=\{\alpha_n\}_{n\in\mathbb{N}},\beta=\{\beta_n\}_{n\in\mathbb{N}}$ be null sequences\footnote{We use the term ``null sequence'' for a sequence converging to zero.} of non-negative real numbers. We then define for $\mathrm{X}=\mathrm{SA}$ or $\mathrm{X}=\mathrm{U}$,
\begin{equation}\label{omega_def_ann}
\begin{split}
\Omega_{f,\alpha,\beta}^{\mathrm{X}}&=\{T\in\Omega_{\mathrm{X}}:\|(P_{f(n)}-I)TP_n\|=O(\alpha_n),\text{ as }n\rightarrow\infty\}\\
&\quad\quad\quad\quad\quad\times\{x\in l^2(\mathbb{N}):\|P_nx-x\|=O(\beta_n),\text{ as }n\rightarrow\infty\},
\end{split}
\end{equation}
where $P_n$ denotes the orthogonal projection onto $\mathrm{span}\{e_1,...,e_n\}$. In using the $O(\cdot)$ notation, the hidden constant is allowed to depend on the operator $T$ or the vector $x$. We will also use
\begin{equation}
\label{nnddnnd}
\Omega_{f,\alpha}^{\mathrm{X}}=\{T\in\Omega_{\mathrm{X}}:\|(P_{f(n)}-I)TP_n\|=O(\alpha_n),\text{ as }n\rightarrow\infty\}.
\end{equation}
When discussing $\Omega_{f,\alpha,\beta}^{\mathrm{SA}}$ and $\Omega_{f,\alpha}^{\mathrm{SA}}$ we will use the notation $\Omega_{f,\alpha,\beta}$ and $\Omega_{f,\alpha}$. The collection of vectors in $l^2(\mathbb{N})$ satisfying $\|P_nx-x\|=O(\beta_n)$ will be denoted by $V_{\beta}$. Finally, when $\alpha_n\equiv 0$, we will abuse notation slightly in requiring the stronger condition that
$$
\|(P_{f(n)}-I)TP_n\|=0.
$$
Thus $\Omega_{f,0}$ is the class of self-adjoint operators whose matrix sparsity structure is captured by the function $f$. For example, if $f(n)=n+1$, we recover the class of self-adjoint tridiagonal matrices, the most studied class of infinite-dimensional operators. When discussing classes that include vectors $x$, we extend $\Lambda_i$ to include pointwise evaluations of the coefficients of $x$. Other additions are sometimes needed, such as data regarding open sets as inputs for computations of measures, but this will always be made clear. When considering the general case of $\Omega_{f,\alpha}$, the function $f$ and sequence $\alpha$ can also be considered as inputs to the algorithm - in other words, the same algorithm works for each class.

\subsection{Connections with previous work}
\label{prev_work}

We have mentioned the literature on infinite-dimensional spectral problems and the SCI hierarchy. Computationally, our point of view in this paper is closest to the work of Olver, Townsend and Webb on practical infinite-dimensional linear algebra \cite{Olver_Townsend_Proceedings, Olver_SIAM_Rev, Olver_code1, Olver_code2,webb2017spectra}. Their work includes efficient codes, such as the infinite-dimensional QL (IQL) algorithm \cite{webb_thesis} (see also \cite{colbrookinfinite} for the IQR algorithm, which has its roots in the work of Deift, Li and Tomei \cite{deift1985toda}), as well as theoretical results. A PDE version of the FEAST algorithm based on contour integration of the resolvent has recently been proposed by Horning and Townsend in \cite{horning2019feast}, which computes discrete spectra. The set of algorithms we provide can be considered as a new member within the growing family of infinite-dimensional techniques.

A similar, though different, object studied in the mathematical physics literature, particularly when considering random Schr\"odinger operators, is the density of states \cite{kirsch1989random,carmona2012spectral,kirsch2006integrated}, which we mention for completeness and also to avoid potential confusion. This object is defined via the ``thermodynamic limit'', where instead of considering the infinite-dimensional operator $T$, one considers finite truncations, say $P_nTP_n$, and the limit $n\rightarrow\infty$ of the measure $\sum_{x_j\in\sigma(P_nTP_n)} \delta_{x_j}/n$. This measure is subtly different from the spectral measure of $T$ (for instance, on the discrete spectrum). The density of states is an important quantity in quantum mechanics, and there is a large literature on its computation. We refer the reader to the excellent review article \cite{lin2016approximating}, which discusses the most common methods. The idea of using the resolvent to approximate the density of states of finite matrices can be found in the method of \cite{haydock1972electronic}, which approximates the imaginary part of $\mathrm{Trace}\left[{R}(z,P_nTP_n)\right]$ for $\mathrm{Im}(z)>0$. Similarly, in the random matrix literature, the connection is made through the Stieltjes transformation (see, for example, \cite{bai2010spectral}). There are three immediate differences between our algorithms and those that compute the density of states. First, we seek to deal with the full, infinite-dimensional, operator directly to compute the spectral measure (and not the limit of increasing system sizes). Second, the object we are computing contains more refined spectral information of the operator and does not involve an averaging procedure. The density of states does not capture the full spectral information, such as the contribution of eigenvalues in the discrete spectrum, whereas the projection-valued spectral measure does. Third, there is a subtlety regarding the limits as $\mathrm{Im}(z)$ goes to zero and the truncation parameter goes to infinity (a similar trade-off also occurs in random matrix literature when theoretically analysing the density of states - see \cite{erdos2012universality}). In our case, appropriate \textit{rectangular} truncations of the infinite-dimensional operator are required to compute the resolvent with error control (see Theorem \ref{res_est1}). This approach differs from finite-dimensional techniques, which typically consider square truncations.

For the $C^*$-algebra viewpoint of the density of states, we refer the reader to the work of Areveson \cite{Arveson_cnum_lin94} and the references therein. Estimating the spectrum of $T$ via $\sigma(P_nTP_n)$ is known as the finite section method. This has often been viewed in connection with Toeplitz theory. See, for example, the work by B{\"o}ttcher \cite{Albrecht_Fields, Bottcher_pseu}, B{\"o}ttcher and Silberman \cite{Bottcher_book}, B{\"o}ttcher, Brunner, Iserles and N{\o}rsett \cite{Arieh2}, Brunner, Iserles and N{\o}rsett \cite{Arieh1}, Hagen, Roch and Silbermann \cite{Silbermann22}, Lindner \cite{lindner2006infinite}, Marletta \cite{Marletta_pollution}, Marletta and Scheichl \cite{Marletta_Spec_gaps}, and Seidel \cite{seidel2014fredholm}.

The study of spectral measures also has a rich history in the theory of orthogonal polynomials and quadrature rules for numerical integration going back to the work of Szeg{\H o} \cite{szeg1939orthogonal,MR838253}, briefly touched upon in \S \ref{motiv}. In certain cases, one can recover a distribution function for the associated measure of the Jacobi operator as a limit of functions constructed using Gaussian quadrature \cite[Ch. 2]{chihara2011introduction}. Our examples in \S \ref{JACOBI_SEC_1} can be considered as a computational realisation of Favard's theorem.

There are several results in the literature considering the computation of spectral density functions for Sturm--Liouville problems. In the case of Sturm--Liouville problems, the spectral density function corresponds to the multiplicative version of the spectral theorem. This is subtly different from the measures we compute, which arise from the projection-valued measure version of the spectral theorem. A common approach to approximate spectral density functions associated with
Sturm--Liouville operators on unbounded domains is to truncate the domain and use the Levitan--Levinson formula, as implemented in the software package SLEDGE \cite{pruess1993mathematical,fulton1994parallel,fulton1998computation}. This approach can be computationally expensive since the eigenvalues cluster as the domain size increases; often, hundreds of thousands of eigenvalues and eigenvectors need to be computed. More sophisticated methods avoiding domain truncation are considered for special cases in \cite{fulton2005computing,fulton2008new}, and an application in plasma physics can be found in \cite{MR3323553}. These make use of the additional structure present in Sturm--Liouville problems using results analogous to (\ref{ctd_fraction}) in the continuous case. Our results, particularly Theorem \ref{WowPDE}, hold for partial differential operators much more complicated than Sturm--Liouville operators (see Appendix \ref{append2}).

Finally, we wish to highlight the work of Webb and Olver \cite{webb2017spectra}, which is of particular relevance to the present study. There the authors studied, through connection coefficients, Jacobi operators that arise as compact perturbations of Toeplitz operators. Similar perturbations (where a stronger exponential decay of the perturbation is crucial for analyticity properties of the resolvent) were studied in the work of Bilman and Trogdon \cite{bilman2017numerical} in connection with the inverse scattering transform for the Toda lattice. (See also the work of Trogdon, Olver and Deconinck \cite{trogdon2012numerical} for computations of spectral measures for inverse scattering for the KdV equation.) The results proven in \cite{webb2017spectra} can be stated in terms of the SCI hierarchy:
\begin{itemize}
	\item If the perturbation is finite rank (and known), the computation of $\sigma_{\mathrm{pp}}$ lies in $\Delta_1^G$, and the computation of the $\mu_{\mathrm{ac}}$ lies in $\Delta_0^G$ (note that $\sigma_{\mathrm{ac}}$ is known analytically).
	\item If the perturbation is compact with a known rate of decay at infinity, then the computation of the full spectrum $\sigma$ lies in $\Delta_1^G$.
\end{itemize}
The current paper complements the work of \cite{webb2017spectra} by; considering operators much more general than tridiagonal compact perturbations of Toeplitz operators (we deal with arbitrary self-adjoint operators and assume we know $f$ such that (\ref{nnddnnd}) holds) and partial differential operators, allowing operators to be unbounded, building algorithms that are arithmetic and can cope with inexact input, and considering computation of a wider range of spectral information. At the price of this greater generality, the objects we study are generally not computable with error control (unless one has local regularity assumptions on the measure - see \cite[Ch. 4]{colbrook2020foundations}), and some lead to computational problems higher up in the SCI hierarchy, though still computationally useful as we shall demonstrate. Our methods are also entirely different and rely on estimating the resolvent operator with error control (Theorem \ref{res_est1}).

\subsection{Organisation of the paper}

The paper is organised as follows. In \S \ref{res_stones} we consider the computation of the resolvent with error control and generalisations of Stone's formula. The computation of measures, their various decompositions and projections are discussed in \S \ref{ev_meas_sec}. We then discuss the functional calculus and density of measures in \S \ref{applic_sec}. The computation of the different types of spectra as sets in the complex plane is discussed in \S \ref{heavy_spec}. We run extensive numerical tests in \S \ref{num_sec}, where we also introduce a new collocation method for the computation of the Radon--Nikodym derivative. We find that increased rates of convergence can also be obtained through iterations of Richardson extrapolation. A summary of the SCI hierarchy can be found in Appendix \ref{append1} and a proof of Theorem \ref{WowPDE} in Appendix \ref{append2}. Throughout, our theorems and proofs use the notation introduced in \S \ref{sec_not} and \S \ref{sec_not2}.

\section{Preliminary Results}
\label{res_stones}

The algorithms built in this paper rely on the computation of the action of the resolvent operator $R(z,T)=(T-z)^{-1}$ for $z\notin\sigma(T)$ with (asymptotic) error control. Given this, one can compute the action of the projections $E^T_{S}$ for a wide range of sets $S$ (Theorem \ref{meas_comp1} and its generalisations), and hence the measures $\mu_{x,y}^T$. This section discusses the computation of the resolvent with error control and generalisations of Stone's formula, which relate the resolvent to the projection-valued measures.

\subsection{Approximating the resolvent operator}
\label{approx_res_sect}

The key theorem for computing the action of the resolvent operator is the following, where we use $\sigma_1$ to denote the injection modulus of an operator defined as
$$
\sigma_1(T):=\min\{\|Tx\|:x\in\mathcal{D}(T),\|x\|=1\}.
$$
The proof of the following theorem boils down to a careful computation of a least-squares solution of a rectangular linear system.

\begin{theorem}
\label{res_est1}
Let $\alpha=\{\alpha_n\}_{n\in\mathbb{N}}$ and $\beta=\{\beta_n\}_{n\in\mathbb{N}}$ be null sequences, and $f:\mathbb{N}\rightarrow \mathbb{N}$ with $f(n)>n$. Define
$$
S_{f,\alpha,\beta}:=\left\{(T,x,z)\in\Omega^{\mathrm{N}}_{f,\alpha,\beta}\times\mathbb{C}:z\in\mathbb{C}\backslash \sigma(T)\right\}
$$
and for $(T,x)\in \Omega^{\mathrm{N}}_{f,\alpha,\beta}$ (defined in (\ref{omega_def_ann})), let $C_1(T,x)$, $C_2(T,x)\in \mathbb{R}_{\geq 0}$ be such that
\begin{enumerate}
	\item $ \|(I-P_{f(n)})TP_n\|\leq C_1\alpha_n$,
	\item $\|P_nx-x\|\leq C_2\beta_n$,
\end{enumerate}
where, for notational convenience, we drop the $(T,x)$ dependence in the notation for $C_1$ and $C_2$. Then there exists a sequence of arithmetical algorithms
$$
\Gamma_n:S_{f,\alpha,\beta}\rightarrow l^2(\mathbb{N}),
$$
each of which use the evaluation functions in $\Lambda_1$ (defined in (\ref{evals_def_nene})), such that each vector $\Gamma_n(T,x,z)$ has finite support with respect to the canonical basis for each $n$, and
$$\lim_{n\rightarrow\infty}\Gamma_n(T,x,z)= R(z,T)x\text{ in }l^2(\mathbb{N}).$$
Moreover, the following error bound holds
\begin{equation}
\label{res_bound}
\|\Gamma_{n}(T,x,z)-R(z,T)x\|\leq \frac{C_2\beta_{f(n)}+C_1\alpha_n\|\Gamma_{n}(T,x,z)\|+\|P_{f(n)}(T-zI)\Gamma_{n}(T,x,z)-P_{f(n)}x\|}{\mathrm{dist}(z,\sigma(T))}.
\end{equation}
If a bound on $C_1$ and $C_2$ are known, this error bound can be computed to arbitrary accuracy using finitely many arithmetic operations and comparisons.
\end{theorem}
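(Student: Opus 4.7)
The strategy is to exploit the rectangular structure revealed by the column-decay assumption: set $B_n:=P_{f(n)}(T-zI)P_n$, viewed as a finite $f(n)\times n$ matrix with entries $\langle(T-zI)e_j,e_i\rangle$, and define $\Gamma_n(T,x,z)$ to be the minimum-norm least-squares solution of $\min_{v\in P_n l^2(\mathbb{N})}\|B_nv-P_{f(n)}x\|$. Since $\mathrm{span}\{e_1,\dots,e_n\}$ is finite-dimensional, this reduces to standard linear algebra (QR or the normal equations), implementable with arithmetic operations and handled for inexact input in the usual way; by construction the output has support in $\{1,\dots,n\}$. Note that for $n$ large enough $B_n$ is injective: if $B_nv=0$ with $v\in P_n l^2$, then $(T-zI)v=(I-P_{f(n)})TP_n v$, and normality of $T$ gives $\mathrm{dist}(z,\sigma(T))\|v\|\leq\|(T-zI)v\|\leq C_1\alpha_n\|v\|$, forcing $v=0$ as soon as $C_1\alpha_n<\mathrm{dist}(z,\sigma(T))$.

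To derive the error bound, let $v=\Gamma_n(T,x,z)$ and decompose
\[
(T-zI)v-x=[P_{f(n)}(T-zI)v-P_{f(n)}x]+(I-P_{f(n)})TP_nv-(I-P_{f(n)})x.
\]
The triangle inequality, combined with $\|(I-P_{f(n)})TP_n\|\leq C_1\alpha_n$ and $\|(I-P_{f(n)})x\|\leq C_2\beta_{f(n)}$, yields
\[
\|(T-zI)v-x\|\leq\|P_{f(n)}(T-zI)v-P_{f(n)}x\|+C_1\alpha_n\|v\|+C_2\beta_{f(n)}.
\]
Since $T$ is normal, $\|R(z,T)\|=1/\mathrm{dist}(z,\sigma(T))$, and the bound (\ref{res_bound}) follows from $\|v-R(z,T)x\|\leq\|R(z,T)\|\,\|(T-zI)v-x\|$. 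All three summands on the right of (\ref{res_bound}) involve only finite sums over the support of $v$ and over the matrix entries of $P_{f(n)}(T-zI)P_n$, so, given upper bounds on $C_1$ and $C_2$, the right-hand side of (\ref{res_bound}) is computable to arbitrary precision using finitely many arithmetic operations and comparisons (the factor $\mathrm{dist}(z,\sigma(T))$ is a fixed positive constant absorbed into the bound).

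For convergence, I would invoke the assumption that $\mathrm{span}\{e_j:j\in\mathbb{N}\}$ is a core of $T$. Setting $y=R(z,T)x\in\mathcal{D}(T)$, pick finitely-supported $w_k$ with $w_k\to y$ and $Tw_k\to Ty$, so that $(T-zI)w_k\to x$. Given $\varepsilon>0$, fix $k$ with $\|(T-zI)w_k-x\|<\varepsilon$ and let $N_k$ bound the support of $w_k$. For every $n\geq N_k$, $w_k$ is feasible in the least-squares problem, so by optimality
\[
\|P_{f(n)}(T-zI)v-P_{f(n)}x\|\leq\|B_nw_k-P_{f(n)}x\|\leq\|(T-zI)w_k-x\|<\varepsilon,
\]
and the residual tends to zero. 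Combining the a priori bound $\|v\|\leq\|v-R(z,T)x\|+\|R(z,T)x\|$ with (\ref{res_bound}) and absorbing the term $C_1\alpha_n\|v\|/\mathrm{dist}(z,\sigma(T))$ into the left-hand side (valid once $C_1\alpha_n<\mathrm{dist}(z,\sigma(T))$) then yields $\Gamma_n(T,x,z)\to R(z,T)x$ in $l^2(\mathbb{N})$.

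The main obstacle is cleanly handling the unbounded case: mere $l^2$-convergence $P_ny\to y$ does \emph{not} imply $TP_ny\to Ty$, so the core property, rather than any column-decay statement for $y$ itself, is what is needed to force the least-squares residual to zero. The other essential ingredient is the identity $\|R(z,T)\|=1/\mathrm{dist}(z,\sigma(T))$, valid for normal operators, which underwrites both the injectivity of $B_n$ for large $n$ and the explicit constant in (\ref{res_bound}); extensions to non-normal closed operators would require replacing this identity by an estimate on the injection modulus of $T-zI$ that is computable or at least controllable.
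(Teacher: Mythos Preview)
Your proposal is correct and follows essentially the same route as the paper: both define $\Gamma_n$ as the least-squares solution of the rectangular system $P_{f(n)}(T-zI)P_n v\approx P_{f(n)}x$, derive the error bound (\ref{res_bound}) by splitting $(T-zI)v-x$ into the finite residual, the column tail $(I-P_{f(n)})TP_nv$, and the tail of $x$, and establish convergence by using the core property to produce a finitely supported competitor $w_k$ with small true residual. The only notable difference is that the paper spells out the arithmetic implementation in detail (approximating the normal-equations matrices $B_n,C_n$ to controlled accuracy, invoking Proposition~\ref{PCholesky} to test $\sigma_1$, and including a safeguard $\Gamma_n=0$ when the approximate $\sigma_1$ is too small), whereas you defer this to ``handled for inexact input in the usual way''; your sketch is adequate but the paper's version is what actually certifies the ``arithmetical'' claim.
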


\begin{proof}
Let $(T,x,z)\in S_{f,\alpha,\beta}$. We have that $n=\mathrm{rank} (P_n)=\mathrm{rank} ((T-zI)P_n)=\mathrm{rank} (P_{f(n)}(T-zI)P_n)$ for large $n$ since $\sigma_1(T-zI)>0$ and $\|(I-P_{f(n)})(T-zI)P_n\|\leq C_1\alpha_n\rightarrow0$ (recall that $z\notin\sigma(T)$). Hence we can define
$$
\widetilde\Gamma_{n}(T,x,z):=\begin{cases}
0 \quad\quad\quad  \text{ if } \sigma_1(P_n(T^*-\overline{z}I)P_{f(n)}(T-zI)P_n)\leq\frac{1}{n}\\
[P_n(T^*-\overline{z}I)P_{f(n)}(T-zI)P_n]^{-1}P_n(T^*-\overline{z}I)P_{f(n)}x \quad \text{ otherwise.}
\end{cases}
$$
Suppose that $n$ is large enough so that $\sigma_1(P_n(T^*-\overline{z}I)P_{f(n)}(T-zI)P_n)>1/n$. Then $\widetilde\Gamma_{n}(T,x,z)$ is a least-squares solution of the optimisation problem $\mathrm{argmin}_{y}\|P_{f(n)}(T-zI)P_ny-x\|$. The linear space $\mathrm{span}\{e_n:n\in\mathbb{N}\}$ forms a core of $T$ and hence also of $T-zI$. It follows by invertibility of $T-zI$ that given any $\epsilon>0$, there exists an $m=m(\epsilon)$ and a $y=y(\epsilon)$ with $P_my=y$ such that
$$
\|(T-zI)y-x\|\leq\epsilon.
$$
It follows that for all $n\geq m$,
\begin{align*}
\|(T-zI)\widetilde\Gamma_{n}(T,x,z)-x\|&\leq \|P_{f(n)}(T-zI)\widetilde\Gamma_{n}(T,x,z)-x\|+C_1\alpha_n\|\widetilde\Gamma_{n}(T,x,z)\|\\
&\leq \|P_{f(n)}(T-zI)y-x\|+C_1\alpha_n\|\widetilde\Gamma_{n}(T,x,z)\|\\
&\leq \|P_{f(n)}(T-zI)y-P_{f(n)}x\|+C_2\beta_{f(n)}+C_1\alpha_n\|\widetilde\Gamma_{n}(T,x,z)\|\\
&\leq \epsilon +C_2\beta_{f(n)}+C_1\alpha_n\|\widetilde\Gamma_{n}(T,x,z)\|.
\end{align*}
This implies that
\begin{align*}
\|\widetilde\Gamma_{n}(T,x,z)-R(z,T)x\|&\leq \|R(z,T)\|\|(T-zI)\widetilde\Gamma_{n}(T,x,z)-x\|\\
&\leq \|R(z,T)\|\left(\epsilon +C_2\beta_{f(n)}+C_1\alpha_n\|\widetilde\Gamma_{n}(T,x,z)\|\right).
\end{align*}
In particular, since $\alpha$ and $\beta$ are null, this implies that $\|\widetilde\Gamma_{n}(T,x,z)\|$ is uniformly bounded in $n$. Since $\epsilon>0$ was arbitrary, we also see that $\widetilde\Gamma_{n}(T,x,z)$ converges to $R(z,T)x$.

Define the matrices
$$
B_n=P_n(T^*-\overline{z}I)P_{f(n)}(T-zI)P_n,\quad C_n=P_n(T^*-\overline{z}I)P_{f(n)}.
$$
Given the evaluation functions in $\Lambda_1$, we can compute the entries of these matrices to any given accuracy and hence also to arbitrary accuracy in the operator norm using finitely many arithmetic operations and comparisons (using the error in the Frobenius norm to bound the error in the operator norm). Denote approximations of $B_n$ and $C_n$ by $\widetilde B_n$ and $\widetilde C_n$ respectively and assume that
$$
\|B_n-\widetilde B_n\|\leq u_n,\quad \|C_n-\widetilde C_n\| \leq v_n,
$$
for null sequences $\{u_n\},\{v_n\}$. Note that $\widetilde B_n^{-1}$ can be computed using finitely many arithmetic operations and comparisons. So long as $u_n$ is small enough, the resolvent identity implies that
$$
\|B_n^{-1}-\widetilde B_n^{-1}\|\leq \frac{\|\widetilde B_n^{-1}\|^2u_n}{1-u_n\|\widetilde B_n^{-1}\|}=:w_n.
$$
By taking $u_n$ and $v_n$ smaller if necessary (so that the algorithm is adaptive and it is straightforward to bound the norm of a finite matrix from above), we can ensure that $\|\widetilde B_n^{-1}\|v_n\leq n^{-1}$ and $(\|\widetilde C_n\|+v_n)w_n\leq n^{-1}$. From Proposition \ref{PCholesky} and a simple search routine, we can also compute $\sigma_1(P_n(T^*-\overline{z}I)P_{f(n)}(T-zI)P_n)$ to arbitrary accuracy using finitely many arithmetic operations and comparisons. Suppose this is done to an accuracy $1/n^2$ and denote the approximation via $\tau_n$. We then define
$$
\Gamma_{n}(T,x,z):=\begin{cases}
\quad\quad\quad 0 & \text{ if } \tau_n\leq\frac{1}{n}\\
\widetilde B_n^{-1}\widetilde C_n \widetilde x_n& \text{ otherwise,}
\end{cases}
$$
where $\widetilde x_n=P_{f(n)}x$. It follows that $\Gamma_{n}(T,x,z)$ can be computed using finitely many arithmetic operations and, for large $n$,
$$
\|\Gamma_{n}(T,x,z)-\widetilde\Gamma_{n}(T,x,z)\|\leq \left(\|\widetilde B_n^{-1}\|v_n+(\|\widetilde C_n\|+v_n)w_n\right)\|x\|\rightarrow 0,
$$
so that $\Gamma_n(T,x,z)$ converges to $R(z,T)x$. By construction, $\Gamma_n(T,x,z)$ has finite support with respect to the canonical basis.

Furthermore, the following error bound holds (which also holds if $\tau_n\leq1/n$)
\begin{align*}
\|\Gamma_{n}(T,x,z)-R(z,T)x\|&\leq \|R(z,T)\|\|(T-zI)\Gamma_{n}(T,x,z)-x\|\\
&\leq \frac{C_2\beta_{f(n)}+C_1\alpha_n\|\Gamma_{n}(T,x,z)\|+\|P_{f(n)}(T-zI)\Gamma_{n}(T,x,z)-P_{f(n)}x\|}{\mathrm{dist}(z,\sigma(T))},
\end{align*}
since $T$ is normal so that $\|R(z,T)\|=\mathrm{dist}(z,\sigma(T))^{-1}$. This bound converges to $0$ as $n\rightarrow\infty$. If the $C_1$ and $C_2$ are known it can be approximated to arbitrary accuracy using finitely many arithmetic operations and comparisons.
\end{proof}

\begin{remark}
If $T$ corresponds to a choice of basis in a space of functions (for example when using a spectral method), there is often a link between the regularity of the functions $x$ and the decay of the terms $\beta_n$. The bound (\ref{res_bound}) can then often be adapted to include such asymptotics, and hence indicate how large $n$ needs to be to gain a given approximation.
\end{remark}

Of course, a vast literature exists on computing $R(z,T)$, especially for infinite matrices with structure (such as being banded) and we refer the reader to \cite{lindner2006infinite,rabinovich2012limit,seidel2014fredholm,grochenig2013norm} for a small sample. Note that if $T$ is banded with bandwidth $m$, then we can take $f(n)=n+m$ and the above computation can be done in $O(nm^2)$ operations \cite{golub2012matrix}. The following corollary of Theorem \ref{res_est1} will be used repeatedly in the following proofs.

\begin{corollary}
\label{res_est2}
There exists a sequence of arithmetic algorithms
$$
\Gamma_n: \Omega_{f,\alpha,\beta}\times\mathbb{C}\backslash\mathbb{R}\rightarrow l^2(\mathbb{N})
$$
with the following properties:
\begin{enumerate}
	\item For all $(T,x)\in\Omega_{f,\alpha,\beta}$ and $z\in\mathbb{C}\backslash\mathbb{R}$, $\Gamma_n(T,x,z)$ has finite support with respect to the canonical basis and converges to $R(z,T)x$ in $l^2(\mathbb{N})$ as $n\rightarrow\infty$.
	\item For any $(T,x)\in\Omega_{f,\alpha,\beta}$, there exists a constant $C(T,x)$ such that for all $z\in\mathbb{C}\backslash\mathbb{R}$,
	$$
	\|\Gamma_n(T,x,z)-R(z,T)x\|\leq \frac{C(T,x)}{\left|\mathrm{Im}(z)\right|}\left[\alpha_n+\beta_n\right].
	$$
\end{enumerate}
\end{corollary}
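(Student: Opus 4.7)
The plan is to use Theorem~\ref{res_est1} as the underlying subroutine, to call its output $\Gamma_m^*(T,x,z)$, and to sharpen it so that the error bound takes the uniform-in-$z$ form claimed here. The first step is to observe that $\Omega_{f,\alpha,\beta}\subset \Omega^{\mathrm{N}}_{f,\alpha,\beta}$, so Theorem~\ref{res_est1} already provides finite-support iterates $\Gamma_m^*(T,x,z)$ converging to $R(z,T)x$ for $z\notin\sigma(T)$; since self-adjointness gives $\sigma(T)\subset\mathbb{R}$, this holds for every $z\in\mathbb{C}\backslash\mathbb{R}$, which supplies property~(1). Self-adjointness also gives $\mathrm{dist}(z,\sigma(T))\geq|\mathrm{Im}(z)|$ and $\|R(z,T)\|\leq|\mathrm{Im}(z)|^{-1}$, so the denominator in~(\ref{res_bound}) is bounded above by $|\mathrm{Im}(z)|$, and this is the factor that eventually appears in the claimed estimate.

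The delicate part is property~(2). Naively substituting into~(\ref{res_bound}) leaves three contributions in the numerator: $C_2\beta_{f(n)}$, the least-squares residual, and $C_1\alpha_n\|\Gamma_n^*\|$. The last is the principal obstacle, since $\|\Gamma_n^*\|$ is only controlled asymptotically by $\|R(z,T)x\|\leq\|x\|/|\mathrm{Im}(z)|$, which would contribute an unwanted second factor of $|\mathrm{Im}(z)|^{-1}$ to the final bound. I would therefore not take $\Gamma_n=\Gamma_n^*$, but instead set $\Gamma_n(T,x,z):=\Gamma_{m(n,z)}^*(T,x,z)$ with an adaptive truncation $m(n,z)\geq n$ chosen so that $\alpha_{m(n,z)}\leq\alpha_n|\mathrm{Im}(z)|$ and $\beta_{f(m(n,z))}\leq\beta_n$. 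Because $\{\alpha_m\}$ and $\{\beta_m\}$ are null sequences accessible to the algorithm and $z$ is an input, such $m(n,z)$ is finite and arithmetically computable. With this choice, and using that $\|\Gamma_{m(n,z)}^*\|\leq 2\|x\|/|\mathrm{Im}(z)|$ once $m(n,z)$ is large enough (which is guaranteed by the convergence in Theorem~\ref{res_est1}), the $C_1\alpha\|\Gamma^*\|$ contribution collapses to $O(\alpha_n/|\mathrm{Im}(z)|)$.

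For the least-squares residual, I would exploit the minimality of $\Gamma_{m(n,z)}^*$ with an explicit trial vector built from a truncation of either $R(z,T)x$ or of $P_{m(n,z)}x$; as in the proof of Theorem~\ref{res_est1}, this bounds the residual by a combination of $\alpha_{m(n,z)}$, $\beta_{m(n,z)}$, and norms depending on $T$ and $x$, each of which is absorbed by the adaptive rule above. Collecting the three numerator terms and dividing by $|\mathrm{Im}(z)|$ then delivers the desired estimate $\|\Gamma_n(T,x,z)-R(z,T)x\|\leq C(T,x)[\alpha_n+\beta_n]/|\mathrm{Im}(z)|$, where $C(T,x)$ aggregates the constants $C_1(T)$, $C_2(x)$ and $\|x\|$. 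The main obstacle throughout is exactly this adaptive compensation for the $z$-dependence of $\|\Gamma_n^*\|$: without the data-dependent choice of $m(n,z)$, the bound inherited from Theorem~\ref{res_est1} would carry an unavoidable additional factor of $|\mathrm{Im}(z)|^{-1}$ and fail to match the statement.
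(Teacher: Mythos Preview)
Your overall strategy—take the algorithms $\widehat\Gamma_m$ from Theorem~\ref{res_est1} and set $\Gamma_n=\widehat\Gamma_{m(n,T,x,z)}$ for an adaptive subsequence—is exactly what the paper does. The difference, and the place where your argument has a genuine gap, is in \emph{how} the subsequence is chosen.

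You try to bound each of the three numerator terms in~(\ref{res_bound}) analytically. The term $C_1\alpha_m\|\widehat\Gamma_m\|$ can indeed be handled by your rule $\alpha_{m}\leq\alpha_n|\mathrm{Im}(z)|$ together with a computable check that $\|\widehat\Gamma_m\|\leq 2\|x\|/|\mathrm{Im}(z)|$. The $\beta$-term is harmless. But your treatment of the least-squares residual $\|P_{f(m)}(T-zI)\widehat\Gamma_m-P_{f(m)}x\|$ does not go through. You assert that a trial vector ``built from a truncation of either $R(z,T)x$ or of $P_{m}x$'' yields a bound in terms of $\alpha_m,\beta_m$ and $z$-independent constants. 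Neither choice works: $R(z,T)x$ is not available to the algorithm, and no truncation of $x$ alone produces a vector whose image under $P_{f(m)}(T-zI)P_m$ is close to $P_{f(m)}x$. In the proof of Theorem~\ref{res_est1} the residual is only shown to tend to zero via the existence of some $y=y(\epsilon)$ with $\|(T-zI)y-x\|\leq\epsilon$, and the required $m(\epsilon)$ depends on $z$ in a way that is not controlled by $\alpha_m,\beta_m$. So there is no a~priori estimate of the form you need.

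The paper avoids this entirely by observing that the \emph{whole} numerator
\[
\beta_{f(m)}+\alpha_m\|\widehat\Gamma_m(T,x,z)\|+\|P_{f(m)}(T-zI)\widehat\Gamma_m(T,x,z)-P_{f(m)}x\|
\]
is computable to arbitrary accuracy with finitely many arithmetic operations (finite support of $\widehat\Gamma_m$, finite matrix $P_{f(m)}TP_m$) and, by Theorem~\ref{res_est1}, converges to $0$ for each fixed $z$. One therefore simply searches for the smallest $m$ at which an approximation of this quantity is at most $2(\alpha_n+\beta_n)$; such an $m$ exists, and substituting into~(\ref{res_bound}) with $\mathrm{dist}(z,\sigma(T))\geq|\mathrm{Im}(z)|$ gives the bound. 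No term-by-term analytic control of the residual is needed.
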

\begin{proof}
Let $\Gamma_n(T,x,z)=\widehat \Gamma_{m(n,T,x,z)}(T,x,z)$, where $\widehat \Gamma_k$ are the algorithms from the statement of Theorem \ref{res_est1} and $m(n,T,x,z)$ is a subsequence diverging to infinity as $n\rightarrow\infty$. Clearly statement (1) holds so we must show how to choose the sequence $m(n,T,x,z)$ such that (2) holds (and hence our algorithms will be adaptive). From (\ref{res_bound}), it is enough to show that $m=m(n,T,x,z)$ can be chosen such that
$$
\beta_{f(m)}+\alpha_m\|\widehat\Gamma_{m}(T,x,z)\|+\|P_{f(m)}(T-zI)\widehat\Gamma_{m}(T,x,z)-P_{f(m)}x\|\lesssim \alpha_n+\beta_n.
$$
The left-hand side can be approximated to arbitrary accuracy using finitely many arithmetic operations and comparisons. Hence by repeatedly computing approximations to within $\alpha_n+\beta_n$, we can choose the minimal $m$ such that these approximate bounds are at most $2(\alpha_n+\beta_n)$.
\end{proof}

\subsection{Stone's formula and Poisson kernels}
\label{stone_sec}

Here we briefly discuss Stone's famous formula \cite{stone1932linear,coddington1955theory,reed1972methods}, which relates the convolution of spectral measures with Poisson kernels to the pointwise action of the projection-valued measures associated with an operator $T\in\Omega_{\mathrm{SA}}$ as $\epsilon\downarrow 0$ (see \S \ref{motiv}). Stone's formula can also be generalised to unitary operators and a much larger class of normal operators (see Proposition \ref{stone2}). We include a short (and standard) proof of Proposition \ref{stone1} for the benefit of the reader.

\begin{proposition}[Stone's formula]
\label{stone1}
The following boundary limits hold:
\begin{itemize}
	\item [(i)] Let $T\in\Omega_{\mathrm{SA}}$. Then for any $-\infty\leq a<b\leq\infty$ and $x\in l^2(\mathbb{N})$,
	$$
	\lim_{\epsilon\downarrow 0}\int_{a}^bK_H(u+i\epsilon;T,x)du=E^{T}_{(a,b)}x+\frac{1}{2}E^{T}_{\{a,b\}}x.
	$$
	\item [(ii)] Let $T\in\Omega_{\mathrm{U}}$. Then for any $0\leq a<b<2\pi$ and $x\in l^2(\mathbb{N})$,
	$$
	\lim_{\epsilon\downarrow 0}\int_{a}^bi\exp(i\psi)K_D((1-\epsilon)\exp(i\psi);T,x)d\psi=E^{T}_{(a,b)_{\mathbb{T}}}x+\frac{1}{2}E^{T}_{\{\exp(ia),\exp(ib)\}}x,
	$$
	where $(a,b)_{\mathbb{T}}$ denotes the image of $(a,b)$ under the map $\theta\rightarrow\exp(i\theta)$.
\end{itemize}
\end{proposition}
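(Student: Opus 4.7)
The plan is, for both parts, to substitute the integral representations~(\ref{Poiss_int1}) and~(\ref{Poiss_int3}), interchange the $u$- or $\psi$-integral with the spectral integral via Fubini, and then pass to the limit $\epsilon\downarrow 0$ by bounded convergence in the functional calculus. The key fact is that $P_H(\cdot,\epsilon)$ on $\mathbb{R}$ and $P_D(1-\epsilon,\cdot)$ on $\mathbb{T}$ are nonnegative approximate identities of unit mass, so their integrals against the indicator of an interval converge pointwise to the indicator of the open interval plus $\tfrac{1}{2}$ times the indicator of the endpoints. Throughout I shall use the standard identity $\|f(T)x\|^2=\int|f|^2\,d\mu_{x,x}^T$, which reduces strong convergence of $f_\epsilon(T)x$ to scalar bounded convergence against the finite measure $\mu_{x,x}^T$.

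For part (i), substituting~(\ref{Poiss_int1}) and invoking Fubini yields
\begin{equation*}
\int_a^b K_H(u+i\epsilon;T,x)\,du = \int_{-\infty}^{\infty} g_\epsilon(\lambda)\,dE^T(\lambda)\,x,
\end{equation*}
where $g_\epsilon(\lambda):=\int_a^b P_H(u-\lambda,\epsilon)\,du=\tfrac{1}{\pi}[\arctan((b-\lambda)/\epsilon)-\arctan((a-\lambda)/\epsilon)]$ (with $\arctan(\pm\infty)=\pm\pi/2$ in the unbounded cases). Then $0\leq g_\epsilon\leq 1$ and $g_\epsilon(\lambda)\to\mathbf{1}_{(a,b)}(\lambda)+\tfrac{1}{2}\mathbf{1}_{\{a,b\}}(\lambda)$ pointwise as $\epsilon\downarrow 0$, so bounded convergence delivers the stated limit $E^T_{(a,b)}x+\tfrac{1}{2} E^T_{\{a,b\}}x$.

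For part (ii), the identical strategy applied to~(\ref{Poiss_int3}) gives, after Fubini,
\begin{equation*}
\int_a^b ie^{i\psi}K_D((1-\epsilon)e^{i\psi};T,x)\,d\psi = \int_0^{2\pi} h_\epsilon(\theta)\,dE^T(\theta)\,x,
\end{equation*}
with $h_\epsilon(\theta):=\int_a^b ie^{i\psi}P_D(1-\epsilon,\psi-\theta)\,d\psi$, which satisfies $|h_\epsilon|\leq 1$ uniformly. Since $P_D(1-\epsilon,\cdot)$ is an approximate identity on $\mathbb{T}$ applied to the piecewise continuous function $\psi\mapsto ie^{i\psi}\mathbf{1}_{(a,b)}(\psi)$, the pointwise limit $h_\epsilon(\theta)\to ie^{i\theta}[\mathbf{1}_{(a,b)}(\theta)+\tfrac{1}{2}\mathbf{1}_{\{a,b\}}(\theta)]$ holds (the factor $\tfrac{1}{2}$ at the endpoints coming from symmetry of $P_D$). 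Bounded convergence then produces $\int ie^{i\theta}[\mathbf{1}_{(a,b)}+\tfrac{1}{2}\mathbf{1}_{\{a,b\}}]\,dE^T(\theta)\,x$, and this equals $E^T_{(a,b)_{\mathbb{T}}}x+\tfrac{1}{2} E^T_{\{e^{ia},e^{ib}\}}x$ by the convention $dE^T(\lambda)=ie^{i\theta}dE^T(\theta)$ recorded just before~(\ref{Poiss_int3}), equivalently $\int_B ie^{i\theta}\,dE^T(\theta)=E^T(\{e^{i\theta}:\theta\in B\})$ for Borel $B\subseteq[0,2\pi)$. The only genuinely delicate point is justifying Fubini in (i) when $a=-\infty$ or $b=\infty$; this is cleanest by first pairing with an arbitrary $y\in l^2(\mathbb{N})$, reducing to Fubini for the finite complex measure $\mu_{x,y}^T$, and then using $P_H\geq 0$ together with $\int_{\mathbb{R}}P_H(u-\lambda,\epsilon)\,du=1$.
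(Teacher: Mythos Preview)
Your proof is correct and follows essentially the same scheme as the paper: Fubini to swap integrals, identify the resulting scalar function of $\lambda$ (or $\theta$), show it is bounded and converges pointwise to the indicator of the open interval plus one-half the endpoint indicator, and conclude by dominated convergence. For part~(i) your argument is identical to the paper's, down to the explicit arctangent formula.

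For part~(ii) the paper takes a more hands-on route: it writes out the inner integral explicitly, splits off small neighbourhoods of the points where $\cos(\psi)=1$, and then evaluates the two elementary integrals $I_1(\rho,\epsilon)$ and $I_2(\rho,\epsilon)$ in closed form to obtain the pointwise limit. Your version replaces that computation by the one-line observation that $P_D(1-\epsilon,\cdot)$ is a symmetric approximate identity on $\mathbb{T}$, so its convolution with the piecewise-continuous function $\psi\mapsto ie^{i\psi}\mathbf{1}_{(a,b)}(\psi)$ converges pointwise to the function value at continuity points and to the average of one-sided limits at the jumps $\theta=a,b$. This is cleaner and equally rigorous; the paper's explicit calculation is just a self-contained verification of that standard fact.
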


\begin{proof}
To prove (i), we can apply Fubini's theorem to interchange the order of integration and arrive at
$$
\int_{a}^bK_H(u+i\epsilon;T,x)du=\left[\int_{-\infty}^\infty\int_a^bP_H(u-\lambda ,\epsilon)du\ dE^T(\lambda )\right]x
$$
But
$$
\int_a^bP_H(u-\lambda ,\epsilon)du=\frac{1}{\pi}\left[\tan^{-1}\left(\frac{b-\lambda }{\epsilon}\right)-\tan^{-1}\left(\frac{a-\lambda }{\epsilon}\right)\right]
$$
is bounded and converges pointwise as $\epsilon\downarrow 0$ to $\chi_{(a,b)}(\lambda )+\chi_{\{a,b\}}(\lambda )/2$, where $\chi_S$ denotes the indicator function of a set $S$. Part (i) now follows from the dominated convergence theorem.

To prove (ii), we apply Fubini's theorem again, now noting that
\begin{equation}
\label{bd11}
\begin{split}
\int_a^bi\exp(i\psi)P_D((1-\epsilon),\psi-\theta)d\psi&=\frac{i\exp(i\theta)}{2\pi}\int_{a-\theta}^{b-\theta}\frac{(2\epsilon-\epsilon^2)\exp(i\psi)}{\epsilon^2+2(1-\epsilon)(1-\cos(\psi))}d\psi.
\end{split}
\end{equation}
We can split the interval into small intervals of width $\rho$ (where $0<\rho<1$) around each point where $\cos(\psi)=1$, and a finite union of intervals on which $1-\cos(\psi)$ is positive, bounded away from $0$. On these later intervals, the limit vanishes as $\epsilon\downarrow0$. Hence by periodicity and considering odd and even parts, we are left with considering
$$
I_1(\rho,\epsilon)=\int_0^\rho\frac{(2\epsilon-\epsilon^2)\cos(\psi)}{\epsilon^2+2(1-\epsilon)(1-\cos(\psi))}d\psi,\quad I_2(\rho,\epsilon)=\int_0^\rho\frac{(2\epsilon-\epsilon^2)\sin(\psi)}{\epsilon^2+2(1-\epsilon)(1-\cos(\psi))}d\psi.
$$
Explicit integration yields $I_2(\epsilon,\rho)=O(\epsilon\log(\epsilon))$ and hence the contribution vanishes in the limit. We also have
$$
I_1(\rho,\epsilon)=\frac{(\epsilon^2-2\epsilon)\rho+2(2+\epsilon^2-2\epsilon)\tan^{-1}\left(\frac{(2-\epsilon)\tan\left(\frac{\rho}{2}\right)}{\epsilon}\right)}{2(1-\epsilon)}.
$$
This converges to $\pi$ as $\epsilon\downarrow 0$. Considering the contributions of $I_1$ and $I_2$ in (\ref{bd11}), we see that (\ref{bd11}) converges pointwise as $\epsilon\downarrow 0$ to
$$
i\exp(i\theta)\left\{\chi_{(a,b)}(\theta)+[\chi_{\{a\}}(\theta)+\chi_{\{b\}}(\theta)]/2\right\}.
$$
Since the integral is also bounded, part (ii) now follows from the dominated convergence theorem and change of variables.
\end{proof}

This type of construction can be generalised to $T\in\Omega_{\mathrm{N}}$ whose spectrum lies on a regular enough curve. However, it is much more straightforward in the general case to use the analytic properties of the resolvent. The next proposition does this and also holds for operators whose spectrum does not necessarily lie along a curve.

\begin{figure}
\centering
\begin{tabular}{c c}
(a) & (b)\\
\includegraphics[width=0.6\textwidth,trim={0mm 0mm 0mm 2mm},clip]{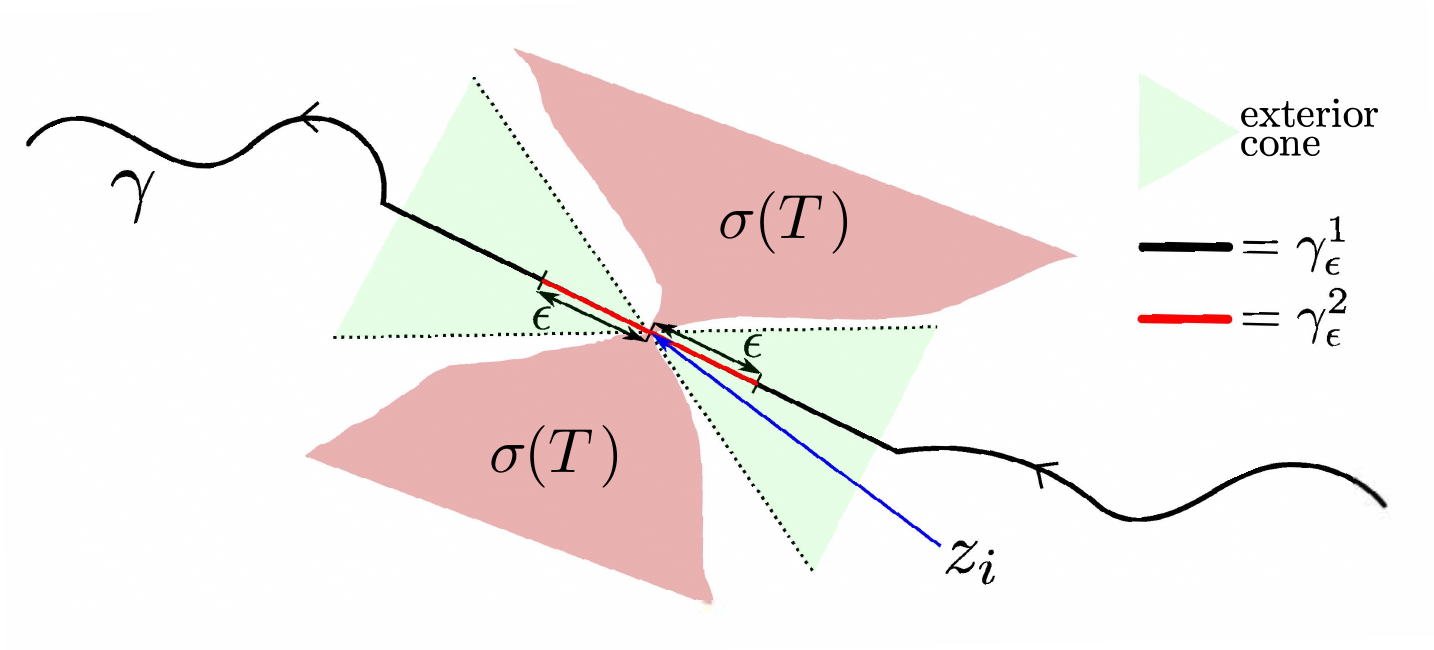} &
\includegraphics[width=0.38\textwidth,trim={0mm -10mm 0mm 0mm},clip]{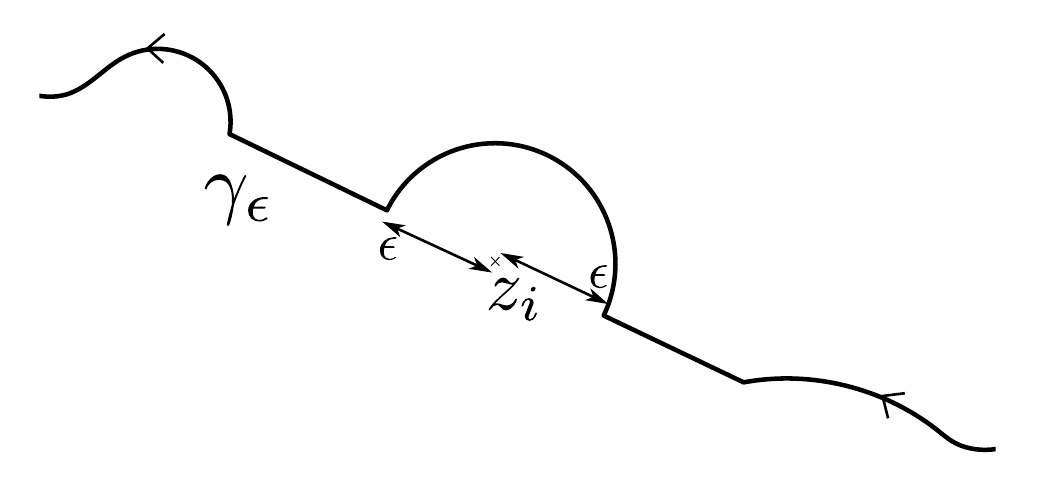}
\end{tabular}
\caption{Left: Exterior cone condition for Proposition \ref{stone2}. Right: Deformed contour $\gamma_{\epsilon}$ to compute $f_{\epsilon}(z_i)$.}
\label{gen_stone}
\end{figure}

\begin{proposition}[Generalised Stone's formula]
\label{stone2}
Let $T\in\Omega_{\mathrm{N}}$ and $\gamma$ be a rectifiable positively oriented Jordan curve with the following properties. The spectrum $\sigma(T)$ intersects $\gamma$ at finitely many points $z_1,...,z_m$ and in a neighbourhood of each of the $z_i$, $\gamma$ is formed of a line segment meeting $\sigma(T)$ only at $z_i$, at which point $\sigma(T)$ has a local exterior cone condition with respect to $\gamma$ (see Figure \ref{gen_stone}). Let $x\in l^2(\mathbb{N})$. Then we can define the Cauchy principal value integral of the resolvent $R(z,T)x$ along $\gamma$ and have
\begin{equation}
\label{stoneGEN}
\frac{-1}{2\pi i}\mathrm{PV}\int_{\gamma} R(z,T)xdz=E^T_{\sigma(T;\gamma)}x-\frac{1}{2}\left[\sum_{j=1}^mE^T_{\{z_j\}}x\right],
\end{equation}
where $\sigma(T;\gamma)$ is the closure of the intersection of $\sigma(T)$ with the interior of $\gamma$.
\end{proposition}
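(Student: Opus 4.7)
The plan is to reduce the vector-valued principal value integral to a family of scalar contour integrals via the functional calculus and Fubini's theorem, and then evaluate those scalar integrals by residue calculus on a deformed contour. For each small $\epsilon>0$, let $C_j^\epsilon$ be the semicircular arc of radius $\epsilon$ centred at $z_j$ and bulging into the exterior of $\gamma$, and let $\gamma_\epsilon$ be the closed rectifiable contour obtained from $\gamma$ by replacing each $\epsilon$-arc around $z_j$ by $C_j^\epsilon$. Write $\gamma^\epsilon$ for $\gamma$ with those small arcs removed, so that $\gamma_\epsilon=\gamma^\epsilon\cup\bigcup_{j=1}^m C_j^\epsilon$. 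A first use of the local exterior cone condition is to guarantee that each $C_j^\epsilon$ can be chosen inside the spectrum-free cone at $z_j$ for all sufficiently small $\epsilon$, so that $\gamma_\epsilon\cap\sigma(T)=\emptyset$ and $R(z,T)x$ is continuous on the compact contour $\gamma_\epsilon$.

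Using the functional calculus $R(z,T)x=\int_{\sigma(T)}(\lambda-z)^{-1}\,dE^T(\lambda)x$ and Fubini (valid because $\mathrm{dist}(\gamma_\epsilon,\sigma(T))>0$), I interchange orders of integration:
\begin{equation*}
\int_{\gamma_\epsilon}R(z,T)x\,dz=\int_{\sigma(T)}\Bigl[\int_{\gamma_\epsilon}\frac{dz}{\lambda-z}\Bigr]dE^T(\lambda)x=-2\pi i\,E^T_{\mathrm{int}(\gamma_\epsilon)\cap\sigma(T)}x,
\end{equation*}
by the residue theorem. Because the semicircles bulge outward, $\mathrm{int}(\gamma_\epsilon)=\mathrm{int}(\gamma)\cup H^\epsilon$ with $H^\epsilon=\bigsqcup_j H_j^\epsilon$ a disjoint union of small open bulges each containing $z_j$ and satisfying $\bigcap_{\epsilon>0}H_j^\epsilon=\{z_j\}$. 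Continuity of the projection-valued measure from above then gives $E^T_{\mathrm{int}(\gamma_\epsilon)\cap\sigma(T)}x\to E^T_{\sigma(T;\gamma)}x$ as $\epsilon\downarrow 0$.

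A second application of Fubini handles the semicircle correction. Setting $g_{j,\epsilon}(\lambda):=\int_{C_j^\epsilon}(\lambda-z)^{-1}dz$ and closing $C_j^\epsilon$ by the chord through $z_j$, one computes directly from the parametrisation $z=z_j+\epsilon e^{i\theta}$ that $g_{j,\epsilon}(z_j)=-i\pi$ for every small $\epsilon$, while $g_{j,\epsilon}(\lambda)\to 0$ as $\epsilon\downarrow 0$ for each $\lambda\neq z_j$. The exterior cone condition supplies the uniform bound on $\{g_{j,\epsilon}\}$ restricted to $\sigma(T)$ that is needed for dominated convergence against $dE^T(\cdot)x$, so
\begin{equation*}
\int_{C_j^\epsilon}R(z,T)x\,dz\xrightarrow[\epsilon\downarrow 0]{}-i\pi\,E^T_{\{z_j\}}x.
\end{equation*}
Writing $\int_{\gamma^\epsilon}=\int_{\gamma_\epsilon}-\sum_j\int_{C_j^\epsilon}$ and combining the two limits shows that the principal value $\lim_{\epsilon\downarrow 0}\int_{\gamma^\epsilon}R(z,T)x\,dz$ exists and equals $-2\pi i\,E^T_{\sigma(T;\gamma)}x+i\pi\sum_{j=1}^m E^T_{\{z_j\}}x$; dividing by $-2\pi i$ yields (\ref{stoneGEN}).

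The main technical obstacle is the uniform estimate on $g_{j,\epsilon}(\lambda)$ over $\lambda\in\sigma(T)$ needed to apply dominated convergence to the semicircle contributions, since $\sigma(T)$ may approach $z_j$. This is precisely where the local exterior cone condition is used: it constrains how $\sigma(T)$ can accumulate at $z_j$ relative to $\gamma$, ensuring a quantitative separation of $\lambda\in\sigma(T)\setminus\{z_j\}$ from the bulging semicircle $C_j^\epsilon$ of the right order in $\epsilon$. Without this condition, spectrum could pile up tangentially against $C_j^\epsilon$ in the exterior of $\gamma$ and the semicircle contribution would fail to stabilise in the limit, invalidating the identification above.
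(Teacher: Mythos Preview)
Your overall plan—pass to scalar integrals via Fubini, compute pointwise limits, and apply dominated convergence using a bound from the cone condition—is exactly the paper's strategy; you organise it by closing $\gamma^\epsilon$ with exterior semicircles and splitting into a Riesz-projection term plus arc corrections, whereas the paper bounds $f_\epsilon(\lambda)=\int_{\gamma^\epsilon}(\lambda-z)^{-1}\,dz$ directly. There is, however, a concrete geometric error in your execution. After normalising so that $z_j=0$ and $\gamma$ is locally the real axis, the cone hypothesis reads $\alpha\,|\mathrm{Re}\,\lambda|\le|\mathrm{Im}\,\lambda|$ for $\lambda\in\sigma(T)$ near $0$: the spectrum occupies a \emph{vertical} cone transverse to $\gamma$, and the spectrum-free region is the complementary \emph{horizontal} cone aligned with $\gamma$. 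Any arc joining $-\epsilon$ to $\epsilon$ through one half-plane must cross the imaginary axis and hence leave the horizontal cone; in particular your semicircle passes through $\pm i\epsilon$, which is precisely where $\sigma(T)$ may sit. If, say, $\sigma(T)$ locally contains $i[0,1]$, then every $C_j^\epsilon$ meets $\sigma(T)$, so $R(z,T)$ is undefined on part of $\gamma_\epsilon$ and your identity $\int_{\gamma_\epsilon}R(z,T)x\,dz=-2\pi i\,E^T_{\mathrm{int}(\gamma_\epsilon)}x$ is unavailable. The same example shows there is no ``quantitative separation of $\lambda\in\sigma(T)\setminus\{z_j\}$ from $C_j^\epsilon$'', so the dominated-convergence bound on $g_{j,\epsilon}$ is also unjustified as stated.

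The paper avoids this by never detouring into the vertical cone: it effectively closes $\gamma^\epsilon$ with the \emph{removed segment} of $\gamma$, writing $f_\epsilon(\lambda)=\int_\gamma(\lambda-z)^{-1}dz-\int_{-\epsilon}^{\epsilon}(\lambda-z)^{-1}dz$ for $\lambda\in\sigma(T)\setminus\{z_j\}$ (both pieces well-defined since such $\lambda$ lie off $\gamma$), and then bounds the segment term $g_\epsilon(\lambda)=\log\bigl((\epsilon+\lambda)/(-\epsilon+\lambda)\bigr)$ directly via the inequality $|\mathrm{Im}\,\lambda|\ge\alpha|\mathrm{Re}\,\lambda|$, which keeps $|(\epsilon+\lambda)/(-\epsilon+\lambda)|$ uniformly away from $0$ and $\infty$. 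Since the horizontal cone with $z_j$ removed is disconnected, no choice of exterior arc can stay spectrum-free in general; replacing the arc by the segment and doing the bookkeeping at the scalar level for each $\lambda$ recovers exactly the paper's argument.
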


\begin{proof}
We will argue for the case $m=1$, and the general case follows in exactly the same manner. Let $\epsilon>0$ be small so that in a neighbourhood of the $\epsilon-$ball around $z_1$, $\gamma$ is given by a straight line. We then decompose $\gamma$ into two disjoint parts
$$
\gamma=\gamma_\epsilon^1\cup \gamma_\epsilon^2,
$$
where $\gamma_\epsilon^2$ denotes the line segment of $\gamma$ at most $\epsilon$ away from $z_1$ (as shown in Figure \ref{gen_stone}). We set
$$
F_\epsilon(x,T)=\int_{\gamma_\epsilon^1} R(z,T)xdz=\left[\int_{\sigma(T)}\int_{\gamma_\epsilon^1}\frac{1}{\lambda -z}dz dE^T(\lambda ) \right]x.
$$
We then consider the inner integral
$$
f_\epsilon(\lambda )=\int_{\gamma_\epsilon^1}\frac{1}{\lambda -z}dz.
$$
If $\lambda $ is inside $\gamma$ then $\lim_{\epsilon\downarrow 0}f_\epsilon(\lambda )=-2\pi i$ via Cauchy's residue theorem. Similarly, if $\lambda $ is outside $\gamma$ then $\lim_{\epsilon\downarrow 0}f_\epsilon(\lambda )=0$. To calculate $f_{\epsilon}(z_1)$, consider the contour integral along $\gamma_{\epsilon}$ in Figure \ref{gen_stone}. We see that
$$
f_{\epsilon}(z_1)-i\pi=-2i\pi
$$
and hence $f_{\epsilon}(z_1)=-i\pi$. We would like to apply the dominated convergence theorem. Clearly, away from $z_1$, $f_{\epsilon}$ is bounded as $\epsilon\downarrow 0$. Now let $0<\delta<\epsilon$ then
$$
f_{\delta}(\lambda )-f_{\epsilon}(\lambda )=\int_\delta^\epsilon\frac{1}{\frac{\lambda -z_1}{w}-s}+\frac{1}{\frac{\lambda -z_1}{w}+s}ds=\log\left(\frac{\epsilon+\frac{\lambda -z_1}{w}}{-\epsilon+\frac{\lambda -z_1}{w}}\right)-\log\left(\frac{\delta+\frac{\lambda -z_1}{w}}{-\delta+\frac{\lambda -z_1}{w}}\right)
$$
for some $w\in\mathbb{T}$. Taking the pointwise limit $\delta\downarrow0$, we see that $f_{\epsilon}(\lambda )$ is bounded for $\lambda \in\sigma(T)$ in a neighbourhood of $z_1$ as $\epsilon\downarrow 0$ if the same holds for
$$
g_\epsilon(\lambda )=\log\left(\frac{\epsilon+\frac{\lambda -z_1}{w}}{-\epsilon+\frac{\lambda -z_1}{w}}\right).
$$
By rotating and translating, we can assume that $w=1$ and $z_1=0$ without loss of generality. Let $\lambda_1=\mathrm{Re}(\lambda )$ and $\lambda_2=\mathrm{Im}(\lambda )$. Using the cone condition gives $\alpha\left|\lambda_1\right|\leq\left|\lambda_2\right|$ for some $\alpha>0$. Assume $\lambda_1\neq 0$ then
$$
\left|\frac{\epsilon+\lambda}{-\epsilon+\lambda}\right|^2=\frac{(\epsilon+\lambda_1)^2+\lambda_2^2}{(\epsilon-\lambda_1)^2+\lambda_2^2}=1+\frac{4x}{(x-1)^2+y^2},
$$
where $x=\epsilon/\lambda_1$ and $y=\lambda_2/\lambda_1$. Note that $y^2\geq\alpha^2$ and without loss of generality we take $y\geq\alpha$. Define
$$
h(x,y)=\frac{4x}{(x-1)^2+y^2}
$$
Note that $h(x,y)\rightarrow 0$ as $\left|x\right|^2+\left|y\right|^2\rightarrow\infty$. We must show that $h(x,y)$ is bounded above $-1$ for $y\geq\alpha$. It is enough to consider points where $\partial h/\partial x=0$ which occur when $x_{\pm}=\pm\sqrt{1+y^2}.$ We have
$$
h(x_{\pm},y)=\frac{\pm 2}{\sqrt{1+y^2}\mp 1}\geq\frac{-2}{\sqrt{1+\alpha^2}+1}>-1,
$$
and hence we have proved the required boundedness. We then define
$$
\mathrm{PV}\int_{\gamma} R(z,T)xdz=\lim_{\epsilon\downarrow 0}F_\epsilon(x,T).
$$
The relation (\ref{stoneGEN}) now follows from the dominated convergence theorem.
\end{proof}

\section{Computation of Measures}
\label{ev_meas_sec}

For the sake of brevity, the analysis in the rest of this paper will consider the self-adjoint case $T\in\Omega_{\mathrm{SA}}$, which is the case most encountered in applications. However, the algorithms we build are based on Theorem \ref{res_est1} (and Corollary \ref{res_est2}) and the link with Poisson kernels/Cauchy transforms. Given the relation (\ref{Poiss_int3}) and Proposition \ref{stone2}, many of the results can be straightforwardly extended to the unitary case and more general cases where conditions similar to that of Proposition \ref{stone2} hold. We consider examples of unitary operators in \S \ref{CMV_numerics}.

\subsection{Full spectral measure}
\label{full_meas_pls}

We start by considering the computation of $E_U^Tx$, where $U\subset\mathbb{R}$ is a non-trivial open set. In other words, $U$ is not the whole of $\mathbb{R}$ or the empty set. The collection of these subsets will be denoted by $\mathcal{U}$. To be precise, we assume that we have access to a finite or countable collection $a_m(U),b_m(U)\in\mathbb{R}\cup\{\pm\infty\}$ such that $U$ can be written as a disjoint union
\begin{equation}
\label{open_union}
U=\bigcup_{m}\left(a_m(U),b_m(U)\right).
\end{equation}
With an abuse of notation, we add this information as evaluation functions to $\Lambda_i$ (defined in (\ref{evals_def_nene})).

\begin{theorem}[Computation of measures on open sets]
\label{meas_comp1}
Given the set-up in \S \ref{sec_not}, \S \ref{sec_not2} and the previous paragraph, consider the map
\begin{align*}
\Xi_{\mathrm{meas}}:\Omega_{f,\alpha,\beta}\times \mathcal{U}&\rightarrow l^2(\mathbb{N})\\
(T,x,U)&\rightarrow E_U^Tx.
\end{align*}
Then $\{\Xi_{\mathrm{meas}},\Omega_{f,\alpha,\beta}\times \mathcal{U}, \Lambda_1\}\in\Delta_2^A$. In other words, we can construct a convergent sequence of arithmetic algorithms for the problem.
\end{theorem}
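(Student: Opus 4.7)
The plan is to use Stone's formula (Proposition \ref{stone1}(i)) together with the resolvent approximation from Corollary \ref{res_est2} to compute $E^T_{(a,b)}x$ for each interval in the decomposition \eqref{open_union}, and then sum. Concretely, for $z=u+i\epsilon$ define
$$
\widehat K_{H,k}(z;T,x)=\frac{1}{2\pi i}\bigl[\Gamma_k(T,x,z)-\Gamma_k(T,x,\overline z)\bigr],
$$
where $\Gamma_k$ are the algorithms from Corollary \ref{res_est2}. By that corollary, $\widehat K_{H,k}(u+i\epsilon;T,x)\to K_H(u+i\epsilon;T,x)$ in $l^2(\mathbb{N})$ as $k\to\infty$, with an error bounded by $C(T,x)(\alpha_k+\beta_k)/\epsilon$; in particular, for any fixed $\epsilon>0$, we can drive the error below any prescribed tolerance by choosing $k$ large. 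Approximating the line integral $\int_{a'}^{b'}K_H(u+i\epsilon;T,x)\,du$ (for a bounded subinterval $(a',b')\subset(a,b)$) by a Riemann/midpoint sum with step size $h$ yields a quantity computable from finitely many arithmetic operations on $\Lambda_1$-data; its error tends to zero as $h\downarrow0$ because, for fixed $\epsilon$, the integrand is a smooth, bounded, $l^2$-valued function of $u$ (the smoothness following from the resolvent identity).

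To recover $E^T_{(a,b)}x$, I chain three limits that must be interleaved. First, Stone's formula gives
$$
\lim_{\epsilon\downarrow 0}\int_{a'}^{b'}K_H(u+i\epsilon;T,x)\,du=E^T_{(a',b')}x+\tfrac12 E^T_{\{a',b'\}}x.
$$
Second, shrinking slightly by taking $(a',b')=(a+1/n,b-1/n)$, I have $E^T_{(a+1/n,b-1/n)}x\to E^T_{(a,b)}x$ in $l^2$ by strong continuity of the projection-valued measure from below. The atomic boundary contributions $\tfrac12 E^T_{\{a+1/n\}}x$ and $\tfrac12 E^T_{\{b-1/n\}}x$ also tend to $0$ in $l^2$, because the points $\{a+1/n,b-1/n\}$ are disjoint and $\sum_n\|E^T_{\{a+1/n\}}x\|^2\le \mu^T_{x,x}(\mathbb{R})<\infty$ (and likewise for $b$), forcing each term to zero. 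Third, if $a=-\infty$ or $b=+\infty$, I truncate to $(-R_n,R_n)$ with $R_n\to\infty$, using $E^T_{(a,b)}x=\lim_{R\to\infty}E^T_{(a,b)\cap(-R,R)}x$ in $l^2$. Finally, for a general $U=\bigcup_m(a_m,b_m)$, orthogonality of the subprojections gives $E^T_Ux=\sum_m E^T_{(a_m,b_m)}x$ in $l^2$, so truncating to $m\le M_n$ with $M_n\to\infty$ introduces an error going to zero.

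The algorithm $\Gamma_n(T,x,U)$ is then built by diagonalisation: choose $M_n,R_n\to\infty$, $\epsilon_n,\delta_n,h_n\downarrow 0$, and resolvent index $k_n\to\infty$, all tied to $n$. For each $m\le M_n$, compute via midpoint quadrature (step $h_n$) the vector
$$
I_{n,m}=\sum_j \widehat K_{H,k_n}(u_j+i\epsilon_n;T,x)\,h_n,
$$
with nodes $u_j$ spanning $\bigl(\max(a_m+\delta_n,-R_n),\min(b_m-\delta_n,R_n)\bigr)$, and output $\Gamma_n(T,x,U)=\sum_{m\le M_n}I_{n,m}$. To guarantee convergence, I choose the schedule adaptively: at stage $n$, first fix $\epsilon_n,\delta_n,R_n,M_n$ to prescribed null/divergent rates, then (using the a posteriori error bound in Theorem \ref{res_est1}) pick $k_n$ large enough that each $\widehat K_{H,k_n}(\cdot;T,x)$ is within $2^{-n}/(M_n h_n^{-1}(R_n+1))$ of the true $K_H$, and $h_n$ small enough that the quadrature error on each interval is at most $2^{-n}/M_n$. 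The total error is thus $o(1)$ as $n\to\infty$, and every step uses only finitely many arithmetic operations on $\Lambda_1$, establishing $\{\Xi_{\mathrm{meas}},\Omega_{f,\alpha,\beta}\times\mathcal{U},\Lambda_1\}\in\Delta_2^A$.

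The main technical obstacle is the \emph{interleaving of the limits}: for small $\epsilon_n$, the Poisson kernel becomes nearly singular, so the constant in Corollary \ref{res_est2} blows up like $1/\epsilon_n$, and simultaneously the smooth-but-peaked integrand demands a quadrature step $h_n\ll \epsilon_n$. Thus $k_n$ must grow fast enough to control the resolvent error \emph{after} dividing by $\epsilon_n$ and multiplying by the number of quadrature nodes, and similarly the shrinkage $\delta_n$ must be chosen so the boundary-atom residuals decay despite no a priori rate being available (only the summability argument). This is where the adaptive \textit{a posteriori} control afforded by the computable error bound in Theorem \ref{res_est1} is essential: it lets one schedule $k_n$ \emph{after} fixing all outer parameters, making the whole construction genuinely arithmetic and avoiding any need to know constants like $C(T,x)$ in advance.
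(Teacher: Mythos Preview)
Your approach is essentially the paper's: approximate the Stone integral by quadrature using the resolvent from Corollary~\ref{res_est2}, work strictly inside each $(a_m,b_m)$, truncate in $m$ and at $\pm R_n$, and tie everything to a single index $n$. The paper's execution is more direct and exposes a subtle point that your sequential-then-diagonal argument obscures.

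The gap is in how you justify the joint limit in $\epsilon_n$ and $\delta_n$. You argue the \emph{sequential} limit: first $\epsilon\to0$ gives $E^T_{(a_m+\delta,b_m-\delta)}x+\tfrac12 E^T_{\{a_m+\delta,b_m-\delta\}}x$ by Stone, then $\delta\to0$ gives $E^T_{(a_m,b_m)}x$, handling the atoms at the \emph{shifted} endpoints $a_m+\delta_n$ by your summability argument. But this does not settle the diagonal. By Fubini the quantity you actually compute equals $\bigl[\int_{\mathbb R}g_n(\lambda)\,dE^T(\lambda)\bigr]x$ with $g_n(\lambda)=\int_{a_m+\delta_n}^{b_m-\delta_n}P_H(u-\lambda,\epsilon_n)\,du$, and at the \emph{original} endpoint $\lambda=a_m$ one has $g_n(a_m)=\tfrac1\pi\bigl[\arctan((b_m-a_m-\delta_n)/\epsilon_n)-\arctan(\delta_n/\epsilon_n)\bigr]$, which tends to $0$ only if $\delta_n/\epsilon_n\to\infty$. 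If $\mu^T_{x,x}$ has an atom at $a_m$ and you pick, say, $\delta_n=\epsilon_n$, the diagonal limit is contaminated by a positive multiple of $E^T_{\{a_m\}}x$; your summability argument about $E^T_{\{a_m+1/n\}}x$ is irrelevant here. The fix is simply to impose $\delta_n/\epsilon_n\to\infty$. The paper sidesteps the sequential/diagonal distinction altogether by applying dominated convergence directly to the one-parameter expression $\int_{U_n}K_H(u+i/n;T,x)\,du$ with $U_n\uparrow U$ strictly from inside, which makes the needed pointwise convergence $g_n\to\chi_U$ (and hence this same rate condition on the inner approximation) the only thing to check.

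A smaller point: the a~posteriori bound of Theorem~\ref{res_est1} contains the unknown constants $C_1,C_2$ (only the rates $\alpha_n,\beta_n$ are given in $\Omega_{f,\alpha,\beta}$), so you cannot certify a \emph{prescribed} tolerance for the resolvent error. Nor is this needed: for $\Delta_2^A$ one only requires convergence, and it suffices to schedule $h_n,k_n$ as explicit functions of $n,\epsilon_n,|U_n|$ so that the quadrature and resolvent errors are $O(1/n)$ up to an unknown but fixed constant $C(T,x)$, exactly as the paper does.
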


\begin{remark}
\label{TKIP}
Essentially, this theorem tells us that if we can compute the action of the resolvent operator with asymptotic error control near the real axis, then we can compute the spectral measures of open sets in one limit. In the unitary case, this can easily be extended to relatively open sets of $\mathbb{T}$ if we can evaluate the resolvent near the unit circle. For any $U\in\mathcal{U}$, the approximation of $E_U^Tx$ has finite support, and hence we can take inner products to compute $\mu_{x,y}^T(U)$.
\end{remark}

\begin{remark}
One may wonder whether it is possible to upgrade the convergence of the algorithm in Theorem \ref{meas_comp1} from $\Delta_2$ to $\Delta_1$. In other words, whether it is possible to compute the measure with error control. However, this is difficult because the measure may be singular. Theorem \ref{meas_err_impossible} shows this is impossible even for singleton sets and discrete Schr\"odinger operators acting on $l^2(\mathbb{N})$.
\end{remark}

\begin{proof}[Proof of Theorem \ref{meas_comp1}]
Let $T\in \Omega_{\mathrm{SA}}$ and $z_1,z_2\in\mathbb{C}\backslash\mathbb{R}$. By the resolvent identity and self-adjointness of $T$,
$$
\|R(z_1,T)-R(z_1,T)\|\leq \left|\mathrm{Im}(z_1)\right|^{-1}\left|\mathrm{Im}(z_2)\right|^{-1}\left|z_1-z_2\right|.
$$
Hence, for $z=u+i\epsilon$ with $\epsilon>0$, the vector-valued function $K_H(u+i\epsilon;T,x)$ (considered with argument $u$) is Lipschitz continuous with Lipschitz constant bounded by $\epsilon^{-2}\|x\|/\pi$. Now consider the class $\Omega_{f,\alpha,\beta}\times \mathcal{U}$ and let $(T,x,U)\in\Omega_{f,\alpha,\beta}\times \mathcal{U}$. From Corollary \ref{res_est2}, we can construct a sequence of arithmetic algorithms, $\widehat\Gamma_n$, such that
$$
\|\widehat\Gamma_n(T,u,z)-K_H(u+i\epsilon;T,x)\|\leq\frac{C(T,x)}{\epsilon}\left(\alpha_n+\beta_n\right)
$$
for all $(T,x)\in\Omega_{f,\alpha,\beta}$. It follows from standard quadrature rules and taking subsequences if necessary (using that $\{\alpha_n\}$ and $\{\beta_n\}$ are null), that for $-\infty<a<b<\infty$, the integral
\begin{equation}
\label{stone22}\int_{a}^b K_H\left(u+\frac{i}{n};T,x\right)du  
\end{equation} 
can be approximated to an accuracy $\widehat C(T,x)/n$ using finitely many arithmetic operations and comparisons and the relevant set of evaluation functions $\Lambda_1$ (the constant $C$ now becomes $\widehat C$ due to not knowing the exact value of $\|x\|$). 

Recall that we assumed the disjoint union
$$
U=\bigcup_{m}(a_m,b_m)
$$
where $a_m,b_m\in\mathbb{R}\cup\{\pm\infty\}$ and the union is at most countable. Without loss of generality, we assume that the union is over $m\in\mathbb{N}$. We then let $a_{m,n},b_{m,n}\in\mathbb{Q}$ be such that $a_{m,n}\downarrow a_m$ and $b_{m,n}\uparrow b_m$ as $n\rightarrow \infty$ with $a_{m,n}<b_{m,n}$ and hence $(a_{m,n},b_{m,n})\subset (a_m,b_m)$. Let
\begin{equation*}
\label{Un_def}
U_n=\bigcup_{m=1}^n (a_{m,n},b_{m,n}),
\end{equation*}
then the proof of Stone's formula in Proposition \ref{stone1} (essentially an application of the dominated convergence theorem) can be easily adapted to show that
\begin{equation*}
\label{stone23}
\lim_{n\rightarrow \infty}\int_{U_n} K_H\left(u+\frac{i}{n};T,x\right)du=E^{T}_{U}x.  
\end{equation*} 
Note that we do not have to worry about contributions from endpoints of the intervals $(a_m,b_m)$ since we approximate strictly from within the open set $U$. To finish the proof, we simply let $\Gamma_{n}(T,x,U)$ be an approximation of the integral
$$
\int_{U_n} K_H\left(u+\frac{i}{n};T,x\right)du
$$
with accuracy $\widehat C(T,x)/n$. By the above remarks, such an approximation can be computed using finitely many arithmetic operations and comparisons from the relevant set of evaluation functions $\Lambda_1$.
\end{proof}

This theorem can clearly be extended to cover the more general case of Proposition \ref{stone2} if $\gamma$ is regular enough to allow approximation of
$$
\mathrm{PV}\int_{\gamma} R(z,T)xdz,
$$
given the ability to compute $R(z,T)x$ with asymptotic error control. Note that when it comes to numerically computing the integrals in Propositions \ref{stone1} and \ref{stone2}, it is advantageous to deform the contour so that most of the contour lies far from the spectrum so that the resolvent has a smaller Lipschitz constant. The proof can also be adapted to compute $E_{I}x$, where $I=[a,b]$ is a closed interval, by considering intervals shrinking to $[a,b]$ ($a,b$ finite). A special case of this is the computation of the spectral measure of singleton sets. However, for these it much easier to directly use the formulae
$$
E^{T}_{\{u\}}x=\lim_{\epsilon\downarrow 0}\epsilon \pi K_H(u+i\epsilon;T,x),\quad E^{T}_{\{\exp(i\theta)\}}x=\lim_{\epsilon\downarrow 0}\epsilon\pi i\exp(i\theta) K_D((1-\epsilon)\exp(i\theta);T,x),
$$
for $T\in\Omega_{\mathrm{SA}}$ and $T\in\Omega_{\mathrm{U}}$ respectively.

\subsection{Measure decompositions and projections}
\label{measu_deomps}

Recall from \S \ref{sec_not} that $P_{\mathcal{I}}^T$ denotes the orthogonal projection onto the space $\mathcal{H}^T_{\mathcal{I}}$, where $\mathcal{I}$ denotes a generic type ($\mathrm{ac},\mathrm{sc},\mathrm{pp},\mathrm{c}$ or $\mathrm{s}$). We have included the continuous and singular parts denoted by $\mathrm{c}$ or $\mathrm{s}$ which correspond to $\mathcal{H}_{\mathrm{ac}}\oplus\mathcal{H}_{\mathrm{sc}}$ and $\mathcal{H}_{\mathrm{sc}}\oplus\mathcal{H}_{\mathrm{pp}}$ respectively. These are often encountered in mathematical physics. As in \S \ref{full_meas_pls}, we assume the decomposition in (\ref{open_union}) and add the $\{a_m,b_m\}$ as evaluation functions to $\Lambda_i$ (defined in (\ref{evals_def_nene})). In this section, we prove the following theorem.

\begin{theorem}
\label{spec_decomp_comp}
Given the set-up in \S \ref{sec_not}, \S \ref{sec_not2} and \S \ref{full_meas_pls}, consider the map
\begin{align*}
\Xi_{\mathcal{I}}:\Omega_{f,\alpha,\beta}\times V_{\beta}\times \mathcal{U}&\rightarrow\mathbb{C}\\
(T,x,y,U)&\rightarrow \langle P_{\mathcal{I}}^TE_U^Tx,y\rangle=\mu^T_{x,y,\mathcal{I}}(U),
\end{align*}
for $\mathcal{I}=\mathrm{ac},\mathrm{sc},\mathrm{pp},\mathrm{c}$ or $\mathrm{s}$. Then for $i=1,2$
$$
\Delta^G_2 \not\owns  \{\Xi_{\mathcal{I}},\Omega_{f,\alpha,\beta}\times V_{\beta}\times \mathcal{U},\Lambda_i\} \in \Delta^A_3.
$$
\end{theorem}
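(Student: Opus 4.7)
The plan is to prove the two directions separately: $\in\Delta_3^A$ by assembling a two-limit arithmetic algorithm from the one-limit subroutines already at our disposal, and $\notin\Delta_2^G$ by an adversarial diagonal construction against any putative general one-limit algorithm.

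For the upper bound I would combine three one-limit ingredients: Corollary \ref{res_est2}, giving the resolvent action with asymptotic error control; Theorem \ref{meas_comp1}, computing the total measure $\mu^T_{x,y}(U)$; and Theorem \ref{meas_comp3}, computing the Radon--Nikodym derivative $\rho^T_{x,y}$ of the absolutely continuous part in $L^1_{\mathrm{loc}}$. Let the outer index $n_2$ simultaneously control a smoothing $\epsilon_{n_2}=1/n_2$, a compact exhaustion $V_{n_2}=U\cap[-n_2,n_2]$, and a rational grid $\{\lambda_k\}\subset V_{n_2}$ of spacing $1/n_2$, while the inner index $n_1$ drives the resolvent and density approximations. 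For the pure point component I would use the pointwise identity $\langle E^T_{\{\lambda\}}x,y\rangle=\lim_{\epsilon\downarrow 0}\epsilon\pi\langle K_H(\lambda+i\epsilon;T,x),y\rangle$: on each grid cell compute $\epsilon_{n_2}\pi\langle K_H(\lambda_k+i\epsilon_{n_2};T,x),y\rangle$ from the resolvent approximation and sum after applying a soft threshold that discards non-atomic contributions (which scale like $\epsilon_{n_2}$) while preserving atomic masses. For the absolutely continuous component I would integrate the inner approximation $\rho^{(n_1)}_{x,y}$ over $V_{n_2}$, the convergence following from $L^1(V_{n_2})$ convergence combined with $\mu^T_{x,y,\mathrm{ac}}(V_{n_2})\to\mu^T_{x,y,\mathrm{ac}}(U)$ as $V_{n_2}\uparrow U$. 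The remaining types follow by subtraction using $\mu_{\mathrm{sc}}=\mu-\mu_{\mathrm{ac}}-\mu_{\mathrm{pp}}$, $\mu_{\mathrm{c}}=\mu-\mu_{\mathrm{pp}}$, and $\mu_{\mathrm{s}}=\mu-\mu_{\mathrm{ac}}$, and arithmetic implementability is inherited from the subroutines.

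For the exclusion from $\Delta_2^G$, the plan is an adversarial diagonal argument. For each $\mathcal{I}$, fix a bounded open $U$, $x=y=e_1$, and exhibit two operators $A,B\in\Omega_{f,\alpha,\beta}$ whose scalar spectral measures with respect to $e_1$ differ in the $\mathcal{I}$-component on $U$: for $\mathcal{I}\in\{\mathrm{pp},\mathrm{ac},\mathrm{c},\mathrm{s}\}$ take $A_{\mathrm{pp}}$ a Jacobi operator with an eigenvalue inside $U$ and $A_{\mathrm{ac}}$ (for example a rescaled free Jacobi matrix) with absolutely continuous spectrum covering $U$; for $\mathcal{I}=\mathrm{sc}$ take $A_{\mathrm{ac}}$ together with a sparse Jacobi $A_{\mathrm{sc}}$ carrying singular continuous spectrum on $U$. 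Given a candidate one-limit algorithm $\Gamma_n$, inductively build $T^{(k)}\in\Omega_{f,\alpha,\beta}$ such that $T^{(k+1)}$ agrees with $T^{(k)}$ on every matrix entry queried by $\Gamma_1,\ldots,\Gamma_k$, and the tail of $T^{(k)}$ beyond those indices is alternated between long blocks modelled on $A$ and $B$, chosen so that $\mu^{T^{(k)}}_{e_1,e_1,\mathcal{I}}(U)$ alternates close to $\mu^A_{e_1,e_1,\mathcal{I}}(U)$ and $\mu^B_{e_1,e_1,\mathcal{I}}(U)$. The pointwise limit $T^{(\infty)}$ lies in $\Omega_{f,\alpha,\beta}$ by localisation of the perturbations, and $\Gamma_n(T^{(\infty)})=\Gamma_n(T^{(n)})$ therefore alternates between two distinct values, contradicting convergence.

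The main obstacle is the spectral-tail analysis in the lower-bound construction: one must verify that replacing a long tail of $T^{(k)}$ by a block modelled on $A$ or $B$ actually drives the scalar measure $\mu^{T^{(k)}}_{e_1,e_1}$ close enough to that of $A$ or $B$ on $U$, uniformly in $k$, while keeping $T^{(k)}$ inside $\Omega_{f,\alpha,\beta}$ with respect to the prescribed $(f,\alpha)$. This requires a transfer-matrix or Schur-complement estimate on a glued Jacobi-type operator together with a careful choice of block lengths and coupling across the interface, so that the modification stays compatible with the column-decay rate $\alpha$ while producing a definite change in the $\mathcal{I}$-component of the measure on $U$. Once such a tail-driving lemma is in place, the adversarial diagonal collapses in the standard way and the exclusion from $\Delta_2^G$ follows.
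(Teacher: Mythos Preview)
Your upper bound has two genuine gaps.

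First, for the absolutely continuous part you invoke Theorem~\ref{meas_comp3} to integrate $\rho^{(n_1)}_{x,y}$ over $V_{n_2}$, relying on $L^1(V_{n_2})$ convergence. But Theorem~\ref{meas_comp3} only delivers $L^1$ convergence on open sets strictly separated from the singular support $\mathrm{supp}(\mu^T_{x,y,\mathrm{sc}})\cup\mathrm{supp}(\mu^T_{x,y,\mathrm{pp}})$; on a general $U\in\mathcal{U}$ this separation fails and you only get a.e.\ pointwise convergence of the Poisson smoothing, which is not enough to pass to the integral. There is no obvious two-limit repair: you cannot compute the singular support first, since that is essentially the problem you are trying to solve.

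Second, the pure point scheme does not work. Summing $\epsilon_{n_2}\pi\langle K_H(\lambda_k+i\epsilon_{n_2};T,x),y\rangle$ over a grid does not approximate $\mu^T_{x,y,\mathrm{pp}}(U)$: atoms off the grid are smeared across several cells, and the absolutely continuous density contributes $O(1)$ per cell so the un-thresholded sum diverges. A ``soft threshold'' introduces a further parameter whose limit must be taken after $\epsilon_{n_2}\to 0$ (otherwise small atoms are discarded), pushing you to three limits. The paper avoids both issues by computing $\mu_{\mathrm{c}}$ and $\mu_{\mathrm{s}}$ directly and obtaining $\mathrm{pp},\mathrm{ac},\mathrm{sc}$ by subtraction: $\mu_{\mathrm{c}}$ via the RAGE theorem,
\[
\langle P_{\mathrm{c}}^TE_U^Tx,x\rangle=\lim_{n\to\infty}\lim_{t\to\infty}\frac{1}{t}\int_0^t\|Q_ne^{-iTs}\chi_U(T)x\|^2\,ds,
\]
and $\mu_{\mathrm{s}}$ via Poltoratski's theorem on the Hilbert transform, $\lim_{\theta\to\infty}\frac{\pi\theta}{2}\int f(t)\chi_{\{|H_\mu|\geq\theta\}}(t)\,dt=\int f\,d\mu_{\mathrm{s}}$. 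Both are genuine two-limit expressions with a one-limit inner approximation, and neither requires any a priori knowledge of the singular support.

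For the lower bound, your diagonal strategy is the right shape, but the ``tail-driving lemma'' you flag as the main obstacle is exactly where the content lies, and a generic Schur-complement estimate will not deliver it: changing a Jacobi tail does not in general move $\mu^T_{e_1,e_1,\mathcal{I}}(U)$ in a controllable direction, because the $\mathcal{I}$-decomposition is highly non-local and unstable under perturbation. The paper sidesteps this by importing two specific spectral-theoretic results for discrete Schr\"odinger operators: Anderson localisation (so that a random i.i.d.\ tail forces pure point spectrum almost surely, even after a finite-rank modification) for the $\mathrm{pp}$ case, and the Krutikov--Remling dichotomy for sparse potentials (Theorem~\ref{oracle_lemma}), which reduces the $\mathrm{ac}/\mathrm{sc}$ cases to the known $\mathrm{SCI}=2$ decision problem ``does $\{a_j\}$ have infinitely many nonzero entries?''. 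Without results of this type, your adversary has no mechanism to guarantee the alternation of $\mu_{e_1,e_1,\mathcal{I}}(U)$ between two separated values.
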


To prove this theorem, it is enough, by the polarisation identity, to consider $x=y$ (note that all the projections commute). We will split the proof into two parts: the $\Delta^A_3$ inclusion, for which it is enough to consider $\Lambda_1$, and the $\Delta^G_2$ exclusion, for which it is enough to consider $\Lambda_2$.

\subsubsection{Proof of inclusion in Theorem \ref{spec_decomp_comp}}

\begin{proof}[Proof of inclusion in Theorem \ref{spec_decomp_comp}]
Since $P^T_{\mathrm{pp}}=I-P^T_{\mathrm{c}}$, $P^T_{\mathrm{ac}}=I-P^T_{\mathrm{s}}$ and $P^T_{\mathrm{sc}}=P^T_{\mathrm{s}}-P^T_{\mathrm{pp}}$, it is enough, by Theorem \ref{meas_comp1} and Remark \ref{TKIP}, to consider only $\mathcal{I}=\mathrm{c}$ and $\mathcal{I}=\mathrm{s}$.

\textbf{Step 1}: We first deal with $\mathcal{I}=\mathrm{c}$, where we shall use a similar argument to the proof of Theorem \ref{F_calc} (which is more general than what we need). We recall the RAGE theorem \cite{ruelle1969remark,amrein1973characterization,enss1978asymptotic} as follows. Let $Q_n$ denote the orthogonal projection onto vectors in $l^2(\mathbb{N})$ with support outside the subset $\{1,...,n\}\subset \mathbb{N}$. Then for any $x\in l^2(\mathbb{N})$,
\begin{equation}
\label{Ruelle2}
\begin{split}
\langle P_{\mathrm{c}}^TE_U^Tx,x\rangle=\|P_{\mathrm{c}}^TE_U^Tx\|^2&=\lim_{n\rightarrow\infty}\lim_{t\rightarrow\infty}\frac{1}{t}\int_0^t\left\|Q_ne^{-iTs}E_U^Tx\right\|^2 ds\\%\\
&=\lim_{n\rightarrow\infty}\lim_{t\rightarrow\infty}\frac{1}{t}\int_0^t\left\|Q_ne^{-iTs}\chi_{U}(T)x\right\|^2 ds.
\end{split}
\end{equation}
The proof of Theorem \ref{F_calc} is easily adapted to show that there exists arithmetic algorithms $\widetilde\Gamma_{n,m}$ using $\Lambda_1$ such that
$$
\|Q_ne^{-iTs}\chi_{U}(T)x-\widetilde\Gamma_{n,m}(T,x,U,s)\|\leq\frac{C(T,x,U)}{m}
$$
for all $(T,x,U,s)\in\Omega_{f,\alpha,\beta}\times\mathcal{U}\times \mathbb{R}$. Note that this bound can be made independent of $s$ (as we have written above) by sufficiently approximating the function $\lambda \rightarrow\exp(-i\lambda s)\chi_{U}(\lambda )$ (it has known total variation for a given $s$ and uniform bound). We now define
$$
\Gamma_{n,m}(T,x,U)=\frac{1}{m^2}\sum_{j=1}^{m^2}\|\widetilde\Gamma_{m,n}(T,x,U,j/m)\|^2.
$$
Using the fact that for $a,b\in l^2(\mathbb{N})$,
\begin{equation}
\label{useful_square}
\left|\langle a,a \rangle -\langle b,b \rangle\right|\leq \|a-b\|\left(2\|a\|+\|a-b\|\right),
\end{equation}
it follows that
$$
\left|\|Q_ne^{-iTs}\chi_{U}(T)x\|^2-\|\widetilde\Gamma_{n,m}(T,x,U,s)\|^2\right|\leq\frac{C(T,x,U)}{m}\left(2\|x\|+\frac{C(T,x,U)}{m}\right).
$$

Hence
\begin{align*}
\left|\Gamma_{n,m}(T,x,U)-\frac{1}{m}\int_0^m\left\|Q_ne^{-iTs}\chi_{U}(T)x\right\|^2 ds\right|&\leq \frac{1}{m^2}\sum_{j=1}^{m^2}\frac{C(T,x,U)}{m}\left(2\|x\|+\frac{C(T,x,U)}{m}\right)\\
&+\frac{1}{m^2}\sum_{j=1}^{m^2}\left|g_n(j/m)-m\int_{\frac{j-1}{m}}^{\frac{j}{m}}g_n(s)ds\right|,
\end{align*}
where $g_n(s)=\|Q_ne^{-iTs}\chi_{U}(T)x\|^2.$ Clearly the first term converges to $0$ as $m\rightarrow\infty$, so we only need to consider the second. Using (\ref{useful_square}), it follows that for any $\epsilon>0$
$$
\left|g_n(s)-g_n(s+\epsilon)\right|\leq 4\|Q_ne^{-iTs}(e^{-iT\epsilon}-I)\chi_{U}(T)x\|\|x\|\leq 4\|x\|\|(e^{-iT\epsilon}-I)\chi_{U}(T)x\|.
$$
But $e^{-iT\epsilon}-I$ converges strongly to $0$ as $\epsilon\downarrow0$ and hence the quantity
$$
\left|g_n(j/m)-m\int_{\frac{j-1}{m}}^{\frac{j}{m}}g_n(s)ds\right|\rightarrow 0
$$
uniformly in $j$ as $m\rightarrow\infty$. It follows that
$$
\lim_{m\rightarrow\infty}\Gamma_{n,m}(T,x,U)=\lim_{t\rightarrow\infty}\frac{1}{t}\int_0^t\left\|Q_ne^{-iTs}E_U^Tx\right\|^2 ds
$$
and hence
$$
\lim_{n\rightarrow\infty}\lim_{m\rightarrow\infty}\Gamma_{n,m}(T,x,U)=\langle P_{\mathrm{c}}^TE_U^Tx,x\rangle.
$$

\textbf{Step 2}: Next we deal with the case $\mathcal{I}=\mathrm{s}$. Note that for $z\in\mathbb{C}\backslash\mathbb{R}$, $\langle R(z,T)x,x \rangle$ is simply the Stieltjes transform (also called the Borel transform) of the positive measure $\mu_{x,x}^T$
$$
\langle R(z,T)x,x \rangle=\int_{\mathbb{R}}\frac{1}{\lambda -z}d\mu_{x,x}^T(\lambda ).
$$
The Hilbert transform of $\mu^T_{x,x}$ is given by the limit
$$
H_{\mu_{x,x}^T}(t)=\frac{1}{\pi}\lim_{\epsilon\downarrow0}\mathrm{Re}\left(\langle R(t+i\epsilon,T)x,x \rangle\right),
$$
with the limit existing (Lebesgue) almost everywhere. This object was studied in \cite{poltoratski2010hilbert,poltoratski1996distributions}, where we shall use the result (since the measure is positive) that for any bounded continuous function $f$,\footnote{Note that this is stronger than weak$^*$ convergence which in this case means restricting to continuous functions vanishing at infinity. That the result holds for arbitrary bounded continuous functions is due to the tightness condition that the result holds for the function identically equal to $1$.}
\begin{equation}
\label{Hilbert_limit}
\lim_{\theta\rightarrow\infty}\frac{\pi \theta}{2}\int_{\mathbb{R}}f(t)\chi_{\{w:|H_{\mu_{x,x}^T}(w)|\geq \theta\}}(t)dt=\int_{\mathbb{R}}f(t)d\mu_{x,x,\mathrm{s}}^T(t).
\end{equation}

Now let $(T,x,U)\in\Omega_{f,\alpha,\beta}\times\mathcal{U}$ with
$$
U=\bigcup_{m}(a_m,b_m),
$$
where $a_m,b_m\in\mathbb{R}\cup\{\pm\infty\}$ and the disjoint union is at most countable as in (\ref{open_union}). Without loss of generality, we assume that the union is over $m\in\mathbb{N}$. Due to the possibility of point spectra at the endpoints $a_m,b_m$, we cannot simply replace $f$ by $\chi_{U}$ in the above limit (\ref{Hilbert_limit}). However, this can be overcome in the following manner. 

Let $\partial U$ denote the boundary of $U$ defined by $\overline{U}\backslash U$ and let $\nu$ denote the measure $\mu_{x,x}^T|_{\partial U}$. Let $\{f_{l}\}_{l\in\mathbb{N}}$ denote a pointwise increasing sequence of continuous functions, converging everywhere up to $\chi_{U}$, such that the support of each $f_l$ is contained in
$$
[-l,l]\bigcap\left(\bigcup_{m=1}^{l}\left(a_m+1/\sqrt{l},b_m-1/\sqrt{l}\right)\right).
$$
Such a sequence exists (and can easily be explicitly constructed) precisely because $U$ is open. We first claim that
\begin{equation}
\label{Hilbert_limit2}
\lim_{l\rightarrow\infty}\frac{\pi l}{2}\int_{\mathbb{R}}f_l(t)\chi_{\{w:|H_{\mu_{x,x}^T}(w)|\geq l\}}(t)dt=\mu_{x,x,\mathrm{s}}^T(U).
\end{equation}
To see this note that for any $k\in\mathbb{N}$, the following inequalities hold
\begin{equation*}
\begin{split}
\liminf_{l\rightarrow\infty}\frac{\pi l}{2}\int_{\mathbb{R}}f_l(t)\chi_{\{w:|H_{\mu_{x,x}^T}(w)|\geq l\}}(t)dt&\geq \liminf_{l\rightarrow\infty}\frac{\pi l}{2}\int_{\mathbb{R}}f_k(t)\chi_{\{w:|H_{\mu_{x,x}^T}(w)|\geq l\}}(t)dt\\
&=\int_{\mathbb{R}}f_k(t)d\mu_{x,x,\mathrm{s}}^T(t),
\end{split}
\end{equation*}
where the last equality is due to (\ref{Hilbert_limit}). Taking $k\rightarrow\infty$ yields
\begin{equation}
\label{Hilbert_limit3}
\liminf_{l\rightarrow\infty}\frac{\pi l}{2}\int_{\mathbb{R}}f_l(t)\chi_{\{w:|H_{\mu_{x,x}^T}(w)|\geq l\}}(t)dt\geq\mu_{x,x,\mathrm{s}}^T(U),
\end{equation}
so we are left with proving a similar bound for the limit supremum. Note that any point in the support of $f_l$ is of distance at least $1/\sqrt{l}$ from $\partial U$. It follows that there exists a constant $C$ independent of $t$ such that for any $t\in \mathrm{supp}(f_l)$,
$$
\left|H_{\nu}(t)\right|\leq C\sqrt{l}
$$
Now let $\epsilon\in(0,1)$. Then, for large $l$, $l-C\sqrt{l}\geq (1-\epsilon)l$ and hence
\begin{equation}
\label{set_inclu}
\mathrm{supp}(f_l)\cap\{w:|H_{\mu_{x,x}^T}(w)|\geq l\}\subset \mathrm{supp}(f_l)\cap\{w:|H_{\mu_{x,x}^T-\nu}(w)|\geq (1-\epsilon)l\}.
\end{equation}
Now let $f$ be any bounded continuous function such that $f\geq \chi_U$. Then using (\ref{set_inclu}),
\begin{equation*}
\begin{split}
\limsup_{l\rightarrow\infty}\frac{\pi l}{2}\int_{\mathbb{R}}f_l(t)\chi_{\{w:|H_{\mu_{x,x}^T}(w)|\geq l\}}(t)dt&\leq \limsup_{l\rightarrow\infty}\frac{1}{1-\epsilon}\frac{\pi (1-\epsilon)l}{2}\int_{\mathbb{R}}f_l(t)\chi_{\{w:|H_{\mu_{x,x}^T-\nu}(w)|\geq (1-\epsilon)l\}}(t)dt\\
&\leq \limsup_{l\rightarrow\infty}\frac{1}{1-\epsilon}\frac{\pi (1-\epsilon)l}{2}\int_{\mathbb{R}}f(t)\chi_{\{w:|H_{\mu_{x,x}^T-\nu}(w)|\geq (1-\epsilon)l\}}(t)dt\\
&=\frac{1}{1-\epsilon}\int_{\mathbb{R}}f(t)d([\mu_{x,x}^T-\nu]_{\mathrm{s}})(t).
\end{split}
\end{equation*}
Now we let $f\downarrow\chi_{\overline{U}}$, with pointwise convergence everywhere. This is possible since the complement of $\overline{U}$ is open. By the dominated convergence theorem, and since $\epsilon$ was arbitrary, this yields
$$
\limsup_{l\rightarrow\infty}\frac{\pi l}{2}\int_{\mathbb{R}}f_l(t)\chi_{\{w:|H_{\mu_{x,x}^T}(w)|\geq l\}}(t)dt\leq[\mu_{x,x}^T-\nu]_{\mathrm{s}}(\overline{U})=\mu_{x,x,{\mathrm{s}}}^T(U),
$$
where the last equality follows from the definition of $\nu$. The claim (\ref{Hilbert_limit2}) now follows.

Let $\chi_n$ be a sequence of non-negative continuous piecewise affine functions on $\mathbb{R}$, bounded by $1$ and such that $\chi_n(t)=0$ if $t\leq n-1$ and $\chi_n(t)=1$ if $t \geq n+1$. Consider the integrals
$$
I(n,m)=\frac{\pi n}{2}\int_{\mathbb{R}}f_n(t)\chi_n(\left|F_m(t)\right|)dt,
$$
where $F_m(t)$ is an approximation of
$$
\frac{1}{\pi}\mathrm{Re}\left(\left\langle R\left(t+\frac{i}{m},T\right)x,x \right\rangle\right)
$$
to pointwise accuracy $O(m^{-1})$ over $t\in[-n,n]$. Note that a suitable piecewise affine function $f_n$ can be constructed using $\Lambda_1$, as can suitable $\chi_n$, and a suitable approximation function $F_m$ can be pointwise evaluated using $\Lambda_1$ (again by Corollary \ref{res_est2}). To see this, recall the definition of $\Lambda_1$ in (\ref{evals_def_nene}) and that we added $\{a_m,b_m\}$ from (\ref{open_union}) to $\Lambda_1$. To define $f_n$, we can define the function by suitable piecewise affine functions on each interval $[-n,n]\cap\left(a_m+1/\sqrt{n},b_m-1/\sqrt{n}\right)$. It follows that there exists arithmetic algorithms $\Gamma_{n,m}(T,x,U)$ using $\Lambda_1$ such that
$$
\left|I(n,m)-\Gamma_{n,m}(T,x,U)\right|\leq\frac{C(T,x,U)}{m}.
$$
The dominated convergence theorem implies that
$$
\lim_{m\rightarrow\infty}\Gamma_{n,m}(T,x,U)=\lim_{m\rightarrow\infty}I(n,m)=\frac{\pi n}{2}\int_{\mathbb{R}}f_n(t)\chi_n(|H_{\mu_{x,x}^T}(t)|)dt.
$$
Note that continuity of $\chi_n$ is needed to gain convergence almost everywhere and prevent possible oscillations about the level set $\{H_{\mu_{x,x}^T}(t)=n\}$. We also have
$$
\chi_{\{w:|H_{\mu_{x,x}^T}(w)|\geq n+1\}}(t)\leq\chi_n(|H_{\mu_{x,x}^T}(t)|)\leq\chi_{\{w:|H_{\mu_{x,x}^T}(w)|\geq n-1\}}(t)
$$
The same arguments used to prove (\ref{Hilbert_limit2}), therefore show that
$$
\lim_{n\rightarrow\infty}\frac{\pi n}{2}\int_{\mathbb{R}}f_n(t)\chi_n(|H_{\mu_{x,x}^T}(t)|)dt=\mu_{x,x,\mathrm{s}}^T(U).
$$
Hence,
$$
\lim_{n\rightarrow\infty}\lim_{m\rightarrow\infty}\Gamma_{n,m}(T,x,U)=\mu_{x,x,\mathrm{s}}^T(U),
$$
completing the proof of inclusion in Theorem \ref{spec_decomp_comp}.
\end{proof}

\subsubsection{Proof of exclusion in Theorem \ref{spec_decomp_comp}}
\label{measu_deomps2}

To prove the exclusion, we need two results which will also be used in \S \ref{heavy_spec}. First, we consider a result connected to Anderson localisation (Theorem \ref{andlocal}) and, second, we consider a result concerning sparse potentials of discrete Schr\"odinger operators (Theorem \ref{oracle_lemma}). The free Hamiltonian $H_0$ acts on $l^2(\mathbb{N})$ via the tridiagonal matrix representation
$$
H_0=\begin{pmatrix}
2 & -1 &  & \\
-1 & 2 & -1 & \\
 & -1 & 2 & \ddots\\
 &  & \ddots & \ddots
\end{pmatrix}
$$
We define a Schr{\"o}dinger operator acting on $l^2(\mathbb{N})$ to be an operator of the form
\begin{equation*}
H_v=H_0+v,
\end{equation*}
where $v$ is a bounded (real-valued) multiplication operator with matrix $\mathrm{diag}(v(1),v(2),...)$.

Since Anderson's introduction of his famous model 60 years ago \cite{anderson1958absence}, there has been a considerable amount of work by both physicists and mathematicians aiming to understand the suppression of electron transport due to disorder (Anderson localisation). A full discussion of Anderson localisation is beyond the scope of this paper, and we refer the reader to \cite{carmona2012spectral,cycon2009schrodinger,kirsch2007invitation} for broader surveys. When considering Anderson localisation, we will assume that $v=v_{\omega}=\{v(1),v(2),...\}$ is a collection of independent identically distributed random variables. Following \cite{graf1994anderson}, we assume that the single-site probability distribution has a density $\rho\in L^1(\mathbb{R})\cap L^\infty(\mathbb{R})$ with $\|\rho\|_1=1$ (with respect to the standard Lebesgue measure). For such a potential, a measure of disorder is given by the quantity $\|\rho\|_{\infty}^{-1}$. The first result we need is the following theorem which follows straightforwardly from the technique of \cite{graf1994anderson}, and hence we have not provided a proof which would be almost verbatim to \cite{graf1994anderson}.

\begin{theorem}[Anderson Localisation for Perturbed Operator \cite{graf1994anderson}]
\label{andlocal}
There exists a constant $C>0$ such that if $\|\rho\|_{\infty}\leq C$ and $\rho$ has compact support, then the operator $H_{v_\omega}+A$ has only pure point spectrum with probability 1 for any fixed self-adjoint operator $A$ of the form
\begin{equation}
\label{A_perturbation}
A=\sum_{j=1}^M\alpha_j\left|x_{m_j}\right\rangle\left\langle x_{n_j}\right|.
\end{equation}
In other words, the operator $A$'s matrix with respect to the canonical basis has only finitely many non-zeros.
\end{theorem}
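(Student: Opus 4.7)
The plan is to follow the Aizenman--Molchanov fractional moment method as executed in \cite{graf1994anderson}, treating the finite-rank perturbation $A$ as a mild modification of the structure handled there. Write $A = A_d + A_{\mathrm{od}}$ with $A_d$ the diagonal part and $A_{\mathrm{od}}$ the off-diagonal part, and let $S \subset \mathbb{N}$ denote the finite set of indices appearing in the representation (\ref{A_perturbation}). The diagonal part can be absorbed into the potential by setting $\tilde v_\omega(n) := v_\omega(n) + A_d(n,n)$; the new variables $\tilde v_\omega(n)$ remain independent, with densities $\rho_n(t) = \rho(t - A_d(n,n))$ satisfying $\|\rho_n\|_\infty = \|\rho\|_\infty \leq C$ and compact support. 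Thus it suffices to prove the conclusion for $\tilde H := H_0 + \tilde v_\omega + A_{\mathrm{od}}$, where $A_{\mathrm{od}}$ is a finite-rank self-adjoint operator supported on $S$.

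The goal is then to establish the exponential fractional moment bound
\[
\mathbb{E}\bigl[\,|\langle e_x, R(z,\tilde H) e_y\rangle|^s\,\bigr] \leq C\, e^{-\mu|x-y|}
\]
for some $s \in (0,1)$ and constants $C,\mu > 0$, uniformly in $z \in \mathbb{C}\setminus\mathbb{R}$. By the Simon--Wolff criterion (equivalently, the Aizenman--Molchanov dynamical localization argument), this bound implies that $\tilde H$ has only pure point spectrum almost surely. The a priori input used in \cite{graf1994anderson} is the single-site estimate $\mathbb{E}_{\tilde v_\omega(n)}[|G(n,n;z)|^s] \leq C_s \|\rho\|_\infty$, which depends only on the density of $\tilde v_\omega(n)$ and not on the surrounding matrix; it therefore survives the perturbation unchanged. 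The inductive resolvent expansion that drives the decoupling is carried out at sites outside $S$, where the matrix entries and the randomness of $\tilde H$ coincide with those of the standard Anderson model treated in \cite{graf1994anderson}, and yields the same exponential decay rate $\mu$ provided $\|\rho\|_\infty$ is smaller than the explicit constant $C$ identified there.

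The only genuinely new point is how the finite deterministic region $S$ interacts with the decoupling step. Inside $S$ the single-site randomness is still present, but finitely many off-diagonal couplings are now deterministic via $A_{\mathrm{od}}$, and these can in principle create finite-range resonances. The contribution of the $S$-block to the resolvent expansion is bounded and finite rank, and can be absorbed into the constant $C$: crudely bounding $|G(x,y;z)|^s$ on the finite block by a suitable norm of the local resolvent, whose fractional moment is still integrable for small $s$ by the same spectral averaging used in \cite{graf1994anderson}, allows the induction to be restarted just outside $S$. The main obstacle --- and the place requiring most care compared with \cite{graf1994anderson} --- is this finite-block boundary estimate; once it is verified, the remainder of the argument is essentially verbatim to \cite{graf1994anderson} and delivers the pure point conclusion for $H_{v_\omega} + A$.
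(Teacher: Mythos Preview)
Your proposal is consistent with the paper's approach: the paper does not provide its own proof of this theorem, stating only that it ``follows straightforwardly from the technique of \cite{graf1994anderson}, and hence we have not provided a proof which would be almost verbatim to \cite{graf1994anderson}.'' Your sketch of how to absorb the diagonal part of $A$ into the potential and handle the off-diagonal finite-rank piece as a bounded finite-block perturbation within the fractional moment framework is precisely the kind of adaptation the paper has in mind, and adds useful detail beyond what the paper itself supplies.
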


The second result we need is the following from \cite{krutikov2001schrodinger}.
\begin{theorem}[Krutikov and Remling \cite{krutikov2001schrodinger}]
\label{oracle_lemma}
Consider discrete Schr\"odinger operators acting on $l^2(\mathbb{N})$. Let $v$ be a (real-valued and bounded) potential of the following form:
$$
v(n)=\sum_{j=1}^\infty g_j\delta_{n,m_j},\quad m_{j-1}/m_{j}\rightarrow0.
$$
Then $[0,4]\subset \sigma_{\mathrm{ess}}(H_v)$ and the following dichotomy holds:
\begin{itemize}
	\item[(a)] If $\sum_{j\in\mathbb{N}}g_j^2<\infty$ then $H_v$ is purely absolutely continuous on $(0,4)$.
	\item[(b)] If $\sum_{j\in\mathbb{N}}g_j^2=\infty$ then $H_v$ is purely singular continuous on $(0,4)$.
\end{itemize}
\end{theorem}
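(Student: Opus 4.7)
The plan is to run a Prüfer/transfer-matrix analysis of the generalised eigenvalue equation and appeal to Gilbert--Pearson subordinacy theory. Fix $\lambda\in(0,4)$ and write $\lambda=2-2\cos k$ with $k\in(0,\pi)$. The difference equation $-u(n+1)-u(n-1)+v(n)u(n)=(\lambda-2)u(n)$ is recast via modified Prüfer variables $(R_n,\theta_n)$ defined by $u(n)=R_n\sin\theta_n$ and $u(n-1)=R_n\sin(\theta_n-k)$. At sites $n$ where $v(n)=0$ the free transfer matrix is a rotation, so $R_{n+1}=R_n$ and $\theta_{n+1}=\theta_n+k$; between two consecutive sparse sites $m_{j-1}$ and $m_j$ the radius is therefore constant while $\theta$ rotates by $(m_j-m_{j-1})k$. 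A direct computation at a sparse site yields a multiplicative jump of the form
\[
\log\frac{R_{m_j+1}^2}{R_{m_j}^2}=g_j F(\theta_{m_j},k)+g_j^2 G(\theta_{m_j},k)+O(g_j^3),
\]
with explicit bounded trigonometric $F,G$, where $F(\theta,k)$ has mean zero in $\theta$ over a full period.

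Next I would establish $[0,4]\subset\sigma_{\mathrm{ess}}(H_v)$ directly from sparseness: for any $\lambda=2-2\cos k\in[0,4]$ a sequence of normalised plane-wave cutoffs $\phi^{(j)}_n=c_j\chi_{[m_j+L_j,\,m_{j+1}-L_j]}(n)e^{ikn}$, with $L_j\to\infty$ but $L_j/(m_{j+1}-m_j)\to 0$, satisfies $\|(H_v-\lambda)\phi^{(j)}\|\to 0$ because $v$ vanishes on the support and the boundary contributions decay like $L_j^{-1/2}$ after normalisation. Mutual orthogonality of the $\phi^{(j)}$ then produces a singular Weyl sequence. For the dichotomy itself, the key lemma is that the quantities $\xi_j(k):=F(\theta_{m_j}(k),k)$ become asymptotically orthogonal in $L^2_{\mathrm{loc}}(dk)$: the hypothesis $m_{j-1}/m_j\to 0$ forces $\theta_{m_j}$ to undergo many more free rotations than $\theta_{m_{j-1}}$ as a function of $k$, decorrelating the trigonometric weights across distinct bumps.

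In case (a), $\sum g_j^2<\infty$ combined with this orthogonality and a Borel--Cantelli argument gives that $\sum_j g_j F(\theta_{m_j},k)$ converges for a.e.\ $k$, so the Prüfer radius $R_n(\lambda)$ is bounded for a.e.\ $\lambda\in(0,4)$. Hence no solution of $H_v u=\lambda u$ is subordinate on $(0,4)$, and Gilbert--Pearson subordinacy theory yields purely absolutely continuous spectrum on $(0,4)$. In case (b), the same orthogonality together with $\sum g_j^2=\infty$ forces $R_n(\lambda)\to\infty$ for a.e.\ $\lambda\in(0,4)$, so no solution lies in $l^2(\mathbb{N})$, ruling out eigenvalues throughout $(0,4)$, and simultaneously no solution is bounded on a positive-measure subset, which by subordinacy theory forces the absolutely continuous part of the spectral measure to vanish on $(0,4)$. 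Combined with $[0,4]\subset\sigma_{\mathrm{ess}}(H_v)$, this leaves purely singular continuous spectrum on $(0,4)$.

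The principal obstacle is making the asymptotic orthogonality of $\{\xi_j(k)\}$ quantitative enough to control the almost-every-$k$ behaviour of $\sum_j g_j\xi_j(k)$. The sparseness hypothesis $m_{j-1}/m_j\to 0$ is essential here: between consecutive bumps the fractional parts $\{(m_j-m_{j-1})k/2\pi\}$ equidistribute so rapidly that the cross terms $\int F(\theta_{m_i},k)F(\theta_{m_j},k)\,dk$ for $i\neq j$ decay fast enough that the $L^2(dk)$-norm of the partial sum matches the variance $\sum_{j\le J} g_j^2$. A standard remedy is to prove $L^2$-bounds on these partial sums restricted to compact subintervals of $(0,\pi)$ and upgrade to pointwise statements via Borel--Cantelli along a dyadic subsequence, taking care that the resulting null set does not depend on the tail of $(g_j,m_j)$.
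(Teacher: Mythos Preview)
The paper does not supply a proof of this theorem at all: it is quoted verbatim from Krutikov and Remling \cite{krutikov2001schrodinger} and used as a black-box input to the lower-bound arguments in \S\ref{measu_deomps2} and \S\ref{heavy_spec}. So there is no ``paper's own proof'' to compare against.

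That said, your sketch is broadly the right strategy and is essentially the one carried out in the cited reference (and in closely related work of Kiselev--Last--Simon): modified Pr\"ufer variables, a Weyl-sequence argument for the essential spectrum using the long gaps between bumps, and then an analysis of $\log R_n$ as a sum of weakly correlated increments indexed by the sparse sites, with the dichotomy read off via subordinacy theory. Two points deserve more care if you want a complete argument rather than a plan. First, in case~(b) you need more than $R_n\to\infty$ for a.e.\ $\lambda$: to exclude absolutely continuous spectrum via the Last--Simon criterion you need this for \emph{every} nontrivial solution, and to obtain singular continuous (rather than merely singular) spectrum you must in addition show that the ratio of two linearly independent solutions tends to zero, i.e.\ that a subordinate solution exists for a.e.\ $\lambda$. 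This is where the quadratic term $g_j^2 G(\theta_{m_j},k)$ with strictly positive $\theta$-average is actually used in Krutikov--Remling, not merely the first-order term. Second, your decorrelation step is stated as asymptotic $L^2(dk)$-orthogonality plus Borel--Cantelli; in the original proof this is made precise by showing the phases $\theta_{m_j}(k)\bmod 2\pi$ become equidistributed as $j\to\infty$ uniformly on compact $k$-intervals, which is exactly where $m_{j-1}/m_j\to 0$ enters. Your outline gestures at this but would need the quantitative oscillatory-integral estimate to close.
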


\begin{proof}[Proof of exclusion in Theorem \ref{spec_decomp_comp}]
Since $P^T_{\mathrm{pp}}=I-P^T_{\mathrm{c}}$, $P^T_{\mathrm{ac}}=I-P^T_{\mathrm{s}}$ and $P^T_{\mathrm{sc}}=P^T_{\mathrm{s}}-P^T_{\mathrm{pp}}$, it is enough, by Theorem \ref{meas_comp1} and Remark \ref{TKIP}, to consider $\mathcal{I}=\mathrm{pp}$, $\mathrm{ac}$ and $\mathrm{sc}$. We restrict the proof to considering bounded Schr\"odinger operators $H_v$ acting on $l^2(\mathbb{N})$, which are clearly a subclass of $\Omega_{f,0}$ for $f(n)=n+1$. Note that since the evaluation functions in $\Lambda_2$ can be recovered from those in $\Lambda_1$ in this special case, we can assume that we are dealing with $\Lambda_1$. We also set $x=e_1$, with the crucial properties that this vector is cyclic and hence $\mu^{H_v}_{e_1,e_1}$ has the same support as $\sigma(H_v)$, and that $x\in {V}_{0}$. Throughout, we also take $U=(0,4)$.

\textbf{Step 1}: We begin with $P^T_{\mathrm{pp}}$. Suppose for a contradiction that there does exist a sequence of general algorithms $\Gamma_n$ such that
$$
\lim_{n\rightarrow\infty}\Gamma_n(H_v)=\langle P^{H_v}_{\mathrm{pp}}E_{(0,4)}^{H_v}e_1,e_1\rangle.
$$
We take a general algorithm, denoted $\widehat\Gamma_n$, from Theorem \ref{meas_comp1} which has
$$
\lim_{n\rightarrow\infty}\widehat\Gamma_n(H_v)=\mu_{e_1,e_1}^{H_v}((0,4)).
$$
Since $e_1$ is cyclic, this limit is non-zero if $(0,4)\cap\sigma(H_v)\neq\emptyset$. We therefore define
$$
\widetilde\Gamma_n(H_v)=\begin{dcases}
0\quad\text{if }\widehat\Gamma_n(H_v)=0\\
\frac{\Gamma_n(H_v)}{\widehat\Gamma_n(H_v)}\quad\text{otherwise}.
\end{dcases}
$$

We will use Theorem \ref{andlocal} and the following well-known facts:
\begin{enumerate}
	\item If for any $l\in\mathbb{N}$ there exists $m_l$ such that $v({m_l+1})=v({m_l+2})=...=v({m_l+l})=0$, then $(0,4)\subset \sigma(H_v)$.
	\item If there exists $N\in\mathbb{N}$ such that $v(n)$ is $0$ for $n\geq N$, then $\sigma_{\mathrm{pp}}(H_v)\cap(0,4)=\emptyset$ \cite{remling1998absolutely}, but $[0,4]\subset \sigma(H_v)$ (the potential acts as a compact perturbation so the essential spectrum is $[0,4]$).
	\item If we are in the setting of Theorem \ref{andlocal}, then the spectrum of $H_{v_\omega}+A$ is pure point almost surely. Moreover, if $\rho=\chi_{[-c,c]}/(2c)$ for some constant $c$, then $[-c,4+c]\subset\sigma_{\mathrm{pp}}(H_{v_\omega}+A)$ almost surely.
\end{enumerate}

The strategy will be to construct a potential $v$ such that $(0,4)\subset \sigma(H_v)$, yet $\widetilde\Gamma_n(H_v)$ does not converge. This is a contradiction since by our assumptions, for such a $v$ we must have
$$
\widetilde\Gamma_n(H_v)\rightarrow \frac{\langle P^{H_v}_{\mathrm{pp}}E_{(0,4)}^{H_v}e_1,e_1\rangle}{\mu_{e_1,e_1}^{H_v}((0,4))}.
$$
To do this, choose $\rho=\chi_{[-c,c]}/(2c)$ for some constant $c$ such that the conditions of Theorem \ref{andlocal} hold and define the potential $v$ inductively as follows.

Let $v_1$ be a potential of the form $v_{\omega}$ (with the density $\rho$) such that $\sigma(H_{v_1})$ is pure point. Such a $v_1$ exists by Theorem \ref{andlocal} and we have $\langle P^{H_{v_1}}_{\mathrm{pp}}E_{(0,4)}^{H_{v_1}}e_1,e_1\rangle=\mu_{e_1,e_1}^{H_{v_1}}((0,4))$. Hence for large enough $n$ it must hold that $\widetilde\Gamma_n(H_{v_1})>3/4$. Fix $n=n_1$ such that this holds. Then $\Gamma_{n_1}(H_{v_1})$ only depends on $\{v_1(j):j\leq N_1\}$ for some integer $N_1$ by (i) of Definition \ref{Gen_alg}. Define the potential $v_2$ by $v_2(j)=v_1(j)$ for all $j\leq N_1$ and $v_2(j)=0$ otherwise. Then by fact (2) above, $\langle P^{H_{v_2}}_{\mathrm{pp}}E_{(0,4)}^{H_{v_2}}e_1,e_1\rangle=0$ but $\mu_{e_1,e_1}^{H_{v_2}}((0,4))\neq0$, and hence $\widetilde\Gamma_n(H_{v_2})<1/4$ for large $n$, say for $n=n_2>n_1$. But then $\Gamma_{n_2}(H_{v_2})$ only depends on $\{v_2(j):j\leq N_2\}$ for some integer $N_2$. 

We repeat this process inductively switching between potentials which induce $\widetilde\Gamma_{n_k}(H_{v_{k}})<1/4$ for $k$ even and potentials which induce $\widetilde\Gamma_{n_k}(H_{v_{k}})>3/4$ for $k$ odd. Explicitly, if $k$ is even then define a potential $v_{k+1}$ by $v_{k+1}(j)=v_k(j)$ for all $j\leq N_k$ and $v_{k+1}(j)=v_{\omega}(j)$ (with the density $\rho$) otherwise such that the spectrum of $H_{v_{k}}$ is pure point. Such a $\omega$ exists from Theorem \ref{andlocal} applied with the perturbation $A$ to match the potential for $j\leq N_k$. If $k$ is odd then we define $v_{k+1}$ by $v_{k+1}(j)=v_k(j)$ for all $j\leq N_k$ and $v_{k+1}(j)=0$ otherwise. We can then choose $n_{k+1}$ such that the above inequalities hold and $N_{k+1}$ such that $\Gamma_{n_{k+1}}(H_{v_{k+1}})$ only depends on $\{v_{k+1}(j):j\leq N_{k+1}\}$. We also ensure that $N_{k+1}\geq N_k+k$.

Finally set $v(j)=v_{k}(j)$ for $j\leq N_{k}$. It is clear from (iii) of Definition \ref{Gen_alg}, that $\widetilde\Gamma_{n_k}(H_v)=\widetilde\Gamma_{n_k}(H_{v_{k}})$ and this implies that $\widetilde\Gamma_{n_k}(H_v)$ cannot converge. However, since $N_{k+1}\geq N_k+k$, for any $k$ odd we have $v({N_k+1})=v({N_k+2})=...=v({N_k+k})=0$. Fact (1) implies that $(0,4)\subset \sigma(H_v)$, hence $\mu_{e_1,e_1}^{H_v}((0,4))\neq 0$ and therefore $\widetilde\Gamma_n(H_v)$ converges. This provides the required contradiction.

\textbf{Step 2}: Next we deal with $\mathcal{I}=\mathrm{ac}$. To prove that one limit will not suffice, our strategy will be to reduce a certain decision problem to the computation of $\Xi_{\mathrm{ac}}$. Let $(\mathcal{M}',d')$ be the discrete space $\{0,1\}$, let $\Omega'$ denote the collection of all infinite sequence $\{a_{j}\}_{j\in\mathbb{N}}$ with entries $a_{j}\in\{0,1\}$ and consider the problem function
\begin{equation*}
\Xi'(\{a_{j}\}):\text{ `Does }\{a_{j}\}\text{ have infinitely many non-zero entries?'},
\end{equation*}
which maps to $(\mathcal{M}',d')$. In Appendix \ref{append1}, it is shown that $\mathrm{SCI}(\Xi',\Omega')_{G} = 2$ (where the evaluation functions consist of component-wise evaluation of the array $\{a_{j}\}$). Suppose for a contradiction that $\Gamma_{n}$ is a height one tower of general algorithms such that
$$
\lim_{n\rightarrow\infty}\Gamma_n(H_v)=\langle P^{H_v}_{\mathrm{ac}}E_{(0,4)}^{H_v}e_1,e_1\rangle.
$$
We will gain a contradiction by using the supposed tower to solve $\{\Xi',\Omega'\}$. 

Given $\{a_{j}\}\in\Omega'$, consider the operator $H_v$, where the potential is of the following form:
\begin{equation}
\label{factorial_pot}
v(m)=\sum_{k=1}^\infty a_{k}\delta_{m,k!}.
\end{equation}
Then by Theorem \ref{oracle_lemma}, $\langle P^{H_v}_{\mathrm{ac}}E_{(0,4)}^{H_v}e_1,e_1\rangle=\mu^{H_v}_{e_1,e_1}((0,4))$ if $\sum_{k}a_{k}<\infty$ (that is, if $\Xi'(\{a_j\})=0$) and $\langle P^{H_v}_{\mathrm{ac}}E_{(0,4)}^{H_v}e_1,e_1\rangle=0$ otherwise. Note that in either case we have $\mu^{H_v}_{e_1,e_1}((0,4))\neq 0$. We follow Step 1 and take a general algorithm, denoted $\widehat\Gamma_n$, from Theorem \ref{meas_comp1} which has
$$
\lim_{n\rightarrow\infty}\widehat\Gamma_n(H_v)=\mu_{e_1,e_1}^{H_v}((0,4)).
$$
Since $e_1$ is cyclic, this limit is non-zero for $H_v$, where $v$ is of the form (\ref{factorial_pot}). We therefore define
$$
\widetilde\Gamma_n(H_v)=\begin{dcases}
0\quad\text{if }\widehat\Gamma_n(H_v)=0\\
\frac{\Gamma_n(H_v)}{\widehat\Gamma_n(H_v)}\quad\text{otherwise}.
\end{dcases}
$$
It follows that
$$
\lim_{n\rightarrow\infty}\widetilde\Gamma_n(H_v)=\begin{dcases}
1\quad\text{if }\Xi'(\{a_j\})=0\\
0\quad\text{otherwise}.
\end{dcases}
$$

Given $N$, we can evaluate any matrix value of $H$ using only finitely many evaluations of $\{a_{j}\}$ and hence the evaluation functions $\Lambda_1$ can be computed using component-wise evaluations of the sequence $\{a_j\}$. We now set
$$
\overline{\Gamma}_{n}(\{a_{j}\})=\begin{dcases}
0\quad\text{if }\Gamma_n(H_v)>\frac{1}{2}\\
1\quad\text{otherwise}.
\end{dcases}
$$
The above comments show that each of these is a general algorithm and it is clear that it converges to $\Xi'(\{a_j\})$ as $n\rightarrow\infty$, the required contradiction.

\textbf{Step 3}: Finally, we must deal with $\mathcal{I}=\mathrm{sc}$. The argument is the same as Step 2, replacing $\langle P^{H_v}_{\mathrm{ac}}E_{(0,4)}^{H_v}e_1,e_1\rangle$ with $\langle P^{H_v}_{\mathrm{sc}}E_{(0,4)}^{H_v}e_1,e_1\rangle$ and the resulting $\widetilde\Gamma_n(H_v)$ with $1-\widetilde\Gamma_n(H_v)$.
\end{proof}

\section{Two Important Applications}
\label{applic_sec}

Two important applications of our techniques are the computation of the functional calculus and of the Radon-Nikodym derivative of $\mu_{x,y,\mathrm{ac}}^T$ with respect to Lebesgue measure, denoted by $\rho_{x,y}^T$. Both of these have applications throughout mathematics and the physical sciences, some of which are explored numerically in \S \ref{num_sec}. For example, suppose that we wish to solve the Schr\"odinger equation
$$
\frac{du}{dt}=-iHu,\quad u_{t=0}=u_0,
$$
where $H$ is some self-adjoint Hamiltonian. We can express the solution at time $t$ as
$$
u(t)=\exp(-iHt)u_0=\left[\int_{\mathbb{R}}\exp(-i\lambda t)dE^H(\lambda)\right]u_0.
$$
For example, the quantity
$$
L(t)=\langle u(t),u_0\rangle=\int_{\mathbb{R}}\exp(-i\lambda t)d\mu_{u_0,u_0}^H(\lambda),
$$
known as the autocorrelation function \cite{tannor2007introduction}, is simply the Fourier transform of the spectral measure $d\mu_{u_0,u_0}^H$. In particular, if $d\mu_{u_0,u_0}^H$ is absolutely continuous, then $\rho_{u_0,u_0}^H$ and $L$ form a Fourier transform pair. The computation of evolution generated by an operator is in some sense dual to the computation of the spectral measure. This interpretation of a time evolution can be adapted to describe many signals generated by PDEs~\cite{simon1982schrodinger,hundertmark2013operator,dell2019second} and stochastic processes~\cite{kallianpur1971spectral,girardin2003semigroup}~\cite[Ch.~7]{rosenblatt1991stochastic}. In this section, we show how to compute the functional calculus and $\rho_{x,y}^T$.

\subsection{Computation of the functional calculus}
\label{applic_FCCC}

Recall that given a possibly unbounded complex-valued Borel function $F$, defined on $\sigma(T)$, and $T\in\Omega_{\mathrm{N}}$, $F(T)$ is defined by
$$
F(T)=\int_{{\sigma(T)}}F(\lambda )dE^T(\lambda ).
$$
$F(T)$ is a densely defined closed normal operator with dense domain given by
$$
\mathcal{D}(F(T))=\left\{x\in l^2(\mathbb{N}):\int_{\sigma(T)}\left|F(\lambda )\right|^2d\mu_{x,x}^T(\lambda )<\infty\right\}.
$$
For simplicity, we will only deal with the case that $F$ is a bounded continuous function on $\mathbb{R}$, that is, $F\in C_b(\mathbb{R})$. In this case $\mathcal{D}(F(T))$ is the whole of $l^2(\mathbb{N})$ (the variations $|\mu^T_{x,y}|$ are finite) and we can use standard properties of the Poisson kernel. We assume that given $F\in C_b(\mathbb{R})$, we have access to piecewise constant functions $F_n$ supported in $[-n,n]$ such that $\|F-F_n\|_{L^\infty([-n,n])}\leq n^{-1}$. Clearly other suitable data also suffices and, as usual, we abuse notation slightly by adding this information to the evaluation sets $\Lambda_i$ (recall that $\Lambda_i$ are defined in (\ref{evals_def_nene})).

\begin{theorem}[Computation of the functional calculus]
\label{F_calc}
Given the set-up in \S \ref{sec_not} and \S \ref{sec_not2}, consider the map
\begin{align*}
\Xi_{\mathrm{fun}}:\Omega_{f,\alpha,\beta}\times C_b(\mathbb{R})&\rightarrow l^2(\mathbb{N})\\
(T,x,F)&\rightarrow F(T)x.
\end{align*}
Then $\{\Xi_{\mathrm{fun}},\Omega_{f,\alpha,\beta}\times C_b(\mathbb{R}), \Lambda_1\}\in\Delta_2^A$.
\end{theorem}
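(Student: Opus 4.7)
The plan is to combine Corollary \ref{res_est2} with the Poisson-kernel representation underlying Stone's formula. By Fubini's theorem, for any $\epsilon > 0$ and any integrable $G$,
$$
\int_{-\infty}^\infty G(u)\, K_H(u+i\epsilon;T,x)\,du \;=\; \Bigl[\bigl(G * P_H(\cdot,\epsilon)\bigr)(T)\Bigr] x,
$$
where $(G * P_H(\cdot,\epsilon))(\lambda) := \int G(u) P_H(\lambda - u, \epsilon)\,du$ is the Poisson integral of $G$. Specialising $G = F_n$ (the piecewise constant approximation of $F$ on $[-n,n]$) and $\epsilon = \epsilon_n \downarrow 0$, I would define $\Gamma_n(T,x,F)$ to be an arithmetic approximation of this integral, and then show that both the Poisson smoothing and the truncation $F_n \to F$ are washed out in the limit $n \to \infty$.

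For the construction, I would, for each $n$, use Corollary \ref{res_est2} to compute $R(u+i\epsilon_n, T)x$ to tunable accuracy at any rational $u$, and hence also $K_H(u + i\epsilon_n; T, x)$ with error $O(\epsilon_n^{-1}(\alpha_{m}+\beta_{m}))$ for truncation parameter $m$. Since $F_n$ is piecewise constant on $[-n,n]$ and vanishes outside, the integral on the right-hand side of the display reduces to a finite sum of integrals of $K_H(u+i\epsilon_n;T,x)$ over subintervals of $[-n,n]$. By the resolvent identity and self-adjointness, $u \mapsto K_H(u+i\epsilon_n; T, x)$ is Lipschitz with constant $\leq \|x\|/(\pi \epsilon_n^2)$, as used already in the proof of Theorem \ref{meas_comp1}, so a uniform Riemann sum with sufficiently fine mesh approximates each such subinterval integral to arbitrary $l^2$-accuracy with finitely many resolvent evaluations. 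I would set $\Gamma_n(T,x,F)$ to be the resulting finite linear combination of truncated resolvent vectors, choosing the truncation parameter $m(n)$ and the quadrature mesh adaptively so that the total deviation from $\bigl(F_n * P_H(\cdot, \epsilon_n)\bigr)(T) x$ is at most $1/n$ in norm.

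Convergence in $l^2$ to $F(T)x$ then follows from two applications of the scalar dominated convergence theorem against the finite measure $\mu^T_{x,x}$. The family $\{F_n * P_H(\cdot, \epsilon_n)\}_n$ is uniformly bounded by $\|F\|_\infty + 1$. Moreover, at every $\lambda \in \mathbb{R}$, continuity of $F$, the estimate $\|F_n - F\|_{L^\infty([-n,n])} \leq 1/n$, and the decay of the Poisson-kernel tails together imply $(F_n * P_H(\cdot, \epsilon_n))(\lambda) \to F(\lambda)$ for any $\epsilon_n \downarrow 0$. Dominated convergence inside the spectral integral then yields $\bigl(F_n * P_H(\cdot, \epsilon_n)\bigr)(T) x \to F(T) x$ strongly in $l^2(\mathbb{N})$, and the $1/n$ numerical tolerance gives $\Gamma_n(T,x,F) \to F(T)x$.

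The main obstacle is the three-way balance between the resolvent approximation error (which scales like $\epsilon_n^{-1}$), the quadrature error (which scales like $\epsilon_n^{-2}$ times mesh size times the length $2n$), and the need for $\epsilon_n \downarrow 0$ at all in order to recover $F(T)x$. Because the target classification $\Delta_2^A$ demands only a single-limit convergent algorithm without error control, this balance does not require a quantitative rate: one can fix any prescribed $\epsilon_n \downarrow 0$ in advance (for instance $\epsilon_n = 1/n$), and let the adaptive construction of Corollary \ref{res_est2} select its truncation parameter internally, large enough to drive the total numerical error in the quadrature below $1/n$.
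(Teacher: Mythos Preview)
Your proposal is correct and follows essentially the same route as the paper: represent $F(T)x$ via the Poisson integral $\int_{-n}^n F_n(u)\,K_H(u+i/n;T,x)\,du$, approximate the integrand using Corollary~\ref{res_est2} together with the Lipschitz bound on $K_H$ (exactly as in the proof of Theorem~\ref{meas_comp1}), and pass to the limit by dominated convergence. The paper's proof is terser but the ingredients, the choice $\epsilon_n=1/n$, and the logical structure are identical.
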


\begin{proof}
Let $(T,x,F)\in\Omega_{f,\alpha,\beta}\times C_b(\mathbb{R})$ then by Fubini's theorem,
$$
\int_{-n}^n K_H(u+i/n;T,x)F_n(u)du=\left[\int_{-\infty}^\infty\int_{-n}^nP_H(u-\lambda ,1/n)F_n(u)du\ dE^T(\lambda )\right]x.
$$
The inner integral is bounded since $F$ is bounded and the Poisson kernel integrates to $1$ along the real line. It also converges to $F(\lambda )$ everywhere. Hence by the dominated convergence theorem
$$
\lim_{n\rightarrow\infty}\int_{-n}^n K_H(u+i/n;T,x)F_n(u)du=F(T)x.
$$
We now use the same arguments used to prove Theorem \ref{meas_comp1}. Using Corollary \ref{res_est2}, together with $\|K_H(u+i/n;T,x)\|_{L^\infty(\mathbb{R})}\leq nC_1$ and the fact that $K_H(u+i/n;T,x)$ is Lipschitz continuous with Lipschitz constant $n^2C_2$ for some (possibly unknown) constants $C_1$ and $C_2$, we can approximate this integral with an error that vanishes in the limit $n\rightarrow\infty$. 
\end{proof}

If $\sigma(T)$ is bounded, then, with slightly more information available to our algorithms, a simpler proof holds using the Stone--Weierstrass theorem. Suppose that given $x$, the vectors $T^nx$ can be computed to arbitrary precision. There exists a sequence of polynomials $p_m(z)$ converging uniformly to $F(z)$ on $\sigma(T)$. Assuming such a sequence can be explicitly constructed (for example using Bernstein or Chebyshev polynomials), we can take $p_m(T)x$ as approximations of $F(T)x$. If we can bound $\|p_m(z)-F(z)\|_{\sigma(T)}\leq \epsilon_m$ with $\epsilon_m$ null, then the vector $F(T)x$ can be computed \textit{with error control}. However, computing $T^nx$ for large $n$ (even if $x=e_1$) may be computationally expensive as was found in the example in \S \ref{penrose_numerics}. We will also see in \S \ref{penrose_numerics} that if $\sigma(T)$ is bounded and $F$ is analytic in an open neighbourhood of $\sigma(T)$, then $F(T)x$ can be computed with error control by deforming the integration contour away from the spectrum. Such a deformation is useful since we the resolvent does not blow up along such a contour, and we can bound its Lipschitz constant.

\subsection{Computation of the Radon--Nikodym derivative}
\label{app1_RN}

Recall the definition of the Radon--Nikodym derivative in (\ref{RN_def}) and note that $\rho_{x,y}^T\in L^1(\mathbb{R})$ for $T\in\Omega_{\mathrm{SA}}$. We consider its computation in the $L^1$ sense in the following theorem, where, as before, we assume (\ref{open_union}), adding the approximations of $U$ to our evaluation set $\Lambda_1$ (defined in (\ref{evals_def_nene})), along with component-wise evaluations of a given vector $y$. However, we must consider the computation away from the singular part of the spectrum.

\begin{theorem}[Computation of the Radon--Nikodym derivative]
\label{meas_comp3}
Given the set-up in \S \ref{sec_not} and \S \ref{sec_not2}, consider the map
\begin{align*}
\Xi_{\mathrm{RN}}:\Omega_{f,\alpha,\beta}\times l^2(\mathbb{N})\times \mathcal{U}&\rightarrow L^1(\mathbb{R})\\
(T,x,y,U)&\rightarrow \rho_{x,y}^T|_{U}.
\end{align*}
We restrict this map to the quadruples $(T,x,y,U)$ such that $U$ is strictly separated from $\mathrm{supp}(\mu_{x,y,\mathrm{sc}}^T)\cup\mathrm{supp}(\mu_{x,y,\mathrm{pp}}^T)$ and denote this subclass by $\widetilde \Omega_{f,\alpha,\beta}$. Then $\{\Xi_{\mathrm{RN}},\widetilde \Omega_{f,\alpha,\beta}, \Lambda_1\}\in\Delta_2^A$. Furthermore, each output $\Gamma_n(T,x,y,U)$ of the algorithms constructed in the proof consists of a piecewise affine function, supported in $U$ with rational knots and taking (complex) rational values at these knots.
\end{theorem}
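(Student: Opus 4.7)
The plan is to build on the same resolvent machinery used for Theorem \ref{meas_comp1}, combined with the classical Stieltjes inversion formula. Specifically, for $u\in\mathbb{R}$ and $\epsilon>0$, the quantity
$$
g_{\epsilon}(u):=\langle K_H(u+i\epsilon;T,x),y\rangle=\int_{\mathbb{R}}P_H(u-\lambda,\epsilon)\,d\mu_{x,y}^T(\lambda)
$$
is the Poisson-smoothing of the complex measure $\mu^T_{x,y}$, and I would approximate $\rho_{x,y}^T|_U$ by sampling $g_{1/n}$ on a fine rational grid inside a truncation $U_n\subset U$ and forming a piecewise affine interpolant. The resolvent can be evaluated with asymptotic error control via Corollary \ref{res_est2}, and after taking inner products with finite truncations $P_m y$ we obtain rational approximations $\widetilde g_{1/n}(u)$ to $g_{1/n}(u)$ at arbitrary rational points $u$; since $\|K_H(u+i/n;T,x)\|\le n\|x\|$ with Lipschitz constant $O(n^2)$ in $u$, we can control the sampling error for any prescribed mesh.

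The analytic content is to show that $g_{1/n}\to\rho^T_{x,y}$ in $L^1(U)$. I would split $\mu^T_{x,y}=\mu^T_{x,y,\mathrm{ac}}+\mu^T_{x,y,\mathrm{s}}$ where $\mu^T_{x,y,\mathrm{s}}:=\mu^T_{x,y,\mathrm{sc}}+\mu^T_{x,y,\mathrm{pp}}$, so that $g_{\epsilon}=P_H(\cdot,\epsilon)*\rho_{x,y}^T+P_H(\cdot,\epsilon)*\mu^T_{x,y,\mathrm{s}}$. For the absolutely continuous piece, the Poisson kernels form an approximate identity, so $P_H(\cdot,\epsilon)*\rho_{x,y}^T\to\rho^T_{x,y}$ in $L^1(\mathbb{R})$. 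For the singular piece, the hypothesis that $U$ is strictly separated from $\mathrm{supp}(\mu^T_{x,y,\mathrm{s}})$ by some $\delta>0$ gives the pointwise bound
$$
|[P_H(\cdot,\epsilon)*\mu^T_{x,y,\mathrm{s}}](u)|\le\frac{\epsilon}{\pi\delta^2}\,\|\mu^T_{x,y,\mathrm{s}}\|\qquad(u\in U),
$$
which vanishes uniformly on $U$ as $\epsilon\downarrow 0$. These two facts together yield $g_{1/n}\to\rho^T_{x,y}$ in $L^1(U\cap[-R,R])$ for every fixed $R$.

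For the constructive step, I would use the decomposition $U=\bigcup_m(a_m,b_m)$ from (\ref{open_union}) and take $U_n=\bigcup_{m=1}^n(a_{m,n},b_{m,n})\cap[-n,n]$ with rational $a_{m,n}\downarrow a_m$, $b_{m,n}\uparrow b_m$. Inside each interval of $U_n$, I would sample $\widetilde g_{1/n}$ at rational knots spaced at $O(1/n^3)$ so that the piecewise affine interpolant $\Gamma_n(T,x,y,U)$ (with rational values at rational knots, extended by zero outside $U_n$) approximates $g_{1/n}$ uniformly on $U_n$ with error $O(1/n)$, and hence in $L^1(U_n)$. Combined with the convergence $g_{1/n}\to\rho^T_{x,y}$ in $L^1(U\cap[-R,R])$, this will give $\Gamma_n\to\rho^T_{x,y}|_U$ in $L^1(U)$ provided we control the tail.

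The main obstacle is exactly this $L^1$-tail when $U$ is unbounded. Since $\rho^T_{x,y}\in L^1(\mathbb{R})$, the tail $\|\rho^T_{x,y}\|_{L^1(U\setminus[-n,n])}\to 0$ automatically; similarly $\|P_H(\cdot,1/n)*\rho^T_{x,y}\|_{L^1(\mathbb{R}\setminus[-n,n])}$ is small for large $n$ because the Poisson convolution is an $L^1$-contraction and concentrates mass near its original support. The singular contribution to $g_{1/n}$ outside $[-n,n]$ is bounded by $(1/n)\|\mu^T_{x,y,\mathrm{s}}\|/(\pi\delta^2)$ times the Lebesgue measure of the intersection with a Poisson-localised region, which after a careful splitting of $\mathbb{R}\setminus[-n,n]$ into a piece near $\mathrm{supp}(\mu^T_{x,y,\mathrm{s}})$ (where Poisson mass is $O(1/n)$) and its complement (where the kernel decays like $\epsilon/u^2$) is also $o(1)$. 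These uniform tail bounds justify cutting off to $U_n$ without loss of $L^1$ accuracy, completing the $\Delta_2^A$ classification.
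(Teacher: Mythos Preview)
Your approach is essentially the paper's: smooth $\mu^T_{x,y}$ by the Poisson kernel to get $g_\epsilon=\langle K_H(\cdot+i\epsilon;T,x),y\rangle$, split into the absolutely continuous and singular parts, use the approximate-identity property for the former and the separation hypothesis for the latter, and then sample $g_{1/n}$ on a rational mesh inside $U_n$ via Corollary~\ref{res_est2} to build a piecewise affine interpolant.

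There is one quantitative slip. You assert that a uniform interpolation error of $O(1/n)$ on $U_n$ ``hence'' gives $L^1(U_n)$ convergence, but $|U_n|$ can be as large as $2n$, so the product is only $O(1)$. The paper avoids this by weighting the mesh on the $m$-th interval by an extra factor $(b_m-a_m)^{-1}m^{-2}$, which makes the $L^1$ contributions summable over $m$; in your formulation the easy fix is to take spacing $O(n^{-4})$ (or require pointwise accuracy $O(n^{-2})$ at the knots), so that the uniform error is $O(n^{-2})$ and the $L^1(U_n)$ error is $O(n^{-1})$.

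Two smaller remarks. First, the $L^1(U)$ control of the singular piece is cleaner than your pointwise bound suggests: by Fubini,
\[
\int_U\Big|[P_H(\cdot,\epsilon)*\mu^T_{x,y,\mathrm{s}}](u)\Big|\,du\le\int\!\!\int_{|u-\lambda|\ge\delta}P_H(u-\lambda,\epsilon)\,du\,d|\mu^T_{x,y,\mathrm{s}}|(\lambda)=\frac{2}{\pi}\Big(\tfrac{\pi}{2}-\arctan(\delta/\epsilon)\Big)\,\|\mu^T_{x,y,\mathrm{s}}\|\to 0,
\]
so you get $L^1(U)$ convergence directly even when $U$ is unbounded, without splitting into $U\cap[-R,R]$ and a tail. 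Second, your tail discussion is more elaborate than needed: since $\Gamma_n$ is supported in $U_n$, the error on $U\setminus U_n$ is exactly $\|\rho^T_{x,y}\|_{L^1(U\setminus U_n)}\to 0$, and there is no need to estimate $g_{1/n}$ there.
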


\begin{remark}
Essentially, this theorem tells us that if we can compute the action of the resolvent operator with asymptotic error control, then we can compute the Radon--Nikodym derivative of the absolutely continuous part of the measures on open sets which are a positive distance away from the singular support of the measure. The assumption that $U$ is separated from $\mathrm{supp}(\mu_{x,y,\mathrm{sc}}^T)\cup\mathrm{supp}(\mu_{x,y,\mathrm{pp}}^T)$ may seem unnatural but is needed to gain $L^1$ convergence of the approximation. However, without it, the proof still gives almost everywhere pointwise convergence.
\end{remark}

\begin{proof}
Let $(T,x,y,U)\in\widetilde \Omega_{f,\alpha,\beta}$. For $u\in U$ we decompose as follows
\begin{equation}
\label{RN_deriv}
\langle K_H(u+i\epsilon;T,x),y \rangle=\frac{1}{\pi}\int_{\mathbb{R}}\frac{\epsilon}{(\lambda -u)^2+\epsilon^2}\rho_{x,y}^T(\lambda )d\lambda +\frac{1}{\pi}\int_{\mathbb{R}\backslash U}\frac{\epsilon}{(\lambda -u)^2+\epsilon^2}\left\{d\mu_{{x,y},\mathrm{sc}}^T(\lambda )+d\mu_{{x,y},\mathrm{pp}}^T(\lambda )\right\}.
\end{equation}
The first term converges to $\rho_{x,y}^T|_{U}$ in $L^1(U)$ as $\epsilon\downarrow 0$ since $\rho_{x,y}^T|_U\in L^1(U)$. Since we assumed that $U$ is separated from $\mathrm{supp}(\mu_{{x,y},\mathrm{sc}}^T)\cup\mathrm{supp}(\mu_{{x,y},\mathrm{pp}}^T)$, it follows that the second term of (\ref{RN_deriv}) converges to $0$ in $L^1(U)$ as $\epsilon\downarrow 0$. Hence we are done if we can approximate $\langle K_H(u+i/n;T,x),y \rangle$ in $L^1(U)$ with an error converging to zero as $n\rightarrow\infty$.

Recall that $K_H(u+i/n;T,x)$ is Lipschitz continuous with Lipschitz constant at most $n^2\|x\|/\pi$. By assumption, and using Corollary \ref{res_est2}, we can approximate $K_H(u+i/n;T,x)$ to asymptotic precision with vectors of finite support. Hence the inner product
$$f_n(u):=\langle K_H(u+i/n;T,x),y \rangle$$
can be approximated to asymptotic precision (now with a possibly unknown constant also depending on $\|y\|$) and $f_n$ is Lipschitz continuous with Lipshitz constant at most $n^2\|x\|\|y\|/\pi$.

Recall that $U$ can be written as the disjoint union
$$
U=\bigcup_{m}(a_m,b_m),
$$
where $a_m,b_m\in\mathbb{R}\cup\{\pm\infty\}$ and the union is at most countable. Without loss of generality, we assume that the union is over $m\in\mathbb{N}$. Given an interval $(a_m,b_m)$, let $a_m<z_{m,1,n}< z_{m,2,n}<...<z_{m,r_m,n}<b_m$ such that $z_{m,j,n}\in\mathbb{Q}$ and $\left|z_{m,j,n}-z_{m,j+1,n}\right|\leq (b_m-a_m)^{-1}n^{-3}m^{-2}$ and $\left|a_m-z_{m,1,n}\right|,\left|b_m-z_{m,r_m,n}\right|\leq n^{-1}$. Let $f_{m,n}$ be a piecewise affine interpolant with knots $z_{m,1,n},...,z_{m,r_m,n}$ supported on $(z_{m,1,n},z_{m,r_m,n})$ with the property that $\left|f_{m,n}(z_{m,j,n})-f_n(z_{m,j,n})\right|<C(b_m-a_m)^{-1}n^{-1}m^{-2}$. Here $C$ is some unknown constant which occurs from the asymptotic approximation of $f_n$ that arises from Corollary \ref{res_est2} and we can always compute such $f_{m,n}$ in finitely many arithmetic operations and comparisons.

Let $\Gamma_n(T,x,y,U)$ be the function that agrees with $f_{m,n}$ on $(a_m,b_m)$ for $m\leq n$ and is zero elsewhere. Clearly the nodes of $\Gamma_n(T,x,y,U)$ can be computed using finitely many arithmetic operations and comparisons and the relevant set of evaluation functions $\Lambda_1$. A simple application of the triangle inequality implies that
\begin{equation*}
\begin{split}
\int_{U}\left|\Gamma_n(T,U,x,y)(u)-\rho_{x,y}^T(u)\right|du&\leq \sum_{m>n}\int_{(a_m,b_m)}\left|\rho_{x,y}^T(u)\right|du+\sum_{m\leq n}\int_{(a_m,b_m)\backslash (z_{m,1,n},z_{m,r_m,n})}\left|\rho_{x,y}^T(u)\right|du\\
&+\sum_{m\leq n}\int_{(z_{m,1,n},z_{m,r_m,n})}\left|\rho_{x,y}^T(u)-f_n(u)\right|du+\frac{\widetilde C(x,y,T)}{n}\sum_{m\leq n}\frac{1}{m^2},
\end{split}
\end{equation*}
where the last term arises due to the piecewise linear interpolant. The bound clearly converges to zero as required.
\end{proof}

\section{Computing Spectra as Sets}
\label{heavy_spec}

We now turn to computing the different types of spectra as sets in the complex plane. Specifically, define the problem functions $\Xi_{\mathcal{I}}^{\mathbb{C}}(T)=\sigma_{\mathcal{I}}(T)$ for $\mathcal{I}=\mathrm{ac},\mathrm{sc}$ or $\mathrm{pp}$. Note also that $\sigma_{\mathrm{pp}}(T)=\overline{\sigma_{\mathrm{p}}(T)}$, the closure of the set of eigenvalues. Recalling the definition of a computational problem in Appendix \ref{append1}, we compute these quantities in a metric space $\mathcal{M}$ with metric $d_{\mathcal{M}}$. Since we wish to include unbounded operators, we use the \emph{Attouch--Wets metric} defined by
\begin{equation}\label{eq:Attouch-Wets}
d_{\mathrm{AW}}(C_1,C_2)=\sum_{n=1}^{\infty} 2^{-n}\min\left\{{1,\underset{\left|x\right|\leq n}{\sup}\left|\mathrm{dist}(x,C_1)-\mathrm{dist}(x,C_2)\right|}\right\},
\end{equation}
for $C_1,C_2\in\mathrm{Cl}(\mathbb{C}),$ where $\mathrm{Cl}(\mathbb{C})$ denotes the set of closed non-empty subsets of $\mathbb{C}$. When considering bounded $T$, whose spectrum is necessarily a compact subset of $\mathbb{C}$, we let $(\mathcal{M},d_{\mathcal{M}})$ be the set of all non-empty compact subsets of $\mathbb{C}$ provided with the \textit{Hausdorff metric} $d_{\mathcal{M}}=d_{\mathrm{H}}$:
\begin{equation}\label{Hausdorff}
d_{\mathrm{H}}(X,Y) = \max\left\{\sup_{x \in X} \inf_{y \in Y} d(x,y), \sup_{y \in Y} \inf_{x \in X} d(x,y) \right\},
\end{equation}
where $d(x,y) = |x-y|$ is the usual Euclidean distance. Note that for compact sets (and hence for bounded operators), the topological notions of convergence according to $d_\mathrm{H}$ and $d_\mathrm{AW}$ coincide. To allow the possibility that the different spectral sets $\sigma_{\mathcal{I}}(T)$ are empty, we add the empty set to our metric space as an isolated point (the space remains metrisable).\footnote{This simply means that $F_n\rightarrow\emptyset$ if and only if $F_n=\emptyset$ eventually.}

The main theorem of this section is the following:

\begin{theorem}
\label{spec_decomp_as_sets}
Given the above setup and that in \S \ref{sec_not} and \S \ref{sec_not2}, for $i=1,2$ it holds that
$$
\Delta^G_2 \not\owns  \{\Xi_{\mathrm{ac}}^{\mathbb{C}},\Omega_{f,\alpha},\Lambda_i\} \in \Delta^A_3,\quad \Delta^G_2 \not\owns  \{\Xi_{\mathrm{sc}}^{\mathbb{C}},\Omega_{f,\alpha},\Lambda_i\} \in \Delta^A_4,\quad \Delta^G_2 \not\owns  \{\Xi_{\mathrm{pp}}^{\mathbb{C}},\Omega_{f,\alpha},\Lambda_i\} \in \Delta^A_3.
$$
If $f(n)-n\geq \sqrt{2n}+\frac{1}{2}$, then $\{\Xi_{\mathrm{sc}}^{\mathbb{C}},\Omega_{f,0},\Lambda_i\}\not\in\Delta^G_3$ also holds.
\end{theorem}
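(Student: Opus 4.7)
My plan has four components, matching the four assertions in the theorem.

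\textbf{Inclusion results.} For each $\mathcal{I}\in\{\mathrm{ac},\mathrm{pp},\mathrm{sc}\}$, I would cover $\mathbb{C}$ by a nested family of balls centred at Gaussian rationals and decide which balls meet $\sigma_{\mathcal{I}}(T)$ using the measure algorithms of Theorem \ref{spec_decomp_comp}. Approximating $\sigma_{\mathcal{I}}(T)$ to accuracy $2^{-k}$ in $d_{\mathrm{AW}}$ reduces to listing those rational centres $z$ with $\mathrm{dist}(z,\sigma_{\mathcal{I}}(T))<2^{-k}$, which in turn amounts to deciding whether $\mu^T_{e_j,e_j,\mathcal{I}}(B(z,2^{-k}))>0$ for some $j$; since $\{e_j\}$ is cyclic the supports of $\mu^T_{e_j,e_j,\mathcal{I}}$ exhaust $\sigma_{\mathcal{I}}(T)$. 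For $\mathcal{I}=\mathrm{ac}$ I would sidestep one limit by using the Radon--Nikodym derivative from Theorem \ref{meas_comp3} in place of the full measure: its positivity on $B(z,2^{-k})$ admits a $\Delta_2^A$ test, so two total limits suffice. For $\mathcal{I}=\mathrm{pp}$ I would bypass Theorem \ref{spec_decomp_comp} entirely by computing atom masses $\|E^T_{\{z\}}e_j\|$ via $\epsilon\pi K_H(z+i\epsilon;T,e_j)$ and letting $\epsilon\downarrow 0$, again a two-limit procedure, leaving one limit for enumeration of candidate atoms and closure. The $\mathrm{sc}$ case has no such shortcut: one is forced to feed the full $\Delta_3^A$ algorithm of Theorem \ref{spec_decomp_comp} into a support-detection layer, yielding $\Delta_4^A$.

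\textbf{Exclusion $\notin\Delta_2^G$ for all three.} I would recycle the adversarial Schr\"odinger potentials used in the exclusion half of Theorem \ref{spec_decomp_comp}, which already lie in $\Omega_{f,0}$ with $f(n)=n+1$. For $\mathrm{pp}$: alternately patch the tail of $v$ between a block drawn from the Anderson ensemble of Theorem \ref{andlocal} (forcing $(0,4)\subset\sigma_{\mathrm{pp}}(H_v)$) and a zero block (forcing $(0,4)\cap\sigma_{\mathrm{pp}}(H_v)=\emptyset$ by \cite{remling1998absolutely}); a hypothetical height-one tower $\Gamma_n$ produces Attouch--Wets limits in different connected components of the metric space along the two switching schedules, and the usual inductive construction in the proof of Theorem \ref{spec_decomp_comp} yields one potential on which $\Gamma_n(H_v)$ fails to converge. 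For $\mathrm{ac}$ and $\mathrm{sc}$ I would reduce from the $\Sigma_1$-complete problem $\Xi'$ of Appendix \ref{append1} using the factorial sparse potential $v(m)=\sum_k a_k\delta_{m,k!}$ together with the Krutikov--Remling dichotomy (Theorem \ref{oracle_lemma}): the output of any supposed $\Delta_2^G$ algorithm, thresholded for membership of the midpoint $2$ in $\sigma_{\mathcal{I}}(H_v)$, would decide $\Xi'(\{a_j\})$ in one limit, a contradiction.

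\textbf{Strong exclusion $\Xi_{\mathrm{sc}}^{\mathbb{C}}\notin\Delta_3^G$ when $f(n)-n\geq\sqrt{2n}+\tfrac12$.} This is the main obstacle. The bandwidth budget $f(n)-n\geq\sqrt{2n}+\tfrac12$ allows roughly $\sqrt{2n}$ off-diagonal entries on row $n$, which by a triangular-number counting argument is exactly enough room to host countably many disjoint ``Schr\"odinger block'' sub-operators acting on mutually orthogonal subspaces of $l^2(\mathbb{N})$ while keeping the matrix representation supported inside the prescribed sparsity pattern. I would use this to reduce from a $\Pi_2$-complete decision problem (e.g., ``does the two-dimensional array $\{a_{i,j}\}$ have the property that every row has only finitely many ones'', shown to have $\mathrm{SCI}_G=3$ by a standard Baire-category diagonalisation in the spirit of Appendix \ref{append1}). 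For each row index $i$, a Krutikov--Remling block indexed by the sparse potential $v_i(m)=\sum_j a_{i,j}\delta_{m,j!}$ is placed on the $i$-th orthogonal subspace and scaled/translated so that its singular continuous spectrum, when present, appears inside a disjoint interval $I_i\subset\mathbb{R}$ with $I_i\to\infty$; the overall operator lies in $\Omega_{f,0}$ by construction. Then $I_i\cap\sigma_{\mathrm{sc}}$ is non-empty iff row $i$ has infinitely many ones, so any height-two tower for $\Xi_{\mathrm{sc}}^{\mathbb{C}}$ in $d_{\mathrm{AW}}$ would, after a rational thresholding, decide the underlying $\Pi_2$ question in two limits, contradicting its $\mathrm{SCI}_G=3$ classification.

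The hard part is the bandwidth geometry in the last step: one must arrange the countably many Schr\"odinger blocks as a single self-adjoint operator whose non-zero entries $(i,j)$ satisfy $j\leq f(i)$ whenever $j>i$ (and vice versa), while preserving the independence of the spectral contribution of each block and ensuring the disjoint intervals $I_i$ do not accumulate so as to blur the Attouch--Wets test. The inequality $f(n)-n\geq\sqrt{2n}+\tfrac12$ is precisely the threshold at which such a packing becomes possible by a triangular-number argument, which is why the hypothesis appears in exactly this form; verifying sharpness in a clean way is where I expect the argument to consume most of its technical weight.
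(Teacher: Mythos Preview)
Your exclusion arguments are essentially those of the paper: Anderson localisation versus compactly supported tails for $\mathrm{pp}$, and the Krutikov--Remling dichotomy on factorial-sparse potentials reducing from the $\{0,1\}$-sequence problem for $\mathrm{ac}$ and $\mathrm{sc}$. For the $\notin\Delta_3^G$ step the paper also reduces from the two-dimensional array problem (``some column has infinitely many ones'', shown to have $\mathrm{SCI}_G=3$ in \cite{ben2015can}) via a direct sum $\bigoplus_j H_j$, but it does \emph{not} translate the blocks to disjoint intervals: all blocks sit on $[0,4]$, and one simply tests $\mathrm{dist}(3,\Gamma_{n_2,n_1}(H))$. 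Your disjoint-interval scheme with $I_i\to\infty$ is an unnecessary complication and would in fact interact badly with the Attouch--Wets metric. The bandwidth check you flag as the hard part is, in the paper, a two-line computation once one uses the standard diagonal bijection $\mathbb{N}\leftrightarrow\mathbb{N}\times\mathbb{N}$: the $n$-th basis vector sits in block $j=O(\sqrt{n})$ at position $m=O(\sqrt{n})$, and the tridiagonal coupling within a block shifts $n$ by at most $O(\sqrt{n})$ under the bijection.

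Your inclusion argument for $\mathrm{pp}$ has a genuine gap. The limit $\epsilon\pi K_H(z+i\epsilon;T,e_j)\to E^T_{\{z\}}e_j$ vanishes unless an eigenvalue sits \emph{exactly} at $z$; enumerating rational $z$ therefore detects nothing for operators whose point spectrum is irrational (the generic case for Jacobi matrices), and your algorithm would output $\emptyset$. The paper fixes this by working with intervals rather than points: Lemma~\ref{pps_tech} builds a height-two tower for the decision problem ``$\sigma_{\mathrm{pp}}(T)\cap[a,b]\neq\emptyset$'' by computing $\mu^T_{e_j,e_j}([a,b])-\mu^T_{e_j,e_j,\mathrm{c}}([a,b])$ (the second term via the RAGE-theorem tower), with the final limit monotone from below. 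These interval decisions then feed a dyadic bisection tree (see Figure~\ref{tree_construct}) whose leaves form the approximating set; crucially the tree is built inside the \emph{same} two limits, not a third one, because for fixed $n_2$ only finitely many intervals are tested and each decision stabilises in $n_1$.

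Your $\mathrm{ac}$ inclusion has a smaller gap: Theorem~\ref{meas_comp3} carries the hypothesis that $U$ is separated from the singular support, which is unknown to the algorithm, so you cannot invoke it on arbitrary balls. The paper's Lemma~\ref{acs_tech} instead uses that $\langle K_H(u+i/n;T,e_m),e_m\rangle\to\rho^T_{e_m,e_m}(u)$ Lebesgue-a.e.\ regardless of the singular support, and handles the failure of $L^1$ convergence with a compactly supported cutoff $\chi_{n_2}$ applied to the integrand, which restores dominated convergence and gives a monotone-from-below height-two tower for $\mu^T_{e_m,e_m,\mathrm{ac}}([a,b])$; the same tree construction then applies. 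For $\mathrm{sc}$ your plan is correct in outline; the paper's Lemma~\ref{scs_tech} subtracts the $\mathrm{pp}$ and $\mathrm{ac}$ towers from the full measure to obtain a height-three interval-decision tower, which pushes the tree construction to $\Delta_4^A$.
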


In order to prove Theorem \ref{spec_decomp_as_sets}, we only need to prove the lower bounds for $\Lambda_2$ and the upper bounds for $\Lambda_1$ (recall that $\Lambda_i$ are defined in (\ref{evals_def_nene})). These results show that despite the results of \S\S \ref{res_stones}-- \ref{applic_sec}, in general it is very hard to compute the decomposition of the spectrum in the sense of (\ref{hilbert_decomp}). We also answer the question posed in \S \ref{stone_sec} and prove that the spectral measures, while computable in one limit, cannot, in general, be computed with error control (see Theorem \ref{meas_err_impossible}), unless one has additional regularity assumptions such as in \cite{colbrook2020computingSM} (computation with error control is made precise in \cite[Ch. 4]{colbrook2020foundations}).

\subsection{Proof for point spectra}

\begin{proof}[Proof that $\{\Xi_{\mathrm{pp}}^{\mathbb{C}},\Omega_{f,\alpha},\Lambda_2\}\notin \Delta_2^G$ ] To prove this, it is enough to consider bounded Schr\"odinger operators acting on $l^2(\mathbb{N})$ (defined in \S \ref{measu_deomps2}), which are clearly a subclass of $\Omega_{f,0}$ for $f(n)=n+1$. Note that since the evaluation functions in $\Lambda_2$ can be recovered from those in $\Lambda_1$ in this special case, we can assume that we are dealing with $\Lambda_1$. Suppose for a contradiction that there does exist a sequence of general algorithms, $\Gamma_n$, with
$$
\lim_{n\rightarrow\infty}\Gamma_n(H_v)=\Xi_{\mathrm{pp}}^{\mathbb{C}}(H_v).
$$
We will construct a potential $v$ such that $\Gamma_n(H_v)$ does not converge. To do this, choose $\rho=\chi_{[-c,c]}/(2c)$ for some constant $c$ such that the conditions of Theorem \ref{andlocal} hold. We will use Theorem \ref{andlocal} and the following well-known facts:
\begin{enumerate}
	\item If there exists $N\in\mathbb{N}$ such that $v(n)$ is $0$ for $n\geq N$, then $\sigma_{\mathrm{pp}}(H_v)\cap(0,4)=\emptyset$ \cite{remling1998absolutely}, but $[0,4]\subset \sigma(H_v)$ (the potential acts as a compact perturbation so the essential spectrum is $[0,4]$).
	\item If we are in the setting of Theorem \ref{andlocal}, then the spectrum of $H_{v_\omega}+A$ is pure point almost surely. Moreover, if $\rho=\chi_{[-c,c]}/(2c)$ for some constant $c$, then $[-c,4+c]\subset\sigma_{\mathrm{pp}}(H_{v_\omega}+A)$ almost surely.
\end{enumerate}

We will define the potential $v$ inductively as follows. Let $v_1$ be a potential of the form $v_{\omega}$ (with density $\rho$) such that $[-c,4+c]\subset \sigma(H_{v_1})$ and $\sigma(H_{v_1})$ is pure point. Such a $v_1$ exists by Theorem \ref{andlocal} and fact (2) above. Then for large enough $n$ there exists $z_n\in\Gamma_n(H_{v_1})$ such that $\left|z_n-2\right|\leq 1$. Fix $n=n_1$ such that this holds. Then $\Gamma_{n_1}(H_{v_1})$ only depends on $\{v_1(j):j\leq N_1\}$ for some integer $N_1$ by (i) of Definition \ref{Gen_alg}. Define the potential $v_2$ by $v_2(j)=v_1(j)$ for all $j\leq N_1$ and $v_2(j)=0$ otherwise. Then by fact (1) above, $\Gamma_n(H_{v_2})\cap [1/2,7/2]=\emptyset$ for large $n$, say for $n_2$. But then $\Gamma_{n_2}(H_{v_2})$ only depends on $\{v_2(j):j\leq N_2\}$ for some integer $N_2$. 

We repeat this process inductively switching between potentials which induce $\Gamma_{n_k}(H_{v_k})\cap [1/2,7/2]=\emptyset$ for $k$ even and potentials which induce $\Gamma_{n_k}(H_{v_k})\cap [1,3]\neq\emptyset$ for $k$ odd. Explicitly, if $k$ is even then define a potential $v_{k+1}$ by $v_{k+1}(j)=v_k(j)$ for all $j\leq N_k$ and $v_{k+1}(j)=v_{\omega}(j)$ (with the density $\rho$) otherwise such that $[-c,4+c]\subset \sigma(H_{v_{k+1}})$ and $\sigma(H_{v_{k+1}})$ is pure point. Such a $\omega$ exists from Theorem \ref{andlocal} and fact (2) above applied with the perturbation $A$ to match the potential for $j\leq N_k$. If $k$ is odd then we define $v_{k+1}$ by $v_{k+1}(j)=v_k(j)$ for all $j\leq N_k$ and $v_{k+1}(j)=0$ otherwise. We can then choose $n_{k+1}$ such that the above intersections hold and $N_{k+1}$ such that $\Gamma_{n_{k+1}}(H_{v_{k+1}})$ only depends on $\{v_{k+1}(j):j\leq N_{k+1}\}$. Finally set $v(j)=v_{k}(j)$ for $j\leq N_{k}$. It is clear from (iii) of Definition \ref{Gen_alg}, that $\Gamma_{n_k}(H_{v})=\Gamma_{n_k}(H_{v_k})$. But then this implies that $\Gamma_{n_k}(H_{v})$ cannot converge, the required contradiction.
\end{proof}

A similar argument gives the following theorem, where $\mathbb{V}$ is used to denote bounded real-valued potentials on $\mathbb{N}$ and $\Lambda_3$ denotes the pointwise evaluations of such potentials.

\begin{theorem}[Impossibility of computing spectral measures with error control]
\label{meas_err_impossible}
Consider the problem function
\begin{equation*}
\begin{split}
\widehat\Xi:\mathbb{V}\times \mathbb{N}&\rightarrow\mathbb{R}_{\geq 0}\\
(v,j)&\rightarrow \langle E_{\{1\}}^{H_v}e_j, e_j\rangle
\end{split}.
\end{equation*}
Then $\{\widehat\Xi,\mathbb{V}\times \mathbb{N},\Lambda_3\}\in\Delta_2^A$ but $\{\widehat\Xi,\mathbb{V}\times \mathbb{N},\Lambda_3\}\notin\Delta_1^G$. In other words, $\widehat\Xi$ can be computed in one limit, but it cannot be computed with error control.
\end{theorem}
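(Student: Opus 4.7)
The plan is to handle the two halves separately: the inclusion is a direct application of the resolvent machinery already set up, while the exclusion requires a careful ``witness potential'' whose spectral mass at $\{1\}$ can be destroyed by modifying only the tail.

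For the inclusion $\{\widehat\Xi,\mathbb{V}\times\mathbb{N},\Lambda_3\}\in\Delta_2^A$, I would start from the singleton form of Stone's formula mentioned after the proof of Theorem \ref{meas_comp1}, namely
$$
\widehat\Xi(v,j)=\langle E^{H_v}_{\{1\}}e_j,e_j\rangle=\lim_{\epsilon\downarrow 0}\epsilon\pi\langle K_H(1+i\epsilon;H_v,e_j),e_j\rangle=\lim_{\epsilon\downarrow 0}\frac{\epsilon}{\pi}\,\mathrm{Im}\langle R(1+i\epsilon,H_v)e_j,e_j\rangle.
$$
Since $H_v$ is tridiagonal ($f(n)=n+1$, $\alpha\equiv 0$) and $e_j\in V_0$, Corollary \ref{res_est2} supplies arithmetic algorithms approximating $R(1+i\epsilon,H_v)e_j$ to arbitrary accuracy once $\epsilon>0$ is fixed. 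Taking an inner product with $e_j$ (which is just reading off the $j$th coordinate of a finitely supported vector) and then diagonalising with $\epsilon_n=1/n$ against a sufficiently large truncation parameter $m(n)$ gives $\Gamma_n$ with $\Gamma_n(v,j)\to\widehat\Xi(v,j)$, proving the inclusion.

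For the exclusion $\{\widehat\Xi,\mathbb{V}\times\mathbb{N},\Lambda_3\}\notin\Delta_1^G$, the core idea is to exhibit a single concrete potential $v_*$ whose eigenvalue at $1$ is both explicit and easily destroyed by tail perturbations. I would take
$$
v_*(1)=-1/2,\qquad v_*(k)=3/2\quad(k\geq 2),
$$
so that the candidate eigenvector $\psi_k=2^{1-k}$ ($k\geq 1$, $\psi_0=0$) satisfies $H_{v_*}\psi=\psi$ (a two-line check using the recurrence $\psi_{k+1}+\psi_{k-1}=(1+v_*(k))\psi_k$). Because $v_*(k)\to 3/2$, the essential spectrum of $H_{v_*}$ is $[3/2,11/2]$, so $1$ is an isolated, simple eigenvalue and $\widehat\Xi(v_*,1)=|\psi_1|^2/\|\psi\|^2=1/(4/3)=3/4$.

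Assume for contradiction $\{\Gamma_n\}$ is a sequence of general algorithms with $|\Gamma_n(v,j)-\widehat\Xi(v,j)|\leq 2^{-n}$ for all inputs, and focus on $n=2$. Running $\Gamma_2$ on $(v_*,1)$ requires only finitely many evaluations of $v_*$; let $N$ be the largest index queried. Define the truncated potential
$$
v_*'(k)=\begin{cases} v_*(k), & k\leq N,\\ 0, & k>N.\end{cases}
$$
Since $v_*'$ is eventually zero, Remling's theorem (fact (1) in Step 1 of the proof of exclusion in Theorem \ref{spec_decomp_comp}) gives $\sigma_{\mathrm{pp}}(H_{v_*'})\cap(0,4)=\emptyset$, hence $E^{H_{v_*'}}_{\{1\}}=0$ and $\widehat\Xi(v_*',1)=0$. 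By property (iii) of general algorithms (Definition \ref{Gen_alg}), $\Gamma_2$ reads the same sequence of query values on $v_*$ and on $v_*'$, so $\Gamma_2(v_*',1)=\Gamma_2(v_*,1)\in[1/2,1]$, which contradicts $|\Gamma_2(v_*',1)-0|\leq 1/4$. The main obstacle is the construction of $v_*$: it must be explicit enough that the eigenvalue $1$ is certified with overlap $|\hat\psi_1|^2$ bounded below by more than $2\cdot 2^{-n_0}$ for some fixed $n_0$, while still being destructible by an eventually-zero tail modification. The exponential-decay ansatz with a single defect at site $1$ achieves both simultaneously, and once this witness is in place, the rest of the argument is a standard finite-information diagonalisation.
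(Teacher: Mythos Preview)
Your proof is correct. The inclusion argument is identical to the paper's, and your exclusion argument is valid: the explicit potential $v_*$ does yield a simple eigenvalue at $1$ with $\widehat\Xi(v_*,1)=3/4$, and the tail truncation together with Remling's result kills it, forcing the claimed numerical contradiction at level $n=2$.

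The route for the exclusion, however, is genuinely different from the paper's. The paper does not construct an explicit witness; instead it assumes a $\Delta_1^G$ tower $\{\Gamma_n\}$ and uses the error bounds to build, for each $j$, a monotone $\{0,1\}$-valued indicator $\Gamma_n^j(v)$ detecting $\widehat\Xi(v,j)>0$, then takes $\widehat\Gamma_n(v)=\max_{j\leq n}\Gamma_n^j(v)$ to obtain a monotone-from-below algorithm deciding whether $1\in\sigma_{\mathrm p}(H_v)$. It then picks \emph{some} $v$ with $1\in\sigma_{\mathrm p}(H_v)$ (no explicit formula needed), truncates the tail, and appeals to the same Remling fact. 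Your argument short-circuits the auxiliary construction entirely by producing a concrete $v_*$ with a quantitatively large spectral mass at $j=1$, so a \emph{single} call to $\Gamma_2$ already gives the contradiction. This buys simplicity and a fully explicit counterexample; the paper's version buys robustness, since it would go through even without an explicit eigenfunction or a lower bound on any individual $\langle E_{\{1\}}e_j,e_j\rangle$.
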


\begin{proof}
The result $\{\widehat\Xi,\mathbb{V}\times \mathbb{N},\Lambda_3\}\in\Delta_2^A$ follows directly from the remarks after Theorem \ref{meas_comp1} and Theorem \ref{res_est1}. Suppose for a contradiction that $\{\widehat\Xi,\mathbb{V}\times \mathbb{N},\Lambda_3\}\in\Delta_1^G$ and that $\Gamma_n$ is a sequence of general algorithms solving the problem with error control. It follows that for each $j\in\mathbb{N}$, there exists a sequence of general algorithms $\Gamma_n^j$ such that
\begin{equation*}
\lim_{n\rightarrow\infty}\Gamma_n^j(v)=\begin{cases}
1,\quad &\text{if }\widehat\Xi(v,j)>0\\
0,\quad&\text{otherwise}
\end{cases}.
\end{equation*}
Informally, these are described as follows. Fix $j$ and consider the lower bound on $\widehat\Xi(v,j)$ computed by $\{\Gamma_m(v,j):m\leq n\}$. If this is greater than $0$ then set $\Gamma_n^j(v)=1$, otherwise set $\Gamma_n^j(v)=0$. It follows that $\Gamma_n^j(v)$ also converges from below. It holds that $1\in\sigma_{\mathrm{p}}(H_v)$ if and only if $\widehat\Xi(v,j)>0$ for some $j\in\mathbb{N}$. Now define
$$
\widehat\Gamma_{n}(v)=\sup_{j\leq n}\Gamma^j_n(v).
$$
It is clear that this is a general algorithm using $\Lambda_3$. Furthermore,
\begin{equation*}
\lim_{n\rightarrow\infty}\widehat\Gamma_n(v)=\begin{cases}
1,\quad &\text{if }1\in\sigma_{\mathrm{p}}(H_v)\\
0,\quad&\text{otherwise}
\end{cases},
\end{equation*}
with convergence from below.

Now we may choose a $v$ such that $1\in\sigma_{\mathrm{p}}(H_v)$ (this can be achieved for example by taking a potential which induces pure point spectrum and shifting the operator accordingly). It follows that for large $n$, we have $\widehat\Gamma_n(v)=1$. But the computation of $\widehat\Gamma_n(v)$ is only dependent on $v(j)$ for $j<N$ for some $N\in\mathbb{N}$. Define $v_0\in\mathbb{V}$ by $v_0(j)=v(j)$ if $j<N$ and $v_0(j)=0$ otherwise. It follows that $\widehat\Gamma_n(v_0)=1$. But since the potential has compact support, $1\notin\sigma_{\mathrm{p}}(H_{v_0})$ and hence $\widehat\Gamma_n(v_0)=0$, the required contradiction.
\end{proof}

We now prove that $\Xi_{\mathrm{pp}}^\mathbb{C}$ can be computed using a height two arithmetical tower. The first step is the following technical lemma, whose proof will also be used later when considering $\Xi_{\mathrm{ac}}^\mathbb{C}$.

\begin{lemma}
\label{pps_tech}
Let $a<b$ with $a,b\in\mathbb{R}$ and consider the decision problem
\begin{align*}
\Xi_{a,b,\mathrm{pp}}:\Omega_{f,\alpha}&\rightarrow\{0,1\}\\
T&\rightarrow\begin{dcases}
1\quad\text{if }\sigma_{\mathrm{pp}}(T)\cap[a,b]\neq\emptyset\\
0\quad\text{otherwise}.
\end{dcases}
\end{align*}
Then there exists a height two arithmetical tower $\Gamma_{n_2,n_1}$ (with evaluation functions $\Lambda_1$) for $\Xi_{a,b,\mathrm{pp}}$. Furthermore, the final limit is from below in the sense that $\Gamma_{n_2}(T):=\lim_{n_1\rightarrow\infty}\Gamma_{n_2,n_1}(T)\leq \Xi_{a,b,\mathrm{pp}}(T)$.
\end{lemma}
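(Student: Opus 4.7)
The plan is to use the characterization that $\sigma_{\mathrm{pp}}(T) \cap [a, b] \neq \emptyset$ if and only if for every $n \in \mathbb{N}$, there is an eigenvalue of $T$ in the open interval $(a - 1/n, b + 1/n)$. The forward direction uses $\sigma_{\mathrm{pp}}(T) = \overline{\sigma_{\mathrm{p}}(T)}$ together with the fact that $[a, b] \subset \bigcap_n (a - 1/n, b + 1/n)$; the converse follows by extracting, via Bolzano--Weierstrass in $[a - 1, b + 1]$, an accumulation point in $[a, b]$ of a sequence of eigenvalues chosen one from each $(a - 1/n, b + 1/n)$.

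For the inner algorithm, I would exploit Corollary \ref{res_est2} to compute, for rational $u$ and integers $m, j$, arithmetic approximations $\widetilde{\tau}_{j, u, m}(T)$ to $\tau_{j, u, m}(T) := (1/m)\, \pi \langle K_H(u + i/m;\, T, e_j), e_j\rangle$ to arbitrary precision. The key monotonicity $\tau_{j, u, m}(T) \geq \mu^T_{e_j, e_j}(\{u\})$ for every $m$, with $\tau_{j, u, m}(T) \downarrow \mu^T_{e_j, e_j}(\{u\})$ as $m \to \infty$ (because the Poisson weight $(1/m)^2 / ((\lambda - u)^2 + (1/m)^2)$ is $\leq 1$ with equality exactly at $\lambda = u$), together with the quantitative lower bound $\tau_{j, u, m}(T) \geq m^\ast / 2$ whenever $|u - \lambda| \leq 1/m$ for an atom $\lambda$ of mass $m^\ast$ on $e_j$, permits the detection of eigenvalues by evaluating $\tau$ on a sufficiently dense rational grid coupled to the regularization scale.

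I would then set $\Gamma_{n_2, n_1}(T) = 1$ exactly when, for every $k$ in a suitable range tied to $(n_2, n_1)$, the search over $j \leq n_2$ and $u$ in a rational grid of spacing $\leq 1/n_1$ inside $(a - 1/k, b + 1/k)$ produces a pair with $\widetilde{\tau}_{j, u, m}(T)$ above a mass threshold (shrinking with $n_2$) for all tested $m \leq n_1$, and $\Gamma_{n_2, n_1}(T) = 0$ otherwise. The inner limit $\Gamma_{n_2}(T) = 1$ then certifies the existence of atoms meeting the threshold at every scale $k$ probed; a pigeonhole count using $\sum_\lambda \mu^T_{e_j, e_j}(\{\lambda\}) \leq 1$ ensures there are only finitely many such atoms across $j \leq n_2$, so their persistence across all $k$ forces one to lie in $\bigcap_k (a - 1/k, b + 1/k) = [a, b]$, giving $\Gamma_{n_2}(T) \leq \Xi_{a, b, \mathrm{pp}}(T)$.

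The main obstacle is arranging the outer $n_2 \to \infty$ limit so that $\Gamma_{n_2}$ eventually detects every $T$ with $\Xi_{a, b, \mathrm{pp}}(T) = 1$, including the case where $\sigma_{\mathrm{pp}}(T) \cap [a, b]$ arises purely by accumulation of eigenvalues from outside $[a, b]$. This requires the mass threshold and the index range on $j$ to relax with $n_2$ at rates that admit clusters of small-mass eigenvectors while preserving the pigeonhole certification at each fixed $n_2$; the delicate coordination between the grid density $1/n_1$, the regularization scale $1/m$, the mass threshold, and the range of scales $k$ tested in the inner loop is the technical crux, but each individual step reduces to routine arithmetic on the evaluations in $\Lambda_1$ provided by Corollary \ref{res_est2}.
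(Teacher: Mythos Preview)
Your approach is quite different from the paper's. Rather than hunting for individual atoms via $\tau_{j,u,m}=(\pi/m)\langle K_H(u+i/m;T,e_j),e_j\rangle$ on a rational grid and running a pigeonhole over scales $k$, the paper computes the total pure point mass $\mu^T_{e_j,e_j,\mathrm{pp}}([a,b])$ directly as the difference $\mu^T_{e_j,e_j}([a,b])-\mu^T_{e_j,e_j,\mathrm{c}}([a,b])$: a height-one tower for the full measure via Stone's formula, and a height-two tower (final limit from above) for the continuous part via the RAGE theorem, as in Step~1 of the proof of Theorem~\ref{spec_decomp_comp}. Subtracting yields a height-two tower for the pure point mass with final limit from below; one then sets $\Upsilon_{n_2,n_1}=\max_{j\leq n_2}a_{j,n_2,n_1}$ and uses the separating-interval trick $J_1^{n_2}=[0,1/n_2]$, $J_2^{n_2}=[2/n_2,\infty)$ to produce a $\{0,1\}$-valued tower. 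This completely bypasses the grid/scale coordination and the problem of distinguishing an atom from nearby singular continuous mass at finite $m$: the RAGE theorem performs that separation wholesale.

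Your sketch has two concrete gaps. First, you do not argue that the inner limit $\lim_{n_1}\Gamma_{n_2,n_1}(T)$ exists: the grid density, the regularization index, and the range of $k$ all move with $n_1$, and the $\{0,1\}$-valued sequence is not obviously eventually constant without a stabilization device analogous to the paper's $J_1/J_2$ trick. Second, and more seriously, the obstacle you flag in your final paragraph is fatal to the scheme as described. If eigenvalues $\lambda_n\uparrow a$ from below with no eigenvalue in $[a,b]$, then for every fixed $n_2$ only finitely many atoms (over $j\leq n_2$) exceed the threshold $\delta(n_2)$, and each of them lies strictly below $a$; for $k$ large enough they fall outside $(a-1/k,b+1/k)$, the detection fails, and $\Gamma_{n_2}(T)=0$ for every $n_2$. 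No relaxation of $\delta$ or of the $j$-range repairs this, because your test asks for a \emph{single} above-threshold atom in each shrinking window rather than for aggregate pure point mass on $[a,b]$. You correctly identify this as the crux but do not resolve it.
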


\begin{proof}
Step 1 of the proof of Theorem \ref{spec_decomp_comp} yields a height two arithmetical tower $\widehat \Gamma_{n_2,n_1}^j(T)$ for the computation of $\mu_{e_j,e_j,\mathrm{c}}^T((a,b))$. Note that the final limit is from above and using the fact that $\mu_{e_j,e_j,\mathrm{c}}^T(\{a,b\})=0$, we obtain a height two tower for $\mu_{e_j,e_j,\mathrm{c}}^T([a,b])$. We can then use the height one tower for $\mu_{e_j,e_j}^T([a,b])$ constructed in \S \ref{stone_sec}, denoted by $\widetilde \Gamma_{n_1}^j(T)$, and define
$$
a_{j,n_2,n_1}(T)=\widetilde \Gamma_{n_1}^j(T)-\widehat \Gamma_{n_2,n_1}^j(T).
$$
This provides a height two arithmetical tower for $\mu_{e_j,e_j,\mathrm{pp}}^T([a,b])$ with the final limit from below. Without loss of generality (by taking successive maxima), we can assume that these towers are non-decreasing in $n_2$. Now set
$$
\Upsilon_{n_2,n_1}(T)=\max_{1\leq j\leq n_2} a_{j,n_2,n_1}(T).
$$
Then it is clear that the limit $\lim_{n_1\rightarrow\infty}\Upsilon_{n_2,n_1}(T)=\Upsilon _{n_2}(T)$ exists. Furthermore, the monotonicity of $a_{j,n_2,n_1}(T)$ in $n_2$ implies that
$$
\lim_{n_2\rightarrow\infty}\Upsilon_{n_2}(T)=\sup_{n\in\mathbb{N}}\mu_{e_n,e_n,\mathrm{pp}}^T([a,b]),
$$
with monotonic convergence from below. This limiting value is zero if $\Xi_{a,b,\mathrm{pp}}(T)=0$, otherwise it is a positive finite number.

To convert this to a height two tower for the decision problem $\Xi_{a,b,\mathrm{pp}}$, that maps to the discrete space $\{0,1\}$, we use the following trick. Consider the intervals
$
J_1^{n_2}=[0,1/n_2],
$ and 
$
J_2^{n_2}=[2/n_2,\infty).
$
Let $k(n_2,n_1)\leq n_1$ be maximal such that $\Upsilon_{n_2,n_1}(T)\in J_1^{n_2}\cup J_2^{n_2}$. If no such $k$ exists or $\Upsilon_{n_2,k}(T)\in J_1^{n_2}$ then set ${\Gamma}_{n_2,n_1}(T)=0$. Otherwise set ${\Gamma}_{n_2,n_1}(T)=1$. These can be computed using finitely many arithmetic operations and comparisons using $\Lambda_1$. The point of the intervals $J_1^{n_2}$ and $J_2^{n_2}$ is that we can show $\lim_{n_1\rightarrow\infty}{\Gamma}_{n_2,n_1}(T)={\Gamma}_{n_2}(T)$ exists. This is because $\lim_{n_1\rightarrow\infty}\Upsilon_{n_2,n_1}(T)=\Upsilon_{n_2}(T)$ exists and hence we cannot oscillate infinitely often between the separated intervals $J_1^{n_2}$ and $J_2^{n_2}$. Now suppose that $\Xi_{a,b,\mathrm{pp}}(T)=0$, then $\lim_{n_1\rightarrow\infty}\widehat\Gamma_{n_2,n_1}(T)=0$ and hence $\lim_{n_1\rightarrow\infty}\Gamma_{n_2,n_1}(T)=0$ for all $n_2$. Now suppose that $\Xi_{a,b,\mathrm{pp}}(T)=1$, then for large enough $n_2$ we must have that $\Upsilon_{n_2}(T)>2/n_2$ and hence $\Gamma_{n_2}(T)=1$. Together, these prove the convergence and that $\Gamma_{n_2}(T)\leq \Xi_{a,b,\mathrm{pp}}(T)$.
\end{proof}

\begin{proof}[Proof that $\{\Xi_{\mathrm{pp}}^{\mathbb{C}},\Omega_{f,\alpha},\Lambda_1\}\in \Delta_3^A$]
To construct a height two arithmetical tower for $\Xi_{\mathrm{pp}}^{\mathbb{C}}$, we will use Lemma \ref{pps_tech} repeatedly. Let $\widehat\Gamma_{n_2,n_1}(\cdot,I)$ denote the height two tower constructed in the proof of Lemma \ref{pps_tech} for the closed interval $I$ ($I=[a,b]$), where without loss of generality by taking successive maxima in $n_2$, we can assume that this tower is non-decreasing in $n_2$ (this is where we use convergence from below in the final limit in the statement of the lemma). For a given $n_1$ and $n_2$, we construct $\Gamma_{n_2,n_1}(T)$ as follows (we will use some basic terminology from graph theory).

Define the intervals $I_{n_2,n_1,j}^{0}=[j,j+1]$ for $j=-n_2,...,n_2-1$ so that these cover the interval $[-n_2,n_2]$. Now suppose that $I_{n_2,n_1,j}^{k}$ are defined for $j=1,...,r_k(n_2,n_1,T)$. Compute each $\widehat\Gamma_{n_2,n_1}(T,{I_{n_2,n_1,j}^{k}})$ and if this is 1, bisect $I_{n_2,n_1,j}^{k}$ via its midpoint into two equal halves consisting of closed intervals. We then take all these bisected intervals and label them as $I_{n_2,n_1,j}^{k+1}$ for $j=1,...,r_{k+1}(n_2,n_1,k,T)$. This is repeated until we have no further bisections or the intervals $I_{n_2,n_1,j}^{n_2}$ have been computed. By adding the interval $[-n_2,n_2]$ as a root with children $I_{n_2,n_1,j}^{0}$, this creates a finite tree structure where a non-root interval $I$ is a parent of two intervals precisely if those two intervals are formed from its bisection and $\widehat\Gamma_{n_2,n_1}(T,I)=1$. We then prune this tree by discarding all leaves $I$ which have $\widehat\Gamma_{n_2,n_1}(T,I)=0$ to form the tree $\mathcal{T}_{n_2,n_1}(T)$. Finally, we let $\Gamma_{n_2,n_1}(T)$ be the union of all the leaves of $\mathcal{T}_{n_2,n_1}(T)$. Clearly this can be computed using finitely many arithmetic operations and comparisons using $\Lambda_1$. The construction is shown visually in Figure \ref{tree_construct}.

In the above construction, the number of intervals considered (including those not in the tree $\mathcal{T}_{n_2,n_1}(T)$) for a fixed $n_2$ is $n_22^{n_2+1}+1$ and hence independent of $n_1$. It follows that $\mathcal{T}_{n_2,n_1}(T)$ and $\Gamma_{n_2,n_1}(T)$ are constant for large $n_1$ (due to the convergence of the $\widehat\Gamma_{n_2,n_1}(T,I)$ in $\{0,1\}$). We denote these limiting values by $\mathcal{T}_{n_2}(T)$ and $\Gamma_{n_2}(T)$ respectively and also denote the corresponding intervals in the construction at the $m-$th level of this limit by $I_{n_2,j}^{m}$. Note also that if $\Xi_{\mathrm{pp}}^{\mathbb{C}}(T)=\emptyset$ then $\Gamma_{n_2}(T)=\emptyset$.

Now suppose that $z\in\Xi_{\mathrm{pp}}^{\mathbb{C}}(T)$, then there exists a sequence of nested intervals $I_m=I_{n_2,a_{m,n_2}}^m$ containing $z$ for $m=0,...,n_2$, where these intervals are independent of $n_2$. Fix $m$, then for large $n_2$ we must have that $\widehat\Gamma_{n_2}(T,I_{j})=1$ for $j=1,...,m$. It follows that $I_m$ has a descendent interval $I_{n_2,m}$ contained in $\Gamma_{n_2}(T)$ and hence we must have
$$
\mathrm{dist}(z,\Gamma_{n_2}(T))\leq 2^{-m}.
$$
Since $m$ was arbitrary it follows that $\mathrm{dist}(z,\Gamma_{n_2}(T))$ converges to $0$ as $n_2\rightarrow\infty$.

Conversely, suppose that $z_{m_j}\in\Gamma_{m_j}(T)$ with $m_j\rightarrow\infty$, then we must show that all limit points of $\{z_{m_j}\}$ lie in $\Xi_{\mathrm{pp}}^{\mathbb{C}}(T)$. Suppose this were false, then by taking a subsequence if necessary, we can assume that $z_{m_j}\rightarrow z$ and $\mathrm{dist}(z_{m_j},\Xi_{\mathrm{pp}}^{\mathbb{C}}(T))\geq\delta$ for some $\delta>0$. We claim that it is sufficient to prove that the maximum length of the leaves of $\mathcal{T}_{n_2}(T)$ intersecting a fixed compact subset of $\mathbb{R}$, converges to zero as $n_2\rightarrow\infty$. Suppose this has been shown, then $z_{m_j}\in I_{m_j}$ for some leaf $I_{m_j}$ of $\mathcal{T}_{m_j}(T)$.  It follows that $I_{m_j}\cap \Xi_{\mathrm{pp}}^{\mathbb{C}}(T)\neq\emptyset$ and $\left|I_{m_j}\right|\rightarrow 0$. But this contradicts $z_{m_j}$ being positively separated from $\Xi_{\mathrm{pp}}^{\mathbb{C}}(T)$.

We are thus left with proving the claim regarding the lengths of leaves. Suppose this were false, then there exists a compact set $K\subset\mathbb{R}$ and leaves $I_{j}$ in $\mathcal{T}_{b_j}(T)$ such that the lengths of $I_j$ do not converge to zero and $I_j$ intersect $K$. By taking subsequences if necessary, we can assume that the lengths of each $I_j$ are constant. Then by the compactness of $K$ and taking subsequences if necessary again, we can assume that each of the $I_j$ are equal to a common interval $I$. It follows that $\widehat \Gamma_{b_j}(T,I)=1$ but that $\widehat \Gamma_{b_j}(T,I_1)=\widehat \Gamma_{b_j}(T,I_2)=0$ since $I$ is a leaf, where $I_1$ and $I_2$ form the bisection of $I$. Taking $b_j\rightarrow\infty$, this implies that $I\cap \Xi_{\mathrm{pp}}^{\mathbb{C}}(T)\neq\emptyset$ but $I_1\cap \Xi_{\mathrm{pp}}^{\mathbb{C}}(T)=I_2\cap \Xi_{\mathrm{pp}}^{\mathbb{C}}(T)=\emptyset$ which is absurd. Hence we have shown the required contradiction, and we have finished the proof.
\end{proof}

\begin{figure}
\centering
\includegraphics[width=0.8\textwidth,trim={0mm 0mm 0mm 0mm},clip]{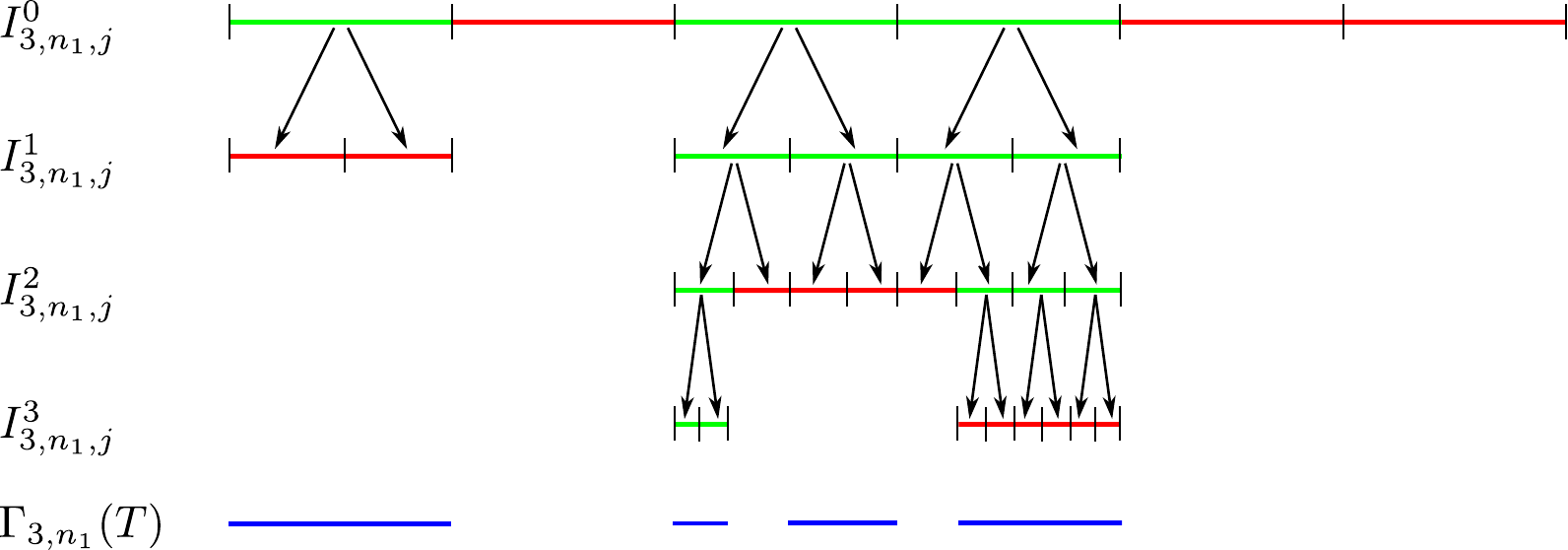}
\caption{Example of tree structure used to compute the point spectrum for $n_2=3$. Each tested interval is shown in green ($\widehat\Gamma_{n_2,n_1}(T,I)=1$) or red ($\widehat\Gamma_{n_2,n_1}(T,I)=0$). The arrows show the bisections and the final output is shown in blue.}
\label{tree_construct}
\end{figure}

\subsection{Proof for absolutely continuous spectra}

To prove the lower bound (that one limit will not suffice), our strategy will be to reduce a certain decision problem to the computation of $\Xi_{\mathrm{ac}}^{\mathbb{C}}$. Let $(\mathcal{M}',d')$ be the discrete space $\{0,1\}$, let $\Omega'$ denote the collection of all infinite sequence $\{a_{j}\}_{j\in\mathbb{N}}$ with entries $a_{j}\in\{0,1\}$ and consider the problem function
\begin{equation*}
\Xi'(\{a_{j}\}):\text{ `Does }\{a_{j}\}\text{ have infinitely many non-zero entries?'},
\end{equation*}
that maps to $(\mathcal{M}',d')$. In Appendix \ref{append1}, it is shown that $\mathrm{SCI}(\Xi',\Omega')_{G} = 2$ (where the evaluation functions consist of component-wise evaluation of the array $\{a_{j}\}$).

\begin{proof}[Proof that $\{\Xi_{\mathrm{ac}}^{\mathbb{C}},\Omega_{f,\alpha},\Lambda_2\}\notin \Delta_2^G$] We are done if we prove the result for $f(n)=n+1$ and $\alpha=0$. In this case $\Lambda_1$ and $\Lambda_2$ are equivalent so we can restrict the argument to $\Lambda_1$. Suppose for a contradiction that $\Gamma_{n}$ is a height one tower of general algorithms solving $\{\Xi_{\mathrm{ac}}^{\mathbb{C}},\Omega_{f,0},\Lambda_2\}$. We will gain a contradiction by using the supposed tower to solve $\{\Xi',\Omega'\}$. 

Given $\{a_{j}\}\in\Omega'$, consider the operator $H=H_0+v$, where the potential is of the following form:
$$
v(m)=\sum_{k=1}^\infty a_{k}\delta_{m,k!}.
$$
Then by Theorem \ref{oracle_lemma}, $[0,4]\subset\sigma_{\mathrm{ac}}(H)$ if $\sum_{k}a_{k}<\infty$ (that is, if $\Xi'(\{a_j\})=0$) and $\sigma_{\mathrm{ac}}(H)\cap(0,4)=\emptyset$ otherwise. Given $N$ we can evaluate any matrix value of $H$ using only finitely many evaluations of $\{a_{j}\}$ and hence the evaluation functions $\Lambda_1$ can be computed using component-wise evaluations of the sequence $\{a_j\}$. We now set
$$
\widehat\Gamma_{n}(\{a_{j}\})=\begin{dcases}
0\quad\text{if }\mathrm{dist}(2,\Gamma_n(H))<1\\
1\quad\text{otherwise}.
\end{dcases}
$$
The above comments show that each of these is a general algorithm, and it is clear that it converges to $\Xi'(\{a_j\})$ as $n\rightarrow\infty$, the required contradiction.
\end{proof}

To construct the height two (arithmetical) tower for $\Xi_{\mathrm{ac}}^{\mathbb{C}}$, we require the following lemma.

\begin{lemma}
\label{acs_tech}
Let $a<b$ with $a,b\in\mathbb{R}$ and consider the decision problem
\begin{align*}
\Xi_{a,b,\mathrm{ac}}:\Omega_{f,\alpha}&\rightarrow\{0,1\}\\
T&\rightarrow\begin{dcases}
1\quad\text{if }\sigma_{\mathrm{ac}}(T)\cap[a,b]\neq\emptyset\\
0\quad\text{otherwise}.
\end{dcases}
\end{align*}
Then there exists a height two arithmetical tower $\Gamma_{n_2,n_1}$ (with evaluation functions $\Lambda_1$) for $\Xi_{a,b,\mathrm{ac}}$. Furthermore, the final limit is from below with $\Gamma_{n_2}(T):=\lim_{n_1\rightarrow\infty}\Gamma_{n_2,n_1}(T)\leq \Xi_{a,b,\mathrm{ac}}(T)$.
\end{lemma}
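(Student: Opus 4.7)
The plan is to mirror the proof of Lemma \ref{pps_tech}, replacing the height-two tower for $\mu^T_{e_j,e_j,\mathrm{c}}([a,b])$ with one for $\mu^T_{e_j,e_j,\mathrm{ac}}([a,b])$ that converges from below. The proof will proceed in three stages: (i) construct a height-two arithmetic tower $a_{j,n_2,n_1}(T)$ for $\mu^T_{e_j,e_j,\mathrm{ac}}([a,b])$ with outer limit monotone from below; (ii) aggregate over basis vectors via a maximum $\Upsilon_{n_2,n_1}(T)=\max_{1\leq j\leq n_2}a_{j,n_2,n_1}(T)$; (iii) apply the dichotomy device with separating intervals $J_1^{n_2}=[0,1/n_2]$ and $J_2^{n_2}=[2/n_2,\infty)$ exactly as in Lemma \ref{pps_tech}.

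For stage (i), the key is the classical identity $\langle K_H(u+i\epsilon;T,e_j),e_j\rangle=(p_\epsilon\ast\mu^T_{e_j,e_j})(u)$, whose pointwise limit as $\epsilon\downarrow 0$ is the Radon--Nikodym derivative $\rho^T_{e_j,e_j}(u)$ almost everywhere. Truncating at height $M$ and invoking dominated convergence gives
\[
\lim_{\epsilon\downarrow 0}\int_a^b\min\!\bigl(M,(p_\epsilon\ast\mu^T_{e_j,e_j})(u)\bigr)\,du=\int_a^b\min\!\bigl(M,\rho^T_{e_j,e_j}(u)\bigr)\,du,
\]
and then monotone convergence as $M\uparrow\infty$ yields
\[
\lim_{M\to\infty}\int_a^b\min\!\bigl(M,\rho^T_{e_j,e_j}(u)\bigr)\,du=\mu^T_{e_j,e_j,\mathrm{ac}}([a,b])
\]
monotonically from below. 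Setting $n_2=M$ and using Corollary \ref{res_est2} to approximate $K_H$ together with composite quadrature on $[a,b]$ whose spacing and per-node resolvent accuracy are coupled to $n_1$ (for instance via $\epsilon=1/n_1$ and quadrature spacing $O(n_1^{-3})$ to absorb the $O(n_1^2)$ Lipschitz constant of $K_H(\cdot+i/n_1;T,e_j)$), one obtains an arithmetic tower $a_{j,n_2,n_1}(T)$ whose inner limit $\lim_{n_1\to\infty}a_{j,n_2,n_1}(T)$ equals $\int_a^b\min(n_2,\rho^T_{e_j,e_j})\,du$. Taking successive maxima in $n_2$ forces $a_{j,n_2,n_1}$ to be non-decreasing in $n_2$ without disturbing the inner limit.

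For stage (ii), arguing verbatim as in Lemma \ref{pps_tech}, $\lim_{n_1\to\infty}\Upsilon_{n_2,n_1}(T)=:\Upsilon_{n_2}(T)$ exists and $\lim_{n_2\to\infty}\Upsilon_{n_2}(T)=\sup_{j\in\mathbb{N}}\mu^T_{e_j,e_j,\mathrm{ac}}([a,b])$ monotonically from below. Since $\{e_j\}$ is an orthonormal basis and $\mathcal{H}_{\mathrm{ac}}$ is $T$-invariant, this supremum is positive whenever $\sigma_{\mathrm{ac}}(T)\cap[a,b]\neq\emptyset$, with the sole exception of the boundary edge case where the ac density is supported strictly to one side of an endpoint; this is harmless for the enveloping bisection scheme employed in the proof of $\{\Xi^{\mathbb{C}}_{\mathrm{ac}},\Omega_{f,\alpha},\Lambda_1\}\in\Delta^A_3$, exactly as in Lemma \ref{pps_tech}. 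Stage (iii) is then identical to the corresponding step of Lemma \ref{pps_tech}: the separation of $J_1^{n_2}$ and $J_2^{n_2}$ forces $\Gamma_{n_2}(T):=\lim_{n_1\to\infty}\Gamma_{n_2,n_1}(T)$ to exist, and monotonicity of $\Upsilon_{n_2}$ yields $\Gamma_{n_2}(T)\leq \Xi_{a,b,\mathrm{ac}}(T)$ together with $\lim_{n_2}\Gamma_{n_2}(T)=\Xi_{a,b,\mathrm{ac}}(T)$.

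The main obstacle will be producing the monotone-from-below ac tower in stage (i). A naive approach of subtracting the tower for $\mu^T_{e_j,e_j,\mathrm{s}}$ from that for $\mu^T_{e_j,e_j}$ fails because Step 2 of the proof of Theorem \ref{spec_decomp_comp} does not obviously give one-sided convergence for $\mu_{\mathrm{s}}$. The truncated-Poisson route bypasses this by exploiting the order-theoretic content of $\min(M,\rho)\uparrow\rho$, so monotonicity is built into the construction rather than inherited from a subtraction. The remaining technical point is coordinating the quadrature, the cap $M$, the resolvent accuracy and the smoothing $\epsilon$ so that for each fixed $n_2$ the inner limit truly equals $\int_a^b\min(n_2,\rho^T_{e_j,e_j})\,du$; the explicit Lipschitz and asymptotic error bounds from Corollary \ref{res_est2} make this a routine coupling of parameters.
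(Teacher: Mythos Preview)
Your proposal is correct and follows essentially the same route as the paper: both cap the Poisson convolution $\langle K_H(u+i/n_1;T,e_j),e_j\rangle$ so that dominated convergence yields the inner limit, then let the cap tend to infinity to get monotone convergence from below to $\mu^T_{e_j,e_j,\mathrm{ac}}([a,b])$, after which the aggregation over $j$ and the $J_1^{n_2}/J_2^{n_2}$ dichotomy are copied verbatim from Lemma~\ref{pps_tech}. The only cosmetic difference is that the paper caps via $\upsilon\cdot\chi_{n_2}(|\upsilon|)$ with $\chi_{n_2}$ a continuous compactly supported cutoff increasing to $1$, whereas you cap via $\min(n_2,\cdot)$; both achieve the required boundedness for the inner limit and monotonicity for the outer one.
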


\begin{proof}
Fix such an $a$ and $b$ and let $\chi_n$ be a sequence of non-negative, continuous piecewise affine functions on $\mathbb{R}$, bounded by $1$ and of compact support such that $\chi_n$ converge pointwise monotonically up to the constant function $1$. Define also the function
$$
\upsilon_{m,n}(u,T)=\langle K_{H}(u+i/n,T,e_m),e_m\rangle
$$ 
and set
$$
a_{m,n_2,n_1}(T)=\int_{a}^b \upsilon_{m,n_1}(u,T)\chi_{n_2}(\left|\upsilon_{m,n_1}(u,T)\right|)du.
$$
Since each $\chi_n$ is continuous and has compact support (which implies that the integrand is bounded for fixed $n_2$), and since $\upsilon_{m,n}(u,T)$ converges almost everywhere to $\rho^T_{e_{m},e_m}(u)$ (the Radon--Nikodym derivative of the absolutely continuous part of the measure $\mu_{e_m,e_m}^T$), it follows by the dominated convergence theorem that
$$
\lim_{n_1\rightarrow\infty}a_{m,n_2,n_1}(T)=:a_{m,n_2}(T)=\int_{a}^b \rho^T_{e_{m},e_m}(u)\chi_{n_2}(\rho^T_{e_{m},e_m}(u))du.
$$
We now use the fact that the $\chi_n$ are increasing and the dominated convergence theorem again (with bounding integrable function $\rho^T_{e_{m},e_m}$) to deduce that
$$
\lim_{n_2\rightarrow\infty}a_{m,n_2}(T)=\mu_{e_m,e_m,\mathrm{ac}}^T([a,b]),
$$
with monotonic convergence from below.

Using Corollary \ref{res_est2} (and the now standard argument of Lipschitz continuity of the resolvent), we can compute approximations of $a_{m,n_2,n_1}(T)$ to accuracy $1/n_1$ in finitely many arithmetic operations and comparisons. Call these approximations $\widetilde a_{m,n_2,n_1}(T)$ and set
$$
\Upsilon_{n_2,n_1}(T)=\max_{1\leq j\leq n_2} \widetilde a_{j,n_2,n_1}(T).
$$
The proof now follows that of Lemma \ref{pps_tech} exactly.
\end{proof}

\begin{proof}[Proof that $\{\Xi_{\mathrm{ac}}^{\mathbb{C}},\Omega_{f,\alpha},\Lambda_1\}\in \Delta_3^A$]
This is exactly the same construction as in the above proof of the inclusion $\{\Xi_{\mathrm{pp}}^{\mathbb{C}},\Omega_{f,\alpha},\Lambda_1\}\in \Delta_3^A$. We simply replace the tower constructed in the proof of Lemma \ref{pps_tech} by the tower constructed in the proof of Lemma \ref{acs_tech}.
\end{proof}

\subsection{Proof for singular continuous spectra}

We first prove the lower bound for the singular continuous spectrum via Theorem \ref{oracle_lemma}. Note that the impossibility result $\{\Xi_{\mathrm{sc}}^{\mathbb{C}},\Omega_{f,\alpha},\Lambda_2\}\notin \Delta_2^G$ follows from the same argument that was used to show $\{\Xi_{\mathrm{ac}}^{\mathbb{C}},\Omega_{f,\alpha},\Lambda_2\}\notin \Delta_2^G$. To show that two limits will not suffice for $f(n)-n\geq \sqrt{2n}+1/2$, our strategy will be to again reduce a certain decision problem to the computation of $\Xi_{\mathrm{sc}}^{\mathbb{C}}$. Let $(\mathcal{M}',d')$ be the discrete space $\{0,1\}$, let $\Omega'$ denote the collection of all infinite matrices $\{a_{i,j}\}_{i,j\in\mathbb{N}}$ with entries $a_{i,j}\in\{0,1\}$ and consider the problem function
\begin{equation*}
\Xi'(\{a_{i,j}\}):\text{ `Does }\{a_{i,j}\}\text{ have a column containing infinitely many non-zero entries?'},
\end{equation*}
that maps to $(\mathcal{M}',d')$. In \cite{ben2015can}, a Baire category argument was used to prove that $\mathrm{SCI}(\Xi',\Omega')_{G} = 3$ (where the evaluation functions consist in component-wise evaluation of the array $\{a_{i,j}\}$).

\begin{proof}[Proof that $\{\Xi_{\mathrm{sc}}^{\mathbb{C}},\Omega_{f,\alpha},\Lambda_2\}\notin \Delta_3^G$ if $f(n)-n\geq \sqrt{2n}+1/2 $] Assume that the function $f$ satisfies $f(n)-n\geq \sqrt{2n}+1/2$. The proof will use a direct sum construction. Given $\{a_{i,j}\}\in\Omega'$, consider the operators $H_j=H_0+v_{(j)}$, where the potential is of the following form:
$$
v_{(j)}(n)=\sum_{k=1}^\infty a_{k,j}\delta_{n,k!}.
$$
Using Theorem \ref{oracle_lemma}, $[0,4]\subset\sigma_{\mathrm{sc}}(H_j)$ if $\sum_{k}a_{k,j}=\infty$ (that is, if the $j$-th column has infinitely many $1$s) and $\sigma_{\mathrm{sc}}(H_j)\cap(0,4)=\emptyset$ otherwise. Now consider an effective bijection (with effective inverse) between the canonical bases of $l^2(\mathbb{N})$ and $\oplus_{j=1}^\infty l^2(\mathbb{N})$:
$$
\phi:\{e_n:n\in\mathbb{N}\}\rightarrow \{e_{\textbf{k}}:\textbf{k}\in \mathbb{N}^{\mathbb{N}},\|\textbf{k}\|_{0}=1\}.
$$
Set $H(\{a_{i,j}\})=\bigoplus_{j=1}^\infty H_j$. Then through $\phi$, we view $H=H(\{a_{i,j}\})$ as a self-adjoint operator acting on $l^2(\mathbb{N})$. Explicitly, we consider the matrix
$$
H_{m,n}=\langle H e_{\phi(n)},e_{\phi(m)}\rangle.
$$
We choose the following bijection (where $m$ lists the canonical basis in each Hilbert space):
\begin{center}
\begin{tikzpicture}
\matrix[matrix of math nodes,inner sep=1pt,row sep=1em,column sep=1em] (M)
{ & j=1 & j=2 & j=3 & \cdots\\
    m=1 & \phi(1) & \phi(3) & \phi(6) & \cdots \\
    m=2 & \phi(2) & \phi(5) &   &  \\
    m=3 & \phi(4) &   &   &  \\
    \cdots & \cdots &  &  \\
}
;
\draw[->] (M-2-2.south west) -- (M-2-2.north east);
\draw[->] (M-3-2.south west) -- (M-2-3.north east);
\draw[->] (M-4-2.south west) -- (M-2-4.north east);
\draw[->] (M-5-2.south west) -- (M-2-5.north east);
\end{tikzpicture}
\end{center}
A straightforward computation shows that $H\in\Omega_{f,0}$. We also observe that if $\Xi'(\{a_{i,j}\})=1$ then $[0,4]\subset\sigma_{\mathrm{sc}}(H)$, otherwise $\sigma_{\mathrm{sc}}(H)\cap(0,4)=\emptyset$.

Suppose for a contradiction that $\Gamma_{n_2,n_1}$ is a height two tower of general algorithms that solves $\{\Xi_{\mathrm{sc}}^{\mathbb{C}},\Omega_{f,0},\Lambda_1\}$. We will gain a contradiction by using the supposed height two tower to solve $\{\Xi',\Omega'\}$. We now set
$$
\widehat\Gamma_{n_2,n_1}(\{a_{i,j}\})=1-\min\{1,\mathrm{dist}(3,\Gamma_{n_2,n_1}(H(\{a_{i,j}\})))\},
$$
where we use the convention $\mathrm{dist}(3,\emptyset)=1$. The comments above show that each of these is a general algorithm. Furthermore, the convergence of $\Gamma_{n_2,n_1}$ implies that
$$
\lim_{n_2\rightarrow\infty}\lim_{n_1\rightarrow\infty}\widehat\Gamma_{n_2,n_1}(\{a_{i,j}\})=1-\min\{1,\mathrm{dist}(3,\sigma_{\mathrm{sc}}(H(\{a_{i,j}\})))\}=\Xi'(\{a_{i,j}\}).
$$
We are not quite done since the convergence here takes place on the interval $[0,1]$ with the usual metric as opposed to $\{0,1\}$ with the discrete metric. To get round this, we use the following, now familiar, trick.

Consider the intervals
$
J_1=[0,1/2],
$ and 
$
J_2=[3/4,1].
$
Let $k(n_2,n_1)\leq n_1$ be maximal such that $\widehat\Gamma_{n_2,k}(\{a_{i,j}\})\in J_1\cup J_2$. If no such $k$ exists or $\widehat\Gamma_{n_2,k}(\{a_{i,j}\})\in J_1$ then set ${\Gamma}_{n_2,n_1}'(\{a_{i,j}\})=0$. Otherwise set ${\Gamma}_{n_2,n_1}'(\{a_{i,j}\})=1$. As in the proof of Lemma \ref{pps_tech}, we can show $\lim_{n_1\rightarrow\infty}{\Gamma}_{n_2,n_1}'(\{a_{i,j}\})={\Gamma}_{n_2}'(\{a_{i,j}\})$ exists. If $\Xi'(\{a_{i,j}\})=0$, then for large $n_2$, $\lim_{n_1\rightarrow\infty}\widehat\Gamma_{n_2,k}(\{a_{i,j}\})<1/2$ and hence $\lim_{n_2\rightarrow\infty}{\Gamma}_{n_2}'(\{a_{i,j}\})=0$. Similarly, if $\Xi'(\{a_{i,j}\})=1$, then for large $n_2$ we must have $\lim_{n_1\rightarrow\infty}\widehat\Gamma_{n_2,k}(\{a_{i,j}\})>3/4$ and hence $\lim_{n_2\rightarrow\infty}{\Gamma}_{n_2}'(\{a_{i,j}\})=1$. Hence ${\Gamma}_{n_2,n_1}'$ is a height two tower of general algorithms solving $\{\Xi',\Omega'\}$, a contradiction.
\end{proof}

Finally, we will use the following lemma to prove that the singular continuous spectrum can be computed in three limits.

\begin{lemma}
\label{scs_tech}
Let $a<b$ with $a,b\in\mathbb{R}$ and consider the decision problem
\begin{align*}
\Xi_{a,b,\mathrm{sc}}:\Omega_{f,\alpha}&\rightarrow\{0,1\}\\
T&\rightarrow\begin{dcases}
1\quad\text{if }\sigma_{\mathrm{sc}}(T)\cap[a,b]\neq\emptyset\\
0\quad\text{otherwise}.
\end{dcases}
\end{align*}
Then there exists a height three arithmetical tower $\Gamma_{n_3,n_2,n_1}$ (with evaluation functions $\Lambda_1$) for $\Xi_{a,b,\mathrm{sc}}$. Furthermore, the final limit is from below with $\Gamma_{n_3}(T):=\lim_{n_2\rightarrow\infty}\lim_{n_1\rightarrow\infty}\Gamma_{n_3,n_2,n_1}(T)\leq \Xi_{a,b,\mathrm{sc}}(T)$.
\end{lemma}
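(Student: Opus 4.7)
The plan is to mimic the constructions of Lemmas \ref{pps_tech} and \ref{acs_tech}. The goal is to build a height three real-valued tower $\Upsilon_{n_3,n_2,n_1}(T)$ whose triple iterated limit is $\sup_{m\in\mathbb{N}}\mu^T_{e_m,e_m,\mathrm{sc}}([a,b])$, and then apply the separated-intervals thresholding device from the proof of Lemma \ref{pps_tech} to convert it into a $\{0,1\}$-valued decision tower. Since $\sigma_{\mathrm{sc}}(T)\cap[a,b]$ is non-empty precisely when this supremum is strictly positive, this yields the required decision tower.

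For the real-valued piece, I would start from the Lebesgue identity
\[
\mu^T_{e_m,e_m,\mathrm{sc}}([a,b]) = \mu^T_{e_m,e_m}([a,b]) - \mu^T_{e_m,e_m,\mathrm{pp}}([a,b]) - \mu^T_{e_m,e_m,\mathrm{ac}}([a,b]),
\]
combining three ingredients: a height one tower $t_{m,n_1}(T)\to\mu^T_{e_m,e_m}([a,b])$ from Theorem \ref{meas_comp1} (extended to closed intervals via the remark following its proof); the height two tower $p_{m,n_2,n_1}(T)$ for $\mu^T_{e_m,e_m,\mathrm{pp}}([a,b])$ underlying Lemma \ref{pps_tech}, which is monotone non-decreasing in $n_2$; and the height two tower $a_{m,n_2,n_1}(T)$ for $\mu^T_{e_m,e_m,\mathrm{ac}}([a,b])$ underlying Lemma \ref{acs_tech}, also monotone non-decreasing in its outer index. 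Setting
\[
s_{m,n_3,n_2,n_1}(T):=t_{m,n_1}(T)-p_{m,n_2,n_1}(T)-a_{m,n_3,n_1}(T),\qquad \Upsilon_{n_3,n_2,n_1}(T):=\max_{1\leq m\leq n_3}s_{m,n_3,n_2,n_1}(T),
\]
each $s_{m,n_3,n_2,n_1}(T)$ is computable to arbitrary accuracy from $\Lambda_1$ by Corollary \ref{res_est2} together with the algorithms of the previous two lemmas, and the three-fold iterated limit of $s_{m,n_3,n_2,n_1}(T)$ is $\mu^T_{e_m,e_m,\mathrm{sc}}([a,b])$. To turn this into a $\{0,1\}$ output, I would then replay the interval trick with $J_1^{n_3}=[0,1/n_3]$ and $J_2^{n_3}=[2/n_3,\infty)$, setting $\Gamma_{n_3,n_2,n_1}(T)=1$ if the most recent $k\leq n_1$ with $\Upsilon_{n_3,n_2,k}(T)\in J_1^{n_3}\cup J_2^{n_3}$ places it in $J_2^{n_3}$, and $\Gamma_{n_3,n_2,n_1}(T)=0$ otherwise.

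The main obstacle, and the reason a third limit is genuinely needed here, is the ``from below'' property $\Gamma_{n_3}(T)\leq\Xi_{a,b,\mathrm{sc}}(T)$. In Lemmas \ref{pps_tech} and \ref{acs_tech} this came for free because the real-valued tower was bounded above by the target value ($a_{j,n_2}\leq\mu^T_{e_j,e_j,\mathrm{pp}}$ and similarly for ac), so that vanishing of the supremum forced $\Upsilon_{n_2}\leq 0$ identically and hence $\Upsilon_{n_2}\in J_1^{n_2}$ after clipping at zero. Here, because both the pp and ac ingredients appear with minus signs in $s_{m,n_3,n_2,n_1}(T)$, the naive combination converges to $\mu^T_{e_m,e_m,\mathrm{sc}}([a,b])$ from \emph{above}, with no quantitative rate; when $\Xi_{a,b,\mathrm{sc}}(T)=0$ one cannot guarantee $\Upsilon_{n_3}(T)\leq 1/n_3$ without additional work. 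The resolution is to tie the outer index $n_3$ simultaneously to the truncation $m\leq h(n_3)$ of the maximum (for a sufficiently slowly growing diagonal function $h$) and to the depth to which the inner pp and ac towers have been advanced, so that when the true supremum is zero the outputs of the inner limits for $m\leq h(n_3)$ settle inside $J_1^{n_3}$ before the max is taken, while when the supremum is positive a witness $m^*$ is eventually included in the range and forces the value into $J_2^{n_3}$. This diagonal argument is the technical heart of the proof; the remaining verifications (existence of intermediate limits, arithmetic implementability from $\Lambda_1$, and the final $\{0,1\}$ convergence) follow verbatim from the corresponding parts of the proofs of Lemmas \ref{pps_tech} and \ref{acs_tech}.
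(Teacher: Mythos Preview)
You have correctly identified the central difficulty: the height-two tower for $\mu^T_{e_m,e_m,\mathrm{sc}}([a,b])$ built from the Lebesgue decomposition converges from \emph{above} in its outer index, so after taking the $\max$ over $m$ one cannot directly conclude $\Gamma_{n_3}(T)\leq\Xi_{a,b,\mathrm{sc}}(T)$. However, your proposed resolution via a ``sufficiently slowly growing diagonal function $h$'' does not work. The rate at which $a_{m,n_3}\uparrow\mu^T_{e_m,e_m,\mathrm{ac}}([a,b])$ depends on $T$ in an uncontrolled way, so no $T$-independent choice of $h$ can guarantee that $\max_{m\leq h(n_3)}\bigl(\mu^T_{e_m,e_m,\mathrm{ac}}([a,b])-a_{m,n_3}\bigr)\leq 1/n_3$ when $\Xi_{a,b,\mathrm{sc}}(T)=0$; and an $h$ depending on $T$ is not computable from $\Lambda_1$. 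In effect your diagonal would collapse the tower by one level, which is exactly what the lower bound forbids.

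The paper's proof avoids this by a different mechanism: after building the real-valued tower $\Upsilon_{n_3,n_2,n_1}$ (with the second limit monotone from above and the third monotone from below), it applies the interval trick \emph{separately for each pair $(m,n)$}, obtaining $\{0,1\}$-valued $\widehat\Gamma_{m,n,n_1}(T)$ with threshold $1/m$, and then sets
\[
\Gamma_{n_3,n_2,n_1}(T)=\max_{1\leq m\leq n_3}\,\min_{1\leq n\leq n_2}\,\widehat\Gamma_{m,n,n_1}(T).
\]
The inner $\min$ absorbs the ``from above'' direction (once $\Upsilon_{m,n}$ drops below $1/m$ for some $n$ the minimum is $0$), while the outer $\max$ handles the ``from below'' direction (if $\Upsilon(T)>0$ then for all large $m$ the monotonicity gives $\Upsilon_{m,n}>2/m$ for every $n$, forcing the minimum to be $1$). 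This $\max$--$\min$ structure is the missing ingredient in your sketch; once you insert it in place of the diagonal $h$, the remaining verifications proceed as you outline.
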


Once this is proven, we can use the same construction that was used to prove $\{\Xi_{\mathrm{pp}}^{\mathbb{C}},\Omega_{f,\alpha},\Lambda_1\}\in \Delta_3^A$ and $\{\Xi_{\mathrm{ac}}^{\mathbb{C}},\Omega_{f,\alpha},\Lambda_1\}\in \Delta_3^A$ to show that $\{\Xi_{\mathrm{sc}}^{\mathbb{C}},\Omega_{f,\alpha},\Lambda_1\}\in \Delta_4^A$, but with an additional limit. Namely, we replace $(n_2,n_1)$ by $(n_3,n_2)$ in the proof and use the tower constructed in the proof of Lemma \ref{acs_tech} instead of $\widehat \Gamma_{n_2,n_1}(T,I)$ for an interval $I$. We still gain the required convergence, since the only change is an additional limit in the finite number of decision problems that decide the appropriate tree.

\begin{proof}[Proof of Lemma \ref{scs_tech}]
Note that we can write
$$
\mu_{e_m,e_m,\mathrm{sc}}^T([a,b])=\mu_{e_m,e_m}^T([a,b])-\mu_{e_m,e_m,\mathrm{pp}}^T([a,b])-\mu_{e_m,e_m,\mathrm{ac}}^T([a,b]).
$$
From this and the proofs of Lemmas \ref{pps_tech} and \ref{acs_tech}, it is clear that we can construct a height two arithmetical tower, $a_{m,n_2,n_1}(T)$, for $\mu_{e_m,e_m,\mathrm{sc}}^T([a,b])$, where the final limit is from above. Now set
$$
\Upsilon_{n_3,n_2,n_1}(T)=\max_{1\leq j\leq n_3} a_{j,n_2,n_1}.
$$
We see that each successive limit converges, with the second from above and the final from below.  By taking successive maxima, minima of our base algorithms, we can assume that the second and final limits are monotonic and that $\Upsilon_{n_3,n_2,n_1}(T)$ is monotonic in both $n_2$ and $n_3$. Define $\Upsilon_{n_3,n_2}(T)=\lim_{n_1\rightarrow\infty}\Upsilon_{n_3,n_2,n_1}(T)$, $\Upsilon_{n_3}(T)=\lim_{n_2\rightarrow\infty}\Upsilon_{n_3,n_2}(T)$ and $\Upsilon(T)=\lim_{n_3\rightarrow\infty}\Upsilon_{n_3}(T)$. Then $\Upsilon(T)$ is zero if $\Xi_{a,b,\mathrm{sc}}(T)=0$, otherwise it is a positive finite number.

With a slight change to the previous argument (the monotonicity in $n_2$ and $n_3$ is crucial for this to work), consider the intervals
$
J_1^{m}=[0,1/m],
$ and 
$
J_2^{m}=[2/m,\infty).
$
Let $k(m,n,n_1)\leq n_1$ be maximal such that $\Upsilon_{m,n,n_1}(T)\in J_1^{m}\cup J_2^{m}$. If no such $k$ exists or $\Upsilon_{m,n,k}(T)\in J_1^{m}$ then set ${\widehat\Gamma}_{m,n,n_1}(T)=0$. Otherwise set ${\widehat\Gamma}_{m,n,n_1}(T)=1$. We then define
$$
\Gamma_{n_3,n_2,n_1}=\max_{1\leq m\leq n_3}\min_{1\leq n\leq n_2}{\widehat\Gamma}_{m,n,n_1}(T).
$$
These can be computed using finitely many arithmetic operations and comparisons using $\Lambda_1$, and, as before, the first limit exists with
$$
\Gamma_{n_3,n_2}(T)=\lim_{n_1\rightarrow\infty}\Gamma_{n_3,n_2,n_1}(T)=\max_{1\leq m\leq n_3}\min_{1\leq n\leq n_2}{\widehat\Gamma}_{m,n}(T).
$$
Note also that the second and third sequential limits exist through the use of maxima and minima.

Now suppose that $\Xi_{a,b,\mathrm{sc}}(T)=0$ and fix $n_3$. Then for large $n_2$, we must have that $\Upsilon_{m,n_2}(T)<1/(2n_3)$ for all $m\leq n_3$ due to the monotonic convergence of $\Upsilon_{p}$ as $p\rightarrow\infty$. It follows in this case that
$$
\lim_{n_2\rightarrow\infty }\Gamma_{n_3,n_2}(T)=0,\quad\text{for all }n_3.
$$

Now suppose that $\Xi_{a,b,\mathrm{sc}}(T)=1$. It follows in this case that there exists $M\in\mathbb{N}$ such that if $m\geq M$ then $\Upsilon_m(T)>3/m$. Due to the monotonic convergence of $\Upsilon_{m,p}$ as $p\rightarrow\infty$ it follows that for all $p$ we must have $\Upsilon_{m,p}>3/m$ and hence there exists $N(m,p)\in\mathbb{N}$ such that if $n_1\geq N(m,p)$ then we must have $\Upsilon_{m,p,n_1}\geq 2/m$. It follows that if $n_3\geq M$ then we must have $\widehat\Gamma_{n_3,p}(T)=1$ for all $p$ and hence that
$$
\lim_{n_3\rightarrow\infty}\Gamma_{n_3}(T)=1.
$$
The conclusion of the lemma now follows.
\end{proof}

\section{Numerical Examples}
\label{num_sec}

We now demonstrate the applicability of the new algorithms. In particular, these are the first algorithms that compute their respective spectral properties for the whole class $\Omega_{f,\alpha}$, and even for the restricted case of tridiagonal self-adjoint matrices. The algorithms are also implicitly parallelisable, allowing large scale computations. We focus on discrete operators and the numerical implementation of the algorithms for ODEs, PDEs and integral operators will be the focus of a future software package.\footnote{Since writing the initial version of this paper, our algorithms have been implemented in the software package \texttt{SpecSolve} of \cite{colbrook2020computingSM}, which treats general discrete, differential and integral operators.}

\subsection{Jacobi matrices and orthogonal polynomials}
\label{JACOBI_SEC_1}

For our first set of examples, we consider the natural link between the spectral measures of Jacobi operators and orthogonal polynomials on $\mathbb{R}$. Let $J$ be a Jacobi matrix
$$
J=\begin{pmatrix}
b_1 & a_1 &  &\\
a_1 & b_2 & a_2 & \\
 & a_2 & b_3 & \ddots\\
 &  & \ddots & \ddots
\end{pmatrix}
$$
with $a_j,b_j\in\mathbb{R}$ and $a_j>0$. In this case, under suitable conditions, the probability measure $\mu_J:=\mu_{e_1,e_1}^J$ is the probability measure associated with the orthonormal polynomials defined by
\begin{equation*}
\begin{split}
&xP_k(x)=a_{k+1}P_{k+1}(x)+b_{k+1}P_k(x)+a_kP_{k-1}(x),\\
&P_{-1}(x)=0,\quad P_0(x)=1,
\end{split}
\end{equation*}
and the spectral measure that appears in the multiplicative version of the spectral theorem (see, for example, \cite{teschl2000jacobi,deift1999orthogonal,stone1932linear}). Classically, one usually first considers the measure and then constructs the orthogonal polynomials and the corresponding $J$. In some sense, the algorithms constructed in this paper, and in particular \S \ref{applic_sec}, compute the inverse problem (note that $J\in\Omega_{n\rightarrow n+1,0}$). In other words, we compute the measure $\mu_J$ given the recurrence coefficients defining the orthogonal polynomials. This is a very difficult problem to study theoretically, and, until now, there has been no numerical method able to tackle the problem in this generality (see \S \ref{prev_work} for comments on methods that tackle compact perturbations of Toeplitz operators). To verify our method, we will consider problems where the measure $\mu_J$ is known analytically.

We begin with the well-known class of Jacobi polynomials defined for $\alpha,\beta>-1$ which have
$$
a_k=2\sqrt{\frac{k(k+\alpha)(k+\beta)(k+\alpha+\beta)}{(2k+\alpha+\beta-1)(2k+\alpha+\beta)^2(2k+\alpha+\beta+1)}},\quad b_k=\frac{\beta^2-\alpha^2}{(2k+\alpha+\beta)(2k-2+\alpha+\beta)},
$$
and measure on the interval $[-1,1]$ given by
\begin{equation}
d\mu_J=\frac{(1-x)^\alpha(1+x)^\beta}{N(\alpha,\beta)}dx=f_{\alpha,\beta}(x)dx,
\end{equation}
where $N(\alpha,\beta)$ is a normalising constant, ensuring the measure is a probability measure. To assess the convergence of the algorithm in \S \ref{app1_RN} that approximates the Radon--Nikodym derivative $f_{\alpha,\beta}$, in this section we will plot various errors as a function of $\epsilon$, the distance from the points at which we compute the resolvents to the real axis.

\begin{figure}
\centering
\includegraphics[width=0.49\textwidth,clip]{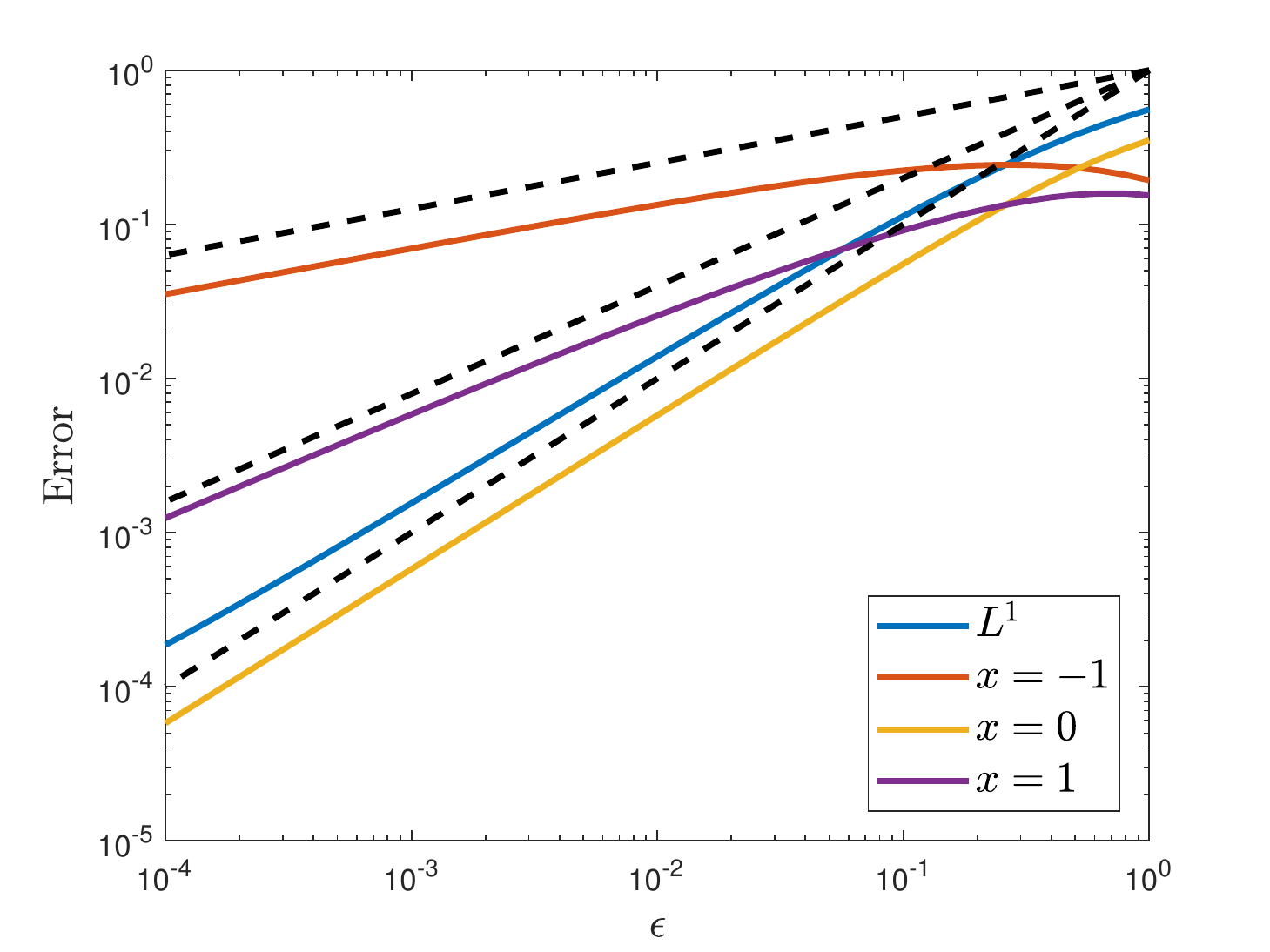}
\includegraphics[width=0.49\textwidth,clip]{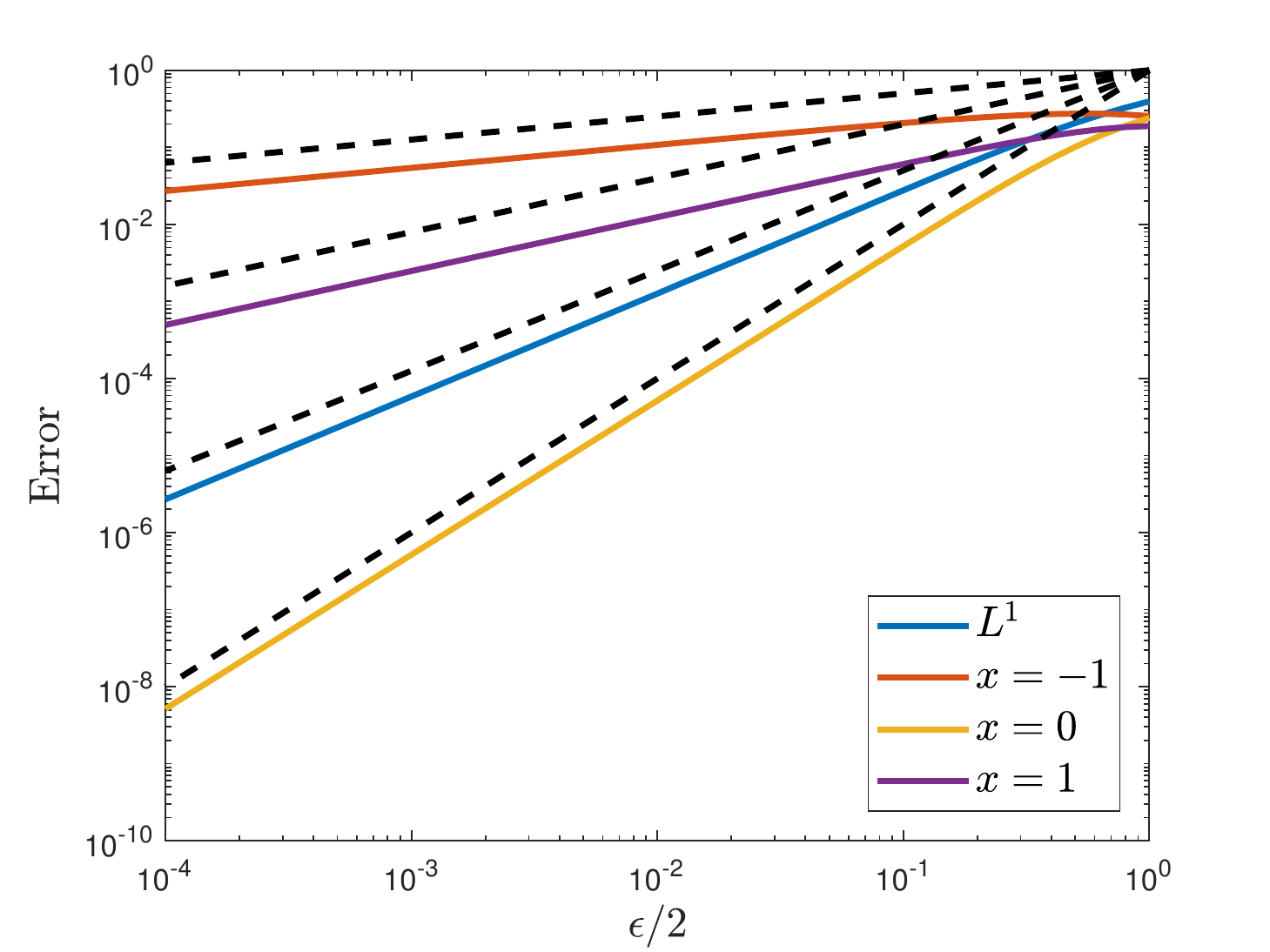}
\caption{Results for Jacobi polynomials with $\alpha=0.7$ and $\beta=0.3$. Left: Convergence in $L^1$ and at the points $\pm 1$ and $0$. The rates $O(\epsilon)$, $O(\epsilon^{0.7})$ and $O(\epsilon^{0.3})$ are also shown as dashed lines. Right: Convergence with Richardson extrapolation. The rates $O(\epsilon^2)$, $O(\epsilon^{1.3})$, $O(\epsilon^{0.7})$ and $O(\epsilon^{0.3})$ are also shown. As discussed in the text, the rates reflect the local smoothness properties of the density $f_{\alpha,\beta}$.}
\label{Jacobi1}
\end{figure}

\begin{figure}
\centering
\includegraphics[width=0.49\textwidth,clip]{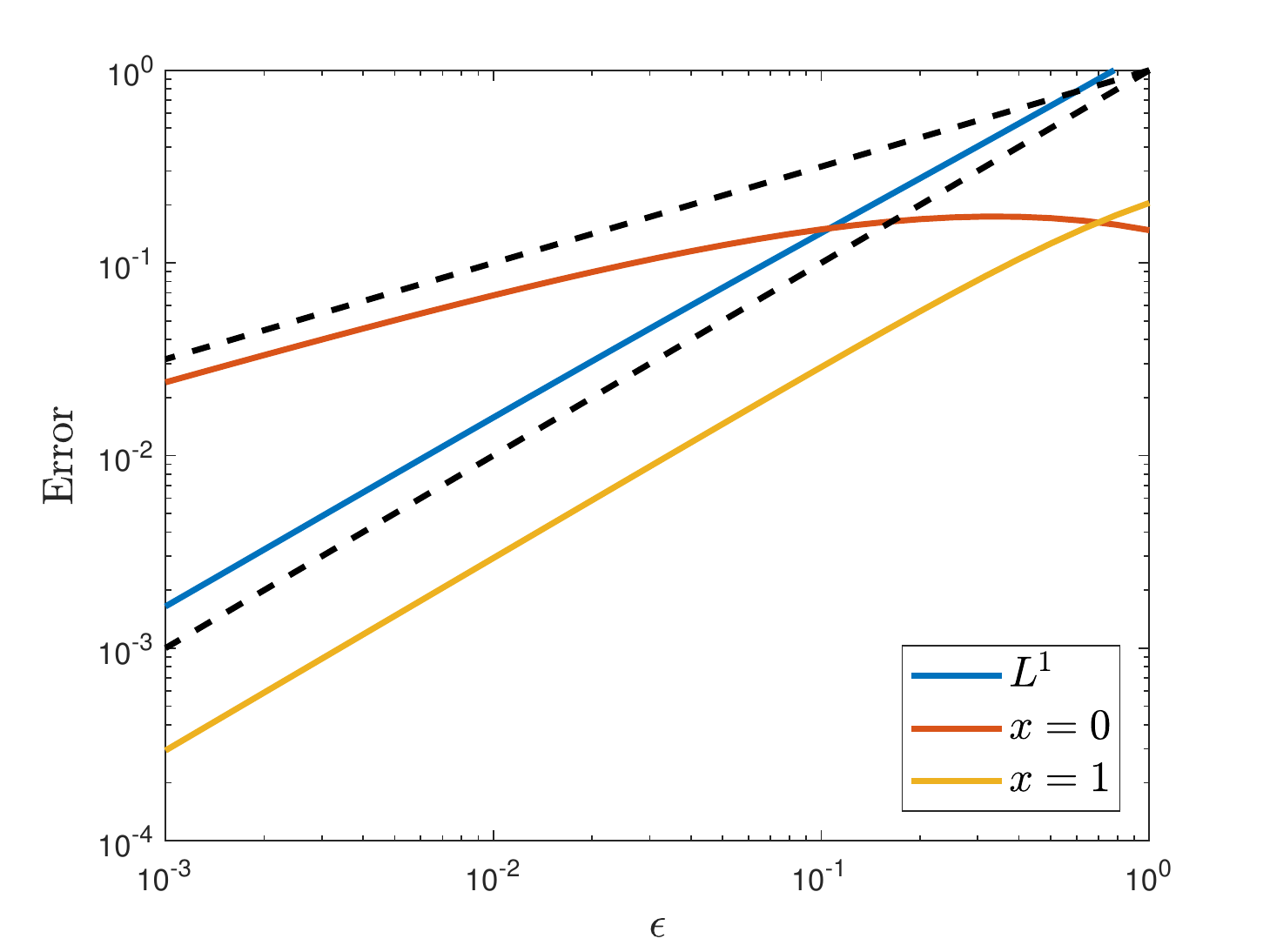}
\includegraphics[width=0.49\textwidth,clip]{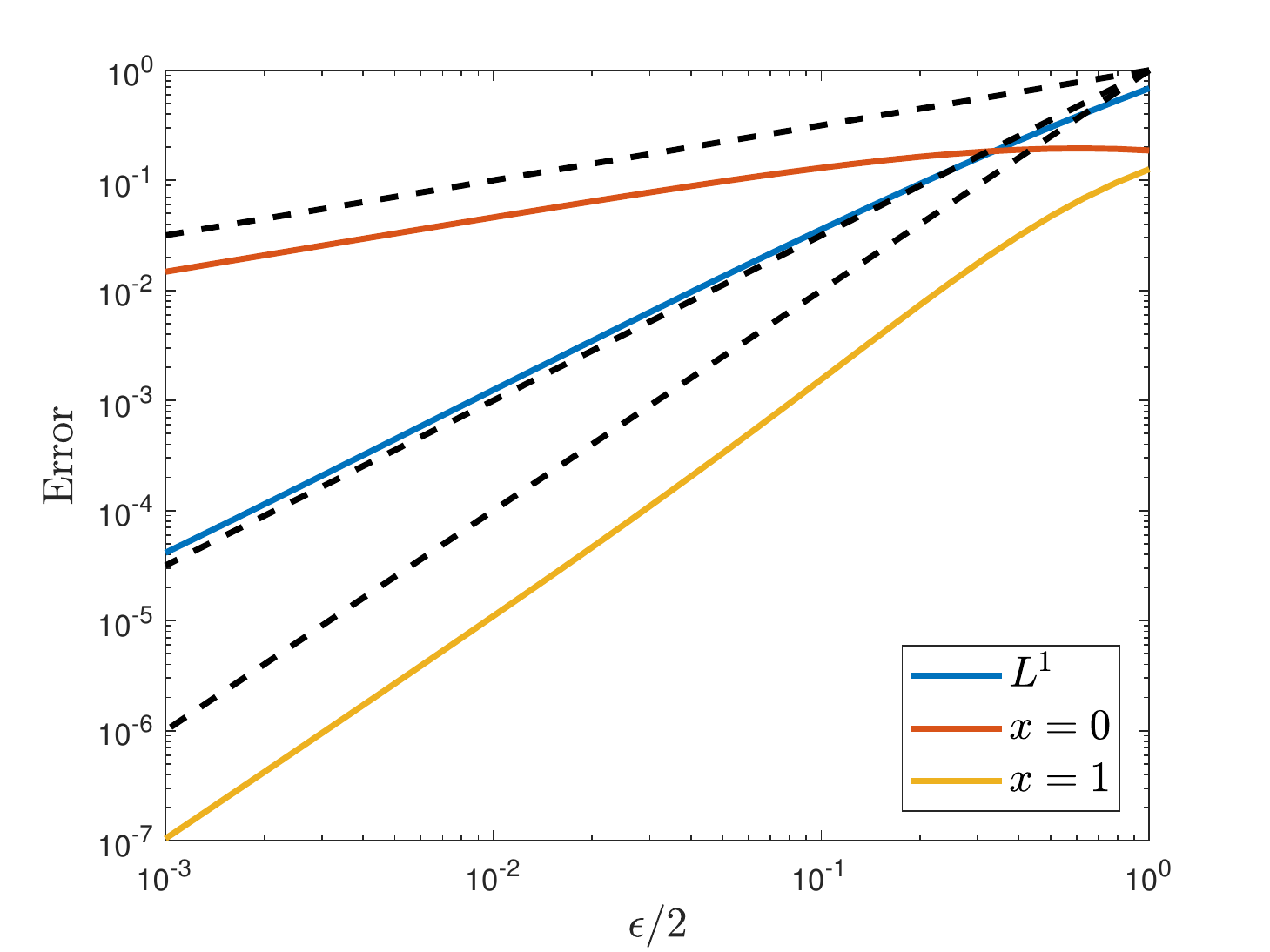} 
\caption{Results for Laguerre polynomials with $\alpha=0.5$. Left: Convergence in $L^1$ and at the points $0$ and $1$. The rates $O(\epsilon)$ and $O(\epsilon^{0.5})$ are also shown as dashed lines. Right: Convergence with Richardson extrapolation. The rates $O(\epsilon^2)$, $O(\epsilon^{1.5})$ and $O(\epsilon^{0.5})$ are also shown.}
\label{Lag1}
\end{figure}

Figure \ref{Jacobi1} (left) shows a typical error plot for $\alpha=0.7$ and $\beta=0.3$. We plot both the $L^1$ error (computed using a large number of discrete points), and the pointwise errors at $-1,0$ and $1$. The procedure of Theorem \ref{res_est1} is used to determine adaptively how large our (rectangular) matrix truncations should be for a given $\epsilon$. We see that both the $L^1$ error and error at $0$ appear to decrease as $O(\epsilon)$,\footnote{This can be proven, though there is an additional $\log(\epsilon^{-1})$ factor. We shall omit such terms in the ensuing discussion.} whereas the errors at $-1$ and $+1$ decrease as $O(\epsilon^{\beta})$ and $O(\epsilon^{\alpha})$ respectively, shown in the plot. This suggests using Richardson extrapolation to accelerate convergence \cite{richardson1927viii}. This is shown in the right of Figure \ref{Jacobi1}, where the extrapolation was computed at distances $\epsilon$ and $\epsilon/2$ from the real axis. Now the error at $0$ decreases as $O(\epsilon^{2})$, whereas the $L^1$ error appears to decrease as $O(\epsilon^{1.3})$. We found similar results for different $\alpha$ and $\beta$. In general, interior points decrease at the rate $O(\epsilon)$ and then $O(\epsilon^{2})$ after extrapolation. The left end point error decreases as $O(\epsilon^{\min\{1,\beta\}})$ and then $O(\epsilon^{\min\{2,\beta\}})$ after extrapolation. The right end point error decreases as $O(\epsilon^{\min\{1,\alpha\}})$ and then $O(\epsilon^{\min\{2,\alpha\}})$ after extrapolation. Finally, the $L^1$ error decreases as $O(\epsilon^{\min\{1,1+\alpha,1+\beta\}})$ and then $O(\epsilon^{\min\{2,1+\alpha,1+\beta\}})$ after extrapolation. These rates for pointwise and $L^1$ errors reflect the local H\"older exponent and integrability of the density and its first derivative respectively, and can be proven using a Taylor series argument for general measures.\footnote{These results can be proven using the theory of high-order rational kernels developed in \cite{colbrook2020computingSM}.} Moreover, we found that increased rates of convergence could be obtained (and again proven) locally near smoother parts of the measure by using further iterates of extrapolation. Note also that we took a uniform value $\epsilon$ over the whole interval. However, $\epsilon$ could just have easily been a function of the position $x$, allowing it to be smaller for points where the resolvent is estimated more accurately for a given matrix size.

To demonstrate the algorithm on unbounded operators, we next consider the class of generalised Laguerre polynomials for $\alpha>-1$ which have
$$
a_k=\sqrt{k(k+\alpha)},\quad b_k=2k+\alpha-1,
$$
and measure on the interval $[0,\infty)$ given by
\begin{equation}
d\mu_J=\frac{x^{\alpha}e^{-x}}{\Gamma(\alpha+1)}dx.
\end{equation}
Results are shown in Figure \ref{Lag1} for $\alpha=0.5$, where we have plotted the (relative) $L^1$ error over the interval $[0,1]$, as well as pointwise errors at $0$ and $1$. Similar conclusions can be drawn as before. Pointwise errors are also shown for this example and the Jacobi operator, but now using the $10$th iterate of Richardson extrapolation, in Figure \ref{Jacobi10}. The errors decay at the expected rates (also shown), with $O(\epsilon^{10})$ convergence at smooth parts of the measure. Near singular points (namely at $x=-0.99$ and $x=0.01$ for the Jacobi and Laguerre cases, respectively), the prefactor in front of the $O(\epsilon^{10})$ term is larger, and smaller $\epsilon$ is needed before the expected rate kicks in.

\begin{figure}
\centering
\includegraphics[width=0.49\textwidth,clip]{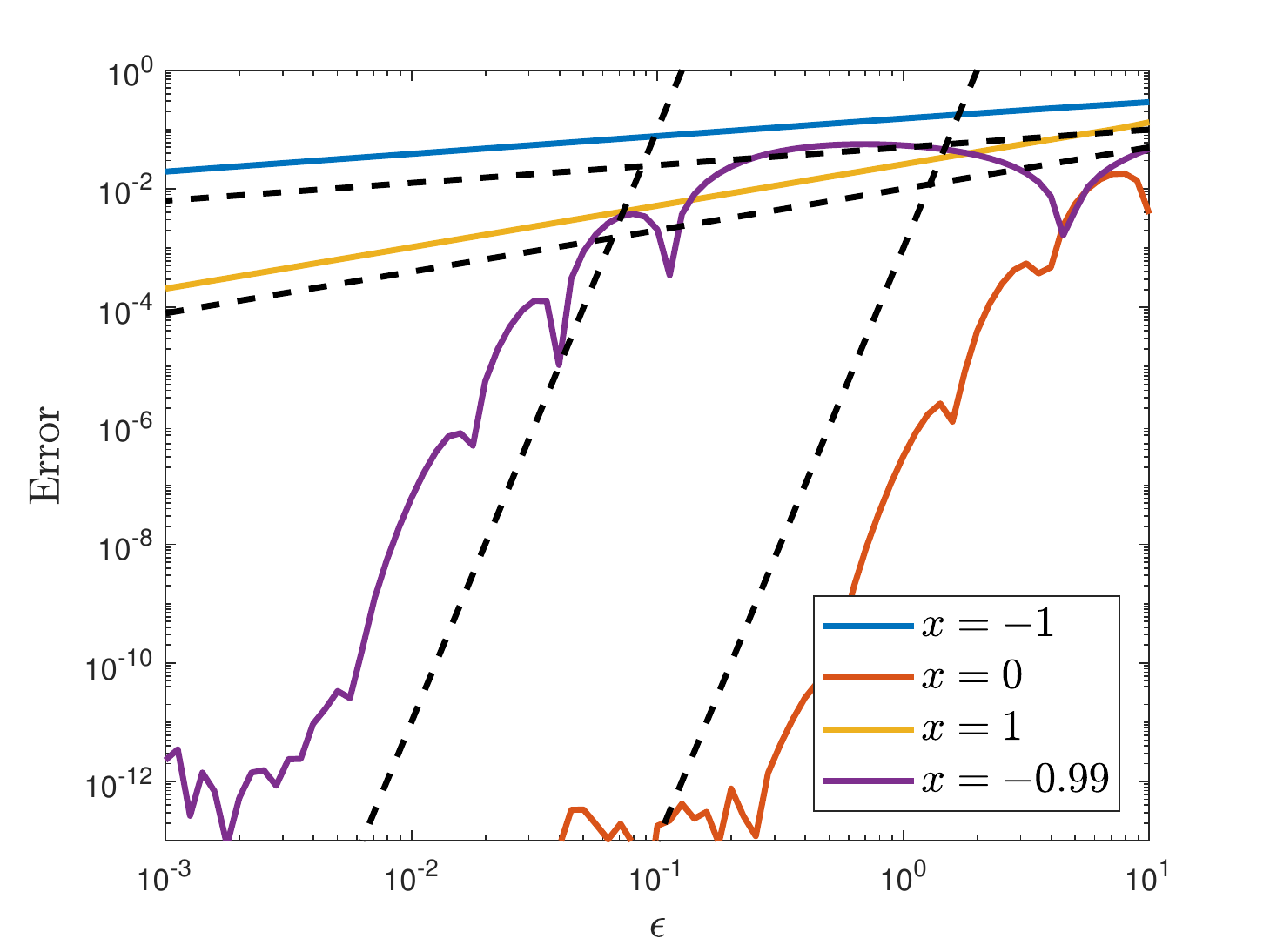}
\includegraphics[width=0.49\textwidth,clip]{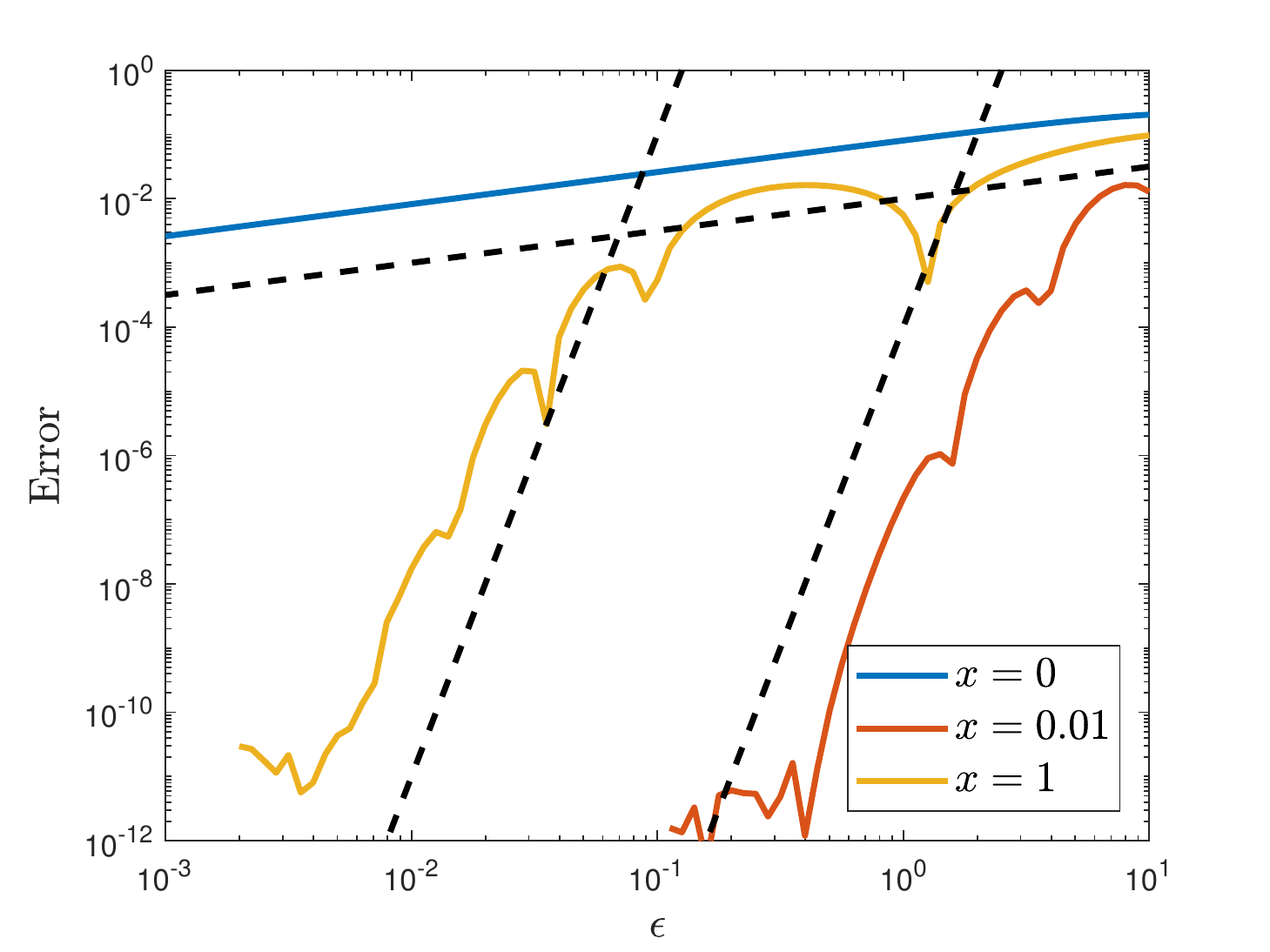}
\caption{Left: Pointwise errors for the Jacobi example ($\alpha=0.7$ and $\beta=0.3$) and ($10$th) iterated Richardson extrapolation. Right: Same but for the Laguerre example ($\alpha=0.5$). The dotted lines show the expected rates of convergence, with $O(\epsilon^{10})$ convergence at smooth parts of the measure.}
\label{Jacobi10}
\end{figure}

\begin{figure}
\centering
\includegraphics[width=0.49\textwidth,clip]{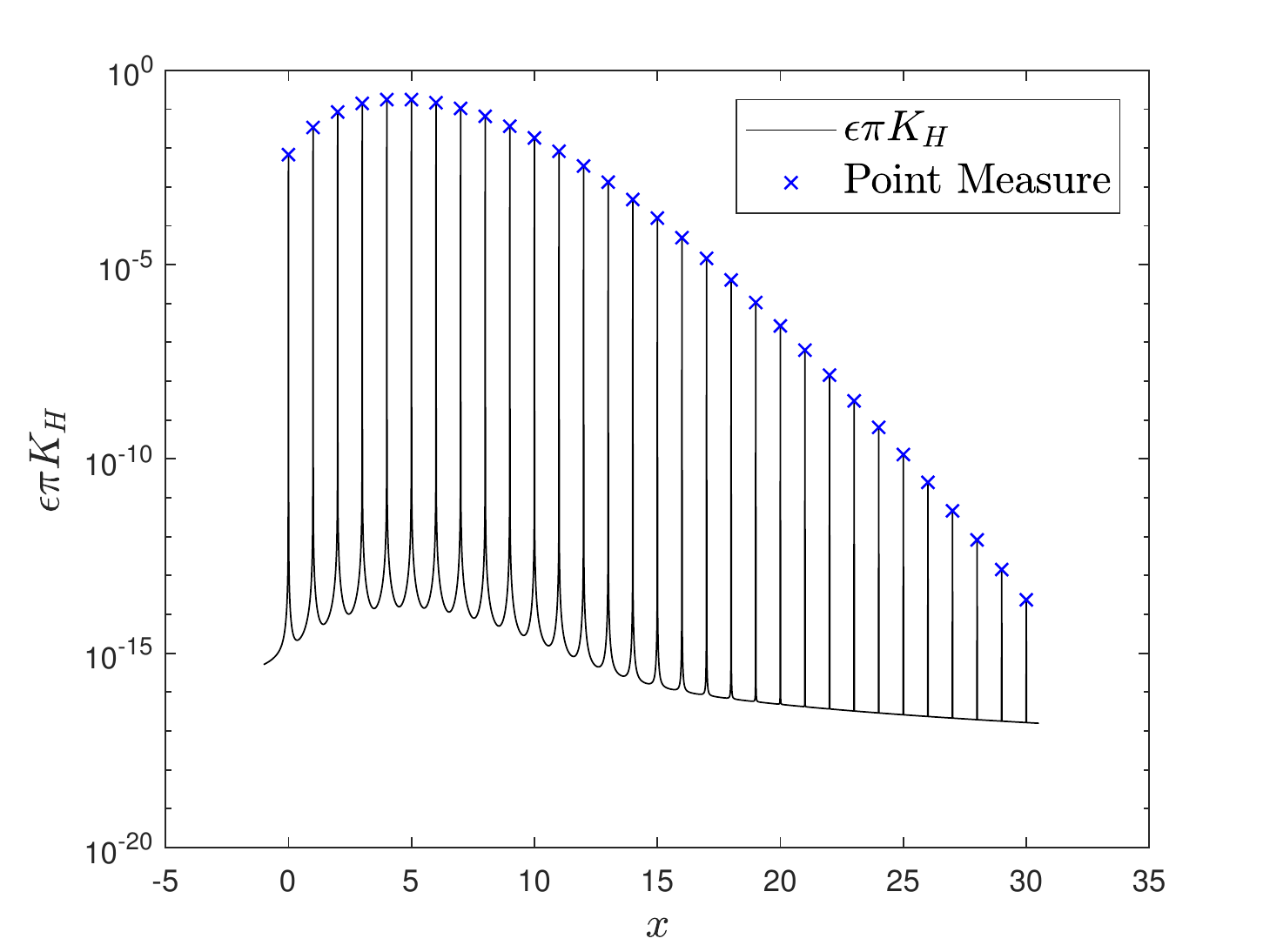}
\includegraphics[width=0.49\textwidth,clip]{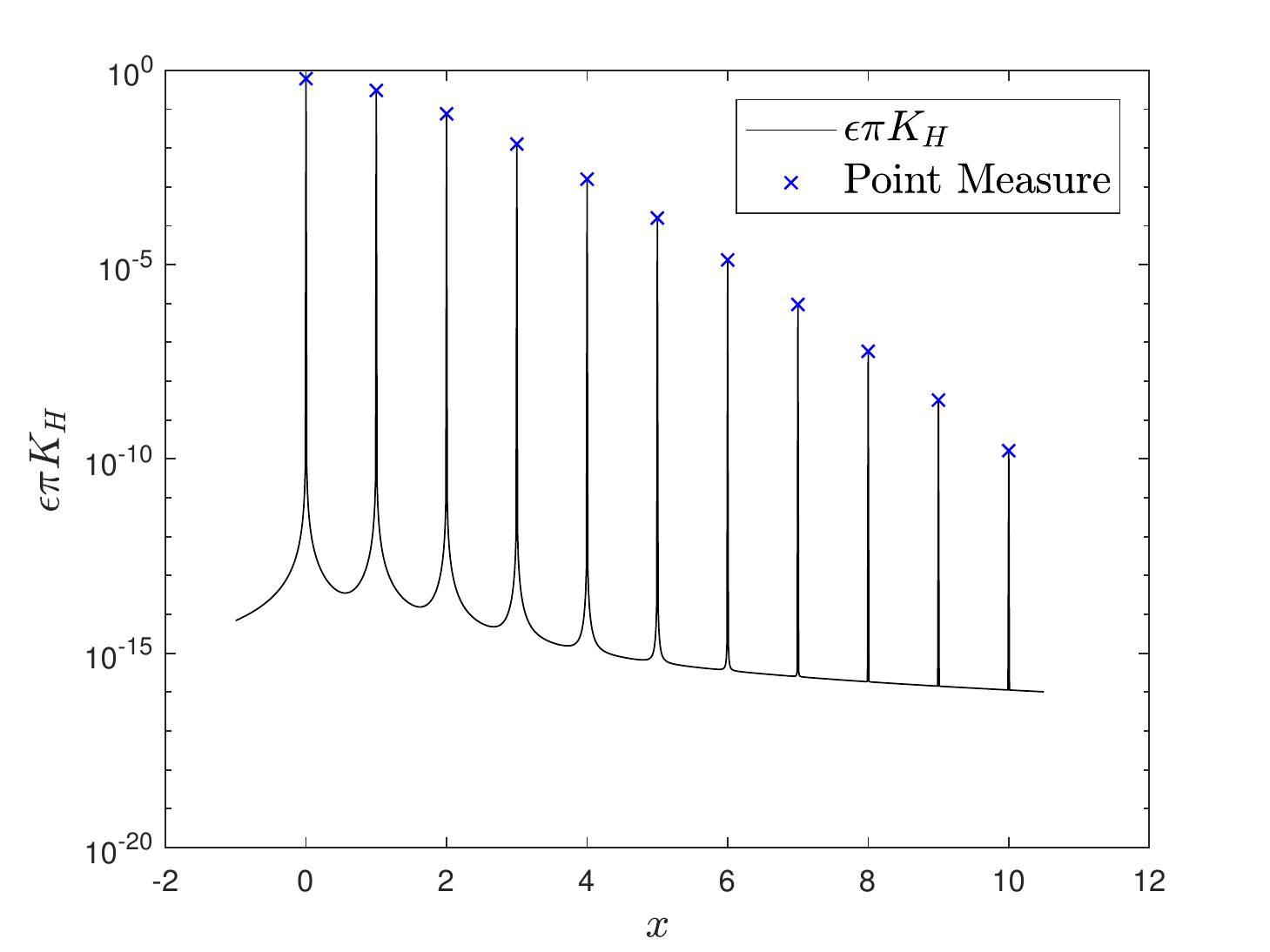} 
\caption{Left: Computation of $\epsilon\pi \langle K_H(x+i\epsilon;T,e_1),e_1\rangle$ (denoted $\epsilon\pi \langle K_H$) for $\epsilon=10^{-7}$ and $\alpha=5$. Right: Same but for $\alpha=0.5$. The blues crosses represent the weights of the atoms of the measure, corresponding to projections onto eigenspaces.}
\label{Charl1}
\end{figure}

Finally, we demonstrate the computation of measures for a Jacobi operator with non-empty discrete spectrum. The Charlier polynomials are generated by
$$
a_k=\sqrt{\alpha k},\quad b_k=k+\alpha-1,
$$
for $\alpha>0$, and have measure
\begin{equation}
d\mu_J=\exp(-\alpha)\sum_{m=0}^\infty\frac{\alpha^m}{m!}\delta_{m},
\end{equation}
where $\delta_m$ denotes a Dirac measure located at the point $m$. Figure \ref{Charl1} shows plots of $\epsilon\pi \langle K_H(x+i\epsilon;T,e_1),e_1\rangle$ for $\epsilon=10^{-7}$, computed using an $(n+1)\times n$ matrix with $n=1000$. The peaks clearly coincide with the atoms of the measure. The difference between the peak values and the weight $\exp(-\alpha)\alpha^m/m!$ was of the order $10^{-13}$ for both examples. This demonstrates that an effective way to compute eigenvalues (particularly the challenging case of those in gaps of the essential spectrum, where spectral pollution occurs, or even those embedded in the essential spectrum) and projections onto eigenspaces may be to find local maxima or spikes of $\epsilon\pi \langle K_H(x+i\epsilon;T,e_1),e_1\rangle$. Such a routine would only require the solution of shifted linear systems (the resolvent), without diagonalisation, and could be executed rapidly in parallel.

\subsection{A global collocation approach}
\label{JACOBI_SEC_2}

Typically, the size of the linear system needed to approximate the resolvent accurately (Theorem \ref{res_est1}) grows as $\epsilon\downarrow 0$ and we approach the spectrum. It is therefore beneficial to increase the rate of convergence of approximating the measures as $\epsilon\downarrow 0$. One local (in terms of $x\in\mathbb{R}$) approach, Richardson extrapolation, was used in \S\ref{JACOBI_SEC_1}. Here we briefly outline a different, more global approach. 

Suppose that we know the spectral measure of an operator $T$ has support included in $I\subset\mathbb{R}$ and is absolutely continuous. Alternatively, we may analytically know, or be able to estimate, the singular part of the measure and subtract this from what follows. In this case, a natural way to approximate the Radon--Nikodym derivative is through a formal basis expansion
$$
\rho_{x,y}^T(\lambda)=\sum_{m=1}^\infty a_m\phi_m(\lambda),
$$
where $\phi_m$ are functions with support $I$ whose Cauchy's transforms are easy to compute. To approximate the coefficients $a_m$, we collocate in the complex plane as follows. Let $\mathcal{C}$ be a finite collection of complex points in the upper half-plane and truncate the approximation of $\rho_{x,y}^T$ to $M$ terms. To generate a linear system for $\{a_m\}_{m=1}^M$, we evaluate the Cauchy transform at points $z\in\mathcal{C}$. The Cauchy transform satisfies
$$
\int_{\mathbb{R}}\frac{\rho_{x,y}^T(\lambda)}{\lambda-z}d\lambda=\langle R(z,T)x,y\rangle,
$$
which can be computed with error control using the results of \S \ref{approx_res_sect}. Call this approximation $b_{x,y}(z)$ and define
$$
\widehat \phi_m(z)=\int_{I}\frac{\phi_m(\lambda)}{\lambda-z}d\lambda.
$$
Then for each $z\in\mathcal{C}$, an approximate linear relation can be written as
$$
\sum_{m=1}^M a_m \widehat\phi_m(z)=b_{x,y}(z).
$$
Evaluating this relation at $\geq M$ points in $\mathcal{C}$ gives rise to a linear system, which can be inverted in the least-squares sense for the approximation of the coefficients $\{a_m\}_{m=1}^M$. If $x=y$, and the basis functions are real, then the coefficients are real. Hence, in this case, taking real and imaginary parts of the linear system gives further linear relations without having to compute further resolvents.

If our basis functions satisfy recursion relations of the form
$$
\phi_{m+1}(\lambda)=\alpha_m \lambda\phi_m(\lambda)+\beta_m \phi_m(\lambda)+\gamma_m\phi_{m-1}(\lambda),
$$
then
\begin{align*}
\widehat\phi_{m+1}(z)&=\alpha_m \int_I\frac{\lambda\phi_m(\lambda)}{\lambda-z}d\lambda+\beta_m\widehat\phi_{m}(z)+\gamma_m\widehat\phi_{m-1}(z)\\
&=\alpha_m\int_I\phi_m(\lambda)d\lambda+z\alpha_m \widehat \phi_m(z)+\beta_m\widehat\phi_{m}(z)+\gamma_m\widehat\phi_{m-1}(z).
\end{align*}
Hence, given the values of the integrals
\begin{equation}
\label{integrals_we_need}
\int_I\phi_m(\lambda)d\lambda,\quad \widehat\phi_{1}(z),
\end{equation}
we can compute $\widehat \phi_m(z)$ for all $m\in\mathbb{N}$. The integrals in (\ref{integrals_we_need}) have simple forms for all the bases used in this paper. This method of computation of $\widehat\phi_m$ is fast, meaning that the most expensive part of the collocation method is the computation of the right hand side of the linear system, that is, computing the resolvent.

\begin{figure}
\centering
\includegraphics[width=0.49\textwidth,clip]{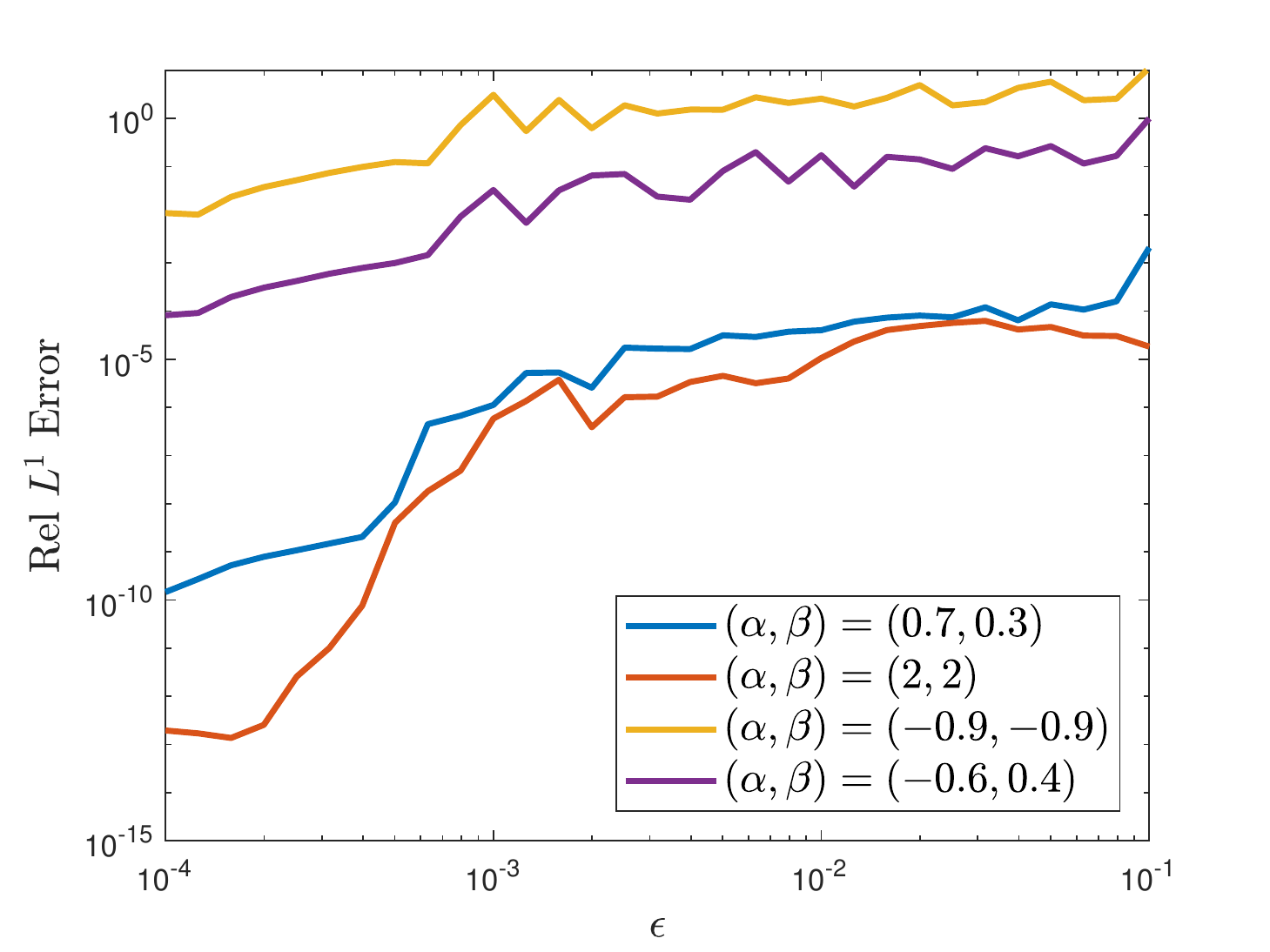}
\includegraphics[width=0.49\textwidth,clip]{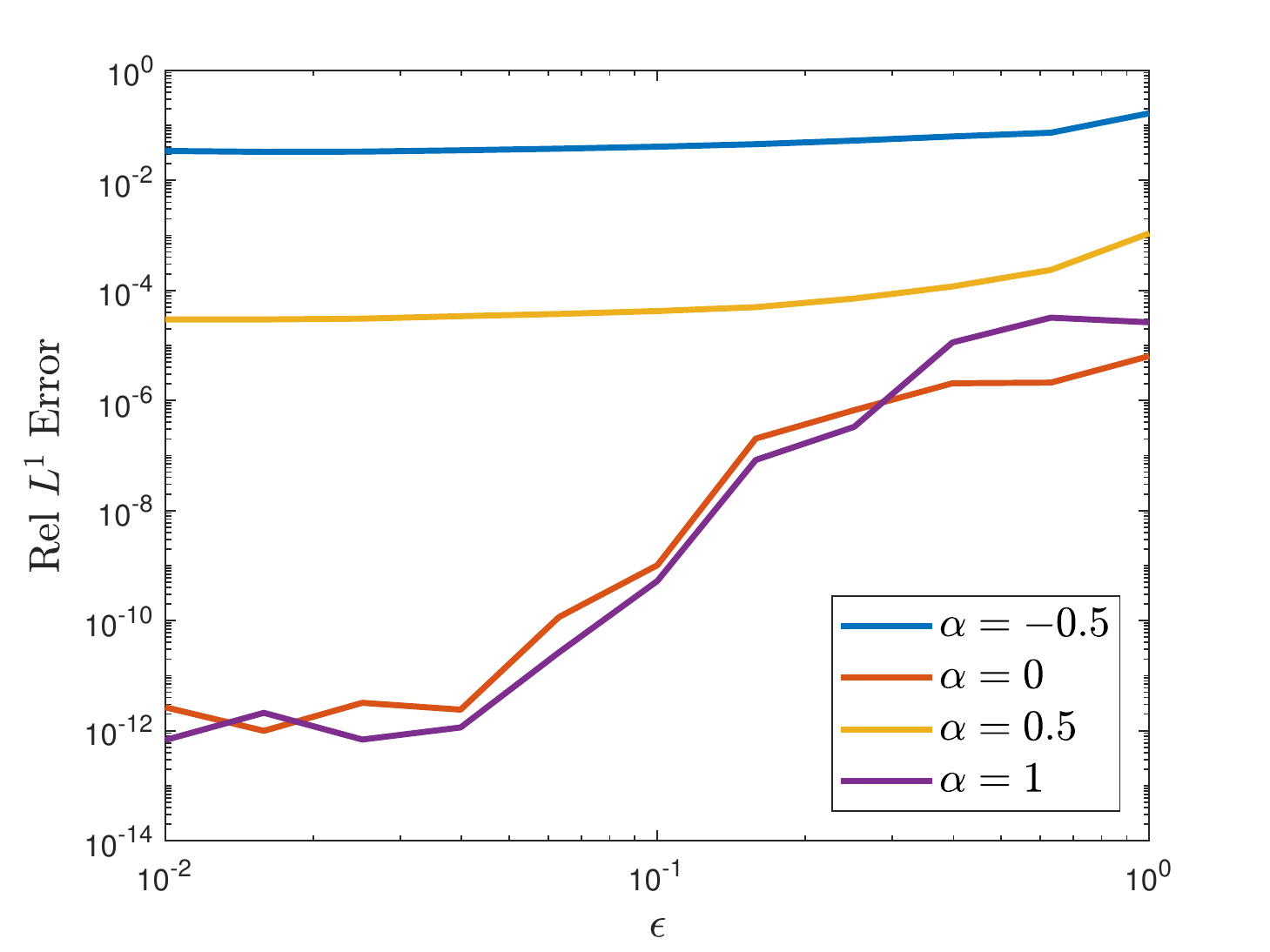}  
\caption{Left: Convergence of collocation method using Chebyshev polynomials. Right: Convergence of collocation method using Laguerre functions on $[0,\infty)$.}
\label{Jacobi2}
\end{figure}

Figure \ref{Jacobi2} (left) revisits the Jacobi polynomials example in \S \ref{JACOBI_SEC_1} and shows the collocation method in the case of using Chebyshev polynomials as the basis functions $\phi_m$ (taking the first $1500$). For collocation points, we took $M$ Chebyshev nodes offset by an additional $\epsilon i$ to lie just above the interval $[-1,1]$ in the complex plane. Note that the rate of convergence is faster than $O(\epsilon^{\min\{2,1+\alpha,1+\beta\}})$ achieved with the methods of \S \ref{applic_sec} after extrapolation. There are at least two prices to pay, though. First, there is no current proof that collocation converges and, second, global regularity of the measure is needed. At the very least, we need to be able to subtract off the singular part of the measure. In practice, even if the measure is absolutely continuous, a large number of basis functions may be needed to capture the Radon--Nikodym derivative. Examples are given in \S \ref{CMV_numerics}, where collocation performs worse than the techniques of \S \ref{applic_sec} due to either of these regularity conditions. Figure \ref{Jacobi2} (right) revisits the (generalised) Laguerre polynomials example in \S \ref{JACOBI_SEC_1} and shows the collocation method in the case of using Laguerre functions (the polynomials multiplied by the square root of the weight function) as the basis functions with $M=1000$. The collocation points were $\{1^2/M^2,2^2/M^2,...,1\}+\epsilon i$. Again, this method converges with faster rates than that in \S \ref{JACOBI_SEC_1}.

\subsection{CMV matrices and extensions to unitary operators}
\label{CMV_numerics}

We now demonstrate that the algorithms extend to the unitary case through use of the functions $K_D$, namely, the convolution with the Poisson kernel of the unit disk. We will consider the class of CMV matrices (named after Cantero, Moral and Vel\'azquez \cite{cantero2003five}) linked with orthogonal polynomials on the unit circle. A full discussion of this subject is beyond the scope of this paper, and we refer the reader to the monographs of Simon \cite{simon2005orthogonal,simon2005orthogonal2}. However, the background for this example is as follows. Given a probability measure $\mu$ on the unit circle $\mathbb{T}$, whose support is not a finite set, we can define a system of orthogonal polynomials $\{\Phi_n\}_{n=0}^\infty$ by applying the Gram--Schmidt process to $\{1,z,z^2,...\}$. Given a polynomial $Q_n(z)$ of degree $n$, we define the reversed polynomial $Q_n^*(z)$ via $Q_n^*(z)=z^n\overline{Q_n(1/\overline{z})}$. Szeg{\H o}'s recurrence relation \cite{szeg1939orthogonal} is given by
\begin{equation}
\Phi_{n+1}(z)=z\Phi_n(z)-\overline{\alpha_n}\Phi_{n}^*(z),
\end{equation}
where the $\alpha_n$ are known as Verblunsky coefficients \cite{verblunsky1936positive} and satisfy $\left|\alpha_j\right|<1$. Verblunsky's theorem \cite{verblunsky1935positive} sets up a one-to-one correspondence between $\mu$ and the coefficients $\{\alpha_j\}_{j=0}^\infty$. Define also
$$
\rho_j=\sqrt{1-\left|\alpha_j\right|^2}>0.
$$
The CMV matrix associated with $\{\alpha_j\}_{j=0}^\infty$ is
\begin{equation}
C=\begin{pmatrix}
\overline{\alpha_0} & \overline{\alpha_1}\rho_0 & \rho_1\rho_0 & 0 & 0 & \cdots\\
\rho_0 & -\overline{\alpha_1}\alpha_0 & -\rho_1\alpha_0 & 0 & 0 & \cdots\\
0 & \overline{\alpha_2}\rho_1 & -\overline{\alpha_2}\alpha_1 & \overline{\alpha_3}\rho_2 & \rho_3\rho_2 & \cdots\\
0 & \rho_2\rho_1 & -\rho_2\alpha_1 & -\overline{\alpha_3}\alpha_2 & -\rho_3\alpha_2& \cdots\\
0 & 0 & 0 & \overline{\alpha_4}\rho_3 & -\overline{\alpha_4}\alpha_3 & \cdots\\
\cdots &\cdots &\cdots &\cdots &\cdots &\cdots 
\end{pmatrix}.
\end{equation}
This matrix is unitary and banded (and lies in $\Omega_{n\rightarrow n+2,0}^{\mathrm{U}}$). This last property does not hold for the so-called GGT representation \cite{geronimus1944polynomials,gragg1993positive,teplyaev1992pure}, which has infinitely many non-zero entries in each row. The GGT representation uses the basis $\{\Phi_n\}_{n=0}^\infty$, whereas the CMV representation obtains a basis via applying Gram--Schmidt to $\{1,z,z^{-1},z^2,z^{2},...\}$. The key result is that $\mu_C:=\mu^C_{e_1,e_1}$ is precisely the measure $\mu$ on the unit circle. Hence our new algorithms can be considered as a computational tool for the correspondence
$$
\{\alpha_j\}_{j=0}^\infty\rightarrow \mu,
$$
in much the same way as for orthogonal polynomials on the real line in \S \ref{JACOBI_SEC_1}.

\begin{figure}
\centering
\includegraphics[width=0.49\textwidth,clip]{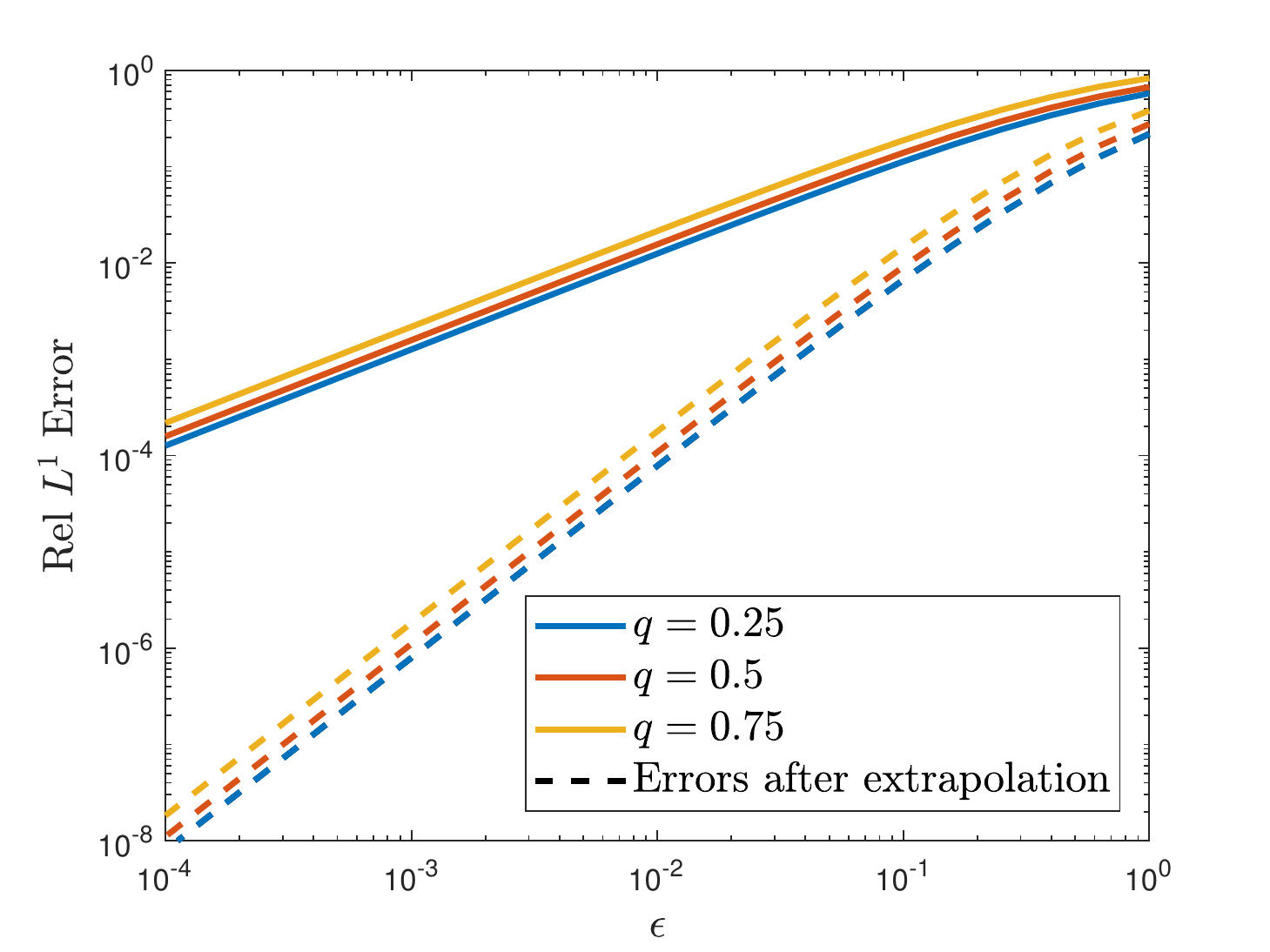}
\includegraphics[width=0.49\textwidth,clip]{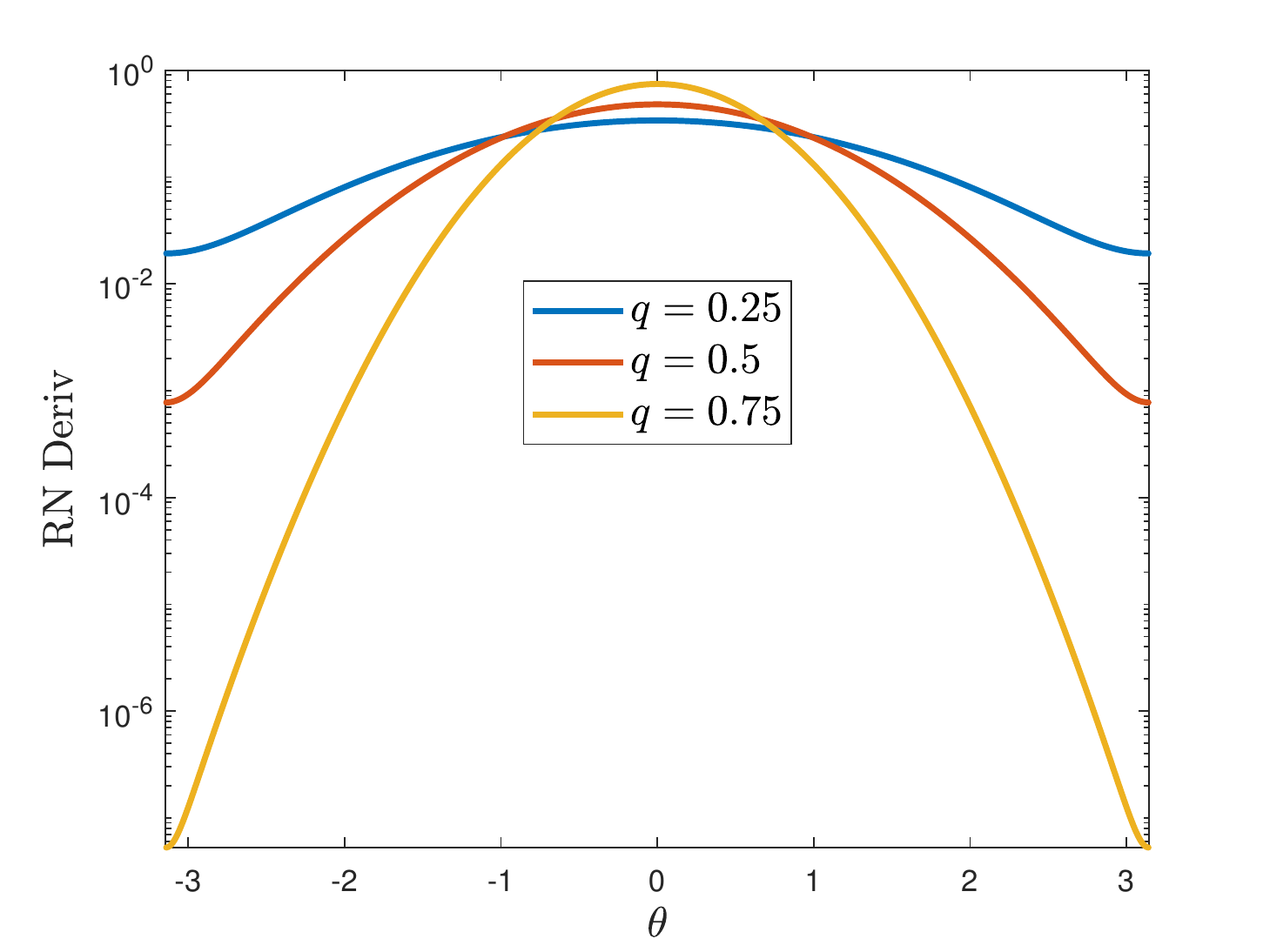}
\caption{Left: Convergence of algorithm for Rogers--Szeg{\H o} polynomials. Right: Corresponding Radon--Nikodym derivatives (densities of the measure).}
\label{CMV1}
\end{figure}

\begin{figure}
\centering
\includegraphics[width=0.49\textwidth,clip]{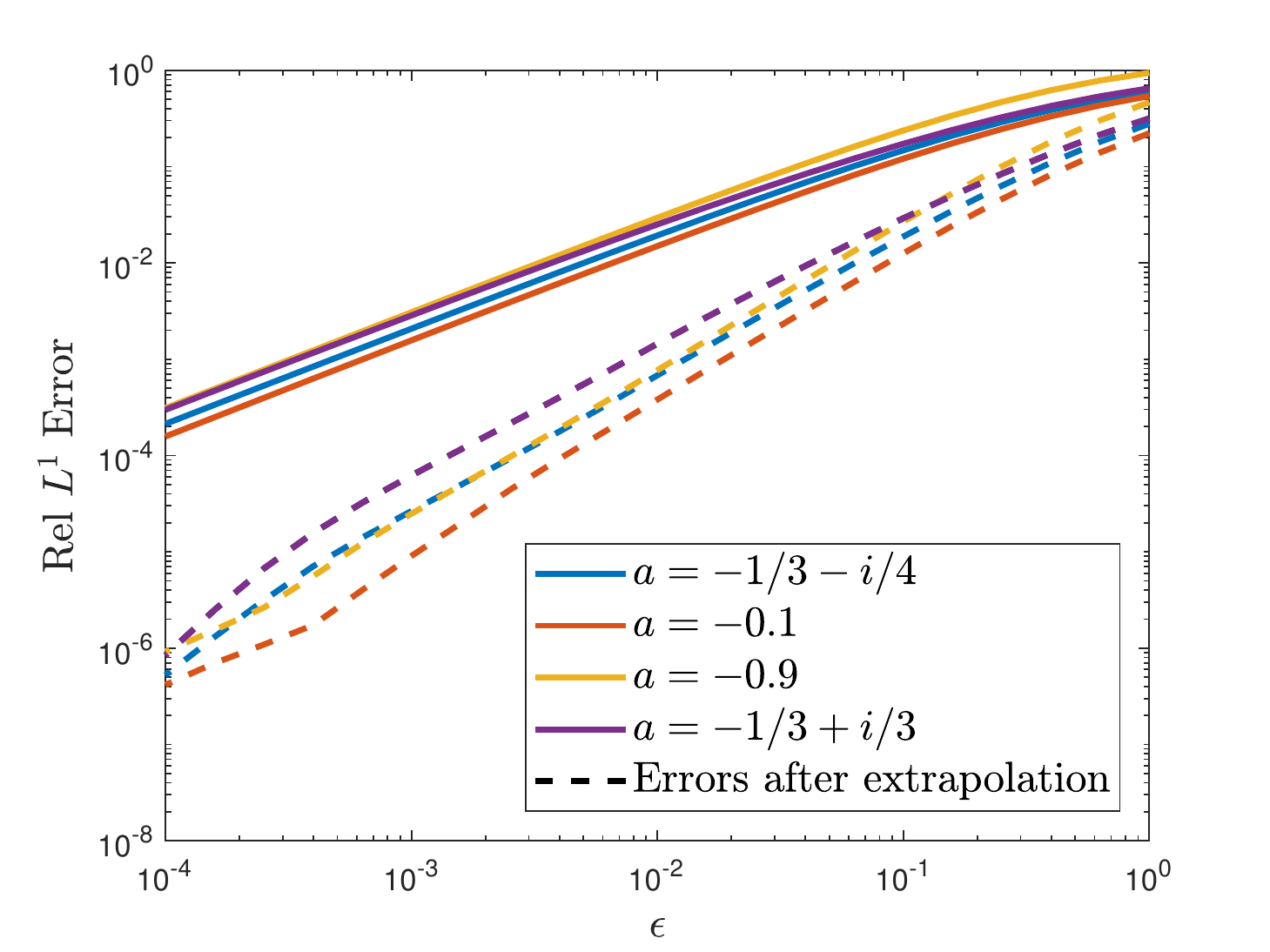}
\includegraphics[width=0.49\textwidth,trim={32mm 92mm 32mm 92mm},clip]{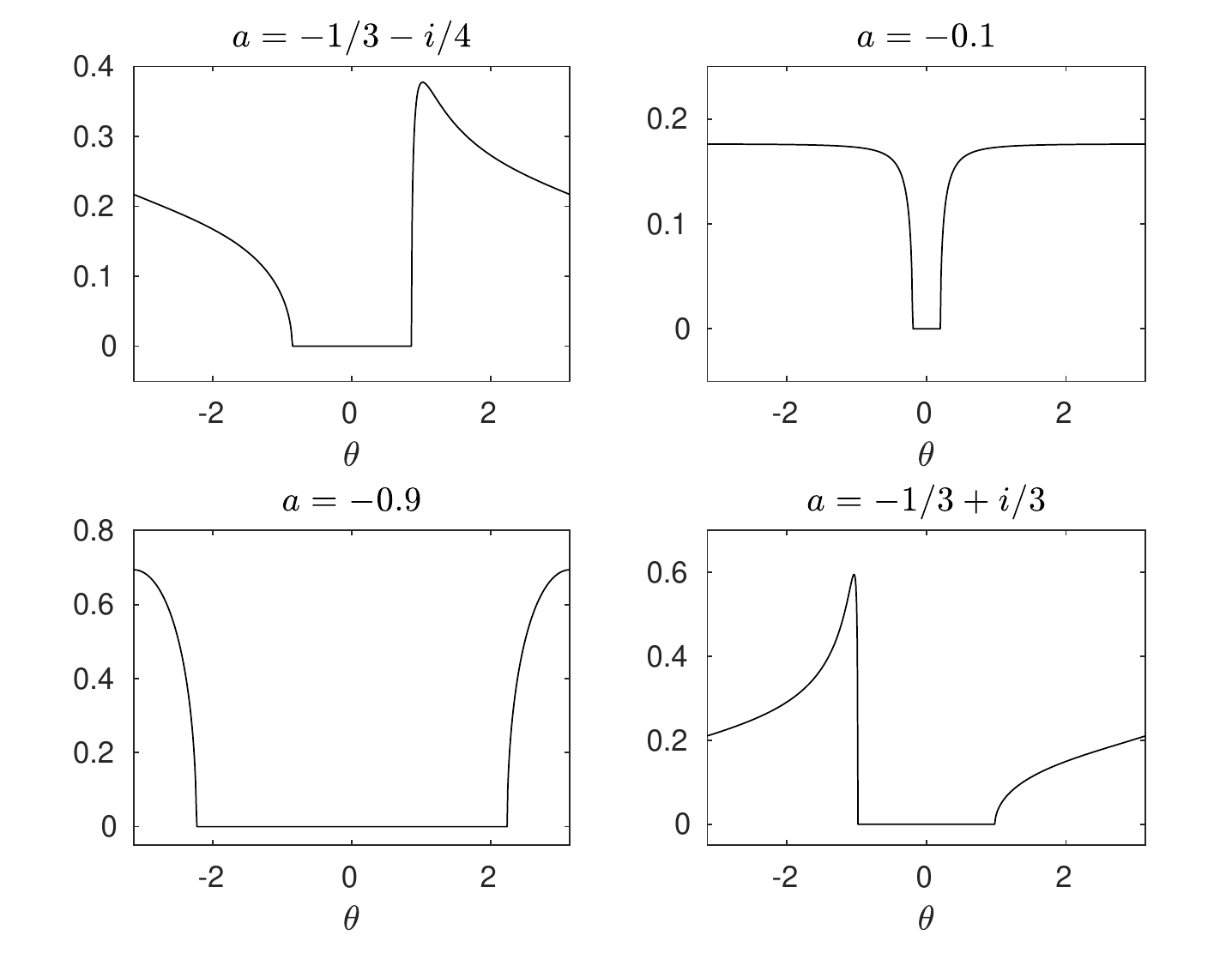}
\caption{Left: Convergence of algorithm for Geronimus polynomials. In this case the algorithm can obtain much more accurate results than collocation owing to the non-smoothness of the density functions. Right: Example density functions for the cases considered.}
\label{ger1}
\end{figure}

\begin{figure}
\centering
\includegraphics[width=0.49\textwidth,clip]{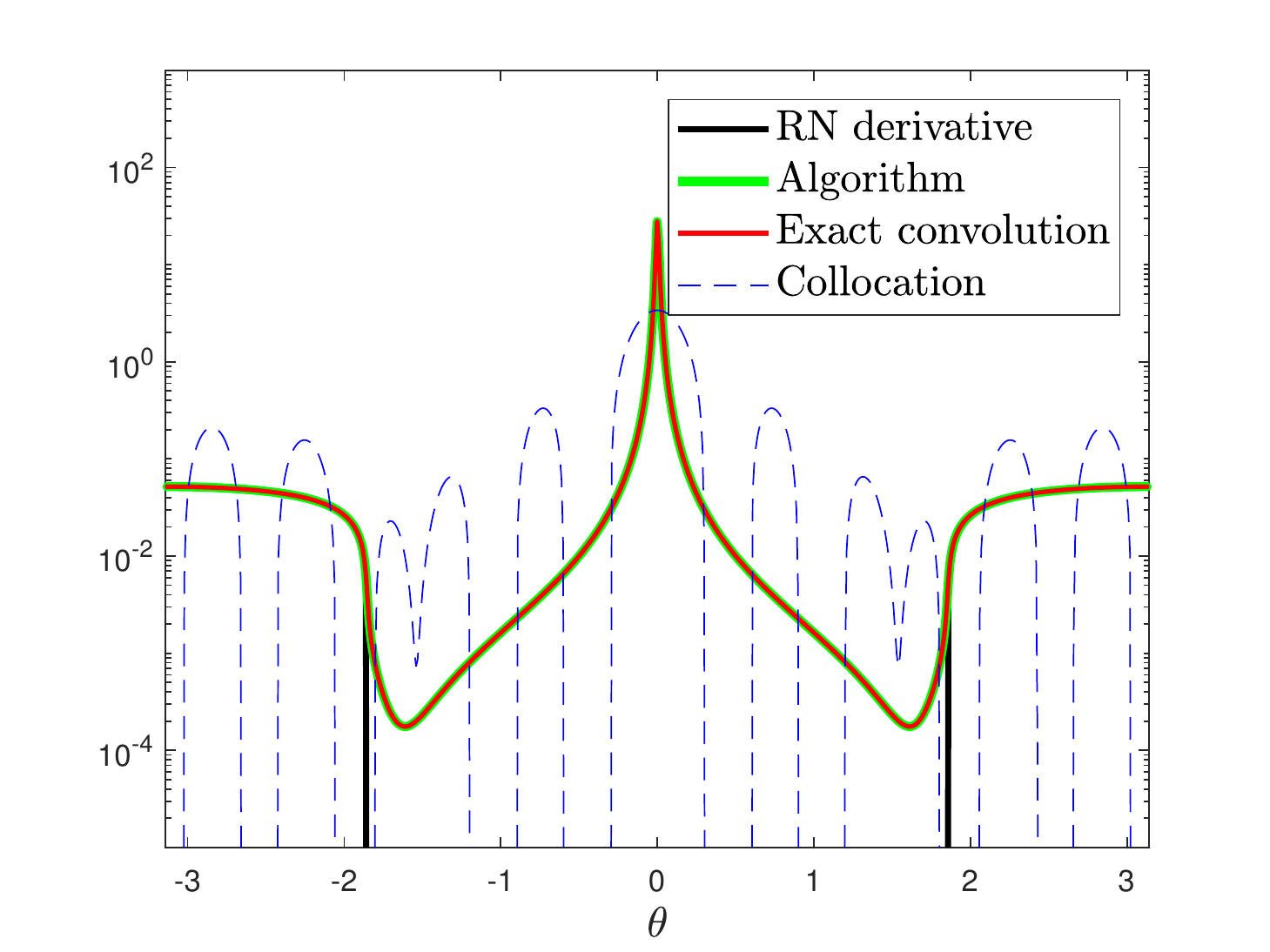}
\includegraphics[width=0.49\textwidth,clip]{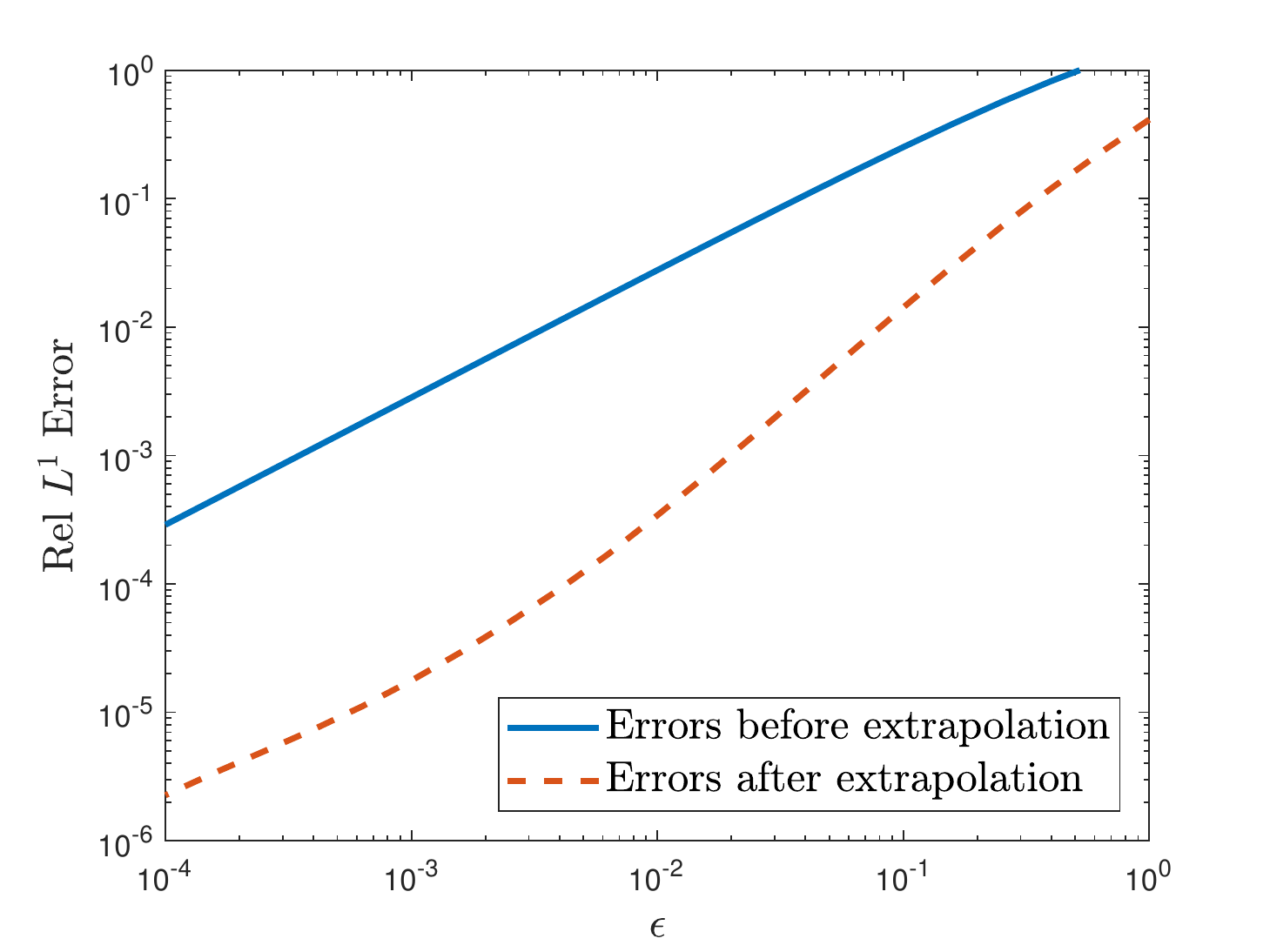}
\caption{Left: The smooth part of the density function (black) for the case of an additional point mass at $\theta=0$. Note that the algorithm's output agrees almost perfectly with the exact convolution of the measure. We have also shown the output of the collocation method, which is unstable in the presence of the point mass. Right: Convergence of the algorithm from Theorem \ref{meas_comp3} over the portion of the spectrum which is purely absolutely continuous.}
\label{ger2}
\end{figure}

The first example we consider are the Rogers--Szeg{\H o} polynomials \cite{rogers1893second} given by
$$
\alpha_j=(-1)^jq^{(j+1)/2},
$$
where $q\in(0,1)$. In this case,
$$
d\mu_C=\frac{1}{\sqrt{2\pi\log(q^{-1})}}\sum_{m\in\mathbb{Z}}\exp\left(-\frac{(\theta-2\pi m)^2}{2\log(q^{-1})}\right)d\theta,
$$
which can be expressed in terms of the theta function. Figure \ref{CMV1} (left) shows the convergence of the new algorithm for various $q$ and we see algebraic convergence as before, with rates $O(\epsilon)$ and $O(\epsilon^2)$ before and after extrapolation respectively (here $\epsilon$ is such that we evaluate the convolutions with the Poisson kernel at radius $r=(1+\epsilon)^{-1}$). Radon--Nikodym derivatives for larger values of $q$ (shown in the right of Figure \ref{CMV1}) have larger derivatives and hence larger pre-factor in front of these rates. We can use a similar collocation method as in \S \ref{JACOBI_SEC_2}, using the standard Fourier basis $\{e^{im\theta}\}_{m\in\mathbb{Z}}$. Note that the relevant Cauchy transforms can be computed explicitly using Cauchy's residue theorem. Collocation points inside and outside the unit disk are needed (collocating inside the unit disk causes the Cauchy transforms of the basis functions with negative $m$ to vanish). This achieved machine precision using just $41$ basis functions and collocating at distance $\epsilon=0.1$ from $\mathbb{T}$.

The next example we consider are the Geronimus polynomials \cite{geronimus1966certain}, which have $\alpha_j=a$ with $\left|a\right|<1$. In this case, for $\left|a+1/2\right|\leq 1/2$,
$$
d\mu_C=\chi_{\left|\theta\right|>\theta_a}\frac{\sqrt{\cos^2(\theta_a/2)-\cos^2(\theta/2)}}{2\pi\left|1+a\right|\left|\sin((\theta-b)/2)\right|}d\theta,
$$
where $\theta\in[-\pi,\pi]$, $b=2\mathrm{arg}(1+\overline{a})$ and $\theta_a=2\sin^{-1}(\left|a\right|)$. Figure \ref{ger1} (right) shows some typical examples, and note that these density functions are not smooth. When $\left|a+1/2\right|>1/2$, there is also a singular part of the measure (see \cite{simon2005orthogonal} for the exact formula).

In the cases shown in Figure \ref{ger1}, the collocation method struggles to gain an accuracy beyond $10^{-4}$ owing to discontinuity in the derivative of the Radon--Nikodym derivative and the algorithm based on convolutions is able to gain much more accurate results. Finally, a typical example is shown in Figure \ref{ger2} (left) for $a=0.8$ (there is now a singular part located at $\theta=0$), where we have shown the output of the algorithm and the exact convolution with the Poisson kernel for $r=1/1.01$, as well as the collocation method using $21$ basis functions and collocation points $\{(1\pm0.1)2\pi/21,(1\pm0.1)2\pi\cdot2/21,...,(1\pm0.1)2\pi\}$. Here, we see an exact agreement between the algorithm and convolution. Unsurprisingly in the presence of point spectra, the collocation method is unstable. Consistent with Theorem \ref{meas_comp3}, the algorithm in \S\ref{app1_RN} converges to the density over the portion of the spectrum, which is purely absolutely continuous. This is shown in the right of Figure \ref{ger2}.

\subsection{Fractional diffusion on a 2D quasicrystal}
\label{penrose_numerics}

\begin{figure}
\centering
\includegraphics[width=0.48\textwidth,trim={35mm 98mm 35mm 92mm},clip]{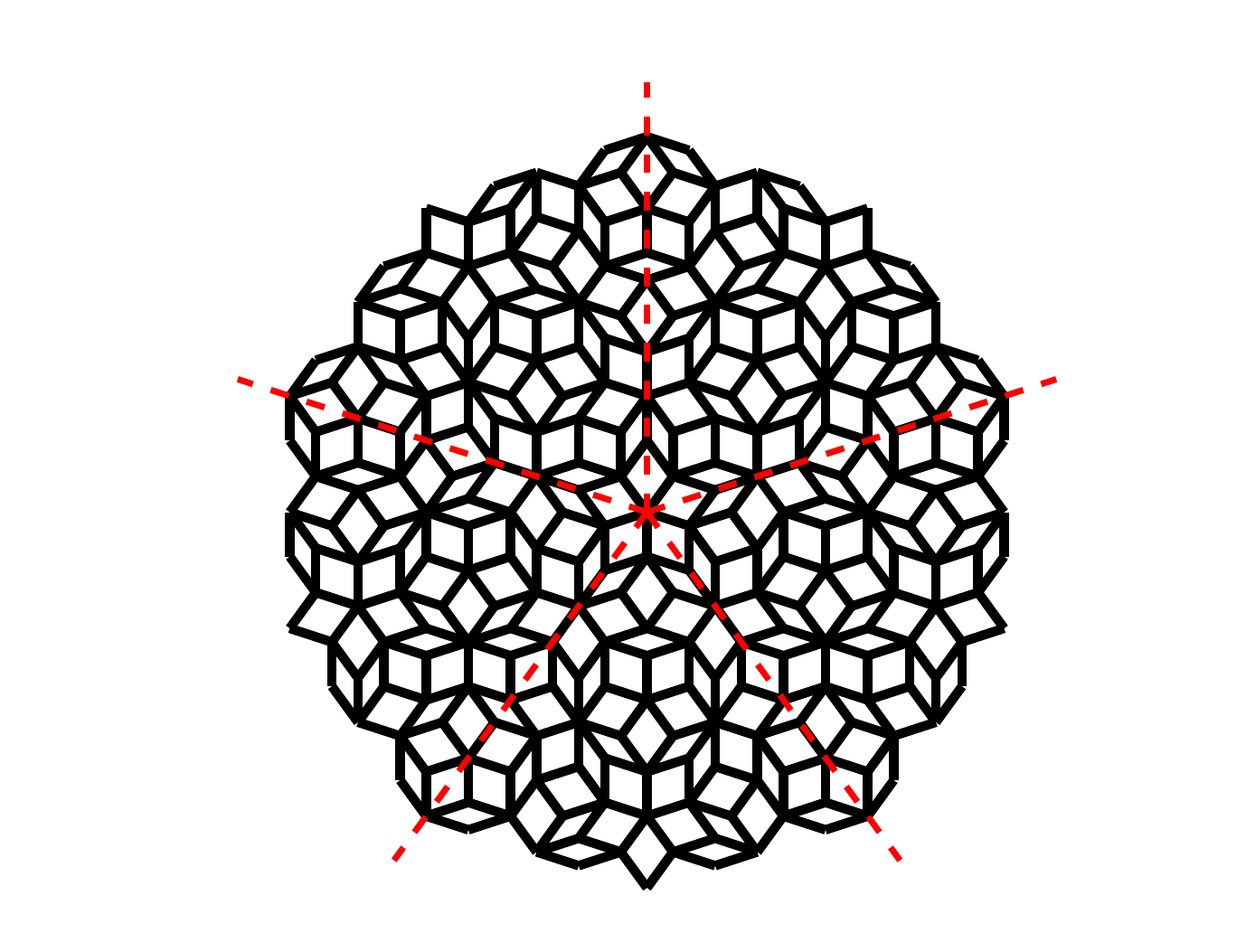}
\includegraphics[width=0.45\textwidth,trim={0mm 0mm 0mm 0mm},clip]{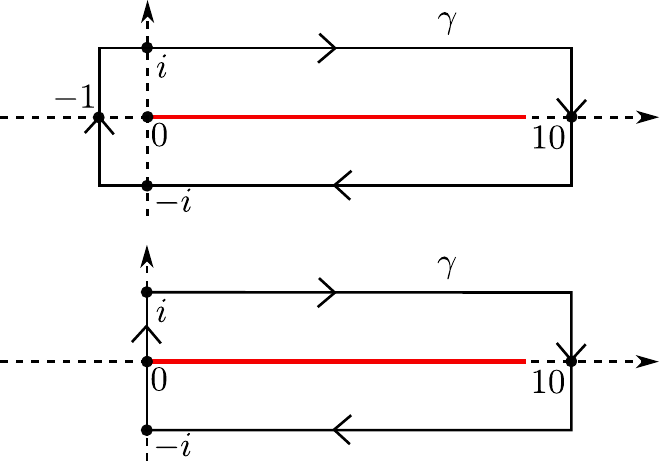}
\caption{Left: Finite portion of Penrose tile showing the fivefold rotational symmetry. We labelled vertices from the centre in a spiral outwards in increasing distance from the origin. Right: Contours used for the fractional diffusion on the Penrose tile ($\alpha\in\mathbb{N}$ top, $\alpha\notin\mathbb{N}$ bottom). The red line represents the interval containing the spectrum of $-H_0$, the branch cut for $z^{\alpha}$ is taken to be $\mathbb{R}_{\leq 0}$.}
\label{pen0}
\end{figure}

\begin{figure}
\centering
\includegraphics[width=0.49\textwidth,clip]{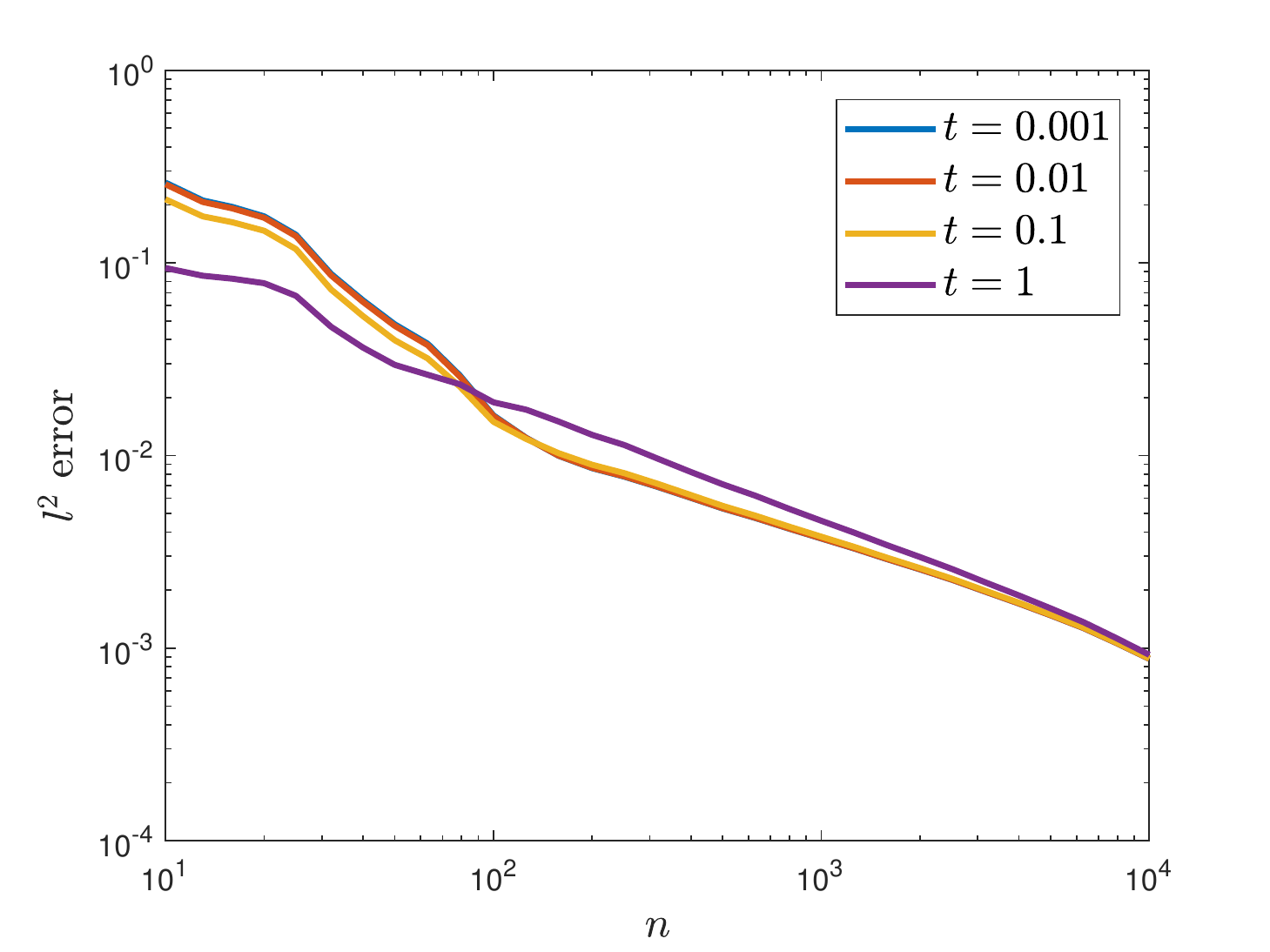}
\includegraphics[width=0.49\textwidth,clip]{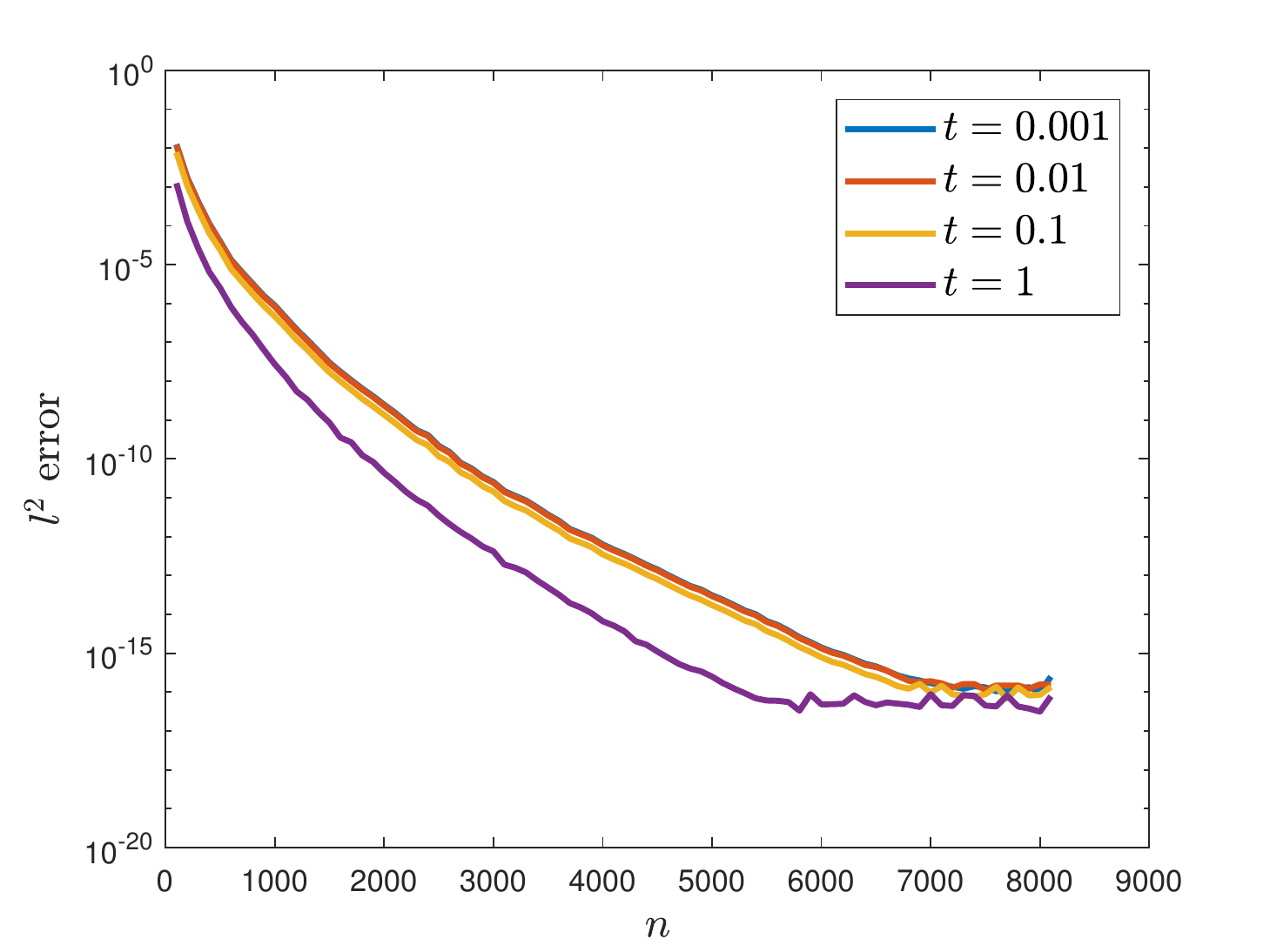} 
\caption{Left: Convergence for $\alpha=1/2$. Right: Convergence for $\alpha=1$. We have plotted the errors as a function of the matrix size (number of matrix columns in the rectangular truncations) used.}
\label{pen1}
\end{figure}

In this example, we demonstrate computation of the functional calculus and consider operators acting on the graph of a Penrose tile - the canonical model of a two-dimensional quasicrystal \cite{vardeny2013optics,takemura2015valence,della2005band} (aperiodic crystals which typically have anomalous spectra/transport properties). Quasicrystals were discovered in 1982 by Shechtman \cite{PhysRevLett.53.1951} who was awarded the 2011 Nobel Prize in Chemistry for his discovery. Since then, quasicrystals have generated considerable interest due to their often exotic physical properties \cite{stadnik2012physical}, with a vast literature on the physics and spectral properties of such aperiodic systems.\footnote{Recently, there has been renewed interest in aperiodic systems connected to graphene in the field of twistronics \cite{chang_2019,lu2019superconductors}.} Unlike the one-dimensional case, little is known about the spectral properties of two-dimensional quasicrystals. A finite portion of the infinite tile is shown in Figure \ref{pen0}, and we consider the natural graph whose vertices are the vertices of the tiling and edges correspond to the edges of the rhombi. Such a graph posses no periodic structure, and it is generically impossible to study its spectral properties analytically with current methods. The free Hamiltonian $H_0$ (Laplacian) is given by 
\begin{equation}\label{H_02}
(H_0\psi)_{i} = \sum_{i \sim j} \left(\psi_j-\psi_i\right),
\end{equation}
with summation over nearest neighbour sites (vertices). The first rigorous computation of the spectrum of $H_0$ (also with additional error control) was performed in \cite{colb1}. We chose a natural ordering of the vertices as in Figure \ref{pen0}, which leads to an operator $H_0$ acting on $l^2(\mathbb{N})$. The local bandwidth grows for this operator (our ordering is asymptotically optimal) and hence computation of powers $H_0^m$ is infeasible for $m\gtrsim 50$, rendering polynomial approximations of the functional calculus intractable. In the above notation, $H_0\in\Omega_{f,0}$ with $f(n)-n=O(\sqrt{n})$, and so this example provides a demonstration of the algorithm for a non-banded matrix. Throughout, we take $u_0=e_1$, though different initial conditions are handled in the same manner.

The ability to compute the functional calculus allows the solution of linear evolution equations. Given $A\in\Omega_{\mathrm{N}}$, a function $F$ (continuous and bounded on $\sigma(A)$) and $u_0\in l^2(\mathbb{N})$, consider the evolution equation
\begin{equation}
\label{evolution_we_can_handle}
\frac{du}{dt}=F(A)u,\quad u_{t=0}=u_0.
\end{equation}
The solution of this equation is
$$
u(t)=\exp(F(A)t)u_0
$$
and can be computed via the algorithm outlined in \S \ref{applic_FCCC}. 

\begin{figure}
\centering
\includegraphics[width=0.28\textwidth,trim={50mm 97mm 62mm 92mm},clip]{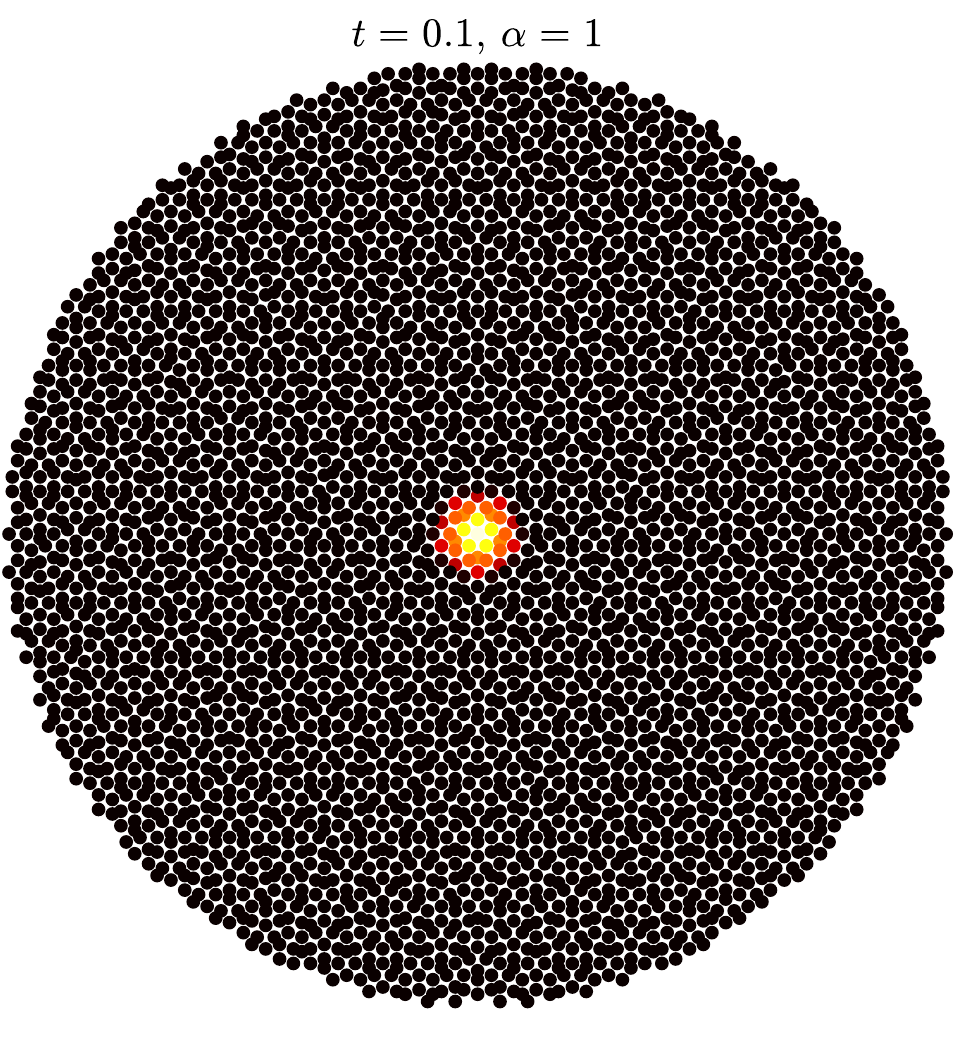}
\includegraphics[width=0.28\textwidth,trim={50mm 97mm 62mm 92mm},clip]{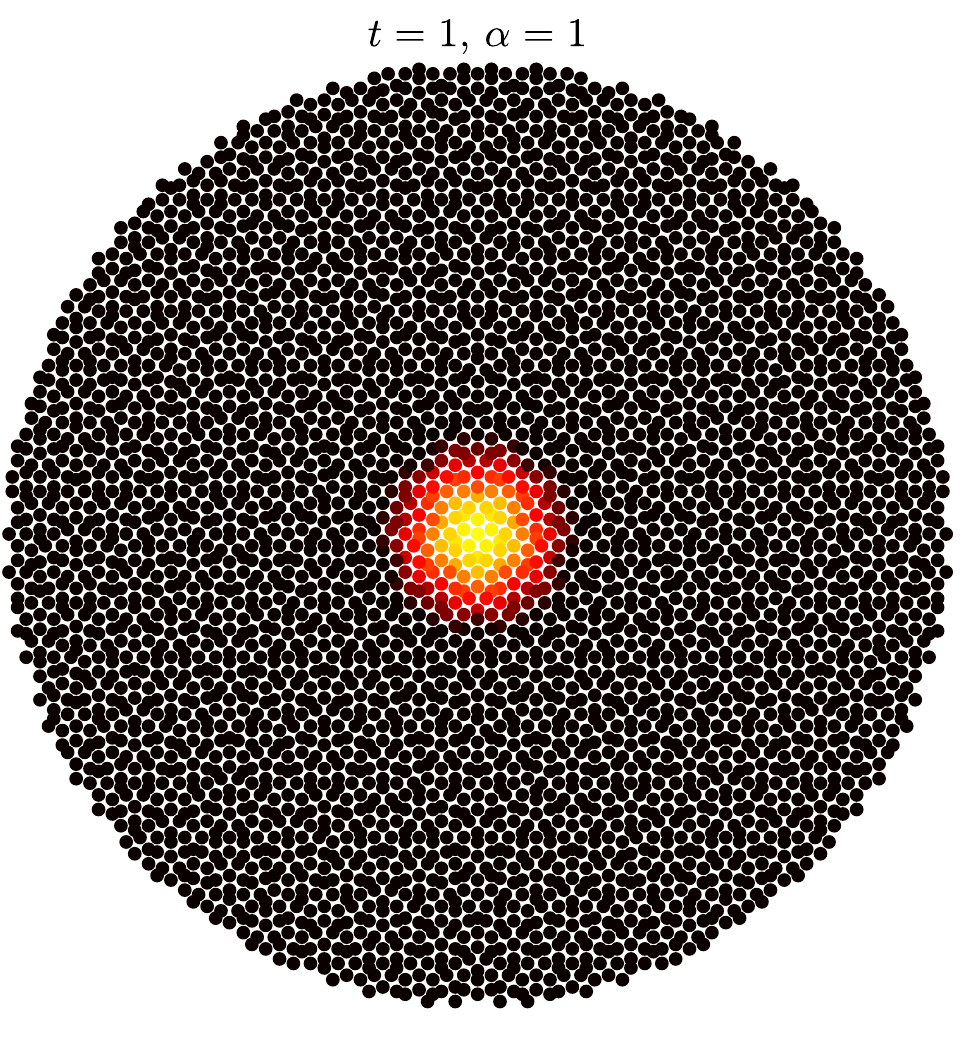}
\includegraphics[width=0.28\textwidth,trim={50mm 97mm 62mm 92mm},clip]{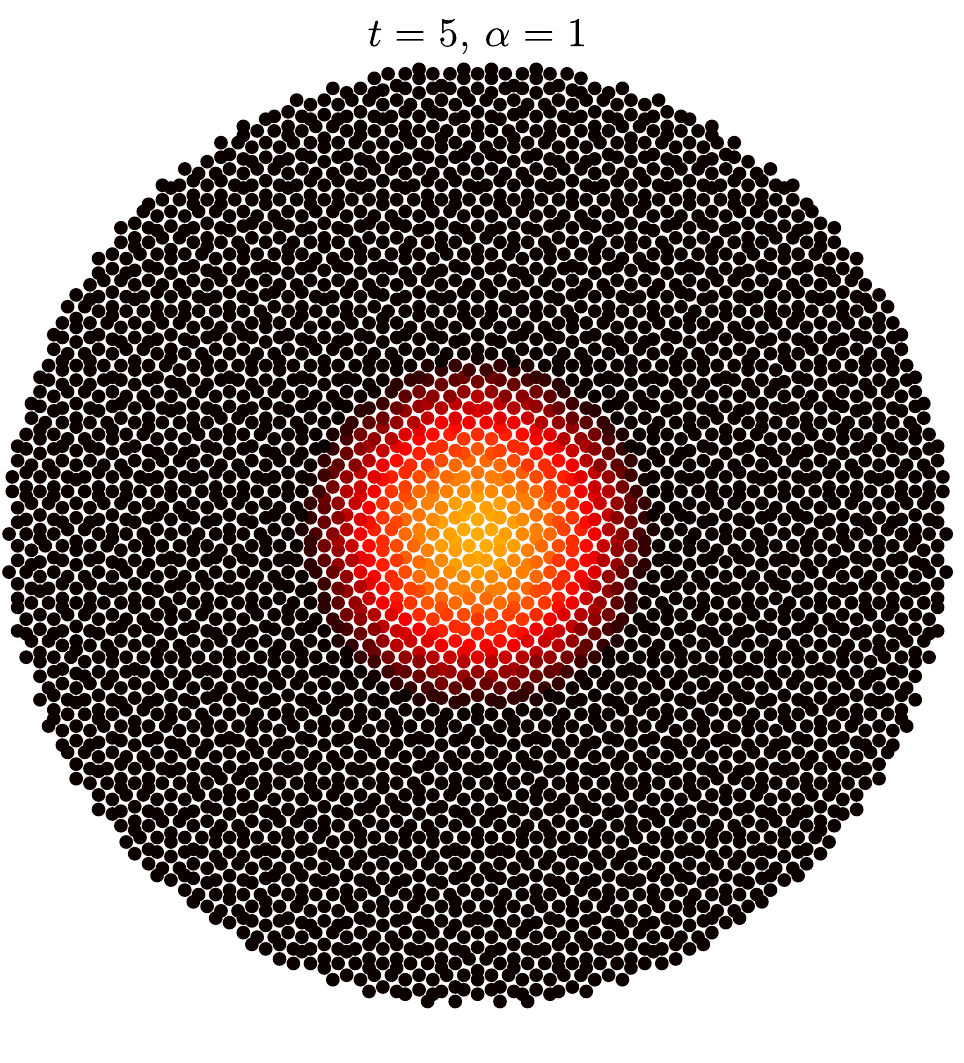}
\includegraphics[width=0.28\textwidth,trim={50mm 97mm 62mm 92mm},clip]{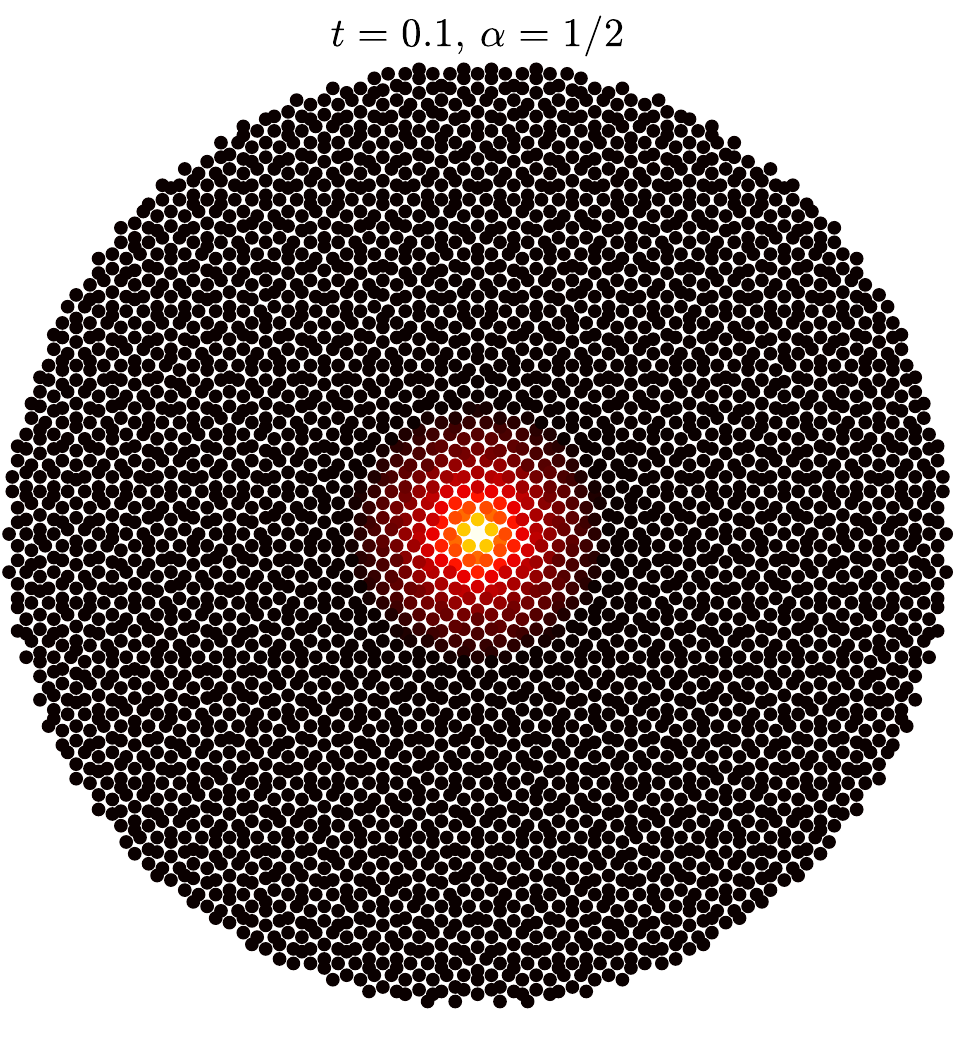}
\includegraphics[width=0.28\textwidth,trim={50mm 97mm 62mm 92mm},clip]{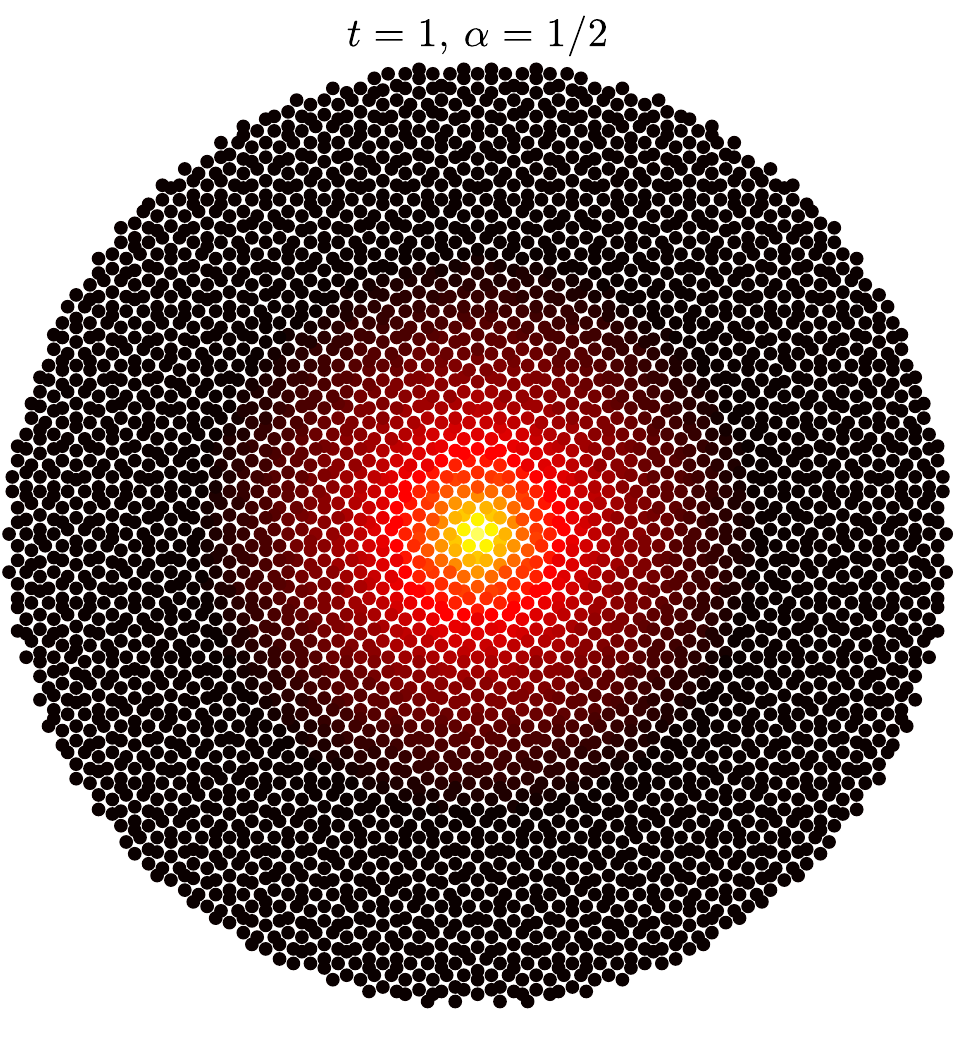}
\includegraphics[width=0.28\textwidth,trim={50mm 97mm 62mm 92mm},clip]{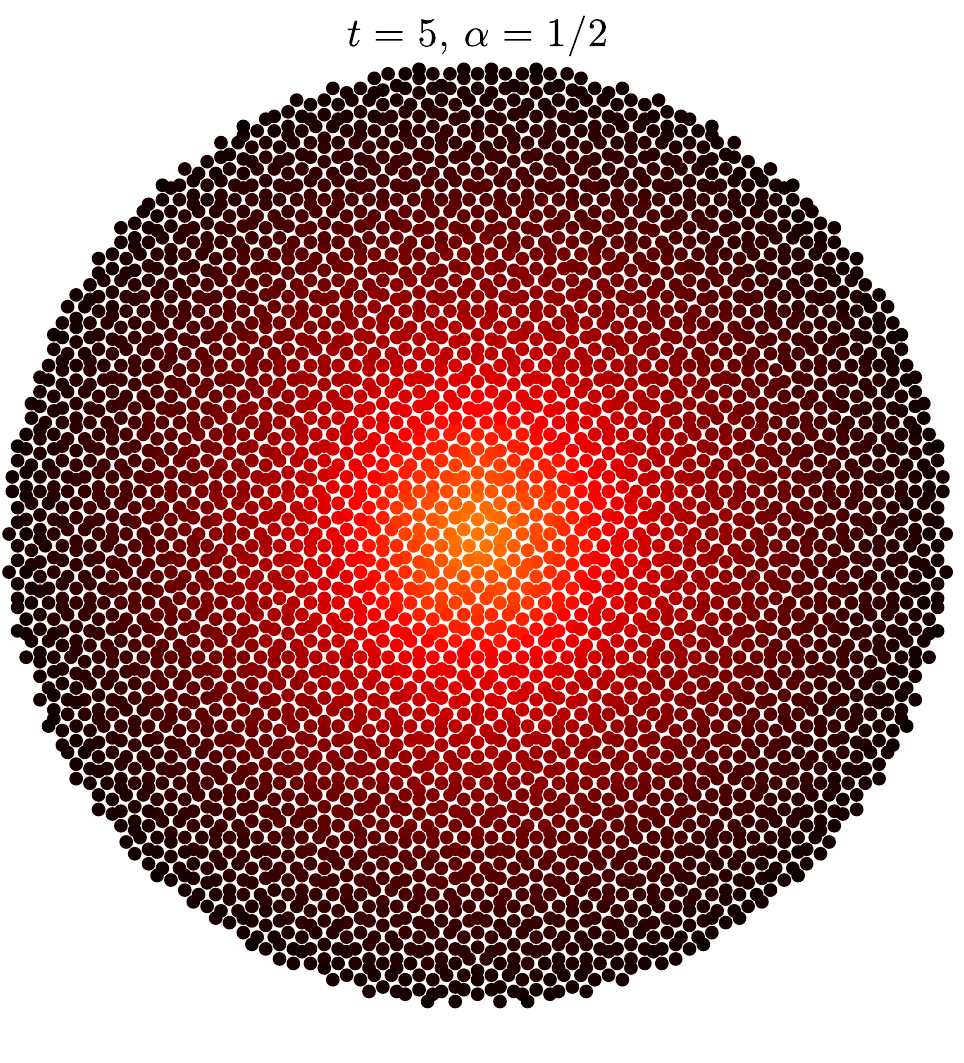}
\includegraphics[width=0.48\textwidth,trim={32mm 102mm 32mm 173mm},clip]{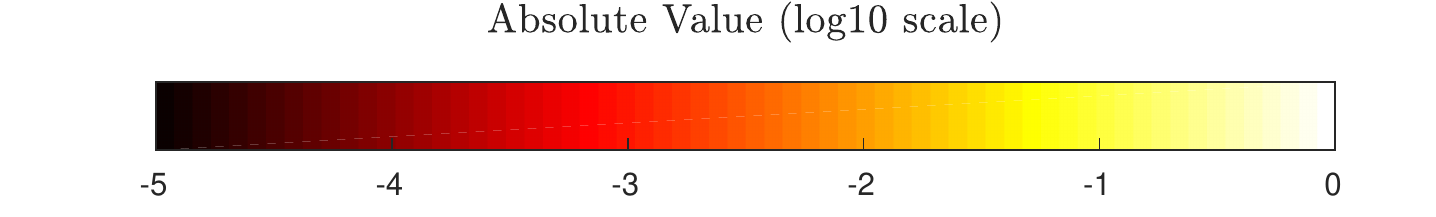} 
\caption{Evolution of initial wavepacket under fractional diffusion.}
\label{pen2}
\end{figure}

We consider fractional diffusion governed by
$$
\frac{du}{dt}=-(-H_0)^{\alpha}u,\quad u_{t=0}=u_0,
$$
for $\alpha>0$. If $\alpha$ is an integer, then the solution can be represented via contour deformation as
\begin{equation}
\label{int_sol}
u(t)=\frac{1}{2\pi i}\int_{\gamma}\exp(-z^{\alpha}t)R(z,-H_0)u_0dz,
\end{equation}
where $\gamma$ is a closed contour looping once around the spectrum. Typically we took the rectangular contour shown in Figure \ref{pen0} and approximated the integral via Gaussian quadrature. This allows us to compute the solution with error control (we known the minimal distance between $\gamma$ and $\sigma(-H_0)$ so can bound the Lipschitz constant of the resolvent) and clearly, this holds for other functions $F$, holomorphic on a neighbourhood of $\sigma(-H_0)$. Note that other methods, such as direct diagonalisation of finite square truncations or discrete time stepping (which is difficult if $\alpha\notin\mathbb{N}$), do not give error control and are slower. In fact, for this example, direct diagonalisation was impractical. When $\alpha\notin\mathbb{N}$, we can still deform the contour, but not at $0$ since $0\in\sigma(-H_0)$. Hence we deform the contour to that shown in Figure \ref{pen0}. For a discussion of contour methods applied to finite matrices (in the case that the spectrum is strictly positive), we refer the reader to \cite{hale2008computing}. Unfortunately, such methods cannot be applied here since $0\in\sigma(-H_0)$. Moreover, $0$ appears to not be an isolated point of the spectrum, meaning that dealing with this point separately is also impossible.

Figure \ref{pen1} shows the convergence of the algorithm for $\alpha=1/2$ and $\alpha=1$. For $\alpha=1/2$, error control is not given by our algorithm, so we computed an error by comparing to a ``converged'' solution using larger $n$. The $l^2$ error refers to the error in $l^2(\mathbb{N})$. The method converges algebraically for $\alpha=1/2$ (owing to the contour touching the spectrum at $0$) but converges exponentially for $\alpha=1$ with similar convergence observed over a large range of times $t$. Figure \ref{pen2} shows the magnitude (log scale) of the computed solution for various times. Note that the techniques presented here can be applied to any evolution equation of the form (\ref{evolution_we_can_handle}) on \textit{infinite-dimensional} Hilbert spaces. They may also be useful for splitting methods/exponential integrators, which require fast computation of matrix/operator exponentials (see \cite{hochbruck2010exponential,mclachlan2002splitting} and the references therein) and more generally in the field of infinite-dimensional ODE/PDE systems. For methods that compute general semigroups on infinite-dimensional separable Hilbert spaces with error control, see \cite{colb_semigp}.

\section*{Acknowledgments} This work was supported by EPSRC grant EP/L016516/1. I wish to thank Arieh Iserles, Andrew Horning, Sheehan Olver, Christian Remling and Marcus Webb for engaging and helpful discussions during the completion of this work. I also wish to thank the anonymous referees whose suggestions led to an improvement in the structure of the paper and in the clarity of some of the proofs.

\newpage
\appendix

\section{The SCI Hierarchy - a Framework for Computation}
\label{append1}

Cornerstones in the SCI hierarchy are the definitions of a computational problem, a general algorithm and towers of algorithms. The basic objects in a computational problem are as follows:
\begin{itemize}
\itemsep0em
\item[(i)] $\Omega$ is some set, called the \emph{domain}.
\item[(ii)] $\Lambda$ is a set of complex-valued functions on $\Omega$ called the \emph{evaluation} set.
\item[(iii)] $\mathcal{M}$ is a metric space with metric $d_{\mathcal{M}}$.
\item[(iv)] $\Xi:\Omega\to \mathcal{M}$ is called the \emph{problem function}.
\end{itemize}
The set $\Omega$ is the set of objects that give rise to our computational problems. The problem function $\Xi : \Omega\to \mathcal{M}$ is what we are interested in computing. Moreover, the set $\Lambda$ is the collection of functions that provide us with the information we are allowed to read.

\begin{remark}
\label{important_about_lam}
Throughout the paper we have relaxed the condition that $\lambda\in\Lambda$ maps into $\mathbb{C}$ by considering evaluation functions consisting of intervals exhausting a set, piecewise constant functions of compact support etc. These seemingly more complicated objects can be effectively encoded by functions that map into $\mathbb{C}$. For example, when considering the decomposition (\ref{open_union}):
\begin{equation*}
U=\bigcup_{m}(a_m(U),b_m(U))
\end{equation*}
of an open set $U$, we consider $\lambda_{1,m},\lambda_{2,m}$ with $\lambda_{1,m}(U)=a_m$ and $\lambda_{2,m}(U)=b_m$. For the sake of clarity of presentation of the proofs, such encodings are used implicitly throughout the paper. 
\end{remark}

This leads to the following definition.

\begin{definition}[Computational Problem]
Given a primary set $\Omega$, an evaluation set $\Lambda$, a metric space $\mathcal{M}$ and a problem function $\Xi:\Omega\to \mathcal{M}$ we call the collection $\{\Xi,\Omega,\mathcal{M},\Lambda\}$ a {computational problem}.
\end{definition}

The goal is to find algorithms that approximate the function $\Xi$. More generally, we need the concept of a tower of algorithms, which is needed to describe problems that need several limits in the computation. However, first one needs the definition of a general algorithm.
\begin{definition}[General Algorithm]\label{Gen_alg}
Given a computational problem $\{\Xi,\Omega,\mathcal{M},\Lambda\}$, a {general algorithm} is a mapping $\Gamma:\Omega\to \mathcal{M}$ such that for each $A\in\Omega$
\begin{itemize}
\item[(i)] There exists a finite non-empty subset of evaluations $\Lambda_\Gamma(A) \subset\Lambda$, 
\item[(ii)] The action of $\,\Gamma$ on $A$ only depends on $\{A_f\}_{f \in \Lambda_\Gamma(A)}$ where $A_f := f(A),$
\item[(iii)] For every $B\in\Omega$ such that $B_f=A_f$ for every $f\in\Lambda_\Gamma(A)$, it holds that $\Lambda_\Gamma(B)=\Lambda_\Gamma(A)$.
\end{itemize}
\end{definition}

Note that the definition of a general algorithm is more general than the definition of a Turing machine or a Blum--Shub--Smale (BSS) machine. A general algorithm has no restrictions on the operations allowed. The only restriction is that it can only take a finite amount of information, though it is allowed to \emph{adaptively} choose the finite amount of information it reads depending on the input. Condition (iii) assures that the algorithm consistently reads the information. Note that the purpose of such a general definition is to get strong lower bounds. In particular, the more general the definition is, the stronger a lower bound will be.

With a definition of a general algorithm, we can define the concept of towers of algorithms.

\begin{definition}[Tower of Algorithms]\label{tower_funct}
Given a computational problem $\{\Xi,\Omega,\mathcal{M},\Lambda\}$, a {tower of algorithms of height $k$
 for $\{\Xi,\Omega,\mathcal{M},\Lambda\}$} is a family of sequences of functions
 $$\Gamma_{n_k}:\Omega
\rightarrow \mathcal{M},\ \Gamma_{n_k, n_{k-1}}:\Omega
\rightarrow \mathcal{M},\dots,\ \Gamma_{n_k, \hdots, n_1}:\Omega \rightarrow \mathcal{M},
$$
where $n_k,\hdots,n_1 \in \mathbb{N}$ and the functions $\Gamma_{n_k, \hdots, n_1}$ at the ``lowest level'' of the tower are general algorithms in the sense of Definition \ref{Gen_alg}. Moreover, for every $A \in \Omega$,
$$
\Xi(A)= \lim_{n_k \rightarrow \infty} \Gamma_{n_k}(A), \quad \Gamma_{n_k, \hdots, n_{j+1}}(A)= \lim_{n_j \rightarrow \infty} \Gamma_{n_k, \hdots, n_j}(A) \quad j=k-1,\dots,1.
$$
\end{definition}

In addition to a general tower of algorithms (defined above), we will focus on arithmetic towers. The definition of a general algorithm allows for strong lower bounds; however, to produce upper bounds we must add structure to the algorithm and towers of algorithms. An arithmetic tower allows for arithmetic operations and comparisons.

\begin{definition}[Arithmetic towers]
Given a computational problem $\{\Xi,\Omega,\mathcal{M},\Lambda\}$, where $\Lambda$ is countable, we define the following: An {arithmetic tower of algorithms} of height $k$
 for $\{\Xi,\Omega,\mathcal{M},\Lambda\}$ is a tower of algorithms where the lowest functions $\Gamma = \Gamma_{n_k, \hdots, n_1} :\Omega \rightarrow \mathcal{M}$ satisfy the following:
 For each $A\in\Omega$ the mapping $(n_k, \hdots, n_1) \mapsto \Gamma_{n_k, \hdots, n_1}(A) = \Gamma_{n_k, \hdots, n_1}(\{A_f\}_{f \in \Lambda})$ is recursive, the action of $\,\Gamma$ on $A$ consists of only finitely many arithmetic operations and comparisons on $\{A_f\}_{f \in \Lambda_\Gamma(A)}$, and $\Gamma_{n_k, \hdots, n_1}(A)$ is a finite string of complex numbers that can be identified with an element in $\mathcal{M}$.%For arithmetic towers we let $\alpha = A$ 
\end{definition} 

\begin{remark}[Recursiveness] By recursive we mean the following. If $f(A) \in \mathbb{Q}$ for all $f \in \Lambda$, $A \in \Omega$, and $\Lambda$ is countable, then $\Gamma_{n_k, \hdots, n_1}(\{A_f\}_{f \in \Lambda})$ can be executed by a Turing machine \cite{turing1937computable}, that takes $(n_k, \hdots, n_1)$ as input, and that has an oracle tape consisting of $\{A_f\}_{f \in \Lambda}$. If $f(A) \in \mathbb{R}$ (or $\mathbb{C}$) for all $f \in \Lambda$, then $\Gamma_{n_k, \hdots, n_1}(\{A_f\}_{f \in \Lambda})$ can be executed by a Blum--Shub--Smale machine \cite{BCSS} that takes $(n_k, \hdots, n_1)$, as input, and that has an oracle that can access any $A_f$ for $f \in \Lambda$. 
\end{remark}

Given the above definitions, we can now define the Solvability Complexity Index: 

\begin{definition}[Solvability Complexity Index]\label{complex_ind}
A computational problem $\{\Xi,\Omega,\mathcal{M},\Lambda\}$ is said to have {Solvability Complexity Index $\mathrm{SCI}(\Xi,\Omega,\mathcal{M},\Lambda)_{\alpha} = k$}, with respect to a tower of algorithms of type $\alpha$, if $k$ is the smallest integer for which there exists a tower of algorithms of type $\alpha$ of height $k$. If no such tower exists then $\mathrm{SCI}(\Xi,\Omega,\mathcal{M},\Lambda)_{\alpha} = \infty.$ If there exists a tower $\{\Gamma_n\}_{n\in\mathbb{N}}$ of type $\alpha$ and height one such that $\Xi = \Gamma_{n_1}$ for some $n_1 < \infty$, then we define $\mathrm{SCI}(\Xi,\Omega,\mathcal{M},\Lambda)_{\alpha} = 0$. We may sometimes write $\mathrm{SCI}(\Xi,\Omega)_{\alpha}$ to simplify notation when $\mathcal{M}$ and $\Lambda$ are obvious. 
\end{definition}

The definition of the SCI immediately induces the SCI hierarchy:

\begin{definition}[The Solvability Complexity Index Hierarchy]
\label{1st_SCI}
Consider a collection $\mathcal{C}$ of computational problems and let $\mathcal{T}$ be the collection of all towers of algorithms of type $\alpha$ for the computational problems in $\mathcal{C}$.
Define 
\begin{equation*}
\begin{split}
\Delta^{\alpha}_0 &:= \{\{\Xi,\Omega\} \in \mathcal{C} \ \vert \   \mathrm{SCI}(\Xi,\Omega)_{\alpha} = 0\}\\
\Delta^{\alpha}_{m+1} &:= \{\{\Xi,\Omega\}  \in \mathcal{C} \ \vert \   \mathrm{SCI}(\Xi,\Omega)_{\alpha} \leq m\}, \qquad \quad m \in \mathbb{N},
\end{split}
\end{equation*}
as well as
\[
\Delta^{\alpha}_{1} := \{\{\Xi,\Omega\}  \in \mathcal{C}   \  \vert \ \exists \ \{\Gamma_n\}_{n\in \mathbb{N}} \in \mathcal{T}\text{ s.t. } \forall A \ d(\Gamma_n(A),\Xi(A)) \leq 2^{-n}\}. 
\]
\end{definition}

In certain cases, the SCI hierarchy can also be refined with notions of error control - see \cite{ben2015can,colbrook2020foundations,colbrook3,colbrook4}. We also need the following result.

\begin{proposition}\label{PCholesky}
Given a matrix $B\in\mathbb{C}^{m\times n}$ and a number $\epsilon>0$ one can test with finitely many arithmetic operations on the entries of $B$ whether the smallest singular value $\sigma_1(B)$ of $B$ is greater than $\epsilon$.
\end{proposition}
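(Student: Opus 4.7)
The plan is to reduce the problem to testing positive definiteness of a Hermitian matrix, which is a classical task solvable by a finite arithmetic procedure.

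First I would handle the trivial case $m<n$: in that case $B$ has a nontrivial kernel, so $\sigma_1(B)=0<\epsilon$ and we can return ``no'' without any computation on the entries. From now on assume $m\geq n$, so that $\sigma_1(B)^2$ equals the smallest eigenvalue of the $n\times n$ Hermitian matrix $B^*B$. Then the condition $\sigma_1(B)>\epsilon$ is equivalent to $B^*B-\epsilon^2 I \succ 0$. Form the matrix $A:=B^*B-\epsilon^2 I$ explicitly from the entries of $B$ and the number $\epsilon^2$; each entry of $A$ is a polynomial in the entries of $B$, $\overline{B}$, and $\epsilon$, so this costs finitely many arithmetic operations.

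Now the problem is purely one of testing whether the Hermitian matrix $A$ is positive definite. The cleanest way to do this with only arithmetic operations and comparisons is \emph{Sylvester's criterion}: $A\succ 0$ if and only if all $n$ leading principal minors of $A$ are strictly positive. Each leading principal minor is a determinant, hence a polynomial expression in the entries of $A$, so it is evaluable by finitely many additions, subtractions, and multiplications (for instance by cofactor expansion or Bareiss's algorithm). Comparing each minor to $0$ is a single comparison, and we output ``yes'' precisely when all $n$ comparisons return strict positivity.

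A more efficient implementation, which I would actually use, is the square-root-free $LDL^*$ factorization (equivalently, symmetric Gaussian elimination). The algorithm computes pivots $d_1,\dots,d_n$ which are the successive ratios of leading principal minors; it uses only arithmetic operations on the entries of $A$, and $A\succ 0$ if and only if the factorization runs to completion with all $d_i>0$. If a zero pivot is encountered partway, the corresponding leading minor vanishes, so $A\not\succ 0$ and we return ``no''. I do not anticipate a genuine obstacle here: the only thing to be checked carefully is that at no point does the procedure require a non-arithmetic operation (such as a square root or a limit), and indeed both Sylvester's criterion and the $LDL^*$ scheme avoid this by design. Finiteness of the number of arithmetic operations and comparisons is then immediate from the finite size of $A$.
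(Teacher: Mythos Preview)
Your proposal is correct and follows essentially the same approach as the paper: reduce to testing positive definiteness of $B^*B-\epsilon^2 I$, then apply a finite arithmetic test (the paper phrases it via Gaussian elimination pivots or Cholesky, you via Sylvester's criterion or $LDL^*$, which are equivalent formulations). The case split $m<n$ you introduce is harmless but unnecessary, since in that case $B^*B$ already has a zero eigenvalue and the positive-definiteness test fails automatically.
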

\begin{proof}
The matrix $B^*B$ is self-adjoint and positive semidefinite, hence has its eigenvalues in $[0,\infty)$. The singular values of $B$ are the square roots of these eigenvalues of $B^*B$.
The smallest singular value is greater than $\epsilon$ if and only if the smallest eigenvalue of $B^*B$ is greater than $\epsilon^2$, which is the case if and only if $C:=B^*B-\epsilon^2I$ is positive definite. The matrix $C$ is positive definite if and only if the pivots left after Gaussian elimination (without row exchange) are all positive. 
Thus, if $C$ is positive definite, Gaussian elimination leads to pivots that are all positive, and this requires finitely many arithmetic operations. If $C$ is not positive definite, then at some point a pivot is zero or negative, at this point the algorithm aborts. An alternative is the Cholesky decomposition. Although forming the lower triangular $L \in \mathbb{C}^{n \times n}$ (if it exists) such that $C = LL^*$ requires the use of radicals, the existence of $L$ can be determined using finitely many arithmetic operations. This follows from the standard Cholesky algorithm, and we omit the details.  
\end{proof}

Finally, we also need the following result.

\begin{proposition}
Let $(\mathcal{M}',d')$ be the discrete space $\{0,1\}$, let $\Omega'$ denote the collection of all infinite sequence $\{a_{j}\}_{j\in\mathbb{N}}$ with entries $a_{j}\in\{0,1\}$, let $\Lambda'$ consist of pointwise evaluations of the $\{a_j\}$ and consider the problem function
\begin{equation*}
\Xi'(\{a_{j}\}):\text{ `Does }\{a_{j}\}\text{ have infinitely many non-zero entries?'}
\end{equation*}
Then $\Delta^G_2 \not\owns  \{\Xi',\Omega',\Lambda'\} \in \Delta^A_3$
\end{proposition}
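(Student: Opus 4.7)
The plan is to establish the two halves separately, with the upper bound coming from an explicit two-limit arithmetic tower and the lower bound coming from a standard finite-information adversary argument exploiting properties (i) and (iii) of Definition \ref{Gen_alg}.

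\textbf{Inclusion $\{\Xi',\Omega',\Lambda'\}\in\Delta_3^A$.} I would exhibit a height-two arithmetic tower directly. Define
$$
\Gamma_{n_2,n_1}(\{a_j\})=\begin{cases} 1 & \text{if there exists }j\text{ with }n_2<j\leq n_1\text{ and }a_j=1,\\ 0 & \text{otherwise.}\end{cases}
$$
This requires only finitely many pointwise evaluations from $\Lambda'$ and finitely many comparisons, so it is an arithmetic algorithm. For fixed $n_2$, the map $n_1\mapsto \Gamma_{n_2,n_1}(\{a_j\})$ is non-decreasing in $\{0,1\}$, so the inner limit $\Gamma_{n_2}(\{a_j\})$ exists and equals $1$ precisely when $\{a_j\}$ has a non-zero entry with index $>n_2$. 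If $\Xi'(\{a_j\})=1$ then $\Gamma_{n_2}=1$ for every $n_2$, while if $\Xi'(\{a_j\})=0$ then $\Gamma_{n_2}=0$ for all sufficiently large $n_2$. In either case $\lim_{n_2\to\infty}\Gamma_{n_2}(\{a_j\})=\Xi'(\{a_j\})$ in the discrete metric, so the tower has height two.

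\textbf{Exclusion $\{\Xi',\Omega',\Lambda'\}\notin\Delta_2^G$.} I would argue by contradiction: suppose $\{\Gamma_n\}$ is a height-one tower of general algorithms computing $\Xi'$. The plan is to build inductively a sequence of inputs and indices $n_1<n_2<\cdots$ together with thresholds $N_1<N_2<\cdots$ such that $\Gamma_{n_k}$ alternates between $0$ and $1$ on a common refinement, violating convergence in the discrete metric $\{0,1\}$. Start from $\{a_j^{(0)}\}\equiv 0$, for which $\Xi'=0$, so $\Gamma_n(\{a_j^{(0)}\})=0$ for all $n\geq$ some $n_1$; by property (i), $\Gamma_{n_1}$ only reads the entries $a_j$ for $j\leq N_1$ for some $N_1$. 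Define $\{a_j^{(1)}\}$ to agree with $\{a_j^{(0)}\}$ up to index $N_1$ and be $1$ for all $j>N_1$; then $\Xi'(\{a_j^{(1)}\})=1$, while property (iii) forces $\Gamma_{n_1}(\{a_j^{(1)}\})=0$. Convergence then gives $n_2>n_1$ with $\Gamma_{n_2}(\{a_j^{(1)}\})=1$, and property (i) again yields $N_2>N_1$ bounding the information used. Continue by freezing the first $N_k$ entries and alternately extending by all-ones (on odd stages, to force $\Xi'=1$ and thus $\Gamma_{n_k}=1$) and all-zeros (on even stages, to force $\Xi'=0$ and thus $\Gamma_{n_k}=0$).

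Finally set $a_j^\star:=a_j^{(k)}$ for the unique $k$ with $N_{k-1}<j\leq N_k$; this is well-defined because all later stages preserve entries $\leq N_k$. By property (iii), $\Gamma_{n_k}(\{a_j^\star\})=\Gamma_{n_k}(\{a_j^{(k)}\})$ for every $k$, so the subsequence $\Gamma_{n_k}(\{a_j^\star\})$ alternates between $0$ and $1$, contradicting $\lim_n \Gamma_n(\{a_j^\star\})=\Xi'(\{a_j^\star\})\in\{0,1\}$.

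\textbf{Main obstacle.} The delicate step is the consistency of the limiting sequence $\{a_j^\star\}$: one must arrange that the construction does not accidentally fix $\Xi'(\{a_j^\star\})$ in a way that cooperates with the algorithm, and that at every stage the algorithm's finite set of queried indices is strictly dominated by the next threshold $N_{k+1}$. Both issues are handled by the bookkeeping $N_1<N_2<\cdots$ together with property (iii) of Definition \ref{Gen_alg}, which guarantees that modifying $\{a_j\}$ only at indices $>N_k$ leaves $\Gamma_{n_k}$ unchanged; the value of $\Xi'(\{a_j^\star\})$ itself plays no role in the contradiction, since the alternation of $\Gamma_{n_k}(\{a_j^\star\})$ already rules out any limit in $\{0,1\}$.
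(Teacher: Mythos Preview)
Your proof is correct and follows essentially the same approach as the paper. The paper's height-two tower uses the partial sums (set $\Gamma_{m,n}=1$ iff $\sum_{j=1}^n a_j>m$) rather than your ``tail witness'' formulation, but these are equivalent ideas; and your adversary argument for the lower bound---alternately extending by all-zeros and all-ones past the finitely many indices read by $\Gamma_{n_k}$, then taking the limiting sequence---matches the paper's construction step for step.
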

\begin{proof}
Consider the arithmetic tower of algorithms defined by
$$
\Gamma_{m,n}(\{a_j\})=\begin{dcases}
1\quad\text{if }\sum_{j=1}^na_j>m,\\
0\quad\text{otherwise}.\end{dcases}
$$
This provides a height two arithmetic tower for $\Xi'$ and hence $\{\Xi',\Omega',\Lambda'\} \in \Delta^A_3$.

To prove the lower bound, suppose for a contradiction that $\{\Gamma_n\}$ is a sequence of general algorithms, using $\Lambda'$, such that
$$
\lim_{n\rightarrow\infty}\Gamma_{n}(\{a_j\})=\Xi'(\{a_{j}\}).
$$
We will construct a sequence $\{a_j\}$ such that $\Gamma_{n}(\{a_j\})$ does not converge, providing the required contradiction.

Set $\{a^1_j\}_{j\in\mathbb{N}}=\{0,0,...\}$, then there exists $n_1\in\mathbb{N}$ such that $\Gamma_{n_1}(\{a^1_j\})=0$. Moreover, $\Lambda_{\Gamma_{n_1}}(\{a^1_j\})\subset \{\{a_j\}_{j\in\mathbb{N}}\rightarrow a_m:m\leq N_1\}$ for some integer $N_1$ by (i) of Definition \ref{Gen_alg}. We choose $\{a^2_j\}_{j\in\mathbb{N}}$ such that $a^2_j=a^1_j$ for $j\leq N_1$ and $a_j^2=1$ otherwise. Then there exists $n_2\in\mathbb{N}$ such that $n_2>n_1$ and $\Gamma_{n_2}(\{a^2_j\})=1$. Moreover, $\Lambda_{\Gamma_{n_2}}(\{a^2_j\})\subset \{\{a_j\}_{j\in\mathbb{N}}\rightarrow a_m:m\leq N_2\}$ for some integer $N_2>N_1$ by (i) of Definition \ref{Gen_alg}. We now repeat this inductively. Explicitly, given the construction up to the $k$th step, we define $\{a^{k+1}_j\}_{j\in\mathbb{N}}$ by $a^{k+1}_j=a^k_j$ for all $j\leq N_k$ and $a^{k+1}_j=(1+(-1)^{k+1})/2$ otherwise. Then there exists $n_{k+1}\in\mathbb{N}$ such that $n_{k+1}>n_k$ and $\Gamma_{n_{k+1}}(\{a^{k+1}_j\})=(1+(-1)^{k+1})/2$. Moreover, $\Lambda_{\Gamma_{n_{k+1}}}(\{a^{k+1}_j\})\subset \{\{a_j\}_{j\in\mathbb{N}}\rightarrow a_m:m\leq N_{k+1}\}$ for some integer $N_{k+1}>N_k$ by (i) of Definition \ref{Gen_alg}.

Finally set $a_j=a^k_j$ for $j\leq N_{k}$. It is clear from (iii) of Definition \ref{Gen_alg}, that $\Gamma_{n_k}(\{a_j\})=\Gamma_{n_{k}}(\{a^k_j\})=(1+(-1)^{k})/2$ and this implies that $\Gamma_{n}(\{a_j\})$ cannot converge, the required contradiction.
\end{proof}

\section{Partial Differential Operators - Proof of Theorem \ref{WowPDE}}
\label{append2}

Before stating and proving a more mathematically precise version of Theorem \ref{WowPDE}, we need to be precise about the computational problems involved. Recall that we consider $L$ formally defined on $L^2(\mathbb{R}^d)$ by
$$
Lu(x)=\sum_{k\in\mathbb{Z}_{\geq0}^d,\left|k\right|\leq N}a_k(x)\partial^ku(x)
$$
with the class $\Omega_{\mathrm{PDE}}$ consisting of self-adjoint $L$ satisfying certain conditions given in \S \ref{PDE_SEC_WOW}. We take $\Lambda_{\mathrm{PDE}}$ to consist of
$$
S_{k,q,m}:\Omega_{\mathrm{PDE}}\rightarrow \mathbb{Q}+i\mathbb{Q},
$$
where $|S_{k,q,m}(L)-a_k(q)|\leq 2^{-m}$ for all $q\in\mathbb{Q}$, together with
$$
b_n:\Omega_{\mathrm{PDE}}\rightarrow \mathbb{Q}_{>0}
$$
such that
$$
\sup_{n\in\mathbb{N}}\max_{\left|k\right|\leq N}\frac{\left\|a_k\right\|_{\mathcal{A}_n}}{b_n(L)}<\infty.
$$
We also implicitly assume that for any given $L$, the dimension $d$ and integer $N$ (order) are known. As well as this, we need to consider the pairs of vectors (members of $L^2(\mathbb{R}^d)$) with which we compute the spectral properties. By the polarisation identity, we can consider equal functions of norm $1$. We let $V_{\mathrm{PDE}}$ consist of all $f\in L^2(\mathbb{R}^d)$ of norm $1$ such that
\begin{enumerate}
	\item[(1)] There exists a positive constant $C$ and an integer $D$ (both possibly unknown) such that 
	$$
	\left|f(x)\right|\leq C\left(1+\left|x\right|^{2D}\right),
	$$
	almost everywhere on $\mathbb{R}^d$, that is, $f$ is polynomially bounded.
	\item[(2)] The restrictions $f|_{[-r,r]^d}\in\mathcal{A}_r$ for all $r>0$.
\end{enumerate}
We then add to $\Lambda_{\mathrm{PDE}}$ the evaluation functions
$$
S_{q,m}:V_{\mathrm{PDE}}\rightarrow \mathbb{Q}+i\mathbb{Q},
$$
where $|S_{q,m}(f)-f(q)|\leq 2^{-m}$ for all $q\in\mathbb{Q}$, together with
$$
c_n:V_{\mathrm{PDE}}\rightarrow \mathbb{Q}_{>0}
$$
such that
$$
\sup_{n\in\mathbb{N}}\frac{\left\|f\right\|_{\mathcal{A}_n}}{c_n(f)}<\infty.
$$

With an abuse of notation, we can consider the computational problem from Theorem \ref{meas_comp1}
\begin{align*}
\Xi_{\mathrm{meas}}:\Omega_{\mathrm{PDE}}\times V_{\mathrm{PDE}}\times \mathcal{U}&\rightarrow L^2(\mathbb{R}^d)\\
(L,f,U)&\rightarrow E_U^Lf.
\end{align*}
Recall that $\mathcal{U}$ is the collection of non-trivial open sets and we have access to a finite or countable collection $a_m(U),b_m(U)\in\mathbb{R}\cup\{\pm\infty\}$ such that $U\in\mathcal{U}$ can be written as a disjoint union
$$
U=\bigcup_{m}\left(a_m(U),b_m(U)\right).
$$
Similarly, from Theorem \ref{F_calc} we consider
\begin{align*}
\Xi_{\mathrm{fun}}:\Omega_{\mathrm{PDE}}\times V_{\mathrm{PDE}}\times C_b(\mathbb{R})&\rightarrow L^2(\mathbb{R}^d)\\
(L,f,F)&\rightarrow F(L)f.
\end{align*}
We assume that given $F\in C_b(\mathbb{R})$, we have access to piecewise constant functions $F_n$ supported in $[-n,n]$ such that $\|F-F_n\|_{L^\infty([-n,n])}\leq n^{-1}$. Finally, from Theorem \ref{meas_comp3} we consider
\begin{align*}
\Xi_{\mathrm{RN}}:\Omega_{\mathrm{PDE}}\times V_{\mathrm{PDE}}\times \mathcal{U}&\rightarrow L^1(\mathbb{R})\\
(L,f,U)&\rightarrow \rho_{f,f}^L|_{U}.
\end{align*}
We restrict this map to the quadruples $(L,f,U)$ such that $U$ is strictly separated from $\mathrm{supp}(\mu_{f,f,\mathrm{sc}}^L)\cup\mathrm{supp}(\mu_{f,f,\mathrm{pp}}^L)$ and denote this subclass by $\widetilde{\Omega}_{\mathrm{PDE}}$.

We can now state the precise form of Theorem \ref{WowPDE}.

\begin{theorem}[Precise form of Theorem \ref{WowPDE}]
\label{WowPDE2}
Given the above set-up,
$$
\{\Xi_{\mathrm{meas}},\Omega_{\mathrm{PDE}}\times V_{\mathrm{PDE}}\times \mathcal{U},\Lambda_{\mathrm{PDE}}\},\{\Xi_{\mathrm{fun}},\Omega_{\mathrm{PDE}}\times V_{\mathrm{PDE}}\times C_b(\mathbb{R}),\Lambda_{\mathrm{PDE}}\},\{\Xi_{\mathrm{RN}},\widetilde{\Omega}_{\mathrm{PDE}},\Lambda_{\mathrm{PDE}}\}\in\Delta_2^A.
$$
In other words, we can construct a convergent sequence of arithmetic algorithms for each problem.
\end{theorem}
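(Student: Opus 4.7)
The strategy is to reduce the continuous problem to the $l^2(\mathbb{N})$ setting so that Theorems \ref{meas_comp1}, \ref{F_calc} and \ref{meas_comp3} (in their inexact-input forms, as noted in \S \ref{sec_not2}) can be invoked. I would fix the orthonormal basis of $L^2(\mathbb{R}^d)$ given by tensor products of one-dimensional Hermite functions and, via a fixed effective bijection $\mathbb{Z}_{\geq 0}^d\leftrightarrow \mathbb{N}$, view it as a canonical basis $\{e_n\}_{n\in\mathbb{N}}$. Each $e_n$ is a Schwartz function; together with Assumption (1) ($C_0^\infty(\mathbb{R}^d)$ is a core) and Assumption (2) (polynomial growth of coefficients), this places $e_n\in\mathcal{D}(L)$, so that one obtains a well-defined infinite matrix $T_{ij}=\langle Le_j,e_i\rangle$ representing $L$ as an element of $\Omega_{\mathrm{SA}}\subset\mathcal{C}(l^2(\mathbb{N}))$. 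Every $f\in V_{\mathrm{PDE}}$ corresponds to a sequence of Hermite coefficients $\hat f\in l^2(\mathbb{N})$, and the outputs $E_U^Lf$, $F(L)f$, $\rho^L_{f,f}|_U$ of the three problem functions can be reconstructed from the $l^2$ outputs of the corresponding discrete problems applied to $(T,\hat f)$.

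The first technical step is to show that each $T_{ij}$ can be approximated to arbitrary precision $2^{-m}$ in finitely many arithmetic operations using only the information in $\Lambda_{\mathrm{PDE}}$. Writing
\[
T_{ij}=\sum_{|k|\leq N}\int_{\mathbb{R}^d}a_k(x)\,\partial^k e_j(x)\,\overline{e_i(x)}\,dx,
\]
I would split each summand into a piece on $[-R,R]^d$ and a tail; the super-exponential decay of Hermite functions combined with the polynomial-growth assumption and the bound $\|a_k\|_{\mathcal{A}_R}\lesssim b_R(L)$ lets one choose $R=R(i,j,m,L)$, computable from $b_\cdot(L)$ alone, that makes the tail smaller than $2^{-m-1}$. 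On the compact part, the integrand has total variation (in the Hardy--Krause sense) bounded by an explicit polynomial in $i,j$ times $\|a_k\|_{\mathcal{A}_R}$, and the Koksma--Hlawka inequality applied to a rational low-discrepancy rule yields an approximation of the compact-part integral to accuracy $2^{-m-1}$ using only finitely many inexact point samples $S_{k,q,\cdot}$ and the numbers $b_n(L)$. The analogous computation gives approximations of the Hermite coefficients of $f\in V_{\mathrm{PDE}}$ with controllable error from $S_{q,\cdot}$ and $c_n(f)$.

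The second step is to exhibit computable $f:\mathbb{N}\to\mathbb{N}$ and null sequences $\alpha_n,\beta_n$ such that $T\in\Omega_{f,\alpha}$ and $\hat f\in V_\beta$. The key asymptotic is that a Hermite function of index $n$ has its essential support on $|x|\lesssim \sqrt{n}$ and is super-exponentially small outside; the same holds for $\partial^k e_n$ up to polynomial factors. Pairing this with the total-variation control on $a_k$ afforded by $b_n(L)$, one obtains quantitative decay of $T_{ij}$ when $i$ is sufficiently larger than $j$, giving $\|(P_{f(n)}-I)TP_n\|=O(\alpha_n)$ for some explicit $f(n)$ growing polynomially in $n$ and $\alpha_n\downarrow 0$ expressible through $b_n(L)$. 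An analogous decay estimate on Hermite coefficients, obtained from the $\mathcal{A}_n$-norm bounds on $f$, furnishes a null sequence $\beta_n$ with $\|P_n\hat f-\hat f\|=O(\beta_n)$.

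With these reductions in hand, the three claimed inclusions follow directly: Theorems \ref{meas_comp1}, \ref{F_calc} and \ref{meas_comp3} give $\Delta_2^A$ algorithms on $\Omega_{f,\alpha,\beta}$ from the evaluation set $\Lambda_1$, and the inexact-input version of these algorithms accepts precisely the approximate matrix and vector entries constructed in Step 1. The main obstacle is bookkeeping: carefully quantifying the Hermite-function estimates and tail bounds so that the column-decay rate $\alpha_n$, the bandwidth $f(n)$ and the vector decay rate $\beta_n$ are \emph{adaptively} computable from $\{b_n(L)\}$ and $\{c_n(f)\}$ alone (which are the only quantitative handles we have, since $A_k$, $B_k$ and the constants in Assumption (3) are in general unknown). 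Once this bookkeeping is done, no new ideas beyond the machinery developed in \S\S \ref{res_stones}--\ref{applic_sec} are needed, and the extensions mentioned in the remark after Theorem \ref{WowPDE} follow by the same reduction applied to Theorem \ref{spec_decomp_comp}.
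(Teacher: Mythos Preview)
Your overall architecture (tensor-Hermite basis, Koksma--Hlawka/quasi-Monte Carlo to approximate the integrals $\langle Le_j,e_i\rangle$ and $\langle f,e_i\rangle$ from $\Lambda_{\mathrm{PDE}}$, then feed these into the $l^2(\mathbb{N})$ machinery) is exactly the paper's plan. The divergence, and the gap, is your Step~2.

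You propose to place $T$ in some $\Omega_{f,\alpha}$ with $f,\alpha$ \emph{computable from $\{b_n(L)\}$}, and $\hat f$ in some $V_\beta$ with $\beta$ computable from $\{c_n(f)\}$. But $b_n(L)$ and $c_n(f)$ are only asymptotic total-variation bounds on growing cubes; they carry essentially no smoothness information, and it is smoothness (not TV growth) that governs the off-diagonal decay of Hermite matrix elements. For a multiplication operator by a merely locally-BV, polynomially bounded $a$, the entries $\langle a\psi_j,\psi_i\rangle$ need not decay at any rate that can be read off from $\|a\|_{\mathcal{A}_n}$; your claimed ``explicit $\alpha_n$ expressible through $b_n(L)$'' is not available. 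The same objection applies to extracting $\beta_n$ from $c_n(f)$. So this is not bookkeeping: the quantitative handle you need is simply not encoded in the data you propose to use.

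The paper avoids this entirely. Rather than establishing membership in $\Omega_{f,\alpha}$, it proves a separate resolvent-approximation result (Theorem~\ref{res_est3}) valid on all of $\Omega_{\mathrm{SA}}\times S_{l^2(\mathbb{N})}$, using as additional input the inner products $\langle Te_j,Te_i\rangle$. These are integrals of $(L\psi_j)\overline{(L\psi_i)}$ and are computable by the \emph{same} quasi-Monte Carlo argument you already use for $\langle Te_j,e_i\rangle$. With $\langle Te_j,Te_j\rangle$ in hand one can compute $\|(I-P_l)TP_n\|$ a posteriori and choose the rectangular truncation adaptively, with no a priori decay rate needed; the unit-norm assumption on $f$ likewise lets one compute $\|(I-P_m)\hat f\|$ directly. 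Theorem~\ref{res_est3} then substitutes for Corollary~\ref{res_est2} in the proofs of Theorems~\ref{meas_comp1}, \ref{F_calc}, \ref{meas_comp3}. If you augment your Step~1 to also approximate $\langle Te_j,Te_i\rangle$ and replace your Step~2 by this adaptive mechanism, your argument becomes the paper's.
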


\begin{remark}
The proof will make clear that we can assume different conditions on the operator $L$ and function $f$. We simply choose an appropriate basis so that we can apply Theorem \ref{res_est3}. We can also extend this to scalar measures (through inner products) and also to the towers of algorithms (of height $\geq 2$) used to compute the decompositions into pure point, absolutely continuous and singular continuous parts of measures and spectra.
\end{remark}

To prove Theorem \ref{WowPDE2}, we need the following theorem, which is similar to the results of \S \ref{approx_res_sect}.

\begin{theorem}
\label{res_est3}
Consider the class $\Omega_{\mathrm{SA}}\times S_{l^2(\mathbb{N})}$, where $S_{l^2(\mathbb{N})}$ denotes the unit sphere (vectors of norm $1$). Assume that for each $(T,x)\in\Omega_{\mathrm{SA}}\times S_{l^2(\mathbb{N})}$ we have access to evaluation functions $\hat\Lambda=\{f_{i,j,m}^{(1)},f_{i,j,m}^{(2)},f^{(3)}_{i,m}:i,j,m\in\mathbb{N}\}$ with
$$
\left|f_{i,j,m}^{(1)}(T)-\langle Te_j,e_i\rangle\right|, \left|f_{i,j,m}^{(2)}(T)-\langle Te_j,Te_i\rangle\right|,\left|f_{i,m}^{(3)}(x)-\langle x,e_i\rangle\right|=O(2^{-m}), \quad \forall i,j\in\mathbb{N},
$$
where the hidden constant depends only on $T$ and $x$. Then there exists a sequence of arithmetic algorithms using $\hat\Lambda$
$$
\Gamma_N: \Omega_{\mathrm{SA}}\times S_{l^2(\mathbb{N})}\times\mathbb{C}\backslash\mathbb{R}\rightarrow l^2(\mathbb{N})
$$
with the following properties:
\begin{enumerate}
	\item For all $(T,x,z)\in\Omega_{\mathrm{SA}}\times S_{l^2(\mathbb{N})}\times\mathbb{C}\backslash\mathbb{R}$, $\Gamma_N(T,x,z)$ has finite support with respect to the canonical basis and converges to $R(z,T)x$ in $l^2(\mathbb{N})$ as $N\rightarrow\infty$.
	\item For any $(T,x)\in\Omega_{\mathrm{SA}}\times S_{l^2(\mathbb{N})}$, there exists a constant $C(T,x)$ such that for all $z\in\mathbb{C}\backslash\mathbb{R}$,
	\begin{equation}
	\label{key_bd_PDE}
	\|\Gamma_N(T,x,z)-R(z,T)x\|\leq \frac{C(T,x)}{N\left|\mathrm{Im}(z)\right|}.
	\end{equation}
\end{enumerate}
\end{theorem}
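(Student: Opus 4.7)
The plan is to mirror the proof of Theorem \ref{res_est1}, adapted to (a) the inexact-input model $\hat\Lambda$ and (b) the absence of any column-decay assumption. For each $N$, I would compute an approximate least-squares solution to $\min_{y \in P_n l^2(\mathbb{N})}\|(T-zI)y - P_m x\|$ for adaptively chosen $n$ and $m$, then bound the error via
$$
\|y - R(z,T)x\| \leq \|R(z,T)\|\cdot\|(T-zI)y - x\| = \frac{\|(T-zI)y - x\|}{|\mathrm{Im}(z)|},
$$
using self-adjointness in the last equality. The algorithm will produce a solution whose residual is at most $1/N$, automatically delivering (\ref{key_bd_PDE}) with $C(T,x)$ an absolute constant.

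The first step is to verify that every quantity needed is computable from $\hat\Lambda$. For any finitely supported $y = \sum_{j \leq n} y_j e_j$ and any $m \geq n$,
$$
\|(T-zI)y - P_m x\|^2 = \|P_m[(T-zI)y] - P_m x\|^2 + \|(I-P_m)Ty\|^2,
$$
and both terms on the right are finite combinations of $\langle Te_j, Te_i\rangle$, $\langle Te_j, e_i\rangle$, and $\langle x, e_i\rangle$, which are approximated to precision $2^{-k}$ by $f^{(2)}$, $f^{(1)}$, and $f^{(3)}$ respectively. Moreover, $\|(I-P_m)x\|^2 = 1 - \sum_{i\leq m}|\langle x,e_i\rangle|^2$ is approximable from $f^{(3)}$ (using $\|x\|=1$). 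Hence for any triple $(n,m,k)$ we can (i) test invertibility of the Gram matrix $P_n(T-zI)^*(T-zI)P_n$ via Proposition \ref{PCholesky}, (ii) compute an approximate least-squares solution $\widetilde y_{n,m,k}$, and (iii) compute a rigorous, arithmetic upper bound $B_{n,m,k}$ satisfying $B_{n,m,k} \geq \|(T-zI)\widetilde y_{n,m,k} - P_m x\| + \|(I-P_m)x\| \geq \|(T-zI)\widetilde y_{n,m,k} - x\|$, whose slack shrinks to zero as $k\to\infty$. Error propagation for the matrix-inverse step proceeds exactly as for $\widetilde B_n^{-1}$ in the proof of Theorem \ref{res_est1}, via the resolvent identity.

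The convergence then follows from the core property: because $\mathrm{span}\{e_n\}$ is a core of $T$ and $T-zI$ is boundedly invertible for $z\in\mathbb{C}\setminus\mathbb{R}$, for every $\varepsilon>0$ there exist $n_0$ and $\hat y\in P_{n_0}l^2(\mathbb{N})$ with $\|(T-zI)\hat y - x\| \leq \varepsilon$. For $m$ with $\|(I-P_m)x\|\leq\varepsilon$, the truncated least-squares minimiser $y_{n,m}$ over $P_n l^2(\mathbb{N})$ (for $n\geq n_0$) inherits $\|(T-zI)y_{n,m} - x\|\leq 3\varepsilon$, and $B_{n,m,k}$ can be forced below $1/N$ by also taking $k$ large enough. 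Therefore the following arithmetic algorithm is well defined and converges: for each $N$, enumerate $(n,m,k)\in\mathbb{N}^3$ diagonally; accept the first triple at which $B_{n,m,k}\leq 1/N$; set $\Gamma_N(T,x,z):=\widetilde y_{n,m,k}$. By construction,
$$
\|\Gamma_N(T,x,z)-R(z,T)x\| \leq \frac{B_{n,m,k}}{|\mathrm{Im}(z)|} \leq \frac{1}{N\,|\mathrm{Im}(z)|}.
$$

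The main obstacle is the careful bookkeeping required to propagate the inexact-input precision through the (possibly ill-conditioned) least-squares system and to obtain a \emph{verifiable} upper bound $B_{n,m,k}$ — the latter is essential because, unlike in Theorem \ref{res_est1}, we cannot assume a known column-decay rate and so every error contribution must be captured by quantities we actually compute. A secondary, milder subtlety is that the terminating triple depends implicitly on $z$, so there is no uniform bound on computation time; however, the estimate \eqref{key_bd_PDE} holds pointwise in $z$ as soon as the search terminates, which suffices for the statement of the theorem.
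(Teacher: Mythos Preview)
Your overall strategy --- least-squares approximation, the residual bound $\|y-R(z,T)x\|\leq\|(T-zI)y-x\|/|\mathrm{Im}(z)|$, the core property for existence of good approximants, and the use of $\|x\|=1$ to control $\|(I-P_m)x\|$ --- is exactly the paper's. The gap is in your claim that you can compute a \emph{rigorous} upper bound $B_{n,m,k}$ on the residual. In the inexact-input model of the theorem, every evaluation function is only accurate to $O(2^{-m})$ with a hidden constant depending on $(T,x)$ that is \emph{unknown to the algorithm}. Consequently no arithmetic combination of finitely many evaluations can be certified as an upper bound for the true residual: whatever buffer you add, the unknown constant could exceed it. Your diagonal search therefore cannot guarantee that when $B_{n,m,k}\leq 1/N$ first occurs the true residual is anywhere near $1/N$ (the accepted triple could have small $k$), and your claim that $C(T,x)$ is an absolute constant is too strong. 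You flag this as ``careful bookkeeping'', but it is structural, not clerical.

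The paper sidesteps the obstruction by never trying to certify a bound. Instead it makes a nested sequence of adaptive choices based on \emph{approximate} computable quantities: first pick $l(n)\geq n$ so that the computed estimate of $\|(I-P_{l(n)})(T-zI)P_n\|$ is small (possible because $\langle Te_j,Te_i\rangle$ is available); then, with $m$ fixed, pick $n(m)$ so that the computed residual proxy $v_{m,n}$ is small; finally pick $m(N)$ using the $\|x\|=1$ trick so that the computed $1-\sum_{j\leq m}|x_{(m),j}|^2\leq N^{-2}$. At each stage the discrepancy between the computed proxy and the true quantity is $O(\cdot)$ with an unknown $(T,x)$-dependent constant, and these constants are simply absorbed into the final $C(T,x)$ in \eqref{key_bd_PDE}. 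Your search can be repaired along the same lines --- e.g.\ by tying $k$ to $N$ and to the sizes $n,m$ so that approximation errors are $O(N^{-1})$ with an unknown constant --- but as written the termination criterion does not deliver the stated bound.
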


\begin{proof}
The proof is similar to that of Theorem \ref{res_est1}. Let $(T,x,z)\in \Omega_{\mathrm{SA}}\times S_{l^2(\mathbb{N})}\times \mathbb{C}\backslash{\mathbb{R}}$. We have that $n=\mathrm{rank} (P_n)=\mathrm{rank} ((T-zI)P_n)$ for large $n$ since $\sigma_1(T-zI)>0$ (recall that $z\notin\sigma(T)$). Hence we can define
$$
\widetilde\Gamma_{n}(T,x,z):=\begin{cases}
0 \quad\quad\quad  \text{ if } \sigma_1(P_n(T^*-\overline{z}I)(T-zI)P_n)\leq\frac{1}{n}\\
[P_n(T^*-\overline{z}I)(T-zI)P_n]^{-1}P_n(T^*-\overline{z}I)x \quad \text{ otherwise.}
\end{cases}
$$
Suppose that $n$ is large enough so that $\sigma_1(P_n(T^*-\overline{z}I)(T-zI)P_n)>1/n$. Then $\widetilde\Gamma_{n}(T,x,z)$ is a least-squares solution of the optimisation problem $\mathrm{argmin}_{y}\|(T-zI)P_ny-x\|$. The linear space $\mathrm{span}\{e_n:n\in\mathbb{N}\}$ forms a core of $T$ and hence also of $T-zI$. It follows by invertibility of $T-zI$ that given any $\epsilon>0$, there exists an $m=m(\epsilon)$ and a $y=y(\epsilon)$ with $P_my=y$ such that
$$
\|(T-zI)y-x\|\leq\epsilon.
$$
It follows that for all $n\geq m$, $\|(T-zI)\widetilde\Gamma_{n}(T,x,z)-x\|\leq \|(T-zI)y-x\|\leq \epsilon$ and hence that
$$
\|\widetilde\Gamma_{n}(T,x,z)-R(z,T)x\|\leq \frac{\epsilon}{\left|\mathrm{Im}(z)\right|}.
$$
Since $\epsilon>0$ was arbitrary, we see that $\widetilde\Gamma_{n}(T,x,z)$ converges to $R(z,T)x$.

For $n,m\in\mathbb{N}$, define the finite matrices
$$
B_n=P_n(T^*-\overline{z}I)(T-zI)P_n,\quad C_{m,n}=P_n(T^*-\overline{z}I)P_{m}.
$$
Given the evaluation functions in $\hat\Lambda$, we have access to the entries of these matrices to asymptotic accuracy (i.e. for a given diverging subsequence $a_n$, to precision $O(2^{-a_n})$). It follows that we have access to approximations of $B_n$ and $C_{m,n}$ denoted $\widetilde B_n$ and $\widetilde C_{m,n}$ respectively with
$$
\|B_n-\widetilde B_n\|=O(n^{-1}),\quad \|C_{m,n}-\widetilde C_{m,n}\| =O(n^{-1}).
$$
Recall that the $O(\cdot)$ notation also means independently of $z$ and other parameters (though it may depend on $T$ and $x$). Note that $\widetilde B_n^{-1}$ can be computed using finitely many arithmetic operations and comparisons and the resolvent identity implies that
$$
\|B_n^{-1}-\widetilde B_n^{-1}\|=O(n^{-1}).
$$
From the proof of Proposition \ref{PCholesky} and a simple search routine, we can also compute $\sigma_1(\widetilde B_n)$ to accuracy $n^{-2}$ using finitely many arithmetic operations and comparisons. Denote the approximation via $\tau_n$. We then define
$$
\Gamma_{m,n}(T,x,z):=\begin{cases}
\quad\quad\quad 0 & \text{ if } \tau_n\leq\frac{1}{n}\\
\widetilde B_n^{-1}\widetilde C_{m,n} x_{(m)}& \text{ otherwise,}
\end{cases}
$$
where $x_{(m)}=P_mx_{(m)}$ is an approximation of $P_mx$ to accuracy $O(m^{-1})$. It follows that $\Gamma_{m,n}(T,x,z)$ can be computed using finitely many arithmetic operations. We also have that
$$
\|\Gamma_{m,n}(T,x,z)-\widetilde\Gamma_{n}(T,x_{(m)},z)\|\leq \|B_n^{-1}-\widetilde B_n^{-1}\|\|C_{m,n}\|\|x_{(m)}\|+\|\widetilde B_n^{-1}\|\|C_{m,n}-\widetilde C_{m,n}\|\|x_{(m)}\|,
$$
so that $\Gamma_{m,n}(T,x,z)$ converges to $R(z,T)x_{(m)}$ as $n\rightarrow\infty$. By construction, $\Gamma_{m,n}(T,x,z)$ has finite support with respect to the canonical basis. Furthermore, since $P_mx_{(m)}=x_{(m)}$, the following error bound holds for any $l\geq m$
\begin{align*}
\|\Gamma_{m,n}(T,x,z)-R(z,T)x_{(m)}\|&\leq \|R(z,T)\|\|(T-zI)\Gamma_{m,n}(T,x,z)-x_{(m)}\|\\
&\leq \frac{\|(I-P_{l})(T-zI)P_n\|\|\Gamma_{m,n}(T,x,z)\|+\|P_{l}(T-zI)\Gamma_{m,n}(T,x,z)-x_{(m)}\|}{\left|\mathrm{Im}(z)\right|}.
\end{align*}
Since we have access to both $\langle Te_j, Te_j \rangle$ (norms of the columns of $T$) and $\langle Te_i, e_j \rangle$, we can estimate $\|(I-P_{l})(T-zI)P_n\|$ to asymptotic accuracy $O(n^{-1})$. It follows that we can can compute, in finitely many arithmetic operations and comparisons, $l(n)\geq n$ such that
$$
\|(I-P_{l(n)})(T-zI)P_n\|=O(n^{-1})
$$
Similarly, we can estimate $\|P_{l(n)}(T-zI)\Gamma_{m,l(n)}(T,x,z)-x_{(m)}\|$ to accuracy $O(n^{-1})$ and call this approximation $v_{m,n}$, and estimate $\|\Gamma_{m,l(n)}(T,x,z)\|$ to accuracy $O(n^{-1})$ and call this approximation $w_{m,n}$. It follows that we have
$$
\|\Gamma_{m,n}(T,x,z)-R(z,T)x_{(m)}\|\leq  \frac{(w_{m,n}+O(n^{-1}))O(n^{-1})+v_{m,n}+O(n^{-1})}{\left|\mathrm{Im}(z)\right|}.
$$
It follows that
$$
\|\Gamma_{m,n}(T,x,z)-R(z,T)x\|\leq  \frac{\|x_{(m)}-x\|+(w_{m,n}+O(n^{-1}))O(n^{-1})+v_{m,n}+O(n^{-1})}{\left|\mathrm{Im}(z)\right|}.
$$
For a fixed $m$, $v_{m,n}\rightarrow 0$ and $w_{m,n}\rightarrow \|R(z,T)x_{(m)}\|$ as $n\rightarrow\infty$. It follows that we can compute $n(m)$ (again in finitely many arithmetic operations and comparisons) such that
\begin{equation}
\label{key_bd_PDE2}
\|\Gamma_{m,n(m)}(T,x,z)-R(z,T)x\|\leq  \frac{\|x_{(m)}-x\|+O(m^{-1})}{\left|\mathrm{Im}(z)\right|}.
\end{equation}
We know that $\|x\|$ has norm $1$ and hence we must have
$$
\label{cheat_equ}
\|x_{(m)}-x\|^2\leq \sum_{j=m+1}^\infty|x_j|^2+O(m^{-2})=1-\sum_{j=1}^m|x_{(m)}|^2+O(m^{-2}).
$$
However, we can compute $1-\sum_{j=1}^m|x_{(m)}|^2$, which converges to zero as $m\rightarrow\infty$. For a given $N$, it follows that we can compute $m(N)\geq N$ such that
$$
1-\sum_{j=1}^m|x_{(m)}|^2\leq N^{-2}.
$$
This implies that $\|x_{(m)}-x\|=O(N^{-1})$ and hence by setting
$$
\Gamma_N(T,x,z)=\Gamma_{m(N),n(m(N))}(T,x,z)
$$
we see that (\ref{key_bd_PDE2}) implies (\ref{key_bd_PDE}).
\end{proof}

\begin{proof}[Proof of Theorem \ref{WowPDE2}]
We choose an orthonormal basis of $L^2(\mathbb{R}^d)$ so that we can carry over the results for $l^2(\mathbb{N})$ proven in this paper. In \cite{colbrook3} it was shown that (the orthonormal basis of) tensor products of Hermite functions form a core for any $L\in\Omega_{\mathrm{PDE}}$.  Namely for $d=1$ we choose the Hermite functions
$$
\psi_m(x)=(2^{m}m!\sqrt{\pi})^{-1/2}e^{-x^2/2}H_{m}(x),m\in\mathbb{Z}_{\geq 0},
$$
where
$$
H_n(x)=(-1)^n\exp(x^2)\frac{d^n}{dx^n}\exp(-x^2).
$$
For $d>1$ we abuse notation and write $\psi_m = \psi_{m_1}\otimes ...\otimes \psi_{m_d}$. The point of this is that by a suitable ordering of $\{\psi_m\}_{m\in\mathbb{N}^d}=\{\psi_{m(1)},\psi_{m(2)},...\}$, any $L\in\Omega_{\mathrm{PDE}}$ can be represented by $T\in\Omega_{\mathrm{SA}}$ and $f\in V_{\mathrm{PDE}}$ by $\hat f\in l^2(\mathbb{N})$ with the inner products
\begin{align}
\langle Te_j,Te_i \rangle &= \int_{\mathbb{R}^d} (L\psi_{m(j)}(x))\overline{(L\psi_{m(i)}(x))}dx \label{PDENEED1}\\
\langle Te_j,e_i \rangle &= \int_{\mathbb{R}^d} (L\psi_{m(j)}(x))\psi_{m(i)}dx \label{PDENEED2}\\
\langle \hat f,e_i \rangle &= \int_{\mathbb{R}^d} f(x)\psi_{m(i)}(x)dx. \label{PDENEED3}
\end{align}
In \cite{colbrook3} it was shown, using the theory of quasi-Monte Carlo numerical integration, that the inner products in (\ref{PDENEED1})--(\ref{PDENEED3}) can be computed using $\Lambda_{\mathrm{PDE}}$ to asymptotic error control. In other words, that we can compute any of the evaluation functions in $\hat\Lambda$ in the statement of Theorem \ref{res_est3} in finitely many arithmetic operations and comparisons. The result now follows by using Theorem \ref{res_est3} instead of Corollary \ref{res_est2} in simple adaptations of the proofs of Theorems \ref{meas_comp1}, \ref{F_calc} and \ref{meas_comp3}.
\end{proof}

\small
\linespread{0.95}\selectfont{}
\bibliographystyle{plain}
\bibliography{spec3}

\end{document}